\tikzstyle{dot} = [inner sep=0.5mm,circle,draw,minimum size=1mm]
\tikzstyle{continued}=[dashed]
\newtheorem{thm}{Theorem}[section]
\newtheorem*{thm*}{Theorem}
\newtheorem{lemma}[thm]{Lemma}
\newtheorem{conj}[thm]{Conjecture}
\newtheorem{coro}[thm]{Corollary}
\newtheorem{prop}[thm]{Proposition}
\newtheorem{thmIntro}{Theorem}  
\newcommand{\thistheoremname}{}
\newtheorem*{genericthm}{\thistheoremname}
\newenvironment{namedthm}[1]
  {\renewcommand{\thistheoremname}{#1}%
   \begin{genericthm}}
  {\end{genericthm}}
\theoremstyle{definition}
\newtheorem{defn}[thm]{Definition}
\newtheorem{idefn}[thm]{Informal Definition}
\theoremstyle{remark}
\newenvironment{ex}{\refstepcounter{thm}\begin{proof}[Example \emph{\thethm}]}{\end{proof}}
\newenvironment{rem}{\refstepcounter{thm}\begin{proof}[Remark \emph{\thethm}]}{\end{proof}}
\newenvironment{conv}{\refstepcounter{thm}\begin{proof}[Convention \emph{\thethm}]}{\end{proof}}
\newenvironment{warn}{\refstepcounter{thm}\begin{proof}[Warning \emph{\thethm}]}{\end{proof}}
\numberwithin{equation}{section}
\DeclareMathOperator{\rank}{rank}
\DeclareMathOperator{\ord}{ord}
\DeclareMathOperator {\sA} {\mathcal{A}} 
\newcommand{\RR}{\mathbb{R}}
\newcommand{\RRpos}{\mathbb{R}_{>0}}
\newcommand{\SSS}{\sA(\RR_{\geq 1})} 
\newcommand{\yy}{\mathbf{y}}
\newcommand{\xx}{\mathbf{x}} 
\newcommand{\cluster}{\xx} 
\newcommand{\subcluster}{\xx} 
\newcommand{\ClusterVariable}[1]{x_{#1}} 
\newcommand{\Subface}[1]{\SSS_{#1}} 
\newcommand{\arxiv}[1]{\href{http://arxiv.org/abs/#1}{\texttt{arXiv:#1}}}
\DeclareRobustCommand{\SkipTocEntry}[5]{}
\def\opa{.5}
\newcommand{\MarkedPoint}[2]{
    ({(#1^2-#2^2)/(#1^2+#2^2)},{(2*#1*#2)/(#1^2+#2^2)})
}
\newcommand{\Horocycle}[2]{
    ({(#1^2-#2^2)/(#1^2+#2^2+1)},{(2*#1*#2)/(#1^2+#2^2+1)}) circle ({1/(#1^2+#2^2+1)})
}
\newcommand{\HorocycleCenter}[2]{
    ({(#1^2-#2^2)/(#1^2+#2^2+1)},{(2*#1*#2)/(#1^2+#2^2+1)})
}
\newcommand{\Geodesic}[4]{
    ({(#1*#3 - #2*#4)/(#1*#3 + #2*#4)},{(#1*#4 + #2*#3)/(#1*#3 + #2*#4)}) circle ({(#1*#4-#2*#3)/(#1*#3+#2*#4)})
}
\title[Superunitary regions of cluster algebras]{Superunitary regions of cluster algebras}
\author{Emily Gunawan}
\author{Greg Muller}
\begin{document}

\begin{abstract}
This note introduces the \emph{superunitary region} of a cluster algebra, the subspace of the totally positive region on which each cluster variable is at least 1. Our main result is that the superunitary region of a finite type cluster algebra is a regular CW complex which is homeomorphic to the generalized associahedron of the cluster algebra. As an application, the compactness of the superunitary region implies that each Dynkin diagram admits finitely many positive integral friezes.
\end{abstract}

\makeatletter
\@namedef{subjclassname@2020}{%
  \textup{2020} Mathematics Subject Classification}
\makeatother

\keywords{Cluster algebra, frieze, generalized associahedron, CW complex, Dynkin diagram}
\subjclass[2020]{
Primary 13F60, 
Secondary 57N80, 
16G70 
}

\maketitle

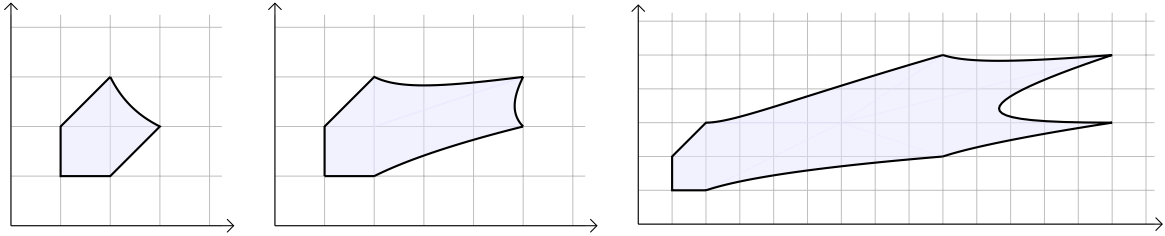
\begin{figure}[h!t]
    \begin{tikzpicture}[scale=.66,baseline=(current bounding box.center)]
        \draw[step=1,black!25,very thin] (0,0) grid (4.25,4.25);
        \draw[-angle 90] (0,0) to (4.5,0);
        \draw[-angle 90] (0,0) to (0,4.5);
        
        \path[fill=blue!10,opacity=\opa, domain=1:2, variable=\t] plot ({\t+1}, {2*\t^(-1)+1}) to (2,1) to (1,1) to (1,2) to (2,3);
        \draw[thick,domain=1:2, variable=\t] plot ({\t+1}, {2*\t^(-1)+1}) to (2,1) to (1,1) to (1,2) to (2,3);
    \end{tikzpicture}
\hspace{.25cm}
    \begin{tikzpicture}[x={(1,1)},y={(1,-1)},x={(1,-1)},scale=.66,baseline=(current bounding box.center)]
     \draw[step=1,black!25,very thin] (0,0) grid (4.25,6.25);
		\draw[-angle 90] (0,0) to (4.5,0);
  		\draw[-angle 90] (0,0) to (0,6.5);
		
		\draw[blue!10,fill=blue!10,opacity=\opa,variable=\t,domain=1:2] plot (\t,{\t^2+1}) plot ({\t+1},{\t+2+2/\t}) to (2,2) to (1,2);
 		\draw[blue!10,fill=blue!10,opacity=\opa,variable=\t,domain=1:2] plot ({\t+2*\t^(-1)},{1+4*\t^(-2)}) to (2,2) to (3,5);
 		\draw[blue!10,fill=blue!10,opacity=\opa,variable=\t,domain=1:2] (1,1) to (1,2) to (3,2) to (2,1) to (1,1);
     \draw[thick,variable=\t,domain=1:2] (1,1) to (1,2) plot (\t,\t^2+1) plot (\t+1,{\t+2+2*\t^(-1)}) plot ({\t+2*\t^(-1)},{1+4*\t^(-2)}) to (2,1) to (1,1);
\end{tikzpicture}
\hspace{.25cm}
\begin{tikzpicture}[x={(1,1)},y={(1,-1)},x={(1,-1)},scale=.45,baseline=(current bounding box.center)]
    \draw[step=1,black!25,very thin] (0,0) grid (6.25,15.25);
 		\draw[-angle 90] (0,0) to (6.5,0);
		\draw[-angle 90] (0,0) to (0,15.5);
		
		\draw[blue!10,fill=blue!10,opacity=\opa,variable=\t,domain=1:2] (3,6) plot (\t,\t^3+1) to (3,6);
		\draw[blue!10,fill=blue!10,opacity=\opa,variable=\t,domain=1:2] (3,6) plot ({\t+1},{\t^2+3*\t+3+2*\t^(-1)}) to (3,6);
		\draw[blue!10,fill=blue!10,opacity=\opa,variable=\t,domain=1:2] (3,6) plot ({\t^2+2*\t^(-1)},{\t^3+5+8*\t^(-3)}) to (3,6);
		\draw[blue!10,fill=blue!10,opacity=\opa,variable=\t,domain=1:2] (3,6) plot ({\t+2+2*\t^(-1)},{\t+3+6*\t^(-1)+4*\t^(-2)}) to (3,6);
		\draw[blue!10,fill=blue!10,opacity=\opa,variable=\t,domain=1:2] (3,6) plot ({\t+4*\t^(-2)},{1+8*\t^(-3)}) to (3,6);
		\draw[blue!10,fill=blue!10,opacity=\opa,variable=\t,domain=1:2] (3,6) to (3,2) to (2,1) to (1,1) to (1,2) to (3,6);
		\draw[thick,variable=\t,domain=1:2] plot (\t,\t^3+1) plot ({\t+1},{\t^2+3*\t+3+2*\t^(-1)}) plot ({\t^2+2*\t^(-1)},{\t^3+5+8*\t^(-3)}) plot ({\t+2+2*\t^(-1)},{\t+3+6*\t^(-1)+4*\t^(-2)}) plot ({\t+4*\t^(-2)},{1+8*\t^(-3)}) to (2,1) to (1,1) to (1,2);
  \end{tikzpicture} 
\caption{The superunitary regions of types $A_2$, $B_2/C_2$, and $G_2$ (embedded in $\mathbb{R}_{>0}^2$)}
\end{figure}

\begin{center}
\emph{A table of contents is provided at the end of the document.}
\end{center}


\section{Summary of results}

\subsection{Cluster algebras}

Cluster algebras are commutative algebras with distinguished elements, called \emph{clusters variables}, and distinguished sets of cluster variables, called \emph{clusters}. While the set of cluster variables may be infinite, cluster algebras with finitely many cluster variables (\emph{finite type} cluster algebras) have been completely classified, and are in bijection with Dynkin diagrams.\footnote{Here and throughout, `Dynkin diagrams' are finite type (e.g.~not affine) but not necessarily connected.}


Clusters can be \emph{mutated}, a process which constructs another cluster in the same cluster algebra by replacing a single element in the set with a new cluster variable. Any two clusters may be connected by repeating this process, and so the entire set of cluster variables can be recovered from a single \emph{initial cluster}.
These relations can be visualized with the \emph{exchange graph}, which has a vertex for each cluster and an edge for each mutation between clusters.

Since two clusters are related by a mutation iff they have all but one element in common, the exchange graph can be equivalently defined as the graph whose vertices are clusters and whose edges are subsets of clusters containing all but one element. In \cite{CFZ02}, exchange graphs of finite type cluster algebras were shown to be the 1-skeleta of polytopes (called \emph{generalized associahedra}) whose faces are indexed by \emph{subclusters} of $\mathcal{A}$; that is, subsets of clusters. These polytopes may be related to other incarnations of Dynkin diagrams; for example, roots of the associated Lie algebra and representations of an associated quiver.

\begin{ex}\label{ex:A2 cluster algebra}
One of the simplest cluster algebras is the \emph{$A_2$ cluster algebra}, given below.
\[\mathbb{Z}[x_1,x_2,x_3,x_4,x_5]/\langle x_{i-1}x_{i+1}-(x_i+1), i\in \{1,2,...,5\}\rangle\]
Here, the cluster variables are the five generators $\{x_1,x_2,x_3,x_4,x_5\}$ (with indices considered mod $5$), and the clusters are the five pairs of cluster variables with adjacent indices:
\[
\{x_1,x_2\},\{x_2,x_3\},\{x_3,x_4\},\{x_4,x_5\},\{x_5,x_1\}\]
Each cluster $\{x_i,x_{i+1}\}$ has two mutations, $\{x_{i-1},x_i\}$ and $\{x_{i+1},x_{i+2}\}$, and so the exchange graph is a 5-cycle. The generalized associahedron contains the exchange graph and a 2-dimensional face corresponding to $\varnothing$ which fills in the interior of a pentagon (Figure \ref{fig: a2ass}).
\end{ex}

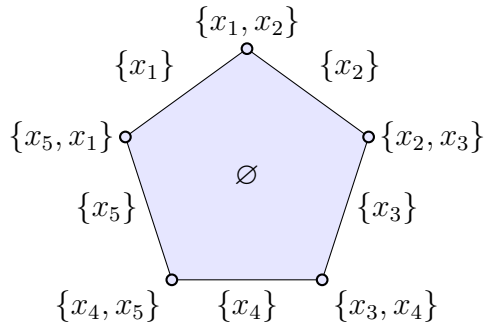
\begin{figure}[h!t]
\begin{tikzpicture}[scale=.85]
    \coordinate (1) at (90:2);
    \coordinate (2) at (90-1*72:2);
    \coordinate (3) at (90-2*72:2);
    \coordinate (4) at (90-3*72:2);
    \coordinate (5) at (90-4*72:2);

    \path[fill=blue!10] (1) to (2) to (3) to (4) to (5) to (1);
    \draw[] (1) to (2) to (3) to (4) to (5) to (1);
    \node[dot,thick,fill=blue!10] (1d) at (1) {};
    \node[dot,thick,fill=blue!10] (2d) at (2) {};
    \node[dot,thick,fill=blue!10] (3d) at (3) {};
    \node[dot,thick,fill=blue!10] (4d) at (4) {};
    \node[dot,thick,fill=blue!10] (5d) at (5) {};

    \node[above] at (90:2) {$\{x_1,x_2\}$};
    \node[right] at (90-1*72:2) {$\{x_2,x_3\}$};
    \node[below right] at (90-2*72:2) {$\{x_3,x_4\}$};
    \node[below left] at (90-3*72:2) {$\{x_4,x_5\}$};
    \node[left] at (90-4*72:2) {$\{x_5,x_1\}$};
    \node[above left] at (90-9*36:1.618) {$\{x_1\}$};
    \node[above right] at (90-1*36:1.618) {$\{x_2\}$};
    \node[right] at (90-3*36:1.618) {$\{x_3\}$};
    \node[below] at (90-5*36:1.618) {$\{x_4\}$};
    \node[left] at (90-7*36:1.618) {$\{x_5\}$};
    \node[] at (0,0) {$\varnothing$};
\end{tikzpicture}
\caption{The generalized associahedron of the $A_2$ cluster algebra}
\label{fig: a2ass}
\end{figure}

\subsection{Totally positive regions}

Cluster algebras often arise as rings of functions on notable spaces (such as Lie groups, Grassmannians, and Teichm\"uller spaces), with the cluster variables corresponding to some distinguished class of functions (like generalized minors, Pl\"ucker coordinates, or Penner coordinates). 
These distinguished functions can be used to characterize particularly well-behaved regions in these spaces. For example, a point in a such a space is \emph{totally positive} if every cluster variable has a positive real value at that point.
In special cases, this can be used to define \emph{totally positive matrices}, \emph{totally positive Grassmannians}, and \emph{decorated Teichm\"uller spaces}.

To define the totally positive points of a general cluster algebra $\sA$, we first identify the elements of $\sA$ with continuous, real-valued functions on the \emph{space of real points} of $\sA$: 
\[ \mathcal{A}(\mathbb{R}) := \{ \text{ring homomorphisms $p:\mathcal{A}\rightarrow \mathbb{R}$}\}\]
Each element $a\in \mathcal{A}$ gives a function $f_a:\mathcal{A}(\mathbb{R})\rightarrow \mathbb{R}$ by the rule $f_a(p):=p(a)$, and we endow $\mathcal{A}(\mathbb{R})$ with the coarsest topology for which each 
$f_a:\mathcal{A}(\mathbb{R})\rightarrow \mathbb{R}$ is continuous.

A \textbf{totally positive} point of $\mathcal{A}$ is a point $p\in \mathcal{A}(\mathbb{R})$ on which the function $f_x$ is positive for each cluster variable $x$; or equivalently, $p$ is a ring homomorphism $\mathcal{A}\rightarrow\mathbb{R}$ which sends each cluster variable to a positive real number. The set of totally positive points forms the \textbf{totally positive region} 
\[\mathcal{A}(\mathbb{R}_{>0})\subset \mathcal{A}(\mathbb{R})\]
Totally positive regions of cluster algebras have been studied extensively, in general and in special cases. A fundamental result is that, for any cluster $\mathbf{x}=(x_1,x_2,...,x_r)$, the $r$-tuple of functions $f_\mathbf{x}:=(f_{x_1},f_{x_2},...,f_{x_r})$ on $\mathcal{A}(\mathbb{R})$ restricts to a homeomorphism
\[ f_\mathbf{x}:\mathcal{A}(\mathbb{R}_{>0})\xrightarrow{\sim} \mathbb{R}_{>0}^r\]
As a consequence, each choice of cluster gives a parametrization of the totally positive region by an orthant in $r$-dimensional space. In special cases, these give well-known parametrizations of totally positive matrices \cite{Lus94,BFZ96} and decorated Teichm\"uller spaces \cite{Pen87}.

\begin{conv}
Here, $\mathbb{R}_{>0}$ denotes the set $\{c\in \mathbb{R} \mid c>0\}$. We extend this convention to other inequalities and other totally ordered sets; so for example, $\mathbb{Z}_{\geq1}:=\{c\in \mathbb{Z} \mid c\geq1\}$.
\end{conv}

\subsection{Superunitary regions}

In this paper, we consider a variation of the previous construction. 
A point $p\in \mathcal{A}(\mathbb{R})$ is \textbf{superunitary}\footnote{Literally, `greater than $1$'.} if $f_x(p)\geq1$ for every cluster variable $x$; or equivalently, $p$ is a ring homomorphism $\mathcal{A}\rightarrow \mathbb{R}$ which sends each cluster variable into $\mathbb{R}_{\geq1}$.

The set of superunitary points forms the \textbf{superunitary region} $\mathcal{A}(\mathbb{R}_{\geq1})\subset \mathcal{A}(\mathbb{R})$. Since every superunitary point is also totally positive, the superunitary region is a subspace of the totally positive region $\mathcal{A}(\mathbb{R}_{>0})$.
For any cluster $\mathbf{x}$, the homeomorphism $f_\mathbf{x}:\mathcal{A}(\mathbb{R}_{>0})\xrightarrow{\sim} \mathbb{R}_{>0}^r$ embeds the superunitary region into the positive orthant in $r$-dimensional space:
\[ f_\mathbf{x}: \mathcal{A}(\mathbb{R}_{\geq1}) \hookrightarrow \mathbb{R}_{>0}^r\]
The image has a  nice characterization. Every element $a \in \mathcal{A}$ can be expressed as a Laurent polynomial in the cluster $\mathbf{x}$, which we denote $\ell_{a,\mathbf{x}}$ (this fact is called the \emph{Laurent phenomenon}). 
The embedding $f_\mathbf{x}$ then identifies the superunitary region $\mathcal{A}(\mathbb{R}_{\geq1})$ with the subset of $\mathbb{R}_{>0}^r$ on which each Laurent polynomial $\ell_{a,\mathbf{x}}$ has value $\geq1$, for all cluster variables $a\in \mathcal{A}$.

\begin{figure}[h!t]
    \begin{tikzpicture}[xshift=.75in,scale=1,baseline=(current bounding box.center)]
        \draw[step=1,black!25,very thin] (-.25,-.25) grid (4.25,4.25);
        \draw[-angle 90] (-.25,0) to (4.5,0) node[above] {$x_1$};
        \draw[-angle 90] (0,-.25) to (0,4.5) node[right] {$x_2$};
        
        \path[fill=blue!10,opacity=\opa, domain=1:2, variable=\t] plot ({\t+1}, {2*\t^(-1)+1}) to (2,1) to (1,1) to (1,2) to (2,3);
        \draw[domain=1:2, variable=\t] plot ({\t+1}, {2*\t^(-1)+1}) to (2,1) to (1,1) to (1,2) to (2,3);
        		
        \draw[continued] (-.25,.75) to (1,2);
        \draw[continued] (2,3) to (3.25,4.25);
        \draw[continued] (1,-.25) to (1,1);
        \draw[continued] (1,2) to (1,4.25);
        \draw[continued] (-.25,1) to (1,1);
        \draw[continued] (2,1) to (4.25,1);
        \draw[continued] (.75,-.25) to (2,1);
        \draw[continued] (3,2) to (4.25,3.25);
        \draw[continued,variable=\t,domain=.62:1] plot (\t+1,{2*\t^(-1)+1});
        \draw[continued,variable=\t,domain=2:3.25] plot (\t+1,{2*\t^(-1)+1});
    \end{tikzpicture}
    \caption{The superunitary region of the $A_2$ cluster algebra (embedded in $\mathbb{R}^2_{>0}$)}
    \label{fig:A2_superunitary_region}
\end{figure}
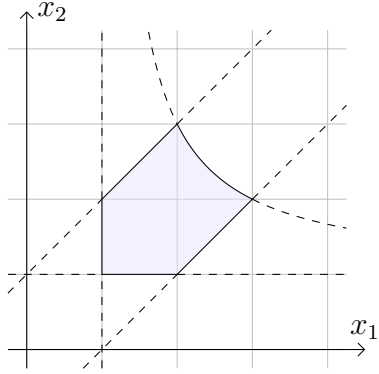

\begin{ex}
The five cluster variables $x_1,x_2,x_3,x_4,x_5$ in the $A_2$ cluster algebra 
may be written as Laurent polynomials in the cluster $\mathbf{x}:=\{x_1,x_2\}$  as follows:
\[ \ell_{x_1,\mathbf{x}} = x_1,
\;\;\;
\ell_{x_2,\mathbf{x}} = x_2 ,
\;\;\;
\ell_{x_3,\mathbf{x}} = \frac{x_2+1}{x_1} ,
\;\;\;
\ell_{x_4,\mathbf{x}} = \frac{x_1+x_2+1}{x_1x_2},
\;\;\;
\ell_{x_5,\mathbf{x}} = \frac{x_1+1}{x_2}
\]
The map $f_\mathbf{x}$ identifies the superunitary region with the subset of $\mathbb{R}_{>0}^2$ defined by
\[
x_1\geq1 ,
\;\;\;
x_2 \geq 1,
\;\;\;
\frac{x_2+1}{x_1}\geq 1 ,
\;\;\; 
\frac{x_1+x_2+1}{x_1x_2} \geq 1 ,
\;\;\;
\frac{x_1+1}{x_2}\geq 1
\]
These inequalities carve out a topological pentagon bounded by four lines and a hyperbola (Figure~\ref{fig:A2_superunitary_region}). Each of the faces may be indexed by the set of cluster variables with value $1$ on that face (Figure~\ref{fig:A2_superunitary_region_subclusters}); note that this indexes each vertex by a cluster, each edge by a single cluster variable, and the interior by the empty set.
\end{ex}

\begin{figure}[h!t]
    \begin{tikzpicture}[scale=1.2,baseline=(current bounding box.center)]
        \draw[step=1,black!25,very thin] (-.25,-.25) grid (4.25,4.25);
        \draw[-angle 90] (-.25,0) to (4.5,0) node[above] {$x_1$};
        \draw[-angle 90] (0,-.25) to (0,4.5) node[right] {$x_2$};
        
        \path[fill=blue!10,opacity=\opa, domain=1:2, variable=\t] plot ({\t+1}, {2*\t^(-1)+1}) to (2,1) to (1,1) to (1,2) to (2,3);
        \draw[domain=1:2,thick, blue, variable=\t] plot ({\t+1}, {2*\t^(-1)+1}) to (2,1) to (1,1) to (1,2) to (2,3);
        
        \node[dot,thick,blue,fill=blue!10] (a) at (1,1) {};
        \node[dot,thick,blue,fill=blue!10] (b) at (2,1) {};
        \node[dot,thick,blue,fill=blue!10] (c) at (3,2) {};
        \node[dot,thick,blue,fill=blue!10] (d) at (2,3) {};
        \node[dot,thick,blue,fill=blue!10] (e) at (1,2) {};
        
        \node[below left] (al) at (a) {$\scriptstyle \{x_1,x_2\}$};
        \node[below right] (bl) at (b) {$\scriptstyle \{x_2,x_3\}$};
        \node[right] (cl) at (c) {$\scriptstyle \{x_3,x_4\}$};
        \node[above] (dl) at (d) {$\scriptstyle \{x_4,x_5\}$};
        \node[above left] (el) at (e) {$\scriptstyle \{x_1,x_5\}$};

        \node[below] at (1.5,1) {$
        \{x_2\}$};
        \node[left] at (1,1.5) {$
        \{x_1\}$};
        \node[above left] at (1.6,2.4) {$
        \{x_5\}$};
        \node[above right] at (2.3,2.3) {$ 
        \{x_4\}$};
        \node[below right] at (2.4,1.6) {$ 
        \{x_3\}$};

        \node[] at (1.8,1.8) {$
        \varnothing$};
    \end{tikzpicture}

    \caption{The face decomposition of the $A_2$ superunitary region indexed by subclusters}
    \label{fig:A2_superunitary_region_subclusters}
\end{figure}
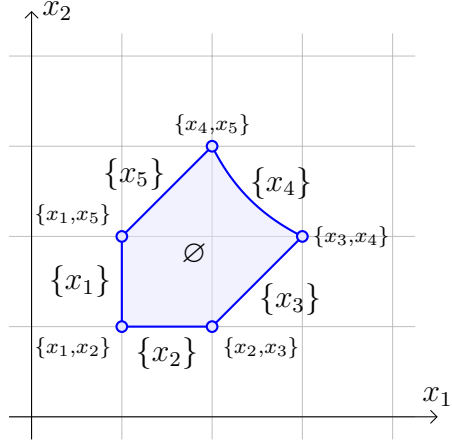

Comparing Figures \ref{fig: a2ass} and \ref{fig:A2_superunitary_region_subclusters}, one may notice that the generalized associahedron and the superunitary region of the $A_2$ cluster algebra are both (topological) pentagons whose faces are indexed by the subclusters. 
Our main theorem is that this is true for all finite type cluster algebras, which we make precise with the language of \emph{CW complexes} (see Section \ref{section: CW}).

\begin{namedthm}{Theorem~\ref{thmIntro:region is generalized associahedron}}
If $\mathcal{A}$ is finite type, then the superunitary region $\mathcal{A}(\mathbb{R}_{\geq1})$ of $\mathcal{A}$ is a regular CW complex which is cellular homeomorphic to the generalized associahedron of $\mathcal{A}$.
\end{namedthm}

\begin{rem}
We can say something stronger: the map $f_\cluster$ embeds the superunitary region into $\mathbb{R}_{>0}^r$ as a \emph{submanifold with corners} (in the sense of \cite{Joy12}) for which the CW structure in Theorem \ref{thmIntro:region is generalized associahedron} is the \emph{boundary stratification} (see Remark \ref{rem: manifoldwithcorners}).
%
\end{rem}

\subsection{Application to friezes}

Theorem \ref{thmIntro:region is generalized associahedron} shows that the superunitary region of a finite type cluster algebra is compact. 
This provides a uniform proof for the finiteness of \emph{positive integral friezes of Dynkin type}, which has so far been proven for all types except $E_7$ and $E_8$.

Given a Dynkin diagram $\Gamma$, one may construct a \emph{repetition quiver} with an infinite row of vertices for each vertex of $\Gamma$ and an infinite zig-zag of arrows for each edge in $\Gamma$. 
A \textbf{positive integral frieze of type $\Gamma$} is a $\mathbb{Z}_{\geq1}$-valued function on the vertices of the repetition quiver of $\Gamma$ which satisfies the \emph{mesh relations}: 
the product of adjacent values in the same row is equal to 1 plus the product of the intermediate values (a precise definition is in Section \ref{section: friezes}). 

\begin{figure}[h!t]
 \begin{tikzpicture}[x=0.025cm,y=-0.025cm]
\begin{scope}[every node/.style={inner sep=4pt,font=\footnotesize}]
    \node (2_a) at (-30,339) {$\dots$};
    \node (4_a) at (-30,439) {$\dots$};
    \node (6_a) at (-30,469) {$\dots$};
    \node (7_a) at (-30,539) {$\dots$};

    \node (1_0) at (20,289) {$2$};
    \node (2_0) at (70,339) {$5$};
    \node (3_0) at (20,389) {$7$};
    \node (4_0) at (70,439) {$18$};
    \node (5_0) at (20,489) {$41$};
    \node (6_0) at (70,469) {$6$};
    \node (7_0) at (70,539) {$11$};
    \node (8_0) at (20,589) {$4$};
    \node (1_1) at (120,289) {$3$};
    \node (2_1) at (170,339) {$8$};
    \node (3_1) at (120,389) {$13$};
    \node (4_1) at (170,439) {$21$};
    \node (5_1) at (120,489) {$29$};
    \node (6_1) at (170,469) {$5$};
    \node (7_1) at (170,539) {$8$};
    \node (8_1) at (120,589) {$3$};
    \node (1_2) at (220,289) {$3$};
    \node (2_2) at (270,339) {$5$};
    \node (3_2) at (220,389) {$13$};
    \node (4_2) at (270,439) {$18$};
    \node (5_2) at (220,489) {$29$};
    \node (6_2) at (270,469) {$6$};
    \node (7_2) at (270,539) {$11$};
    \node (8_2) at (220,589) {$3$};
    \node (1_3) at (320,289) {$2$};
    \node (2_3) at (370,339) {$3$};
    \node (3_3) at (320,389) {$7$};
    \node (4_3) at (370,439) {$16$};
    \node (5_3) at (320,489) {$41$};
    \node (6_3) at (370,469) {$7$};
    \node (7_3) at (370,539) {$15$};
    \node (8_3) at (320,589) {$4$};
    \node (1_4) at (420,289) {$2$};
    \node (2_4) at (470,339) {$5$};
    \node (3_4) at (420,389) {$7$};
    \node (4_4) at (470,439) {$18$};
    \node (5_4) at (420,489) {$41$};
    \node (6_4) at (470,469) {$6$};
    \node (7_4) at (470,539) {$11$};
    \node (8_4) at (420,589) {$4$};
    
    \node (1_5) at (520,289) {$\dots$};
    \node (3_5) at (520,389) {$\dots$};
    \node (5_5) at (520,489) {$\dots$};
    \node (8_5) at (520,589) {$\dots$};
\end{scope}
\begin{scope}[every node/.style={fill=white,font=\scriptsize},every path/.style={-{Latex[length=1.5mm,width=1mm]}}]
    \path (2_a) edge[densely dashed] (1_0);
    \path (2_a) edge[densely dashed] (3_0);
    \path (4_a) edge[densely dashed] (3_0);
    \path (4_a) edge[densely dashed] (5_0);
    \path (6_a) edge[densely dashed] (5_0);
    \path (7_a) edge[densely dashed] (5_0);
    \path (7_a) edge[densely dashed] (8_0);
    \path (1_0) edge (2_0);
    \path (3_0) edge (2_0);
    \path (3_0) edge (4_0);
    \path (5_0) edge (4_0);
    \path (5_0) edge (6_0);
    \path (5_0) edge (7_0);
    \path (8_0) edge (7_0);
    \path (2_0) edge[densely dashed] (1_1);
    \path (2_0) edge[densely dashed] (3_1);
    \path (4_0) edge[densely dashed] (3_1);
    \path (4_0) edge[densely dashed] (5_1);
    \path (6_0) edge[densely dashed] (5_1);
    \path (7_0) edge[densely dashed] (5_1);
    \path (7_0) edge[densely dashed] (8_1);
    \path (1_1) edge (2_1);
    \path (3_1) edge (2_1);
    \path (3_1) edge (4_1);
    \path (5_1) edge (4_1);
    \path (5_1) edge (6_1);
    \path (5_1) edge (7_1);
    \path (8_1) edge (7_1);
    \path (2_1) edge[densely dashed] (1_2);
    \path (2_1) edge[densely dashed] (3_2);
    \path (4_1) edge[densely dashed] (3_2);
    \path (4_1) edge[densely dashed] (5_2);
    \path (6_1) edge[densely dashed] (5_2);
    \path (7_1) edge[densely dashed] (5_2);
    \path (7_1) edge[densely dashed] (8_2);   
    \path (1_2) edge (2_2);
    \path (3_2) edge (2_2);
    \path (3_2) edge (4_2);
    \path (5_2) edge (4_2);
    \path (5_2) edge (6_2);
    \path (5_2) edge (7_2);
    \path (8_2) edge (7_2);
    \path (2_2) edge[densely dashed] (1_3);
    \path (2_2) edge[densely dashed] (3_3);
    \path (4_2) edge[densely dashed] (3_3);
    \path (4_2) edge[densely dashed] (5_3);
    \path (6_2) edge[densely dashed] (5_3);
    \path (7_2) edge[densely dashed] (5_3);
    \path (7_2) edge[densely dashed] (8_3);
    \path (1_3) edge (2_3);
    \path (3_3) edge (2_3);
    \path (3_3) edge (4_3);
    \path (5_3) edge (4_3);
    \path (5_3) edge (6_3);
    \path (5_3) edge (7_3);
    \path (8_3) edge (7_3);
    \path (2_3) edge[densely dashed] (1_4);
    \path (2_3) edge[densely dashed] (3_4);
    \path (4_3) edge[densely dashed] (3_4);
    \path (4_3) edge[densely dashed] (5_4);
    \path (6_3) edge[densely dashed] (5_4);
    \path (7_3) edge[densely dashed] (5_4);
    \path (7_3) edge[densely dashed] (8_4);
    \path (1_4) edge (2_4);
    \path (3_4) edge (2_4);
    \path (3_4) edge (4_4);
    \path (5_4) edge (4_4);
    \path (5_4) edge (6_4);
    \path (5_4) edge (7_4);
    \path (8_4) edge (7_4);
    \path (2_4) edge[densely dashed] (1_5);
    \path (2_4) edge[densely dashed] (3_5);
    \path (4_4) edge[densely dashed] (3_5);
    \path (4_4) edge[densely dashed] (5_5);
    \path (6_4) edge[densely dashed] (5_5);
    \path (7_4) edge[densely dashed] (5_5);
    \path (7_4) edge[densely dashed] (8_5);
\end{scope}
 \end{tikzpicture}
\caption{A positive integral frieze of type $E_8$}
\label{fig: E8frieze}
\end{figure}
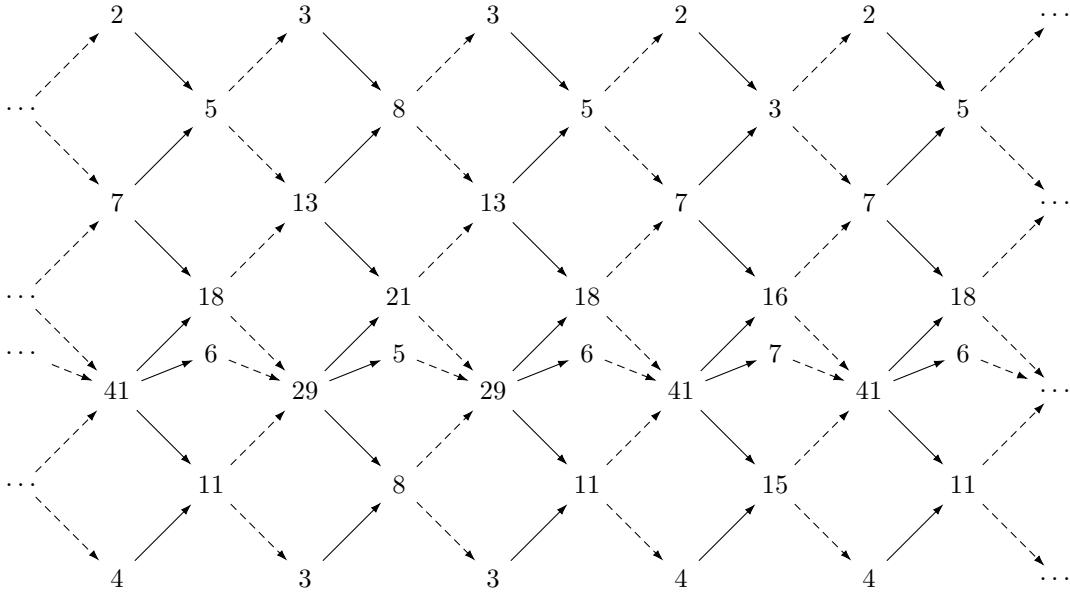

Positive integral friezes of type $A_n$ were introduced by Coxeter \cite{Cox71}
and generalized to other Dynkin diagrams in \cite{CC06,ARS10} based on the following connection to cluster algebras.
Given a Dynkin diagram $\Gamma$, there is a \emph{general frieze of type $\Gamma$} with values in the cluster algebra of type $\Gamma$ such that every positive integral frieze of type $\Gamma$ can be realized by specializing the general frieze along a ring homomorphism $\mathcal{A}\rightarrow \mathbb{R}$ which sends the cluster variables into $\mathbb{Z}_{\geq1}$. Such homomorphisms are called \textbf{frieze points} of $\mathcal{A}$.

The set of frieze points is a closed and discrete subset $\mathcal{A}(\mathbb{Z}_{\geq1})$ of the superunitary region. 
By a standard argument in topology, the compactness of the superunitary region in finite type implies the finiteness of the frieze points, and therefore the positive integral friezes.

\begin{namedthm}{Theorem~\ref{thm: finitefriezes}}
There are finitely many positive integral friezes of each Dynkin type.
\end{namedthm}

The face structure of $\mathcal{A}(\mathbb{R}_{\geq1})$ provides insight into positive integral friezes. Given a positive integral frieze, the corresponding frieze point lies in the face of $\mathcal{A}(\mathbb{R}_{\geq1})$ indexed by the subcluster of cluster variables with value $1$. 
If this subcluster is non-empty, then the frieze can be constructed as a \textbf{unitary extension} of a positive integral frieze on a proper subdiagram. 

In Sections \ref{section: enumeration} and \ref{section: countingfriezes}, 
we use known and conjectured counts of friezes to count those positive integral friezes which are not unitary extensions of smaller friezes. Remarkably, they appear to be quite rare! There are tentatively four types, listed below.

\begin{namedthm}{Theorem~\ref{thm: elementaryfriezes}}
Assuming there are $4400$ and $26952$ positive integral friezes of types $E_7$ and $E_8$,
every positive integral frieze of Dynkin type is a unitary extension of a unique (possibly empty) union of the following friezes: 
\begin{enumerate}
    \item The family of $D_n$ friezes in Figure \ref{fig: Dabfrieze} determined by a proper factorization $n=ab$.
    \item The four translations of the $E_8$ frieze in Figure \ref{fig: E8frieze}.
    \item The family of $B_n$ friezes in Figure \ref{fig: Bn2frieze}, for all $\sqrt{n+1}\in \mathbb{Z}_{\geq2}$.
    \item The $G_2$ frieze at the bottom of Figure \ref{fig: G2frieze}.
\end{enumerate}
\end{namedthm}

\noindent Equivalently, these are the positive integral friezes of connected Dynkin type without $1$s.

\begin{rem}
The vast majority of positive integral friezes are \emph{unitary friezes} (see Table \ref{table: friezes}), which are $1$ on an entire cluster and therefore unitary extensions of the empty frieze.
\end{rem}

\begin{rem}
The last two types of friezes in the theorem may be obtained by \emph{folding} friezes of the first type \cite{FP16}, so one could argue that only the first two types are `elementary'.
\end{rem}

\begin{figure}[h!t]

\begin{tikzpicture}[x=0.025cm,y=-0.025cm]
\begin{scope}[every node/.style={inner sep=4pt,font=\footnotesize}]
    \node (2_a) at (-30,339) {$\dots$};
    \node (4_a) at (-30,439) {$\dots$};
    \node (6_a) at (-30,509) {$\dots$};
    \node (7_a) at (-30,539) {$\dots$};

    \node (1_0) at (20,289) {$2$};
    \node (2_0) at (70,339) {$3$};
    \node (3_0) at (20,389) {$\vdots$};
    \node (4_0) at (70,439) {$(n-2)$};
    \node (5_0) at (20,489) {$(n-1)$};
    \node (6_0) at (70,509) {$b$};
    \node (7_0) at (70,539) {$a$};
    \node (1_1) at (120,289) {$2$};
    \node (2_1) at (170,339) {$3$};
    \node (3_1) at (120,389) {$\vdots$};
    \node (4_1) at (170,439) {$(n-2)$};
    \node (5_1) at (120,489) {$(n-1)$};
    \node (6_1) at (170,509) {$a$};
    \node (7_1) at (170,539) {$b$};
    \node (1_2) at (220,289) {$2$};
    \node (2_2) at (270,339) {$3$};
    \node (3_2) at (220,389) {$\vdots$};
    \node (4_2) at (270,439) {$(n-2)$};
    \node (5_2) at (220,489) {$(n-1)$};
    \node (6_2) at (270,509) {$b$};
    \node (7_2) at (270,539) {$a$};
    \node (1_3) at (320,289) {$2$};
    \node (2_3) at (370,339) {$3$};
    \node (3_3) at (320,389) {$\vdots$};
    \node (4_3) at (370,439) {$(n-2)$};
    \node (5_3) at (320,489) {$(n-1)$};
    \node (6_3) at (370,509) {$a$};
    \node (7_3) at (370,539) {$b$};
    \node (1_4) at (420,289) {$2$};
    \node (2_4) at (470,339) {$3$};
    \node (3_4) at (420,389) {$\vdots$};
    \node (4_4) at (470,439) {$(n-2)$};
    \node (5_4) at (420,489) {$(n-1)$};
    \node (6_4) at (470,509) {$b$};
    \node (7_4) at (470,539) {$a$};
    
    \node (1_5) at (520,289) {$\dots$};
    \node (3_5) at (520,389) {$\dots$};
    \node (5_5) at (520,489) {$\dots$};
\end{scope}
\begin{scope}[every node/.style={fill=white,font=\scriptsize},every path/.style={-{Latex[length=1.5mm,width=1mm]}}]
    \path (2_a) edge[densely dashed] (1_0);
    \path (2_a) edge[densely dashed] (3_0);
    \path (4_a) edge[densely dashed] (3_0);
    \path (4_a) edge[densely dashed] (5_0);
    \path (6_a) edge[densely dashed] (5_0);
    \path (7_a) edge[densely dashed] (5_0);
    \path (1_0) edge (2_0);
    \path (3_0) edge (2_0);
    \path (3_0) edge (4_0);
    \path (5_0) edge (4_0);
    \path (5_0) edge (6_0);
    \path (5_0) edge (7_0);
    \path (2_0) edge[densely dashed] (1_1);
    \path (2_0) edge[densely dashed] (3_1);
    \path (4_0) edge[densely dashed] (3_1);
    \path (4_0) edge[densely dashed] (5_1);
    \path (6_0) edge[densely dashed] (5_1);
    \path (7_0) edge[densely dashed] (5_1);
    \path (1_1) edge (2_1);
    \path (3_1) edge (2_1);
    \path (3_1) edge (4_1);
    \path (5_1) edge (4_1);
    \path (5_1) edge (6_1);
    \path (5_1) edge (7_1);
    \path (2_1) edge[densely dashed] (1_2);
    \path (2_1) edge[densely dashed] (3_2);
    \path (4_1) edge[densely dashed] (3_2);
    \path (4_1) edge[densely dashed] (5_2);
    \path (6_1) edge[densely dashed] (5_2);
    \path (7_1) edge[densely dashed] (5_2);
    \path (1_2) edge (2_2);
    \path (3_2) edge (2_2);
    \path (3_2) edge (4_2);
    \path (5_2) edge (4_2);
    \path (5_2) edge (6_2);
    \path (5_2) edge (7_2);
    \path (2_2) edge[densely dashed] (1_3);
    \path (2_2) edge[densely dashed] (3_3);
    \path (4_2) edge[densely dashed] (3_3);
    \path (4_2) edge[densely dashed] (5_3);
    \path (6_2) edge[densely dashed] (5_3);
    \path (7_2) edge[densely dashed] (5_3);
    \path (1_3) edge (2_3);
    \path (3_3) edge (2_3);
    \path (3_3) edge (4_3);
    \path (5_3) edge (4_3);
    \path (5_3) edge (6_3);
    \path (5_3) edge (7_3);
    \path (2_3) edge[densely dashed] (1_4);
    \path (2_3) edge[densely dashed] (3_4);
    \path (4_3) edge[densely dashed] (3_4);
    \path (4_3) edge[densely dashed] (5_4);
    \path (6_3) edge[densely dashed] (5_4);
    \path (7_3) edge[densely dashed] (5_4);
    \path (1_4) edge (2_4);
    \path (3_4) edge (2_4);
    \path (3_4) edge (4_4);
    \path (5_4) edge (4_4);
    \path (5_4) edge (6_4);
    \path (5_4) edge (7_4);
    \path (2_4) edge[densely dashed] (1_5);
    \path (2_4) edge[densely dashed] (3_5);
    \path (4_4) edge[densely dashed] (3_5);
    \path (4_4) edge[densely dashed] (5_5);
    \path (6_4) edge[densely dashed] (5_5);
    \path (7_4) edge[densely dashed] (5_5);
\end{scope}
 \end{tikzpicture}
\caption{The positive integral frieze of type $D_n$ corresponding to $n=ab$}
\label{fig: Dabfrieze}
\end{figure}
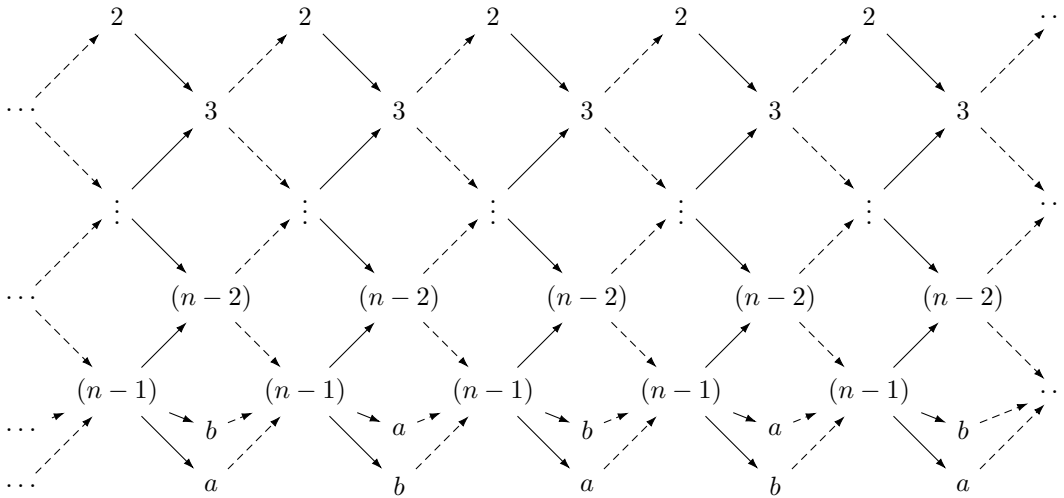


\section{Recollections on cluster algebras}\label{section: recollections}

We assume that the reader is familiar with a basic introduction to cluster algebras, such as \cite{FWZ21}, including the Laurent phenomenon and the finite type classification. 

\begin{conv}
Unless otherwise specified, we work over $\mathbb{Z}$ and \textbf{without frozen variables} or other coefficients. This allows us to avoid handling the variations in how coefficients are used in the literature (e.g.~whether the cluster algebra includes inverses to frozen variables). 

Our definitions can be extended to cluster algebras with coefficients via the definitions in Section \ref{section: infinitetype}; that is, via a positive basis containing the cluster monomials. Using this definition, the superunitary region of a cluster algebra with frozen variables and their inverses is homeomorphic to the superunitary region of the corresponding coefficient-free cluster algebra (see Remark \ref{rem: nofrozens}).
This is our justification for working without coefficients.
\end{conv}

\subsection{Valued quivers}

It will be more convenient to work with \emph{valued quivers} than skew-symmetrizable matrices, so we recall the correspondence.
Given an $n\times n$ skew-symmetrizable matrix $M$, the \textbf{valued quiver} is the quiver with vertex set $\{1,2,...,n\}$ and an arrow $i\rightarrow j$ whenever $M_{i,j}>0$. Each arrow $i\rightarrow j$ has an associated pair of \textbf{values} $(M_{i,j},-M_{j,i})$. 

A \textbf{skew-symmetrizable} valued quiver is one that can be constructed in this way, and it determines a cluster algebra (the cluster algebra of the skew-symmetrizable matrix).

\begin{ex}\label{ex: rank2}
Following \cite{SZ04}, every rank 2 cluster algebra can be described as follows. Let $(b,c)$ be either a pair of positive integers or $(0,0)$, and consider the cluster algebra $\mathcal{A}$ of the following skew-symmetrizable matrix and its associated valued quiver.
\[
\begin{bmatrix}
0 & b \\ -c & 0 
\end{bmatrix}
\hspace{2cm}
\begin{tikzpicture}[baseline=(a.base)]
    \node (a) at (0,0) {$1$};
    \node (b) at (2,0) {$2$};
    \draw[->] (a) to node[above] {$(b,c)$} (b);
\end{tikzpicture}
\]
The cluster variables may be indexed by the integers $\{x_i\}_{i\in \mathbb{Z}}$ and satisfy mutation relations
\[ x_{i-1}x_{i+1} = \left\{\begin{array}{cc}
x_i^b +1 & \text{if $i$ is odd} \\
x_i^c +1 & \text{if $i$ is even} \\
\end{array}\right\}\]
The clusters in $\mathcal{A}$ are pairs $\{x_i,x_{i+1}\}$ of cluster variables with adjacent indices.

If $bc\in \{0,1,2,3\}$, then this sequence of cluster variables is periodic with period $n\in\{4,5,6,8\}$; that is, $x_i=x_{i+n}$. These are the finite type cluster algebras corresponding to Dynkin diagrams $A_1\times A_1$, $A_2$, $B_2/C_2$, and $G_2$, respectively.
In all other cases, each of the $x_i$ are distinct and the cluster algebra is infinite type.
\end{ex}

\subsection{Positivity}

The theory of cluster algebras features a number of closely related positivity results. We summarize a few of these results here.
First, we have the following strengthening of the Laurent phenomenon, which was conjectured at the same time cluster algebras were introduced \cite{CA1} and proven in full generality by \cite{LS15} and \cite{GHKK18}.

\begin{thm}
[Positive Laurent Phenomenon]
\label{thm:positivity}
Let $y$ be a cluster variable and let $\cluster$ be a cluster in the same cluster algebra. Then $y$ may be written as a Laurent polynomial $\ell_{y,\cluster}$ in the elements of $\cluster$ with positive integral coefficients.
\end{thm}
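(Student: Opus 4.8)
The plan is to recognize this as the \emph{positivity conjecture} of \cite{CA1} and to follow the scattering-diagram proof of \cite{GHKK18}, which settles the general skew-symmetrizable case; the skew-symmetric case (in particular the finite type case relevant to this paper) also admits the combinatorial proof of \cite{LS15}, and when the exchange matrix is acyclic a representation-theoretic proof via the cluster category. I will outline the scattering-diagram route. First, fix a seed with exchange matrix $B$ and attach to it the \emph{cluster scattering diagram} $\mathfrak{D}$: a collection of codimension-one ``walls'' in a real vector space, each decorated by a wall-crossing automorphism of a completed group algebra, assembled so that $\mathfrak{D}$ is \emph{consistent} (composing wall-crossings around any loop is the identity). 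Consistency is produced by the Kontsevich--Soibelman order-by-order algorithm, which only ever introduces walls whose attached functions are power series with \emph{nonnegative} integer coefficients; this positivity of the wall functions is the engine of the whole argument.

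Next I would define the \emph{theta functions} $\vartheta_m$ by summing monomials over \emph{broken lines} in $\mathfrak{D}$ with endpoint in a chosen chamber. Since the contribution of each broken line is a product of coefficients extracted from the (positive) wall functions, each $\vartheta_m$ has coefficients in $\mathbb{Z}_{\geq 0}$, and this holds with respect to the cluster coordinates attached to \emph{any} chamber of $\mathfrak{D}$. The heart of the matter is then to identify cluster monomials with theta functions: one shows that the \emph{cluster complex} embeds into the scattering fan, so that the cluster of a seed obtained by any sequence of mutations corresponds to a full-dimensional chamber, and that the cluster monomial with $g$-vector $m$ equals $\vartheta_m$. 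This rests on \emph{sign-coherence} of $c$-vectors and $g$-vectors (re-proved inside \cite{GHKK18} in full generality, and due earlier to Derksen--Weyman--Zelevinsky in the skew-symmetric case), which forces the relevant chambers to be genuine simplicial cones and makes the broken-line sums finite, so that theta functions are honest Laurent \emph{polynomials}. Reading off $\vartheta_y$ in the chamber attached to the target cluster $\cluster$ then exhibits $y=\ell_{y,\cluster}$ with nonnegative integer coefficients; strict positivity on the support follows because the $g$-vector monomial of $y$ always survives, so $y$ is a nontrivial theta function.

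The main obstacle is precisely this identification of cluster monomials with theta functions, together with the bookkeeping needed to make it rigorous: establishing existence and consistency of $\mathfrak{D}$, proving sign-coherence, controlling the passage between the completed (formal power series) setting in which scattering diagrams naturally live and the actual Laurent polynomial ring of $\mathcal{A}$ --- typically via the \emph{principal coefficients} construction and a specialization of the coefficient variables --- and checking that mutation of seeds matches crossing the corresponding wall. For the finite type application needed here one could shortcut much of this: the scattering diagram then has finitely many walls and finitely many chambers, indexed by the generalized associahedron, so all convergence issues evaporate. Alternatively, in finite type each cluster variable is given by the Caldero--Chapoton formula as a sum over quiver Grassmannians of a rigid module, and positivity reduces to nonnegativity of the Euler characteristics of those Grassmannians, which holds because in Dynkin type they admit cell decompositions. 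In any case, for the purposes of this paper the statement is used as a black box, citing \cite{LS15} and \cite{GHKK18}.
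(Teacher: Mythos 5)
The paper does not prove this theorem: it is stated as a known result, conjectured in \cite{CA1} and proven by \cite{LS15} and \cite{GHKK18}, and is used as a black box throughout, so there is no proof in the paper to compare your outline against. Your sketch is an accurate high-level account of the \cite{GHKK18} route (consistent scattering diagram with nonnegative wall functions, theta functions via broken lines, identification of cluster monomials with theta functions on the cluster complex, sign-coherence controlling finiteness), and the alternatives you list for special cases (Lee--Schiffler in the skew-symmetric case, Caldero--Chapoton plus cell decompositions of quiver Grassmannians in Dynkin type) are also standard; you correctly close by noting that the paper simply cites the result, which is exactly what it does. One small imprecision worth flagging: the theorem's ``positive integral coefficients'' only asserts that every nonzero coefficient of $\ell_{y,\mathbf{x}}$ is a positive integer, so the closing remark about the $g$-vector monomial surviving (hence $\vartheta_y$ being nontrivial) is not actually needed for the statement --- nontriviality is immediate since $y\neq 0$.
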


Next, in finite type, the cluster monomials form a basis with positive structure constants; this was proven in~
\cite[Theorem~2.7]{SZ04} for rank $2$, 
\cite[Theorem~1.1]{Cer11}, \cite[Theorem~7.1]{CL12} for type $ADE$, and 
~\cite[Theorem 10.2]{FT17} for type $BCF$. 

\begin{thm} 
\label{thm:cluster monomials form a positive basis in finite type}
If $\sA$ is finite type, the cluster monomials form a $\mathbb{Z}$-basis for $\sA$ with positive integral structure constants; that is, the product of two cluster monomials is an $\mathbb{N}$-linear combination of cluster monomials.
\end{thm}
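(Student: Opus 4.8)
The plan is to reduce to connected Dynkin diagrams and then cite the case-by-case results already in the literature; no single argument handles all finite types uniformly. For the reduction, suppose $\Gamma = \Gamma_1 \sqcup \cdots \sqcup \Gamma_k$ with each $\Gamma_i$ connected. Choosing an initial seed for $\sA = \sA(\Gamma)$ whose valued quiver is the disjoint union of those of the $\Gamma_i$, and using that mutation at a vertex of one block leaves the other blocks unchanged, one sees that every cluster of $\sA$ is a disjoint union of clusters of the $\sA(\Gamma_i)$. Hence the natural identification $\sA \cong \sA(\Gamma_1) \otimes_{\mathbb{Z}} \cdots \otimes_{\mathbb{Z}} \sA(\Gamma_k)$ carries the cluster monomials of $\sA$ bijectively onto the elementary tensors $m_1 \otimes \cdots \otimes m_k$ with each $m_i$ a cluster monomial of $\sA(\Gamma_i)$. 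A tensor product of $\mathbb{Z}$-bases is a $\mathbb{Z}$-basis, and from
\[ (m_1 \otimes \cdots \otimes m_k)(m_1' \otimes \cdots \otimes m_k') = (m_1 m_1') \otimes \cdots \otimes (m_k m_k') \]
one sees that expanding each $m_i m_i'$ as an $\mathbb{N}$-linear combination of cluster monomials of $\sA(\Gamma_i)$ yields such an expansion of the product in $\sA$. So it suffices to prove the theorem for connected $\Gamma$.

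For connected $\Gamma$, that the cluster monomials are linearly independent and span $\sA$ — hence form a $\mathbb{Z}$-basis — is standard in finite type (linear independence holds in all acyclic types, and spanning follows from the finiteness of the cluster complex), so the content is that the structure constants lie in $\mathbb{N}$. The rank-one type $A_1$ is trivial; the rank-two types $A_2$, $B_2/C_2$, $G_2$ are \cite[Theorem~2.7]{SZ04}; the simply-laced types $A_n$, $D_n$, $E_6$, $E_7$, $E_8$ are \cite[Theorem~1.1]{Cer11} together with \cite[Theorem~7.1]{CL12}, where nonnegativity is deduced from the additive categorification of these cluster algebras by cluster categories of Dynkin quivers; and the remaining non-simply-laced types $B_n$, $C_n$, $F_4$ are \cite[Theorem~10.2]{FT17}, via a comparison with simply-laced cluster algebras. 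By the finite type classification these exhaust the connected Dynkin diagrams, which completes the proof.

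The main obstacle is the $ADE$ positivity of structure constants: it genuinely rests on the representation theory of cluster categories and admits no elementary shortcut, whereas the reduction to connected type above and the rank-two and $BCF$ cases are comparatively routine. The one point in the argument proper that deserves care is the claimed bijection between cluster monomials of a disconnected type and elementary tensors of cluster monomials of its components, but this is immediate once one observes that mutation respects the block decomposition of a disconnected valued quiver.
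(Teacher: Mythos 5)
Your proposal is correct and follows the same route as the paper, which simply delegates to the same three sources (\cite[Theorem~2.7]{SZ04} for rank 2, \cite[Theorem~1.1]{Cer11} and \cite[Theorem~7.1]{CL12} for type $ADE$, and \cite[Theorem~10.2]{FT17} for type $BCF$) without giving an argument of its own. The only addition on your side is the explicit tensor-product reduction from disconnected to connected Dynkin diagrams, which the paper leaves implicit; that reduction is correct and worth recording, since the paper's convention allows disconnected Dynkin diagrams while the cited results are stated for connected ones.
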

\begin{rem} 
Theorem \ref{thm:cluster monomials form a positive basis in finite type} and the Laurent phenomenon imply the positive Laurent phenomenon, since the coefficients of the Laurent polynomial $\ell_{y,\cluster}$ become structure constants when multiplying $y$ by a cluster monomial in $\cluster$ with sufficiently large exponents.
\end{rem}

\begin{rem}\label{rem: atomic}
A stronger result is proven in~\cite{SZ04,Cer11,CL12,FT17}; among all elements in a finite type cluster algebra which are \emph{positive} (that is, positive Laurent in every cluster), the cluster monomials are precisely the \emph{atomic} ones (that is, those which cannot be written as a proper sum of two positive elements).\footnote{Some references call these \emph{indecomposable positive} elements; however, other references use that term for positive elements which cannot be factored as a product of two positive elements.} For this reason, the basis of cluster monomials in finite type is sometimes called the \emph{atomic basis}.
We omit this from Theorem \ref{thm:cluster monomials form a positive basis in finite type} because the atomic elements do not form a basis in general (not even in rank 2 when $bc>4$ \cite{LLZ14PositivityAndTameness}). When this happens, one may instead 
choose a sufficiently `good' basis, as in Section \ref{section: goodbases}.
\end{rem}

We will need the following well-known fact about Laurent expansions, which has appeared in the skew-symmetric case in \cite[Lemma~3.7]{CKLP13}.

\begin{lemma}
\label{lemma: sumto1}
Let $y$ be a cluster variable and let $\cluster$ be a cluster in the same cluster algebra. The sum of the coefficients of the Laurent polynomial $\ell_{y,\cluster}$ is 1 if and only if $y\in \cluster$.
\end{lemma}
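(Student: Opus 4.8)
The plan is to evaluate the Laurent polynomial $\ell_{y,\cluster}$ at a cleverly chosen point. Observe that the sum of the coefficients of a Laurent polynomial $\ell(x_1,\dots,x_r)$ is exactly its value at the point $(1,1,\dots,1)$. Under the homeomorphism $f_\cluster:\sA(\RR_{>0})\xrightarrow{\sim}\RR_{>0}^r$, this all-ones point corresponds to a distinguished totally positive point $p_\cluster\in \sA(\RR_{>0})$, namely the unique ring homomorphism sending every element of $\cluster$ to $1$. So the claim becomes: $f_y(p_\cluster)=1$ if and only if $y\in\cluster$. The "if" direction is immediate, since $y\in\cluster$ forces $f_y(p_\cluster)=p_\cluster(y)=1$ by definition of $p_\cluster$.

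For the "only if" direction, I would argue by contradiction: suppose $y\notin\cluster$ but $f_y(p_\cluster)=1$. By the Positive Laurent Phenomenon (Theorem~\ref{thm:positivity}), $\ell_{y,\cluster}$ has positive integral coefficients, and since $y\notin\cluster$ the Laurent monomial $\ell_{y,\cluster}$ is not itself a single cluster variable in $\cluster$; in particular $\ell_{y,\cluster}$ is a genuine Laurent polynomial, not a monomial of the form $x_i$. The key point is that $y$ is irreducible in $\sA$ (a cluster variable cannot factor), so its Laurent expansion cannot be a monomial $\mathbf{x}^{\mathbf a}$ with $\mathbf a\neq 0$ either — if it were, then $y$ would be a unit times a product of cluster variables, contradicting that clusters are algebraically independent and $y$ is a nonzero nonunit not among them. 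Hence $\ell_{y,\cluster}$ has at least two distinct monomials with positive coefficients. Evaluating at $(1,\dots,1)$ then gives a sum of at least two positive integers, which is $\geq 2>1$, contradicting $f_y(p_\cluster)=1$.

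The main obstacle is making precise why $\ell_{y,\cluster}$ must have at least two monomials when $y\notin\cluster$ — i.e., ruling out that it is a single Laurent monomial. The cleanest route is: if $\ell_{y,\cluster}=\mathbf{x}^{\mathbf a}$ then $y$ is invertible in the Laurent polynomial ring $\ZZ[x_1^{\pm},\dots,x_r^{\pm}]$, but $y$ lies in the (non-Laurent) cluster algebra $\sA$, and one can check that the only elements of $\sA$ that become units in the Laurent ring for the cluster $\cluster$ are (up to sign, and here we have positivity) the monomials in the variables of $\cluster$ themselves; combined with the fact that a cluster variable generates a prime ideal / is irreducible, $y=x_i$ for some $i$, contradicting $y\notin\cluster$. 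Alternatively, one can invoke the sharper statement from Remark~\ref{rem: atomic}: a single monomial $\mathbf x^{\mathbf a}$ with $\mathbf a$ having both positive and negative entries is never a cluster variable since it is not even an element of $\sA$, and a monomial with all entries of one sign in the numerator is a product of cluster variables (hence decomposable unless it is a single $x_i$). Either way the argument is short once this irreducibility-type fact is pinned down, and I would cite the positivity results already recalled (Theorems~\ref{thm:positivity} and~\ref{thm:cluster monomials form a positive basis in finite type}, and Remark~\ref{rem: atomic}) rather than reprove them.

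Finally, I should double-check the degenerate low-rank cases (rank $0$ and rank $1$) and the convention that the empty product of no cluster variables is the constant $1$, to make sure the statement and proof read correctly there; these are routine. The upshot is a two-line proof modulo the standard fact that cluster variables are irreducible and not monomials in a non-containing cluster.
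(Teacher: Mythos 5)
Your reduction matches the paper's first step: since the coefficients are positive integers (Theorem \ref{thm:positivity}), a coefficient sum of $1$ forces $\ell_{y,\cluster}$ to be a single Laurent monomial $\mathbf{x}^{\mathbf{a}}$, and the ``if'' direction is immediate. The genuine gap is in the step you yourself flag as the main obstacle: showing that a cluster variable $y\notin\cluster$ cannot have $\ell_{y,\cluster}$ equal to a single Laurent monomial. Your proposed route --- ``the only elements of $\sA$ that become units in the Laurent ring are the monomials in the variables of $\cluster$'' --- is essentially the statement to be proved: every unit of $\ZZ[x_1^{\pm1},\dots,x_r^{\pm1}]$ is $\pm$ a Laurent monomial, so the entire content is precisely that an element of $\sA$ which is a Laurent monomial in $\cluster$ has no negative exponents (and is then a single $x_i$ when it is a cluster variable). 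You do not establish this; you defer it to ``cluster variables are irreducible / generate prime ideals'' and, implicitly, to the units of $\sA$ being trivial. These are nontrivial theorems (in the spirit of Geiss--Leclerc--Schr\"oer's work on factorial cluster algebras) that are neither recalled nor cited in this paper, and irreducibility alone would not even close the mixed-sign case: from $y\cdot\prod_{a_i<0}x_i^{-a_i}=\prod_{a_j>0}x_j^{a_j}$ you need $y$ to divide some $x_j$, which requires primality rather than irreducibility, since $\sA$ need not be a UFD. Two further scope problems: Theorem \ref{thm:cluster monomials form a positive basis in finite type} and Remark \ref{rem: atomic} are finite-type statements, while this lemma is stated and used for arbitrary cluster algebras; and ``atomic'' there means additively indecomposable, so your parenthetical ``hence decomposable unless it is a single $x_i$'' does not follow from that remark (a product of cluster variables from one cluster is a cluster monomial and is atomic in that sense).

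The paper closes exactly this gap with an elementary mutation computation rather than irreducibility: if some exponent $e_i<0$, it substitutes the exchange relation for $x_i$ and rewrites $y$ as a Laurent expression in the adjacent cluster $\cluster\smallsetminus\{x_i\}\cup\{x_i'\}$ whose denominator is either $2$ or a sum of distinct monomials raised to a positive power; this contradicts the positive Laurent phenomenon for $y$ in that cluster. Once all exponents are nonnegative, $y$ and $\mathbf{x}^{\mathbf{a}}$ are two equal cluster monomials, and linear independence of cluster monomials (cited from Gross--Hacking--Keel--Kontsevich) forces $y\in\cluster$ --- no appeal to irreducibility, primality, or unit groups is needed. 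To repair your outline you must either reproduce an argument of this kind for the negative-exponent case or explicitly invoke (with correct hypotheses, valid beyond finite type) the irreducibility/primality and unit-group results you are treating as standard.
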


\begin{proof}
Assume the sum of the coefficients of $\ell_{y,\cluster}$ is $1$. Since the coefficients are positive integers, this forces $\ell_{y,\cluster}$ to be a single Laurent monomial $x_1^{e_1}x_2^{e_2}\cdots x_r^{e_r}$. 
Assume for contradiction one of these exponents $e_i$ is negative, and let $x_i'$ be the cluster variable obtained by mutating $\cluster$ at the cluster variable $x_i$.
Then we have
\begin{equation}
\label{eq:cluster maps to unitary point is injective}
y=x_1^{e_1} x_2^{e_2} \cdots x_r^{e_r}
=\frac
{x_1^{e_1} x_2^{e_2} \cdots x_{i-1}^{e_{i-1}}(x_i')^{-e_i} \, x_{i+1}^{e_{i+1}}\cdots  x_r^{e_r}}
{\displaystyle \left(\prod_{j, M_{ij}>0} x_j^{M_{ij}} + \prod_{j, M_{ij}<0} x_i^{-M_{ij}}\right)^{-e_1}} 
\end{equation}
Either the denominator of the right-hand expression of \eqref{eq:cluster maps to unitary point is injective} is 2 (if $M_{ij}=0$ for all $j$) or it is the sum of distinct monomials.
Either way, Equation~ \eqref{eq:cluster maps to unitary point is injective}
contradicts the fact that $y$ is a positive integral Laurent polynomial in $\cluster \smallsetminus \{ x_i\} \cup \{ x_i'\}$.  
Therefore, $y$ is an ordinary monomial in $\mathbf{x}$. Since the cluster monomials are linearly independent \cite[Theorem~7.20]{GHKK18}, this is only possible if $y\in \cluster$. 
Conversely, if $y\in \cluster$, then $\ell_{y,\cluster}=y$ and the coefficients sum to $1$.
\end{proof}

\subsection{Deletion}
\label{section: deletion}

\def\del{\operatorname{Del}}
A \textbf{subcluster} is a set of cluster variables which are contained in some cluster.
Given a cluster algebra $\mathcal{A}$ and a subcluster $\subcluster$, we may construct a cluster algebra $\mathcal{A}^\dagger$, called the \textbf{deletion of $\subcluster$ in $\mathcal{A}$}. 
Choose a cluster $\mathbf{y}$ containing $\subcluster$, and let $(\mathbf{y},B)$ be the corresponding seed for $\mathcal{A}$. Let $B^\dagger$ denote the submatrix of $B$ in which the rows and columns corresponding to $\subcluster$ have been deleted. On the valued quiver, this process deletes the vertices indexed by $\subcluster$ and any incident arrows. 
The deletion is then defined as the cluster algebra
\[
\mathcal{A}^\dagger := \mathcal{A} (\mathbf{y}\smallsetminus \cluster,B^\dagger)
\]
\begin{prop}
The deletion of $\subcluster$ in $\mathcal{A}$ does not depend on the choice of cluster $\mathbf{y}$ containing $\cluster$; i.e.~for any seed $(\mathbf{y}',B')$ with $\cluster\subset \mathbf{y}'$, there is an isomorphism of cluster algebras
\[ 
\mathcal{A}(\mathbf{y}\smallsetminus \subcluster, B^\dagger) 
\xrightarrow {\sim}
\mathcal{A}(\mathbf{y}'\smallsetminus \subcluster, B'^\dagger) 
\]
\end{prop}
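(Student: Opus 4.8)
The plan is to reduce everything to the known fact that any two seeds of the same cluster algebra are related by a sequence of mutations, together with the fact that deletion commutes with mutation at vertices outside $\subcluster$. First I would observe that since $\subcluster$ is a subcluster, both $\mathbf{y}$ and $\mathbf{y}'$ are clusters of $\mathcal{A}$ containing $\subcluster$, hence are connected by a finite sequence of mutations in the exchange graph of $\mathcal{A}$. The key claim is that we may choose this sequence so that \emph{no mutation is performed at a vertex lying in $\subcluster$}: this follows because the subgraph of the exchange graph spanned by clusters containing $\subcluster$ is itself connected — indeed, by the very construction of the deletion together with the fact that $\mathcal{A}^\dagger$ is a genuine cluster algebra, clusters of $\mathcal{A}$ containing $\subcluster$ correspond bijectively to clusters of $\mathcal{A}^\dagger$, and $\mathcal{A}^\dagger$'s exchange graph is connected. (Alternatively one invokes the standard fact that links of faces in the exchange graph / cluster complex are connected.) So it suffices to treat the case where $(\mathbf{y}',B')=\mu_k(\mathbf{y},B)$ for a single vertex $k\notin \subcluster$.

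For that single step, the core computation is that deletion of $\subcluster$ commutes with mutation at $k$: one checks directly from the matrix mutation formula that $(\mu_k B)^\dagger = \mu_k(B^\dagger)$, where on the right $\mu_k$ denotes mutation at the vertex $k$ now regarded as a vertex of the smaller quiver on $\mathbf{y}\smallsetminus\subcluster$. This is immediate because the mutation rule for the $(i,j)$ entry only involves the entries $B_{i,k}, B_{k,j}, B_{i,j}$, and deleting rows/columns indexed by $\subcluster$ (which does not contain $k$) does not disturb any of these entries for $i,j\notin\subcluster$. Likewise, mutation of the cluster $\mathbf{y}$ at $x_k$ replaces $x_k$ by a new cluster variable $x_k'$ computed from the entries $B_{\cdot,k}$, and this same exchange relation is the one used in $\mathcal{A}^\dagger$; so $\mu_k(\mathbf{y})\smallsetminus\subcluster = \mu_k(\mathbf{y}\smallsetminus\subcluster)$ as seeds, where the right-hand mutation is taken in $\mathcal{A}^\dagger$. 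Hence the seeds $(\mathbf{y}\smallsetminus\subcluster, B^\dagger)$ and $(\mathbf{y}'\smallsetminus\subcluster, B'^\dagger)$ are related by a single mutation \emph{within} $\mathcal{A}^\dagger$, so they define the same (up to canonical isomorphism) cluster algebra.

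Iterating this over the mutation sequence connecting $\mathbf{y}$ to $\mathbf{y}'$ produces the desired isomorphism $\mathcal{A}(\mathbf{y}\smallsetminus\subcluster,B^\dagger)\xrightarrow{\sim}\mathcal{A}(\mathbf{y}'\smallsetminus\subcluster,B'^\dagger)$, and chasing an initial cluster variable through the chain shows the isomorphism is the identity on the cluster variables shared by the two descriptions (it is induced by a path in the exchange graph of $\mathcal{A}^\dagger$), so it is canonical and independent of the chosen path up to the automorphisms coming from loops in that exchange graph — which act trivially on the underlying algebra. I expect the main obstacle to be the claim that the clusters of $\mathcal{A}$ containing a fixed subcluster $\subcluster$ form a connected subgraph of the exchange graph; in finite type this is the connectedness of links in the generalized associahedron / cluster complex, but one wants a statement valid in arbitrary type. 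The cleanest route is to note that this connectedness is exactly equivalent to the well-definedness being proven, so one should instead argue it directly: a cluster containing $\subcluster$ is obtained from any other such cluster by mutations, and any mutation at a vertex of $\subcluster$ leaves the subgraph (it produces a cluster not containing $\subcluster$), so we can always shortcut such detours — this uses only that the exchange graph of $\mathcal{A}$ is connected and the local structure of mutation, not finite type.
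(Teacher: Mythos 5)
Your overall architecture matches the paper's: find a mutation path from $\mathbf{y}$ to $\mathbf{y}'$ that never mutates at a vertex of $\subcluster$, check that deletion of the rows/columns indexed by $\subcluster$ commutes with mutation at any vertex $k\notin\subcluster$, and push the path down to a path of mutations in $\mathcal{A}^\dagger$. The second ingredient is correct and routine, and it is exactly what the paper means by the sequence ``descending'' to $\mathcal{A}^\dagger$.

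The gap is in the first ingredient, which is where all the mathematical content lives. The claim that any two clusters containing $\subcluster$ are joined by mutations avoiding $\subcluster$ is not established by either of your arguments. The appeal to a bijection between clusters of $\mathcal{A}$ containing $\subcluster$ and clusters of $\mathcal{A}^\dagger$ is circular: that bijection is a consequence of the deletion construction being well defined (and in this paper it is only proved, via Proposition~\ref{prop:deletion map}, in finite type), whereas the proposition at hand is stated for arbitrary cluster algebras. Your fallback --- ``any mutation at a vertex of $\subcluster$ leaves the subgraph, so we can always shortcut such detours'' --- is not an argument: a path in the exchange graph may leave the set of clusters containing $\subcluster$ and re-enter it somewhere far away, and replacing that excursion by a path inside the subgraph presupposes exactly the connectedness you are trying to prove. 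Connectedness of this subgraph (equivalently, connectedness of the star of a simplex in the cluster complex) is a genuinely nontrivial theorem in general type; in finite type it follows from polytopality of the cluster complex, but the general statement is due to Cao--Li, and this is precisely the citation the paper's proof rests on: by \cite{CaoLi20} there is a mutation sequence from $\mathbf{y}$ to $\mathbf{y}'$ which never mutates at $\cluster$, and this sequence descends to $\mathcal{A}^\dagger$, giving the isomorphism. Without that input (or an equivalent proof of it), your argument does not close.
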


\begin{proof}
By \cite{CaoLi20}, there is a sequence of mutations in $\mathcal{A}$ from $\mathbf{y}$ to $\mathbf{y}'$ which do not mutate at $\cluster$. This descends to a sequence of mutations in $\mathcal{A}^\dagger$, which provides the isomorphism.
\end{proof}

The deletion of a subcluster may be related to the original cluster algebra as follows.

\begin{prop}\label{prop:deletion map}
Let $\mathcal{A}^\dagger$ be the deletion of a subcluster $\subcluster$ in a cluster algebra $\mathcal{A}$. If $\mathcal{A}^\dagger$ is finite type, then there is a ring homomorphism
\[ d:\mathcal{A}\rightarrow \mathcal{A}^\dagger\]
called the \textbf{deletion map} such that:
\begin{enumerate}
    \item Each cluster variable in $\subcluster$ is sent to $1$.
    \item 
    Each cluster variable in $\mathcal{A}$ which is compatible with $\subcluster$ but not in $\subcluster$ is sent to a cluster variable in $\mathcal{A}^\dagger$.
    This induces a bijection between the set of clusters in $\mathcal{A}^\dagger$ and the set of clusters in $\mathcal{A}$ containing $\cluster$.
    \item Each cluster variable in $\mathcal{A}$ which is not compatible with $\cluster$ is sent to a sum of at least two cluster monomials in $\mathcal{A}^\dagger$.
\end{enumerate}
\end{prop}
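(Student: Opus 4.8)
The plan is to construct the deletion map $d$ directly using the basis of cluster monomials in the finite type cluster algebra $\mathcal{A}^\dagger$, and then check properties (1)--(3) by tracking Laurent expansions. First I would fix a cluster $\mathbf{y}$ of $\mathcal{A}$ containing $\subcluster$, with seed $(\mathbf{y},B)$, so that $\mathbf{y}\smallsetminus\subcluster$ is an initial cluster of $\mathcal{A}^\dagger$ with exchange matrix $B^\dagger$. Every element of $\mathcal{A}$ is, by the Laurent phenomenon, a Laurent polynomial $\ell_{\cdot,\mathbf{y}}$ in $\mathbf{y}$; I would define $d$ on $\mathcal{A}$ to be the ring homomorphism that specializes each variable in $\subcluster$ to $1$ and fixes each variable in $\mathbf{y}\smallsetminus\subcluster$. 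To see this is well-defined as a map into $\mathcal{A}^\dagger$ (rather than just into the fraction field), note that $\mathcal{A}\subset\mathbb{Z}[\mathbf{y}^{\pm1}]$ and the specialization lands in $\mathbb{Z}[(\mathbf{y}\smallsetminus\subcluster)^{\pm1}]$; then I would argue that the image of each cluster variable of $\mathcal{A}$ actually lies in $\mathcal{A}^\dagger$, for instance by observing that a cluster variable $y$ of $\mathcal{A}$ compatible with $\subcluster$ appears in a cluster $\mathbf{z}\supset\subcluster$, so $\mathbf{z}\smallsetminus\subcluster$ is a cluster of $\mathcal{A}^\dagger$ (by the mutation-sequence argument from the previous proposition, mutations avoiding $\subcluster$ in $\mathcal{A}$ descend to $\mathcal{A}^\dagger$), and the specialized Laurent expansion of $y$ is exactly the Laurent expansion in $\mathbf{z}\smallsetminus\subcluster$ of the corresponding cluster variable of $\mathcal{A}^\dagger$; hence $d(y)$ is that cluster variable. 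Since $\mathcal{A}$ is generated as a ring by its cluster variables, this shows $d(\mathcal{A})\subseteq\mathcal{A}^\dagger$.

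Property (1) is then immediate from the definition, and property (2) follows from the identification in the previous paragraph: compatibility of $y$ with $\subcluster$ means $y$ lies in some cluster containing $\subcluster$, these clusters are in bijection with clusters of $\mathcal{A}^\dagger$ via $\mathbf{z}\mapsto\mathbf{z}\smallsetminus\subcluster$ (injectivity and surjectivity again coming from the fact that mutation sequences not touching $\subcluster$ connect all such clusters, on both sides), and $d$ intertwines this bijection, sending the cluster variables of $\mathbf{z}$ outside $\subcluster$ to the cluster variables of $\mathbf{z}\smallsetminus\subcluster$. The injectivity of $y\mapsto d(y)$ on cluster variables compatible with but not in $\subcluster$ can be checked by noting that such a $y$ is recovered from $d(y)$ together with the knowledge of which cluster containing $\subcluster$ it lies in; alternatively one cites the linear independence of cluster monomials.

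For property (3), let $y$ be a cluster variable of $\mathcal{A}$ not compatible with $\subcluster$. Here I would use Theorem~\ref{thm:cluster monomials form a positive basis in finite type}: $d(y)$ is an element of $\mathcal{A}^\dagger$ which, by the positive Laurent phenomenon (Theorem~\ref{thm:positivity}) applied in $\mathcal{A}$ and then specialized, is a positive-integral Laurent polynomial in every cluster of $\mathcal{A}^\dagger$, i.e.\ a positive element; hence $d(y)$ is a nonnegative integer combination of cluster monomials of $\mathcal{A}^\dagger$. It is nonzero since its Laurent expansion in $\mathbf{y}\smallsetminus\subcluster$ has positive coefficients (the specialization of the positive Laurent expansion of $y$, which cannot cancel). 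It remains to rule out the possibility that $d(y)$ is a single cluster monomial. If $d(y)$ were a cluster monomial $m$ in some cluster $\mathbf{z}\smallsetminus\subcluster$ of $\mathcal{A}^\dagger$, I would argue that $m$ must in fact be a single cluster variable or $1$: by Lemma~\ref{lemma: sumto1}, the sum of the coefficients of the Laurent expansion $\ell_{d(y),\,\mathbf{y}\smallsetminus\subcluster}$ equals the sum of the coefficients of $\ell_{y,\mathbf{y}}$ (specialization to $1$ does not change the coefficient sum), and that latter sum is $1$ exactly when $y\in\mathbf{y}$. More generally, I would show that if $d(y)$ is a cluster monomial then $y$ must be compatible with $\subcluster$, contradicting the hypothesis: a cluster monomial $m$ in $\mathbf{z}\smallsetminus\subcluster$ lifts to the cluster monomial $m$ in $\mathbf{z}$ (same exponents on the surviving variables), and comparing with the cluster-monomial expansion of $y$ in $\mathcal{A}$ forces $y$ to share a cluster with $\subcluster$. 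The main obstacle I anticipate is precisely this last step---making rigorous the claim that ``$d$ of a cluster monomial is a cluster monomial and $d$ of a non-compatible cluster variable is not,'' which amounts to controlling how the cluster-monomial basis of $\mathcal{A}$ behaves under the specialization $d$; the clean way is to combine the bijection-of-clusters statement in (2) with linear independence of cluster monomials (Theorem~\ref{thm:cluster monomials form a positive basis in finite type}) to conclude that $d$ maps the cluster-monomial basis of $\mathcal{A}$ onto cluster monomials of $\mathcal{A}^\dagger$ together with their sums, and then deduce (3) by elimination.
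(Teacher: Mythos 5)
There is a genuine gap, and it sits exactly where you flag it: the argument that $d(y)$ cannot be a \emph{single} cluster monomial when $y$ is incompatible with $\subcluster$. Your use of Lemma~\ref{lemma: sumto1} is applied in the wrong cluster: you compare coefficient sums of $\ell_{y,\mathbf{y}}$ and $\ell_{d(y),\,\mathbf{y}\smallsetminus\subcluster}$ in the \emph{fixed initial} cluster, but if $d(y)$ is a cluster monomial in some \emph{other} cluster $\cluster'=\mathbf{z}\smallsetminus\subcluster$ of $\mathcal{A}^\dagger$, its Laurent expansion in $\mathbf{y}\smallsetminus\subcluster$ will generally have coefficient sum $\geq 2$, so no contradiction is produced. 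Your fallback (``lift $m$ to $\mathbf{z}$ and compare with the cluster-monomial expansion of $y$'') does not work as stated either: from $d(y)=d(\tilde m)$ you cannot conclude $y=\tilde m$, since $d$ has a large kernel, and ``deduce (3) by elimination'' from the statement that $d$ sends cluster monomials to sums of cluster monomials is not an argument --- ruling out the single-monomial case is precisely the content of (3). The missing move, which is the paper's key step, is to change cluster: expand $y$ in the lifted cluster $\subcluster\sqcup\cluster'$ of $\mathcal{A}$. Then $d(\ell_{y,\subcluster\sqcup\cluster'})=\ell_{d(y),\cluster'}$ is a single Laurent monomial with coefficient $1$; since specializing a positive Laurent polynomial at $\subcluster\mapsto 1$ can only merge terms (never cancel them, and merging raises coefficients), $\ell_{y,\subcluster\sqcup\cluster'}$ must itself be a single Laurent monomial with coefficient sum $1$, and Lemma~\ref{lemma: sumto1} then forces $y\in\subcluster\sqcup\cluster'$, contradicting incompatibility. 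With this substitution your outline becomes essentially the paper's proof of (3).

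Two secondary points. First, your claim ``since $\mathcal{A}$ is generated by its cluster variables, $d(\mathcal{A})\subseteq\mathcal{A}^\dagger$'' is asserted before you have said anything about cluster variables \emph{not} compatible with $\subcluster$; in your setup their membership in $\mathcal{A}^\dagger$ only comes later, from positivity in every cluster of $\mathcal{A}^\dagger$ plus the finite-type statement that positive elements are nonnegative combinations of cluster monomials (Theorem~\ref{thm:cluster monomials form a positive basis in finite type} and Remark~\ref{rem: atomic}), so the order of the argument needs repair --- or one does what the paper does and gets the homomorphism $d$ at the outset from the isomorphism $\mathcal{A}/\langle x-1 \mid x\in\subcluster\rangle\cong\mathcal{A}^\dagger$, which uses that a finite type $\mathcal{A}^\dagger$ is acyclic and equals its upper cluster algebra. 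Second, when you assert that $d(y)$ is positive Laurent in \emph{every} cluster of $\mathcal{A}^\dagger$, you are implicitly using the bijection of clusters from part (2) together with the fact that $d$, defined via the initial cluster, sends each cluster $\mathbf{z}\supset\subcluster$ of $\mathcal{A}$ variable-by-variable to the cluster $\mathbf{z}\smallsetminus\subcluster$ of $\mathcal{A}^\dagger$; that is fine, but it should be stated before the positivity argument rather than after.
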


\begin{proof}
A finite type cluster algebra is acyclic, so by \cite[Corollary 1.19]{CA3}, $\mathcal{A}^\dagger$ is equal to its own upper cluster algebra, and so by \cite[Prop. 3.8]{LACA} 
there is an isomorphism
\[  \mathcal{A} / \langle
x -1 \mid x
\in \subcluster\rangle 
\xrightarrow{\sim} 
\sA(\mathbf{y} \smallsetminus \subcluster,B^\dagger)
=:
\mathcal{A}^\dagger
\]
which sends cluster variables of $\mathcal{A}$ in the complement $\mathbf{y}\smallsetminus \subcluster$ to the corresponding cluster variables in $\mathcal{A}^\dagger$. Since every cluster variable in $\mathcal{A}^\dagger$ is 
of this form, 
part (2) follows.

To show (3), consider an arbitrary cluster variable $y$ in $\mathcal{A}$. 
For any cluster $\yy$ 
in $\mathcal{A}$ containing $\cluster$, the Laurent expansion of $y$ in $\yy$ 
is positive Laurent. 
Since the deletion map $d$ simply specializes $\cluster$ to $1$, the image $d(y)$ is positive Laurent in the cluster 
$\yy\smallsetminus \cluster$. 
Therefore, $d(y)$ is positive Laurent in every cluster in $\mathcal{A}^\dagger$. Since the cluster monomials constitute the atomic basis in finite type (see Remark \ref{rem: atomic}), $d(y)$ is a sum of cluster monomials in $\mathcal{A}^\dagger$.

Suppose $d(y)$ is a single cluster monomial in some cluster $\cluster'$ in $\mathcal{A}^\dagger$.
By part (2), this cluster corresponds to a cluster $\cluster\sqcup \cluster'$ in $\mathcal{A}$.
Since $d(\ell_{y,\cluster\sqcup \cluster'}) = \ell_{d(y),\cluster'}$ is a monomial in $\cluster'$, the Laurent polynomial $\ell_{y,\cluster\sqcup \cluster'}$ must be a Laurent monomial in $\cluster\sqcup \cluster'$. By Lemma~\ref{lemma: sumto1}, this forces $y\in \cluster \sqcup\cluster'$ and so $y$ is compatible with $\cluster$.
\end{proof}

\begin{rem}
Parts (1) and (2) of the proposition holds whenever $\mathcal{A}^\dagger=\mathcal{A}[\subcluster^{-1}]$; that is, $\mathcal{A}^\dagger$ is a \emph{cluster localization} in the terminology of \cite{LACA}. This is known to hold in far greater generality than finite type; for example, it holds whenever $\mathcal{A}^\dagger$ is \emph{locally acyclic}. 
\end{rem}

\section{The superunitary region (and other sets of real points)}

In this section, we consider several topological spaces associated to a cluster algebra:
\[ \mathcal{A}(\mathbb{R}) 
\supset \mathcal{A}(\mathbb{R}_{>0}) 
\supset \mathcal{A}(\mathbb{R}_{\geq1}) 
\supset \mathcal{A}(\mathbb{Z}_{\geq1}) 
\]
These are (read left to right) the \emph{space of real points}, the \emph{totally positive region}, the \emph{superunitary region}, and the \emph{set of frieze points}.
All have been studied in earlier works except the superunitary region, which is our primary object of interest.

\subsection{The space of real points}

To a cluster algebra $\mathcal{A}$, we associate the set\footnote{The notation here is a little unconventional. Normally, one would first introduce notation like $X$ for the scheme associated to the ring, and then let $X(\mathbb{R})$ denote the variety of $\mathbb{R}$-valued points. Since we are avoiding varieties and schemes entirely, we denote the set of $\mathbb{R}$-valued points using the algebra, not the scheme.}
\[ \mathcal{A}(\mathbb{R}) := \{ \text{ring homomorphisms $p:\mathcal{A}\rightarrow \mathbb{R}$}\} \]
Each element $a\in \mathcal{A}$ defines an $\mathbb{R}$-valued function $f_a$ on $\mathcal{A}(\mathbb{R})$, via the rule that $f_a(p):=p(a)$. 
We make $\mathcal{A}(\mathbb{R})$ into a topological space by endowing it with the coarsest topology for which $f_a:\mathcal{A}(\mathbb{R})\rightarrow \mathbb{R}$ is continuous for all $a\in \mathcal{A}$. 

\begin{rem}
As defined, $\mathcal{A}(\mathbb{R})$ is the variety of \emph{real-valued points} of $\mathrm{Spec}(\mathcal{A})$ endowed with the \emph{analytic} or \emph{Euclidean} topology (as opposed to the Zariski topology).
\end{rem}


More generally, to each subset $S\subset \mathbb{R}$, we may associate a subspace of $\mathcal{A}(\mathbb{R})$ as follows.

\begin{defn}\label{defn: AS}
Given a cluster algebra $\mathcal{A}$ and a subset $S\subset \mathbb{R}$, let \[ \mathcal{A}(S) := \{ p:\mathcal{A}\rightarrow \mathbb{R} \mid p(x) \in S  \text{ for all cluster variables $x$}\} \subset \mathcal{A}(\mathbb{R})\]
This set is endowed with the subspace topology from the inclusion $\mathcal{A}(S)\subset \mathcal{A}(\mathbb{R})$.
\end{defn}

\noindent This topological space can be equivalently defined as the subspace of $\mathcal{A}(\mathbb{R})$ on which $f_x$ takes values in $S$, for all cluster variables $x$.

In the following sections, we consider three special cases of this construction: the \emph{totally positive region} $\mathcal{A}(\mathbb{R}_{>0})$, the \emph{superunitary region} $\mathcal{A}(\mathbb{R}_{\geq1})$, and the \emph{frieze points} $\mathcal{A}(\mathbb{Z}_{\geq1})$.

\begin{warn}
The superior version of Definition \ref{defn: AS} is given by Definition \ref{defn: ASgeneral}, which depends on a choice of positive basis for the cluster algebra but is better behaved in general. 
Since the two definitions coincide for coefficient-free finite type cluster algebras, we use Definition \ref{defn: AS} for its simplicity and its independence from a choice of basis.
\end{warn}

\subsection{The totally positive region}

The \textbf{totally positive region} of a cluster algebra $\mathcal{A}$, denoted $\mathcal{A}(\mathbb{R}_{>0})$, has the following equivalent characterizations.
\begin{itemize}
    \item The set of ring homomorphisms $p:\mathcal{A}\rightarrow \mathbb{R}$ which send each cluster variable into $\mathbb{R}_{>0}$.
    \item The subspace of $\mathcal{A}(\mathbb{R})$ on which $f_x$ is a positive function for each cluster variable $x$.
\end{itemize}

The topology of the totally positive region is particularly simple. As the following proposition shows, $\mathcal{A}(\mathbb{R}_{>0})$ may be identified with a positive orthant in affine space.

\begin{prop}
\label{prop:homemomorphism from totally positive region to positive orthant}
Let $\mathbf{x}=(x_1,x_2,...,x_r)$ be a cluster in $\mathcal{A}$. Then the map
\[f_\mathbf{x}:\mathcal{A}(\mathbb{R}_{>0}) \rightarrow \mathbb{R}_{>0}^r\]
defined by
$ f_\mathbf{x} (p) := (f_{x_1}(p),f_{x_2}(p),...,f_{x_r}(p)) = (p(x_1),p(x_2),...,p(x_r)) $
is a homeomorphism.
\end{prop}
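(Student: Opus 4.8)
The plan is to exhibit a continuous inverse to $f_{\mathbf{x}}$. The map $f_{\mathbf{x}}$ is continuous because each component $f_{x_i}$ is continuous by the definition of the topology on $\mathcal{A}(\mathbb{R})$. For surjectivity and injectivity, the key tool is the Laurent phenomenon: every element $a\in\mathcal{A}$ has a unique expression $\ell_{a,\mathbf{x}}$ as a Laurent polynomial in $x_1,\dots,x_r$. Given a point $(c_1,\dots,c_r)\in\mathbb{R}_{>0}^r$, I would define a candidate ring homomorphism $p:\mathcal{A}\to\mathbb{R}$ by $p(a):=\ell_{a,\mathbf{x}}(c_1,\dots,c_r)$; this is well-defined since each $c_i>0$ (so no division by zero occurs) and is a ring homomorphism because Laurent expansion $a\mapsto\ell_{a,\mathbf{x}}$ is itself a ring homomorphism from $\mathcal{A}$ to the Laurent polynomial ring (equivalently, $\mathcal{A}$ embeds in $\mathbb{Z}[x_1^{\pm1},\dots,x_r^{\pm1}]$ and we are composing with evaluation at $(c_1,\dots,c_r)$).

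The substance of the proof is checking that this $p$ actually lands in $\mathcal{A}(\mathbb{R}_{>0})$, i.e. that $p(y)>0$ for every cluster variable $y$, not just for those in $\mathbf{x}$. This is exactly where the Positive Laurent Phenomenon (Theorem~\ref{thm:positivity}) is used: $\ell_{y,\mathbf{x}}$ has positive coefficients, so evaluating at a point with all coordinates positive yields a positive number. Thus $p\in\mathcal{A}(\mathbb{R}_{>0})$, and clearly $f_{\mathbf{x}}(p)=(c_1,\dots,c_r)$, giving surjectivity. Injectivity follows because any $p\in\mathcal{A}(\mathbb{R}_{>0})$ is determined on all of $\mathcal{A}$ by its values on the cluster $\mathbf{x}$: if $p(x_i)=q(x_i)$ for all $i$, then $p(a)=\ell_{a,\mathbf{x}}(p(x_1),\dots,p(x_r))=\ell_{a,\mathbf{x}}(q(x_1),\dots,q(x_r))=q(a)$ for every $a\in\mathcal{A}$ (again using that $p$ commutes with the Laurent expansion, which holds because $p$ is a ring homomorphism and the $p(x_i)$ are invertible in $\mathbb{R}$).

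Finally, I would check that the inverse map $g:(c_1,\dots,c_r)\mapsto p$ is continuous. By the definition of the topology on $\mathcal{A}(\mathbb{R})$ (and hence the subspace topology on $\mathcal{A}(\mathbb{R}_{>0})$), it suffices to show that $f_a\circ g:\mathbb{R}_{>0}^r\to\mathbb{R}$ is continuous for each $a\in\mathcal{A}$. But $f_a\circ g$ is precisely the function $(c_1,\dots,c_r)\mapsto\ell_{a,\mathbf{x}}(c_1,\dots,c_r)$, which is a Laurent polynomial function and therefore continuous on the positive orthant (no pole is hit). Hence $g$ is continuous, and $f_{\mathbf{x}}$ is a homeomorphism.

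\textbf{Main obstacle.} The only genuinely nontrivial input is the Positive Laurent Phenomenon, needed to guarantee that the homomorphism $p$ built from an arbitrary positive orthant point sends \emph{all} cluster variables (not merely those in $\mathbf{x}$) to positive reals; everything else is a routine verification that Laurent expansion intertwines ring homomorphisms with evaluation. I expect no difficulty beyond being careful that $\mathcal{A}$ genuinely injects into the Laurent ring $\mathbb{Z}[x_1^{\pm1},\dots,x_r^{\pm1}]$ so that $\ell_{-,\mathbf{x}}$ is an honest ring homomorphism with which every $p\in\mathcal{A}(\mathbb{R})$ having $p(x_i)\in\mathbb{R}^{\times}$ must be compatible.
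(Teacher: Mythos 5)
Your proof is correct and matches the paper's argument in all essential respects: continuity of $f_{\mathbf{x}}$ from the definition of the topology, construction of the inverse by evaluating Laurent expansions, the positive Laurent phenomenon to ensure the constructed homomorphism sends every cluster variable into $\mathbb{R}_{>0}$, and continuity of the inverse via $f_a\circ f_{\mathbf{x}}^{-1}=\ell_{a,\mathbf{x}}$. Your slightly more explicit treatment of injectivity is a minor presentational difference, not a different route.
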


\begin{proof}
The map $f_\cluster$ is continuous because each $f_{x_i}$ 
is continuous by the definition of the topology on $\sA(\RR)$.
Each $v:=(v_1, v_2, ..., v_r) \in \RRpos^r$, 
determines a ring homomorphism
\[ p: \mathbb{Z}[x_1^{\pm1},x_2^{\pm1},...,x_r^{\pm1}] \rightarrow \mathbb{R} \]
by the rule that $p(x_1^{e_1}x_2^{e_2}\cdots x_r^{e_r}) = v_1^{e_1}v_2^{e_2}\cdots v_r^{e_r}$. 
By the positive Laurent phenomenon (Theorem \ref{thm:positivity}), this map restricts to a ring homomorphism $p:\mathcal{A}\rightarrow \mathbb{R}$ which sends each cluster variable into $\mathbb{R}_{>0}$. The map $v\mapsto p$ is the inverse map to $f_\cluster$.

To show that $f_\cluster^{-1}$ is continuous, note that, for each $i$, the composition $f_{\ClusterVariable{i}} \circ f_\cluster^{-1}: \RRpos^r \to \RR$ is the projection map $(v_1, ..., v_r) \mapsto v_i$, and so $f_{\ClusterVariable{i}} \circ f_\cluster^{-1}$ is continuous. Since any $a\in \mathcal{A}$ may be written as a Laurent polynomial in the cluster $\cluster$, the composition
$f_a \circ f_\cluster^{-1}$ is continuous for all $a \in \sA$, and so by the definition of the topology on $\sA(\mathbb{R})$, the map $f_\cluster^{-1}$ is continuous.
\end{proof}

\begin{rem}
Proposition~\ref{prop:homemomorphism from totally positive region to positive orthant} is well-known, and may be proven without Theorem~\ref{thm:positivity}.
\end{rem}

Under the identification $f_\cluster:\mathcal{A}(\mathbb{R}_{>0})\xrightarrow{\sim}\mathbb{R}_{>0}^r$, a function of the form $f_a:\mathcal{A}(\mathbb{R}_{>0})\rightarrow\mathbb{R}$ becomes the function 
\[ \ell_{a,\mathbf{x}}:\mathbb{R}_{>0}^r\rightarrow \mathbb{R} \]
given by writing $a$ as a Laurent polynomial in $\mathbf{x}$. Note that, if $a$ is a cluster variable, $\ell_{a,\cluster}$ is a positive Laurent polynomial, and so $\ell_{a,\cluster}$ sends $\mathbb{R}_{>0}^r$ into $\mathbb{R}_{>0}$.

Proposition~\ref{prop:homemomorphism from totally positive region to positive orthant} gives many different identifications of the form $\mathcal{A}(\mathbb{R}_{>0})\simeq \mathbb{R}_{>0}^r$; one for each cluster.  
These identifications may be related using the following maps. For any clusters $\mathbf{x}, \mathbf{x}'$ in $\mathcal{A}$, define a map
$ \mu_{\mathbf{x}',\mathbf{x}}:\mathbb{R}_{>0}^r\rightarrow \mathbb{R}_{>0}^r$
by 
\[ \mu_{\mathbf{x}',\mathbf{x}}(x_1,x_2,...,x_r) := (\ell_{x_1',\mathbf{x}}(x_1,x_2,...,x_r),\ell_{x_2',\mathbf{x}}(x_1,x_2,...,x_r),...,\ell_{x_r',\mathbf{x}}(x_1,x_2,...,x_r))\]
The following lemma summarizes the relevant properties of these maps.

\begin{lemma}
\begin{enumerate}
    \item For any clusters $\mathbf{x}, \mathbf{x}'$ in $\mathcal{A}$, $\mu_{\mathbf{x}',\mathbf{x}}\circ f_{\mathbf{x}} = f_{\mathbf{x}'}$.
    \item For any $a\in \mathcal{A}$ and any clusters $\mathbf{x}, \mathbf{x}'$ in $\mathcal{A}$, $\ell_{a,\mathbf{x}'}\circ \mu_{\mathbf{x}',\mathbf{x}} = \ell_{a,\mathbf{x}}$.
    \item For any clusters $\mathbf{x}, \mathbf{x}',\mathbf{x}''$ in $\mathcal{A}$, $\mu_{\mathbf{x}'',\mathbf{x}'}\circ \mu_{\mathbf{x}',\mathbf{x}} = \mu_{\mathbf{x}'',\mathbf{x}}$.
    \item For any clusters $\mathbf{x}, \mathbf{x}'$ in $\mathcal{A}$, $\mu_{\mathbf{x}',\mathbf{x}}:\mathbb{R}_{>0}^r\rightarrow \mathbb{R}_{>0}^r$ is a diffeomorphism with inverse $\mu_{\mathbf{x},\mathbf{x}'}$.
\end{enumerate}

\end{lemma}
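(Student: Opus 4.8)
The four statements are all routine consequences of the Laurent phenomenon and the definition of $\mu_{\mathbf{x}',\mathbf{x}}$, but they must be established in the right order since later parts rely on earlier ones. The plan is to prove (1) first, then (2), then deduce (3) and (4) formally from (1) and (2) together with the fact (Proposition~\ref{prop:homemomorphism from totally positive region to positive orthant}) that each $f_{\mathbf{x}}$ is a homeomorphism.

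For (1): unwind the definitions. For $p\in\mathcal{A}(\mathbb{R}_{>0})$, the $i$-th coordinate of $\mu_{\mathbf{x}',\mathbf{x}}(f_{\mathbf{x}}(p))$ is $\ell_{x_i',\mathbf{x}}(p(x_1),\dots,p(x_r))$. Since $x_i'=\ell_{x_i',\mathbf{x}}(x_1,\dots,x_r)$ as an identity in $\mathcal{A}$ and $p$ is a ring homomorphism, this equals $p(x_i')=f_{x_i'}(p)$. Hence $\mu_{\mathbf{x}',\mathbf{x}}\circ f_{\mathbf{x}}=f_{\mathbf{x}'}$ as maps $\mathcal{A}(\mathbb{R}_{>0})\to\mathbb{R}_{>0}^r$. (Here one should note that $f_{\mathbf{x}'}$ does land in $\mathbb{R}_{>0}^r$ by the positive Laurent phenomenon, so the target is correct.)

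For (2): the same unwinding. The identity $a=\ell_{a,\mathbf{x}}(x_1,\dots,x_r)$ in $\mathcal{A}$ can be rewritten, using $x_i=\ell_{x_i,\mathbf{x}'}$ wait--more directly: substitute $x_j=\ell_{x_j,\mathbf{x}}$ is trivial; instead use that expanding $a$ first in $\mathbf{x}'$ and then substituting each $x_j'$ by its Laurent expansion in $\mathbf{x}$ recovers the Laurent expansion of $a$ in $\mathbf{x}$, by uniqueness of Laurent expansions in the ambient Laurent ring $\mathbb{Z}[x_1^{\pm1},\dots,x_r^{\pm1}]$. Evaluating this polynomial identity at any point of $\mathbb{R}_{>0}^r$ gives $\ell_{a,\mathbf{x}'}(\mu_{\mathbf{x}',\mathbf{x}}(v))=\ell_{a,\mathbf{x}}(v)$. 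Then (3) follows by applying (2) with $a=x_i''$ for each $i$: $\ell_{x_i'',\mathbf{x}'}\circ\mu_{\mathbf{x}',\mathbf{x}}=\ell_{x_i'',\mathbf{x}}$, which is exactly the $i$-th coordinate of $\mu_{\mathbf{x}'',\mathbf{x}'}\circ\mu_{\mathbf{x}',\mathbf{x}}=\mu_{\mathbf{x}'',\mathbf{x}}$. For (4): taking $\mathbf{x}''=\mathbf{x}$ in (3) gives $\mu_{\mathbf{x},\mathbf{x}'}\circ\mu_{\mathbf{x}',\mathbf{x}}=\mu_{\mathbf{x},\mathbf{x}}$, and $\mu_{\mathbf{x},\mathbf{x}}=\mathrm{id}$ since $\ell_{x_i,\mathbf{x}}=x_i$; symmetrically $\mu_{\mathbf{x}',\mathbf{x}}\circ\mu_{\mathbf{x},\mathbf{x}'}=\mathrm{id}$. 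So $\mu_{\mathbf{x}',\mathbf{x}}$ is a bijection with inverse $\mu_{\mathbf{x},\mathbf{x}'}$; that it is a diffeomorphism follows because both it and its inverse are given coordinatewise by Laurent polynomials, which are smooth on the open set $\mathbb{R}_{>0}^r$.

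The only mild subtlety — and the thing I'd be most careful about — is the substitution/uniqueness argument in (2): one must be sure that "expand in $\mathbf{x}'$, then re-expand each $x_j'$ in $\mathbf{x}$" genuinely yields $\ell_{a,\mathbf{x}}$, i.e. that Laurent expansions compose correctly. This is immediate from the uniqueness of the Laurent expansion in the localization $\mathbb{Z}[x_1^{\pm1},\dots,x_r^{\pm1}]$ (into which $\mathcal{A}$ embeds), but it is the one place where something beyond pure definition-chasing is used. Alternatively, one can bypass (2)'s polynomial identity entirely and prove (2), (3), (4) directly from (1): since $f_{\mathbf{x}}$ is a surjective homeomorphism onto $\mathbb{R}_{>0}^r$ by Proposition~\ref{prop:homemomorphism from totally positive region to positive orthant}, every $v\in\mathbb{R}_{>0}^r$ is $f_{\mathbf{x}}(p)$ for a unique $p$, and then $\ell_{a,\mathbf{x}}(v)=f_a(p)$, $\mu_{\mathbf{x}',\mathbf{x}}(v)=f_{\mathbf{x}'}(p)$ by (1), so (2) reads $f_a(p)=f_a(p)$, (3) reads $f_{\mathbf{x}''}(p)=f_{\mathbf{x}''}(p)$, and (4) is the statement that $f_{\mathbf{x}'}\circ f_{\mathbf{x}}^{-1}$ and $f_{\mathbf{x}}\circ f_{\mathbf{x}'}^{-1}$ are mutually inverse homeomorphisms, with smoothness supplied by the Laurent-polynomial formulas. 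I would present the proof this second way, as it is cleanest: prove (1) by the ring-homomorphism computation, then derive (2)–(4) by transporting along the homeomorphisms $f_{\mathbf{x}}$.
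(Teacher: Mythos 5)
Your proof is correct. Note that the paper does not actually supply a proof of this lemma -- it is stated as a routine summary of properties, with only the commutative diagram in the following remark as implicit justification -- so there is no argument of the authors' to compare against; your write-up simply fills in the omitted details. Both of your routes work, and the second one (prove (1) by the ring-homomorphism computation, then transport (2)--(4) along the homeomorphism $f_{\mathbf{x}}$ of Proposition~\ref{prop:homemomorphism from totally positive region to positive orthant}, using the identity $f_a=\ell_{a,\mathbf{x}}\circ f_{\mathbf{x}}$ already recorded in the paper) is exactly the viewpoint encoded in the paper's diagram, and it cleanly sidesteps the one subtlety you flag in the algebraic route, namely that composing Laurent expansions is justified by uniqueness of the expansion in $\mathbb{Z}[x_1^{\pm1},\dots,x_r^{\pm1}]$. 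The only point worth making explicit in either version is that $\mu_{\mathbf{x}',\mathbf{x}}$ really does land in $\mathbb{R}_{>0}^r$ (by positivity of the Laurent expansions, or by (1) together with surjectivity of $f_{\mathbf{x}}$), which you do address.
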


\begin{rem}
The relations between $f_a$, $f_\mathbf{x}$, $\ell_{a,\mathbf{x}}$ and $\mu_{\mathbf{x}',\mathbf{x}}$ may be summarized by the commutativity of the following diagram.
\[
\begin{tikzpicture}[scale=1.5]
    \node (TPR) at (0,0) {$\mathcal{A}(\mathbb{R}_{>0})$};
    \node (R) at (0,-1) {$\mathbb{R}$};
    \node (Rr1) at (-1.25,1) {$\mathbb{R}_{>0}^r$};
    \node (Rr2) at (1.25,1) {$\mathbb{R}_{>0}^r$};
    \draw[->] (TPR) to node[ left] {$f_a$} (R);
    \draw[->] (TPR) to node[below left] {$f_{\mathbf{x}}$} (Rr1);
    \draw[->] (TPR) to node[below right] {$f_{\mathbf{x}'}$} (Rr2);
    \draw[->,out=270,in=150] (Rr1) to node[below left] {$\ell_{a,\mathbf{x}}$} (R);
    \draw[->,out=270,in=30] (Rr2) to node[below right] {$\ell_{a,\mathbf{x}'}$} (R);
    \draw[->] (Rr1) to node[above] {$\mu_{\mathbf{x}',\mathbf{x}}$} (Rr2);
\end{tikzpicture}  
\]
Note that, if $a$ is a cluster variable, the $\mathbb{R}$ on the bottom may be replaced by $\mathbb{R}_{>0}$.
\end{rem}

As a consequence, any two clusters give diffeomorphic identifications of $\mathcal{A}(\mathbb{R}_{>0})$ with $\mathbb{R}_{>0}^r$. Therefore, any $f_\mathbf{x}$ may be used as a global chart to give $\mathcal{A}(\mathbb{R}_{>0})$ a smooth manifold structure, and the resulting smooth structure does not depend on the choice of $\mathbf{x}$.

\begin{prop}
The totally positive region $\mathcal{A}(\mathbb{R}_{>0})$ has a unique smooth manifold structure such that, for each cluster $\mathbf{x}$, the map $f_{\mathbf{x}}:\mathcal{A}(\mathbb{R}_{>0})\xrightarrow {\sim} \mathbb{R}_{>0}^r$ is a diffeomorphism.
\end{prop}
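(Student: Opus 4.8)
The plan is to construct the smooth structure via one chart and then use the lemma to show it is independent of the choice. First, I would fix a cluster $\mathbf{x}$ and declare the single global chart $f_\mathbf{x}:\mathcal{A}(\mathbb{R}_{>0})\to \mathbb{R}_{>0}^r$ to be a diffeomorphism; since this is a homeomorphism onto the open subset $\mathbb{R}_{>0}^r\subset\mathbb{R}^r$ by Proposition~\ref{prop:homemomorphism from totally positive region to positive orthant}, a single such chart manifestly defines a smooth (indeed real-analytic) manifold structure on $\mathcal{A}(\mathbb{R}_{>0})$: the atlas $\{f_\mathbf{x}\}$ is trivially compatible with itself. So existence of \emph{a} smooth structure for which $f_\mathbf{x}$ is a diffeomorphism is immediate.

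The content is the claim that this structure does not depend on the chosen cluster, i.e.\ that it is simultaneously the structure making every $f_{\mathbf{x}'}$ a diffeomorphism. Here I would invoke part (1) of the Lemma, $f_{\mathbf{x}'} = \mu_{\mathbf{x}',\mathbf{x}}\circ f_{\mathbf{x}}$, together with part (4), which says $\mu_{\mathbf{x}',\mathbf{x}}:\mathbb{R}_{>0}^r\to\mathbb{R}_{>0}^r$ is a diffeomorphism with inverse $\mu_{\mathbf{x},\mathbf{x}'}$. Since $f_{\mathbf{x}'}$ is the composite of the diffeomorphism $f_{\mathbf{x}}$ (with respect to the chosen structure) followed by the diffeomorphism $\mu_{\mathbf{x}',\mathbf{x}}$ of open subsets of $\mathbb{R}^r$, it is itself a diffeomorphism onto $\mathbb{R}_{>0}^r$. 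Conversely, any smooth structure making \emph{some} $f_{\mathbf{x}'}$ a diffeomorphism agrees with the one built from $f_{\mathbf{x}}$: the transition map between the two charts is exactly $\mu_{\mathbf{x}',\mathbf{x}}$ (or its inverse), which is smooth, so the two atlases are compatible and define the same maximal atlas. This gives uniqueness.

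There is essentially no obstacle here; the one small point worth stating carefully is that $\mu_{\mathbf{x}',\mathbf{x}}$ is genuinely smooth on all of $\mathbb{R}_{>0}^r$, which is true because each component $\ell_{x_i',\mathbf{x}}$ is a Laurent polynomial and hence smooth (even analytic) wherever the variables are nonzero — in particular on the positive orthant — and part (4) of the Lemma upgrades this to a diffeomorphism. Part (4) itself rests on parts (1)--(3): $\mu_{\mathbf{x},\mathbf{x}'}\circ\mu_{\mathbf{x}',\mathbf{x}} = \mu_{\mathbf{x},\mathbf{x}} = \mathrm{id}$ (the last equality because $\ell_{x_i,\mathbf{x}}=x_i$), and symmetrically, so $\mu_{\mathbf{x}',\mathbf{x}}$ is a bijection with smooth inverse. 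I would therefore present the argument as: build the structure from $f_{\mathbf{x}}$, check every $f_{\mathbf{x}'}$ becomes a diffeomorphism using Lemma parts (1) and (4), and note uniqueness follows since any competing structure has smooth transition to this one.
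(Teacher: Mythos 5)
Your proposal is correct and follows essentially the same route as the paper, which derives this proposition directly from the preceding lemma: a single chart $f_{\mathbf{x}}$ defines the smooth structure, and parts (1) and (4) of the lemma (the transition maps $\mu_{\mathbf{x}',\mathbf{x}}$ being diffeomorphisms of $\mathbb{R}_{>0}^r$ given by Laurent polynomials) show the structure is independent of the chosen cluster and that every $f_{\mathbf{x}'}$ is a diffeomorphism. Your additional remarks on smoothness of Laurent polynomials on the positive orthant and on deducing part (4) from parts (1)--(3) simply make explicit what the paper leaves implicit.
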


\begin{rem}
The space of real points $\mathcal{A}(\mathbb{R})$ need not have a manifold structure, smooth or otherwise; e.g.~if $\mathcal{A}$ is the $A_3$ cluster algebra, $\mathcal{A}(\mathbb{R})$ is a singular real variety \cite{BFMS21}.
\end{rem}

\subsection{The superunitary region}

The \textbf{superunitary region} of a cluster algebra $\mathcal{A}$ is the space $\mathcal{A}(\mathbb{R}_{\geq1})$, which has the following equivalent characterizations.
\begin{itemize}
    \item The set of ring homomorphisms $p:\mathcal{A}\rightarrow \mathbb{R}$ which send each cluster variable into $\mathbb{R}_{\geq1}$.
    \item The subspace of $\mathcal{A}(\mathbb{R})$ on which $f_x$ is greater than or equal to $1$, for each cluster variable $x$.
\end{itemize}
The superunitary region $\mathcal{A}(\mathbb{R}_{\geq1})$ is a subspace of the totally positive region $\mathcal{A}(\mathbb{R}_{>0})$. By Proposition~\ref{prop:homemomorphism from totally positive region to positive orthant}, a choice of cluster $\mathbf{x}$ in $\mathcal{A}$ identifies the superunitary region $\mathcal{A}(\mathbb{R}_{\geq1})$ with
\[ f_\mathbf{x}(\mathcal{A}(\mathbb{R}_{\geq1})) \subset \mathbb{R}_{>0}^r\]
This subset is explicitly carved out by a family of Laurent inequalities, as follows.

\begin{prop}
For any cluster $\mathbf{x}$,
the set $f_\mathbf{x}(\mathcal{A}(\mathbb{R}_{\geq1}))$ is the subset of $\mathbb{R}_{>0}^r$ on which the Laurent polynomial $\ell_{x,\mathbf{x}}$ is greater than or equal to $1$ for each cluster variable $x$ in $\mathcal{A}$. 
\end{prop}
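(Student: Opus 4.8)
The statement to prove is essentially a direct translation lemma: under the identification $f_{\mathbf{x}}:\mathcal{A}(\mathbb{R}_{>0})\xrightarrow{\sim}\mathbb{R}_{>0}^r$, the superunitary region is cut out inside the orthant by the inequalities $\ell_{x,\mathbf{x}}\geq 1$ ranging over all cluster variables $x$. The plan is to unwind the definitions and use the commuting triangle $\ell_{a,\mathbf{x}}\circ f_{\mathbf{x}}=f_a$ established earlier.

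First I would recall that, by Proposition~\ref{prop:homemomorphism from totally positive region to positive orthant}, every point of $\mathcal{A}(\mathbb{R}_{>0})$ is of the form $f_{\mathbf{x}}^{-1}(v)$ for a unique $v=(v_1,\dots,v_r)\in\mathbb{R}_{>0}^r$, and the corresponding ring homomorphism $p=f_{\mathbf{x}}^{-1}(v)$ satisfies $p(a)=\ell_{a,\mathbf{x}}(v)$ for every $a\in\mathcal{A}$ — this is exactly the content of how the inverse to $f_{\mathbf{x}}$ was constructed, via the evaluation $p(x_1^{e_1}\cdots x_r^{e_r})=v_1^{e_1}\cdots v_r^{e_r}$ extended to Laurent polynomials. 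In particular $f_a(p)=p(a)=\ell_{a,\mathbf{x}}(v)$ for all $a$, which is the bottom commuting triangle in the diagram following the lemma about $\mu_{\mathbf{x}',\mathbf{x}}$.

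Now the argument is a chain of equivalences. A point $p\in\mathcal{A}(\mathbb{R}_{>0})$ lies in $\mathcal{A}(\mathbb{R}_{\geq1})$ if and only if $f_x(p)\geq 1$ for every cluster variable $x$, by the second bullet characterization of the superunitary region. Writing $v=f_{\mathbf{x}}(p)$, the previous paragraph gives $f_x(p)=\ell_{x,\mathbf{x}}(v)$, so this holds if and only if $\ell_{x,\mathbf{x}}(v)\geq 1$ for every cluster variable $x$. Since $f_{\mathbf{x}}$ is a bijection onto $\mathbb{R}_{>0}^r$, it follows that $f_{\mathbf{x}}(\mathcal{A}(\mathbb{R}_{\geq1}))$ is precisely $\{v\in\mathbb{R}_{>0}^r \mid \ell_{x,\mathbf{x}}(v)\geq 1 \text{ for all cluster variables } x\}$, which is the claim. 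One should note that $\ell_{x,\mathbf{x}}$ is an honest function $\mathbb{R}_{>0}^r\to\mathbb{R}_{>0}$ (being a positive Laurent polynomial, by Theorem~\ref{thm:positivity}), so the inequality $\ell_{x,\mathbf{x}}(v)\geq 1$ makes sense pointwise on the whole orthant.

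There is no real obstacle here; the only thing to be careful about is making explicit that the inverse homeomorphism $f_{\mathbf{x}}^{-1}$ sends $v$ to the homomorphism whose value on $a$ is $\ell_{a,\mathbf{x}}(v)$, so that $f_a = \ell_{a,\mathbf{x}}\circ f_{\mathbf{x}}$ as functions on $\mathcal{A}(\mathbb{R}_{>0})$. This is immediate from the construction in the proof of Proposition~\ref{prop:homemomorphism from totally positive region to positive orthant} (or from part (2) of the lemma on $\mu_{\mathbf{x}',\mathbf{x}}$, or directly: $\ell_{a,\mathbf{x}}$ expresses $a$ in terms of $\mathbf{x}$, so $p(a)=\ell_{a,\mathbf{x}}(p(x_1),\dots,p(x_r))$ for any ring homomorphism $p$). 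Everything else is a formal manipulation with the definition of $\mathcal{A}(S)$ specialized to $S=\mathbb{R}_{\geq1}$.
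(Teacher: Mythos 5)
Your proposal is correct and is exactly the argument the paper has in mind: the paper states this proposition without a separate proof, treating it as immediate from Proposition~\ref{prop:homemomorphism from totally positive region to positive orthant} and the identification $f_x = \ell_{x,\mathbf{x}}\circ f_{\mathbf{x}}$ on $\mathcal{A}(\mathbb{R}_{>0})$, which is precisely the chain of equivalences you spell out.
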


\begin{ex}
Consider the $B_2/C_2$ cluster algebra, the $(b,c)=(1,2)$ case of Example \ref{ex: rank2}.
The Laurent expansions of the four non-initial clusters in the initial cluster $\cluster:=(x_1,x_2)$ are
\[
\ell_{x_3,\mathbf{x}} = \frac{x_2^2+1}{x_1} ,
\;\;\;
\ell_{x_4,\mathbf{x}} = \frac{x_1+x_2^2+1}{x_1x_2},
\;\;\;
\ell_{x_5,\mathbf{x}} = \frac{x_1^2+x_2^2+2x_1+1}{x_1x_2^2},
\;\;\;
\ell_{x_6,\mathbf{x}} = \frac{x_1+1}{x_2}
\]
The map $f_\mathbf{x}$ identifies the superunitary region with the subset of $\mathbb{R}_{>0}^2$ defined by
\[
x_1\geq1 ,
\;\;\;
x_2 \geq 1,
\;\;\;
\frac{x_2^2+1}{x_1}\geq 1 ,
\;\;\; 
\frac{x_1+x_2^2+1}{x_1x_2} \geq 1
\;\;\;
\frac{x_1^2+x_2^2+2x_1+1}{x_1x_2^2} \geq 1
\;\;\;
\frac{x_1+1}{x_2}\geq 1
\]
These inequalities carve out a topological hexagon in $\mathbb{R}^2$ (Figure~\ref{fig:B2_superunitary_region}). 
\end{ex}

\begin{figure}[h!t] 
    \begin{tikzpicture}[x={(1,1)},y={(1,-1)},x={(1,-1)},scale=.75,baseline=(current bounding box.center)]
     \draw[step=1,black!25,very thin] (-.25,-.25) grid (4.25,6.25);
		\draw[-angle 90] (-.25,0) to (4.5,0) node[right] {$x_2$};
 		\draw[-angle 90] (0,-.25) to (0,6.5) node[above] {$x_1$};
		
		\draw[blue!10,fill=blue!10,opacity=\opa,variable=\t,domain=1:2] plot (\t,{\t^2+1}) plot ({\t+1},{\t+2+2/\t}) to (2,2) to (1,2);
 		\draw[blue!10,fill=blue!10,opacity=\opa,variable=\t,domain=1:2] plot ({\t+2*\t^(-1)},{1+4*\t^(-2)}) to (2,2) to (3,5);
 		\draw[blue!10,fill=blue!10,opacity=\opa,variable=\t,domain=1:2] (1,1) to (1,2) to (3,2) to (2,1) to (1,1);
     \draw[variable=\t,domain=1:2] (1,1) to (1,2) plot (\t,\t^2+1) plot (\t+1,{\t+2+2*\t^(-1)}) plot ({\t+2*\t^(-1)},{1+4*\t^(-2)}) to (2,1) to (1,1);
    
    \draw[continued] (1,-.25) to (1,1);
    \draw[continued] (1,2) to (1,6.25);
    \draw[continued,variable=\t,domain=-.25:1] plot (\t,\t^2+1);
    \draw[continued,variable=\t,domain=2:2.3] plot (\t,\t^2+1);
    \draw[continued,variable=\t,domain=.55:1] plot (\t+1,\t+2+2/\t);
    \draw[continued,variable=\t,domain=2:3.25] plot (\t+1,\t+2+2/\t);
    \draw[continued,variable=\t,domain=.88:1] plot ({\t+2*\t^(-1)},{1+4*\t^(-2)});
    \draw[continued,variable=\t,domain=2:3.75] plot ({\t+2*\t^(-1)},{1+4*\t^(-2)});
    
    \draw[continued] (-.25,1) to (1,1);
    \draw[continued] (2,1) to (4.25,1);
    \draw[continued] (.75,-.25) to (2,1);
    \draw[continued] (3,2) to (4.25,3.25);
\end{tikzpicture}
    \caption{The superunitary region of the $B_2/C_2$ cluster algebra (embedded in $\mathbb{R}^2_{>0}$)}
    \label{fig:B2_superunitary_region}
\end{figure}

\begin{ex}
Consider the $G_2$ cluster algebra, the $(b,c)=(1,3)$ case of Example \ref{ex: rank2}.
The Laurent expansions of the six non-initial clusters in the initial cluster $\cluster:=(x_1,x_2)$ are
\[
\ell_{x_3,\mathbf{x}} = \frac{x_2^3+1}{x_1} ,
\;\;\;
\ell_{x_4,\mathbf{x}} = \frac{x_1+x_2^3+1}{x_1x_2},
\;\;\;
\ell_{x_5,\mathbf{x}} = \frac{
x_2^6+3 x_1 x_2^3+2 x_2^3+x_1^3+3 x_1^2+3 x_1+1
}{x_1^2x_2^3}
\]
\[
\ell_{x_6,\mathbf{x}} = \frac{x_2^3+x_1^2+2x_1+1}{x_1x_2^2},
\;\;\;
\ell_{x_7,\mathbf{x}} = \frac{x_1^3+x_2^3+3x_1^2 + 3x_1+1}{x_1x_2^3},
\;\;\;
\ell_{x_8,\mathbf{x}} = \frac{x_1+1}{x_2}
\]
Setting each of these $\geq1$ carves out the rather sharp topological octagon in Figure \ref{fig:G2_superunitary_region}.
\end{ex}

\begin{figure}[h!t] 
\begin{tikzpicture}[x={(1,1)},y={(1,-1)},x={(1,-1)},scale=.5,baseline=(current bounding box.center)]
    \draw[step=1,black!25,very thin] (-.25,-.25) grid (6.25,15.25);
		\draw[-angle 90] (-.25,0) to (6.5,0) node[right] {$x_2$};
		\draw[-angle 90] (0,-.25) to (0,15.5) node[above] {$x_1$};
		
		\draw[blue!10,fill=blue!10,opacity=\opa,variable=\t,domain=1:2] (3,6) plot (\t,\t^3+1) to (3,6);
		\draw[blue!10,fill=blue!10,opacity=\opa,variable=\t,domain=1:2] (3,6) plot ({\t+1},{\t^2+3*\t+3+2*\t^(-1)}) to (3,6);
		\draw[blue!10,fill=blue!10,opacity=\opa,variable=\t,domain=1:2] (3,6) plot ({\t^2+2*\t^(-1)},{\t^3+5+8*\t^(-3)}) to (3,6);
		\draw[blue!10,fill=blue!10,opacity=\opa,variable=\t,domain=1:2] (3,6) plot ({\t+2+2*\t^(-1)},{\t+3+6*\t^(-1)+4*\t^(-2)}) to (3,6);
		\draw[blue!10,fill=blue!10,opacity=\opa,variable=\t,domain=1:2] (3,6) plot ({\t+4*\t^(-2)},{1+8*\t^(-3)}) to (3,6);
		\draw[blue!10,fill=blue!10,opacity=\opa,variable=\t,domain=1:2] (3,6) to (3,2) to (2,1) to (1,1) to (1,2) to (3,6);
		\draw[variable=\t,domain=1:2] plot (\t,\t^3+1) plot ({\t+1},{\t^2+3*\t+3+2*\t^(-1)}) plot ({\t^2+2*\t^(-1)},{\t^3+5+8*\t^(-3)}) plot ({\t+2+2*\t^(-1)},{\t+3+6*\t^(-1)+4*\t^(-2)}) plot ({\t+4*\t^(-2)},{1+8*\t^(-3)}) to (2,1) to (1,1) to (1,2);

		\draw[continued,variable=\t,domain=-.25:1] plot (\t,\t^3+1);
		\draw[continued,variable=\t,domain=2:2.43] plot (\t,\t^3+1);
		\draw[continued,variable=\t,domain=.17:1] plot ({\t+1},{\t^2+3*\t+3+2*\t^(-1)});
		\draw[continued,variable=\t,domain=2:2.2] plot ({\t+1},{\t^2+3*\t+3+2*\t^(-1)});
		\draw[continued,variable=\t,domain=.96:1] plot ({\t^2+2*\t^(-1)},{\t^3+5+8*\t^(-3)});
		\draw[continued,variable=\t,domain=2:2.11] plot ({\t^2+2*\t^(-1)},{\t^3+5+8*\t^(-3)});
		\draw[continued,variable=\t,domain=.92:1] plot ({\t+2+2*\t^(-1)},{\t+3+6*\t^(-1)+4*\t^(-2)});
		\draw[continued,variable=\t,domain=2:3.6] plot ({\t+2+2*\t^(-1)},{\t+3+6*\t^(-1)+4*\t^(-2)});
		\draw[continued,variable=\t,domain=.87:1] plot ({\t+4*\t^(-2)},{1+8*\t^(-3)});
		\draw[continued,variable=\t,domain=2:6] plot ({\t+4*\t^(-2)},{1+8*\t^(-3)});
    \draw[continued] (1,-.25) to (1,1);
    \draw[continued] (1,2) to (1,15.25);
    \draw[continued] (-.25,1) to (1,1);
    \draw[continued] (2,1) to (6.25,1);
    \draw[continued] (.75,-.25) to (2,1);
    \draw[continued] (3,2) to (6.25,5.25);
  \end{tikzpicture} 
\caption{The superunitary region of the $G_2$ cluster algebra (embedded in $\mathbb{R}^2_{>0}$)}\label{fig:G2_superunitary_region}
\end{figure}

\begin{ex}
Fix a seed $\cluster=(x_1,x_2,x_3)$ whose quiver is the following cyclic triangle. 
\[
\begin{tikzpicture}[scale=1.7]
\node (1) at (0,0) {$1$};
\node (2) at (1,0) {$2$};
\node (3) at (0.5,0.5) {$3$};
\draw[->] (1) to (2);
\draw[->] (2) to (3);
\draw[->] (3) to (1);
\end{tikzpicture}  
\]
While this quiver is not Dynkin, any mutation of it is a Dynkin quiver of type $A_3$, and so the associated cluster algebra is the $A_3$ cluster algebra. As Laurent polynomials in the initial cluster $\cluster$, the nine cluster variables are
\[
\begin{array}{ccc}
x_1 & x_2 & x_3 \\
\frac{x_2+x_3}{x_1} & 
\frac{x_1+x_3}{x_2} &
\frac{x_1+x_2}{x_3} \\
\frac{x_1+x_2+x_3}{x_1x_2} &
\frac{x_1+x_2+x_3}{x_2x_3} &
\frac{x_1+x_2+x_3}{x_1x_3}
\end{array}
\]
Setting each Laurent polynomial to be greater than or equal to 1 defines the topological polyhedron in Figure~\ref{fig:A3_superunitary_region}, with the same face structure as a 3-dimensional associahedron.
\end{ex}

\begin{figure}[h!t]
    \begin{tikzpicture}[xshift=2.875in,yshift=-.49cm,scale=.75,baseline=(current bounding box.center)]
   	\path[fill=blue!10,variable=\t,domain=2:3] plot ({5-\t+.35*(5-\t)},{2*(5-\t)*(4-\t)^(-1)+.49*(5-\t)}) to ($(1,3)+2*(.35,.49)$) to ($(1,2)+(.35,.49)$) to($(1,1)+(.35,.49)$) to ($(2,1)+(.35,.49)$) to ($(3,1)+2*(.35,.49)$) to ($(4,2)+2*(.35,.49)$) plot ({2*\t*(\t-1)^(-1)+.35*\t}, {\t+.49*\t});
		\draw[-angle 90] (-.25,0) to (5.25,0) node[above] {$x_1$};
		\draw[-angle 90] (0,-.25) to (0,5.25) node[right] {$x_2$};
		\draw ($-.25*(.36,.49)$) to (1.35,1.89) node[above left] {$x_3$};
		\draw[-angle 90, gray, very thin] (1.35,1.89)to ($5.25*(.36,.49)$);
		
     \draw[variable=\t,domain=1:2] plot (\t+1+.35,{2*\t^(-1)+1+.49}) to ($(2,1)+(.35,.49)$) to ($(1,1)+(.35,.49)$) to ($(1,2)+(.35,.49)$) to ($(2,3)+(.35,.49)$);
		\draw ($(1,2)+(.35,.49)$) to ($(1,3)+2*(.35,.49)$) to ($(2,4)+2*(.35,.49)$) to ($(2,3)+(.35,.49)$);
		\draw[gray,very thin] ($(1,1)+(.35,.49)$) to ($(1,1)+2*(.35,.49)$);
		\draw[gray,very thin] ($(1,1)+2*(.35,.49)$) to ($(1,2)+3*(.35,.49)$);
 		\draw[gray,very thin,variable=\t,domain=1:2] plot ({1+.35*\t+.35*1} ,{2*\t^(-1)+1+.49*\t+.49*1});
 		\draw[gray,very thin,variable=\t,domain=1:2] plot ({2*\t^(-1)+1+.35*\t+.35*1},{1+.49*\t+.49*1});
		\draw[gray,very thin] ($(1,2)+3*(.35,.49)$) to ($(2,2)+4*(.35,.49)$) to ($(2,1)+3*(.35,.49)$) to ($(1,1)+2*(.35,.49)$);
		
		\draw ($(2,1)+(.35,.49)$) to ($(3,1)+2*(.35,.49)$) to ($(4,2)+2*(.35,.49)$) to ($(3,2)+(.35,.49)$);
		\draw[variable=\t,domain=2:3] plot ({2*\t*(\t-1)^(-1)+.35*\t}, {\t+.49*\t});
 		\draw[variable=\t,domain=2:3] plot ({\t+.35*\t},{2*\t*(\t-1)^(-1)+.49*\t});
 		\draw[gray,very thin,variable=\t,domain=2:3] plot ({\t+.35*2*\t*(\t-1)^(-1)}, {\t+.49*2*\t*(\t-1)^(-1)});
  \end{tikzpicture} 
    \caption{The superunitary region of the $A_3$ cluster algebra (embedded in $\mathbb{R}^3_{>0}$)}
    \label{fig:A3_superunitary_region}
\end{figure}
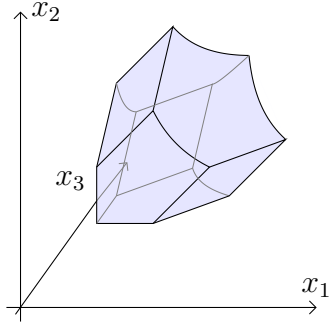

\subsection{The frieze points}
We call $\mathcal{A}(\mathbb{Z}_{\geq1})$ the set of \textbf{frieze points}, which has the following equivalent characterizations.
\begin{itemize}
    \item The set of ring homomorphisms $p:\mathcal{A}\rightarrow \mathbb{R}$ which send each cluster variable into $\mathbb{Z}_{\geq1}$.
    \item The subset of $\mathcal{A}(\mathbb{R})$ on which $f_x$ is a positive integer, for each cluster variable $x$.
\end{itemize}
The name `frieze points' is justified by their connection to \emph{friezes}; see Section \ref{section: friezes}.


Frieze points are superunitary, 
but their topology is trivial in the following sense.
\begin{prop}\label{prop: closeddiscrete}
The set of frieze points is closed and discrete in the superunitary region.
\end{prop}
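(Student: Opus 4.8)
The plan is to fix a cluster $\mathbf{x}=(x_1,\dots,x_r)$ and use the homeomorphism $f_\mathbf{x}\colon \mathcal{A}(\mathbb{R}_{>0})\xrightarrow{\sim}\mathbb{R}_{>0}^r$ of Proposition~\ref{prop:homemomorphism from totally positive region to positive orthant} to transport everything into the open orthant $\mathbb{R}_{>0}^r$. Under this identification, the superunitary region $\mathcal{A}(\mathbb{R}_{\geq1})$ becomes the subset where $\ell_{x,\mathbf{x}}\geq1$ for every cluster variable $x$, and the set of frieze points $\mathcal{A}(\mathbb{Z}_{\geq1})$ becomes $\{v\in\mathbb{R}_{>0}^r\mid \ell_{x,\mathbf{x}}(v)\in\mathbb{Z}_{\geq1}\text{ for all cluster variables }x\}$. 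In particular, $f_\mathbf{x}$ maps the frieze points into the integer lattice points $\mathbb{Z}_{\geq1}^r\subset\mathbb{R}_{>0}^r$, since the initial cluster variables $x_1,\dots,x_r$ are among the cluster variables and their Laurent expansions in $\mathbf{x}$ are just the coordinates.

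First I would prove discreteness. The set $\mathbb{Z}_{\geq1}^r$ is discrete in $\mathbb{R}_{>0}^r$ (each point has a neighborhood — say an open ball of radius $1/2$ — meeting $\mathbb{Z}_{\geq1}^r$ only in itself), and the frieze points are contained in $\mathbb{Z}_{\geq1}^r$, hence inherit discreteness as a subspace. Pulling back along the homeomorphism $f_\mathbf{x}$ (restricted to the superunitary region, or even all of $\mathcal{A}(\mathbb{R}_{>0})$), we conclude $\mathcal{A}(\mathbb{Z}_{\geq1})$ is discrete. Next, for closedness: each function $\ell_{x,\mathbf{x}}\colon\mathbb{R}_{>0}^r\to\mathbb{R}$ is continuous, so $\ell_{x,\mathbf{x}}^{-1}(\mathbb{Z}_{\geq1})$ is closed in $\mathbb{R}_{>0}^r$ because $\mathbb{Z}_{\geq1}$ is closed in $\mathbb{R}$ (indeed in $\mathbb{R}_{>0}$). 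The image of the frieze points is the intersection $\bigcap_x \ell_{x,\mathbf{x}}^{-1}(\mathbb{Z}_{\geq1})$ over all cluster variables $x$, an intersection of closed sets, hence closed in $\mathbb{R}_{>0}^r$; intersecting with the superunitary region shows $\mathcal{A}(\mathbb{Z}_{\geq1})$ is closed in $\mathcal{A}(\mathbb{R}_{\geq1})$. (Alternatively, and more intrinsically: each $f_x\colon\mathcal{A}(\mathbb{R})\to\mathbb{R}$ is continuous by definition of the topology, so $\mathcal{A}(\mathbb{Z}_{\geq1})=\bigcap_x f_x^{-1}(\mathbb{Z}_{\geq1})$ is closed in $\mathcal{A}(\mathbb{R})$, and a fortiori in the subspace $\mathcal{A}(\mathbb{R}_{\geq1})$.)

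There is essentially no obstacle here — the statement is soft topology once one has the chart $f_\mathbf{x}$ and the observation that frieze points land in the integer lattice. The only point requiring a moment's care is that discreteness is a property of the subspace topology on $\mathcal{A}(\mathbb{Z}_{\geq1})$, not of an ambient embedding, so one should phrase it as: every frieze point has an open neighborhood in $\mathcal{A}(\mathbb{R}_{\geq1})$ (equivalently in $\mathcal{A}(\mathbb{R})$) whose intersection with $\mathcal{A}(\mathbb{Z}_{\geq1})$ is a single point. This follows immediately from pulling back a radius-$1/2$ ball around the corresponding lattice point under $f_\mathbf{x}$. I would not even need finite type for this proposition; the positive Laurent phenomenon (Theorem~\ref{thm:positivity}), which underlies Proposition~\ref{prop:homemomorphism from totally positive region to positive orthant}, is all that is used.
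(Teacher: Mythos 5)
Your proof is correct and follows essentially the same route as the paper: fix a cluster and use the homeomorphism $f_\mathbf{x}$ to send the frieze points into the closed, discrete subset $\mathbb{Z}_{\geq1}^r\subset\mathbb{R}_{>0}^r$, from which both properties follow. Your supplementary closedness argument via $\bigcap_x f_x^{-1}(\mathbb{Z}_{\geq1})$ is a harmless, slightly more explicit variant of the paper's one-line observation (which instead rests on the fact that any subset of a closed discrete set is itself closed), and you are right that finite type is not needed.
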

\begin{proof}
Given a cluster $\cluster$, the identification $f_\cluster:\mathcal{A}(\mathbb{R}_{>0})\xrightarrow {\sim}  \mathbb{R}_{>0}^{|\cluster|}$ restricts to an inclusion $\mathcal{A}(\mathbb{Z}_{\geq1})\hookrightarrow \mathbb{Z}_{\geq1}^{|\cluster|}$. Since the latter set is closed and discrete in $\mathbb{R}_{>0}^{|\cluster|}$, the result follows.
\end{proof}

\begin{ex}\label{ex: G2friezepoints}
There are nine frieze points for the cluster algebra of type $G_2$ (see Figure~\ref{fig:G2_superunitary_region with a point in the interior}). In terms of their values on the initial cluster, they are:
\[(1, 1), (1, 2), (9,2), (14,3), (14,5), (9,5), (2,3), (2,1), (3,2)\]
Eight are at the corners of the superunitary region, and the ninth is in the interior.
\end{ex}

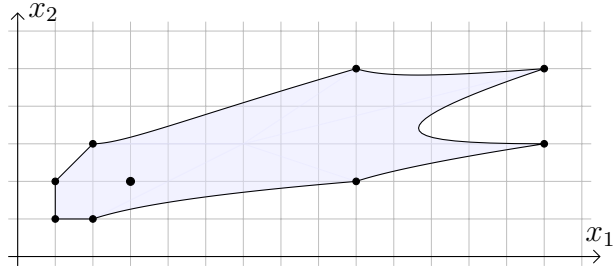
\begin{figure}[htb!]
\begin{tikzpicture}[x={(1,1)},y={(1,-1)},x={(1,-1)},scale=.5,baseline=(current bounding box.center)]
    \draw[step=1,black!25,very thin] (-.25,-.25) grid (6.25,15.25);
		\draw[-angle 90] (-.25,0) to (6.5,0) node[right] {$x_2$};
		\draw[-angle 90] (0,-.25) to (0,15.5) node[above] {$x_1$};

\draw[blue!10,fill=blue!10,variable=\t,domain=1:2,opacity=.5] (3,6) plot (\t,\t^3+1) to (3,6);
\draw[blue!10,fill=blue!10,variable=\t,domain=1:2,opacity=.5] (3,6) plot ({\t+1},{\t^2+3*\t+3+2*\t^(-1)}) to (3,6);
\draw[blue!10,fill=blue!10,variable=\t,domain=1:2,opacity=.5] (3,6) plot ({\t^2+2*\t^(-1)},{\t^3+5+8*\t^(-3)}) to (3,6);
\draw[blue!10,fill=blue!10,variable=\t,domain=1:2,opacity=.5] (3,6) plot ({\t+2+2*\t^(-1)},{\t+3+6*\t^(-1)+4*\t^(-2)}) to (3,6);
\draw[blue!10,fill=blue!10,variable=\t,domain=1:2,opacity=.5] (3,6) plot ({\t+4*\t^(-2)},{1+8*\t^(-3)}) to (3,6);
\draw[blue!10,fill=blue!10,variable=\t,domain=1:2,opacity=.5] (3,6) to (3,2) to (2,1) to (1,1) to (1,2) to (3,6);
	
\draw[variable=\t,domain=1:2] plot (\t,\t^3+1) plot ({\t+1},{\t^2+3*\t+3+2*\t^(-1)}) plot ({\t^2+2*\t^(-1)},{\t^3+5+8*\t^(-3)}) plot ({\t+2+2*\t^(-1)},{\t+3+6*\t^(-1)+4*\t^(-2)}) plot ({\t+4*\t^(-2)},{1+8*\t^(-3)}) to (2,1) to (1,1) to (1,2);

\foreach \x in {(1, 1), (1, 2), (2, 1), (2, 9), (3, 2), (3, 14), (5, 9), (5, 14)} 
{
\filldraw \x circle (2.5pt); 
}

\filldraw (2, 3) circle (3pt); 
\end{tikzpicture} 
\caption{The nine frieze points of the $G_2$ cluster algebra (drawn on the superunitary region, embedded in $\mathbb{R}^2_{>0}$)}
\label{fig:G2_superunitary_region with a point in the interior}
\end{figure}

The clusters of $\mathcal{A}$ provide the following source of frieze points.

\begin{defn}\label{defn: unitarypoint}
Given a cluster $\cluster$ in $\sA$, the \textbf{unitary point} $\mathbf{1}_\cluster\in \mathcal{A}(\mathbb{R}_{>0})$ of $\cluster$ has the following equivalent characterizations.
\begin{enumerate}
     \item $\mathbf{1}_\cluster := f_\cluster^{-1}(1,1,...,1) \in \SSS$
    \item $\mathbf{1}_\cluster$ is the unique ring homomorphism $\sA\rightarrow \mathbb{R}$ which sends each $x\in \cluster$ to $1$.
    \item For all $a\in \mathcal{A}$, $f_a ( \mathbf{1}_\cluster)$ is the sum of the coefficients of the Laurent polynomial $\ell_{a,\cluster}$.
\end{enumerate}
\end{defn}

The third characterization implies that 
every cluster variable has a positive integer value at $\mathbf{1}_\cluster$,
so each unitary point is a frieze point. 
The assignment $\cluster\mapsto \mathbf{1}_\cluster$ gives a map
\[ \{\text{clusters in $\mathcal{A}$}\} \rightarrow \mathcal{A}(\mathbb{Z}_{\geq1})\]

\begin{prop}
\label{prop:cluster maps to unitary point is injective}
This map is injective; i.e.~if $\mathbf{1}_\cluster=\mathbf{1}_\mathbf{y}$, then $\cluster=\mathbf{y}$.
\end{prop}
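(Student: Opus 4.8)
The plan is to reduce the statement to Lemma~\ref{lemma: sumto1} by way of the third characterization of the unitary point in Definition~\ref{defn: unitarypoint}. Suppose $\mathbf{1}_\cluster=\mathbf{1}_\mathbf{y}$, and write $p$ for this common point of $\mathcal{A}(\mathbb{Z}_{\geq1})$.

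First I would fix an arbitrary cluster variable $x\in\cluster$. Since $p=\mathbf{1}_\cluster$ sends every element of $\cluster$ to $1$, we have $f_x(p)=1$. On the other hand $p=\mathbf{1}_\mathbf{y}$, so by characterization (3) of Definition~\ref{defn: unitarypoint} the value $f_x(p)$ is exactly the sum of the coefficients of the Laurent polynomial $\ell_{x,\mathbf{y}}$. Hence that coefficient sum equals $1$, and Lemma~\ref{lemma: sumto1} forces $x\in\mathbf{y}$. As $x\in\cluster$ was arbitrary, this shows $\cluster\subseteq\mathbf{y}$.

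To finish, I would note that all clusters in $\mathcal{A}$ have the same cardinality $r$, so the inclusion $\cluster\subseteq\mathbf{y}$ of two $r$-element sets gives $\cluster=\mathbf{y}$; alternatively, interchanging the roles of $\cluster$ and $\mathbf{y}$ in the previous step yields $\mathbf{y}\subseteq\cluster$ directly, with the same conclusion.

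I do not expect any real obstacle: the substance of the argument is carried entirely by Lemma~\ref{lemma: sumto1}. The only point requiring care is to invoke the correct description of $\mathbf{1}_\mathbf{y}$ — namely that its value on an element $a$ records the coefficient sum of $\ell_{a,\mathbf{y}}$ — rather than its defining characterization in terms of $\mathbf{y}$ itself, since it is the former that interfaces with Lemma~\ref{lemma: sumto1}.
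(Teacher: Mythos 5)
Your argument is correct and is essentially the paper's own proof: both reduce to Lemma~\ref{lemma: sumto1} via characterization (3) of the unitary point, the only difference being that you show $\cluster\subseteq\mathbf{y}$ (by looking at $\ell_{x,\mathbf{y}}$ for $x\in\cluster$) while the paper shows $\mathbf{y}\subseteq\cluster$ (by looking at $\ell_{y,\cluster}$ for $y\in\mathbf{y}$). Either inclusion suffices, by symmetry or by equicardinality of clusters, exactly as you note.
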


\noindent This was shown in the skew-symmetric case in \cite[Proposition 2.5]{GS20}.

\begin{proof}
Assume $\mathbf{1}_\cluster=\mathbf{1}_\mathbf{y}$ and let $y\in \mathbf{y}$. Then $f_y(\mathbf{1}_\cluster)=1$ and so the sum of the coefficients of $\ell_{y,\cluster}$ is $1$. 
By Lemma \ref{lemma: sumto1},  this forces $y\in\cluster$. Since this holds for all $y\in \mathbf{y}$, we see $\cluster=\mathbf{y}$.
\end{proof}

As a consequence of Proposition~\ref{prop:cluster maps to unitary point is injective}, if $\mathcal{A}$ is infinite type, then the set of frieze points $\mathcal{A}(\mathbb{Z}_{\geq1})$ is infinite. We prove the converse in Corollary \ref{coro: finite}.

\begin{ex}
In Figure \ref{fig:G2_superunitary_region with a point in the interior}, the 8 frieze points at the corners of the superunitary region of type $G_2$ are unitary points, but the frieze point in the interior is not unitary. Notably, this is the only non-unitary frieze point in a finite type cluster algebra of rank $2$.
\end{ex}

\begin{rem}
In general, the unitary points are the $0$-dimensional faces of the superunitary region; see Proposition \ref{prop:cluster face is a singleton}.
\end{rem}

\section{The faces of finite type superunitary regions}

In this section, we show that the superunitary region of a finite type cluster algebra decomposes into \emph{faces} indexed by {subclusters}, and the closure of each face is diffeomorphic to the superunitary region of the cluster algebra obtained by the deletion of that subcluster.

\subsection{Subclusters}
A \textbf{subcluster} of $\mathcal{A}$ is a set of cluster variables which are contained in some cluster. Note that any proper subcluster will be contained in multiple clusters.
The subclusters naturally form a poset under inclusion, which we call the \textbf{subcluster poset}.

\begin{ex}
The $A_2$ cluster algebra (Example \ref{ex:A2 cluster algebra}) has 11 subclusters:
\begin{itemize}
    \item The 5 clusters 
$
\{x_1,x_2\},\{x_2,x_3\},\{x_3,x_4\},\{x_4,x_5\},\{x_5,x_1\}$.
    \item The 5 singletons $\{x_1\}, \{x_2\}, \{x_3\}, \{x_4\}, \{x_5\}$.
    \item The empty set $\varnothing$.\qedhere
\end{itemize}
\end{ex}

For finite type cluster algebras, each superunitary point determines a subcluster.

\begin{prop}
\label{prop:entries 1 form a subcluster}
Let $\mathcal{A}$ be a finite type cluster algebra.
At any superunitary point $p\in \mathcal{A}(\mathbb{R}_{\geq1})$, the set of cluster variables $x$ for which $f_x(p)=1$ is a subcluster of $\mathcal{A}$.
\end{prop}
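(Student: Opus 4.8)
The plan is to reduce the statement to a claim about the deletion map and the injectivity of unitary points. Let $p \in \mathcal{A}(\mathbb{R}_{\geq 1})$ and let $\subcluster$ denote the set of cluster variables $x$ with $f_x(p) = 1$. I want to show $\subcluster$ is a subcluster, i.e.\ that it is contained in some cluster. The key idea is that $p$ factors through the quotient $\mathcal{A} / \langle x - 1 \mid x \in \subcluster \rangle$; if I can show this quotient is (isomorphic to) the deletion $\mathcal{A}^\dagger$ of $\subcluster$, then $p$ descends to a superunitary point of $\mathcal{A}^\dagger$, and I can analyze the situation there. But to even speak of the deletion $\mathcal{A}^\dagger$, I first need to know $\subcluster$ is a subcluster — so this approach is circular unless I am careful. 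Instead, I will build up $\subcluster$ incrementally.

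Here is the incremental version. First, $\subcluster$ is nonempty is not needed; if $\subcluster = \varnothing$ there is nothing to prove. Otherwise, pick $x_1 \in \subcluster$; a single cluster variable is trivially a subcluster. Now suppose inductively that some subset $\cluster_0 \subseteq \subcluster$ is known to be a subcluster, and suppose $\cluster_0 \neq \subcluster$; pick $y \in \subcluster \smallsetminus \cluster_0$. I must show $\cluster_0 \cup \{y\}$ is again a subcluster. Form the deletion $\mathcal{A}^\dagger$ of $\cluster_0$ (which exists and is finite type, being a deletion of a subcluster in a finite type cluster algebra, as its valued quiver is an induced subquiver of a finite type one). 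By Proposition~\ref{prop:deletion map}, there is a deletion map $d : \mathcal{A} \to \mathcal{A}^\dagger$ sending each element of $\cluster_0$ to $1$ and each cluster variable compatible with $\cluster_0$ (but not in it) to a cluster variable of $\mathcal{A}^\dagger$, while each cluster variable \emph{not} compatible with $\cluster_0$ goes to a sum of at least two cluster monomials. Since $p$ sends every element of $\cluster_0$ to $1$, $p$ factors as $p = \bar p \circ d$ for a unique ring homomorphism $\bar p : \mathcal{A}^\dagger \to \mathbb{R}$ (using the isomorphism $\mathcal{A}/\langle x - 1 \mid x \in \cluster_0\rangle \xrightarrow{\sim} \mathcal{A}^\dagger$ from the proof of Proposition~\ref{prop:deletion map}).

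Now I claim $\bar p$ is superunitary on $\mathcal{A}^\dagger$: every cluster variable $z$ of $\mathcal{A}^\dagger$ is $d(x)$ for some cluster variable $x$ of $\mathcal{A}$ compatible with $\cluster_0$, and $f_z(\bar p) = f_x(p) \geq 1$. Next, consider $y \in \subcluster \smallsetminus \cluster_0$. If $y$ were not compatible with $\cluster_0$, then $d(y)$ would be a sum of at least two cluster monomials of $\mathcal{A}^\dagger$; evaluating at $\bar p$, each cluster monomial is a product of cluster-variable values, hence is $\geq 1$ since $\bar p$ is superunitary, so $f_y(p) = f_{d(y)}(\bar p) \geq 2 > 1$, contradicting $y \in \subcluster$. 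Therefore $y$ is compatible with $\cluster_0$, so $\cluster_0 \cup \{y\}$ is a subcluster, completing the induction; since $\mathcal{A}$ is finite type, $\subcluster$ is finite, so after finitely many steps we conclude $\subcluster$ itself is a subcluster.

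The main obstacle I anticipate is the bookkeeping around the deletion map: I need Proposition~\ref{prop:deletion map}(3) in the precise form that a cluster variable incompatible with a subcluster maps to a \emph{sum of at least two} cluster monomials (not just a single monomial), because that is exactly what forces the strict inequality $f_y(p) \geq 2$. I should also make sure the induction is set up so that at each stage $\cluster_0$ is genuinely a subcluster (so that its deletion is defined) — which is why I build $\subcluster$ up one element at a time rather than trying to handle it all at once. One could alternatively phrase the whole argument in a single step by first establishing that $p$ factors through the quotient by the ideal generated by $\{x-1 : x \in \subcluster\}$ and identifying that quotient with a deletion, but that requires knowing $\subcluster$ is a subcluster up front; the incremental argument sidesteps this cleanly. (Alternatively, a more geometric approach using the chart $f_{\cluster}$ for a cluster $\cluster$ and the explicit Laurent inequalities is possible, but it seems to require more case analysis than the deletion-map argument.)
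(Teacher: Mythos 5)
Your argument is correct, and it is genuinely different from the paper's. The paper proves this in one step: writing $u_1,\dots,u_m$ for the cluster variables equal to $1$ at $p$, it expands the product $u_1u_2\cdots u_m$ in the basis of cluster monomials using Theorem~\ref{thm:cluster monomials form a positive basis in finite type} (non-negative structure constants), evaluates at $p$, and observes that since every cluster monomial is $\geq 1$ at a superunitary point while the left-hand side equals $1$, the expansion must collapse to a single cluster monomial with coefficient $1$; hence the product is a cluster monomial and the set lies in a cluster. You instead build the set one variable at a time via the deletion map of Proposition~\ref{prop:deletion map}, using part (3) (incompatible variables go to sums of at least two cluster monomials) to force the value $\geq 2$ and derive a contradiction. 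Your route is non-circular — Proposition~\ref{prop:deletion map} does not depend on the statement being proved — but it invokes heavier machinery: its proof rests on the atomicity of cluster monomials (Remark~\ref{rem: atomic}) and on upper cluster algebra/localization results, and your induction additionally uses (as the paper itself does implicitly, e.g.\ in the proof of Theorem~\ref{thmIntro:region is generalized associahedron}) that the deletion of a subcluster in a finite type cluster algebra is again finite type, so that Proposition~\ref{prop:deletion map}'s hypothesis is met at each step. What your approach buys is a tighter link with the face/deletion picture developed later (it is essentially the mechanism of Lemma~\ref{lemma: deletionhomeomorphism} run in reverse), whereas the paper's argument is shorter, needs only the positivity of structure constants rather than atomicity, and requires no induction.
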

\begin{proof}
Let $p \in \SSS$, and let $\{ u_1, u_2, ..., u_m\}$ be the set of cluster variables which are $1$ at $p$. 
Theorem~\ref{thm:cluster monomials form a positive basis in finite type} tells us that 
the product $u_1 u_2 \dots u_m$ is a linear combination of 
cluster monomials 
with nonnegative integer coefficients. 
That is,
\[
u_1 u_2 \dots u_m = \sum_{\Gamma} \lambda_\Gamma x_\mathbf{\Gamma}
\]
where the sum is over all cluster monomials and the coefficients $\lambda_\Gamma$ are nonnegative integers.

We evaluate both sides of the equation at $p$.
The left-hand side is $1$ by assumption. 
Since $p$ sends every cluster variable into $\mathbb{R}_{\geq 1}$, every cluster monomial $x_\mathbf{\Gamma}$ must evaluate to at least $1$. 
This is only possible if 
all coefficients on 
the right-hand side are $0$ except for 
 a single cluster monomial $x_\mathbf{\Gamma}$ with coefficient $1$, therefore
\[
u_1 u_2 \dots u_m =  x_\mathbf{\Gamma}.
\]
Since $u_1 u_2 \dots u_m$ is a cluster monomial, the set $\{ u_1, u_2, ..., u_m\}$ is a subset of a cluster.
\end{proof}

\subsection{Subcluster faces}

Since each point in the superunitary region determines a subcluster, the superunitary region can be decomposed into subsets indexed by subclusters.

\begin{defn}
Given a subcluster $\subcluster$ in $\sA$, the \textbf{subcluster face} of $\subcluster$ is
\begin{align*}
\Subface{\subcluster} 
:= \{ 
p \in \SSS \mid
&
\text{ for all cluster variables $u$, $f_u(p)=1$ if and only if $u \in \subcluster$ }
\}
\end{align*}
\end{defn}

\begin{ex}
The superunitary region of the $A_2$ cluster algebra has 11 subcluster faces.
\begin{itemize}
    \item Each cluster corresponds to a single point at a corner of the superunitary region.
    \item Each cluster variable corresponds to the open interval on the boundary connecting the two corners (corresponding to the two clusters containing the cluster variable).
    \item The subcluster face of the empty set is the interior of the superunitary region.\qedhere
\end{itemize}
\end{ex}

The simplest subcluster faces are those corresponding to an entire cluster, in which case they are a single point that we have already encountered in a different context.

\begin{prop}
\label{prop:cluster face is a singleton}
The subcluster face $\Subface{\cluster}$ of a cluster $\cluster$ consists of a single point, which is the unitary point $\mathbf{1}_\cluster$ of $\cluster$ (see Definition \ref{defn: unitarypoint}).
\end{prop}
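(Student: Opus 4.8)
The plan is to show both that $\mathbf{1}_\cluster \in \Subface{\cluster}$ and that $\Subface{\cluster}$ contains nothing else. For the first inclusion, I would invoke the third characterization of the unitary point in Definition~\ref{defn: unitarypoint}: for any cluster variable $u$, the value $f_u(\mathbf{1}_\cluster)$ is the sum of the coefficients of the Laurent polynomial $\ell_{u,\cluster}$. By Lemma~\ref{lemma: sumto1}, this sum equals $1$ precisely when $u \in \cluster$, and otherwise it is a positive integer strictly greater than $1$ (positivity of coefficients plus the Laurent phenomenon guarantee the sum is a positive integer). In particular $\mathbf{1}_\cluster$ sends every cluster variable into $\mathbb{R}_{\geq 1}$, so $\mathbf{1}_\cluster \in \SSS$, and the set of cluster variables taking value $1$ at $\mathbf{1}_\cluster$ is exactly $\cluster$; hence $\mathbf{1}_\cluster \in \Subface{\cluster}$.

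For the reverse inclusion, suppose $p \in \Subface{\cluster}$, meaning $p \in \SSS$ and $f_u(p) = 1$ if and only if $u \in \cluster$. In particular $f_x(p) = 1$ for every $x \in \cluster$. But by Proposition~\ref{prop:homemomorphism from totally positive region to positive orthant}, the map $f_\cluster : \mathcal{A}(\mathbb{R}_{>0}) \xrightarrow{\sim} \mathbb{R}_{>0}^r$ is a homeomorphism, so $p$ is completely determined by the tuple $f_\cluster(p) = (f_{x_1}(p), \dots, f_{x_r}(p)) = (1, 1, \dots, 1)$. That is, $p = f_\cluster^{-1}(1,1,\dots,1) = \mathbf{1}_\cluster$. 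Therefore $\Subface{\cluster} = \{\mathbf{1}_\cluster\}$.

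I do not expect any serious obstacle here; the statement is essentially an unwinding of definitions combined with two previously established facts (Lemma~\ref{lemma: sumto1} and the homeomorphism of Proposition~\ref{prop:homemomorphism from totally positive region to positive orthant}). The one point requiring a moment's care is confirming that $\mathbf{1}_\cluster$ genuinely lies in the superunitary region and not merely in the totally positive region --- i.e.\ that every cluster variable, not just those in $\cluster$, has value $\geq 1$ at $\mathbf{1}_\cluster$ --- but this is immediate from the coefficients of $\ell_{u,\cluster}$ being positive integers summing to a positive integer. (This was of course already noted right after Definition~\ref{defn: unitarypoint}.)
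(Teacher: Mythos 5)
Your proof is correct and follows essentially the same route as the paper: use Lemma~\ref{lemma: sumto1} to show that the set of cluster variables taking value~$1$ at $\mathbf{1}_\cluster$ is exactly $\cluster$, and use the homeomorphism $f_\cluster$ from Proposition~\ref{prop:homemomorphism from totally positive region to positive orthant} to conclude uniqueness. The paper phrases the superunitarity of $\mathbf{1}_\cluster$ by noting it is a frieze point, but that reduces to the same coefficient argument you spell out, so the two arguments are the same in substance.
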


\begin{proof}
The unitary point $\mathbf{1}_\cluster$ is a frieze point and therefore superunitary. By Lemma~\ref{lemma: sumto1}, $f_y(\mathbf{1}_\cluster)=1$ iff $y\in \cluster$, and so $\mathbf{1}_\cluster\in \Subface{\cluster}$.
Since the homeomorphism $f_\cluster$ must send any point in $\Subface{\cluster}$ to $(1,1,...,1)$, this is the unique point in $\Subface{\cluster}$.
\end{proof}

\subsection{Local structure of superunitary regions}

The following lemma describes a neighborhood of any point in the superunitary region.

\begin{lemma}
\label{lemma:U}
Let $\mathcal{A}$ be finite type and let $p\in \mathcal{A}(\mathbb{R}_{\geq1})$. If $(x_1,x_2,...,x_m)$ is the subcluster of cluster variables which are equal to $1$ at $p$, and $\mathbf{x}:=(x_1,x_2,...,x_r)$ is any cluster containing this subcluster, 
then there is an open neighborhood $U\subset \mathcal{A}(\mathbb{R}_{>0})$ of $p$ such that 
\[ 
f_\mathbf{x}(U\cap \mathcal{A}(\mathbb{R}_{\geq 1})) 
=
f_\mathbf{x}(U)\cap (\mathbb{R}_{\geq1}^m \times \mathbb{R}_{>0}^{r-m} )
\]
\end{lemma}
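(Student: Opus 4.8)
The plan is to produce the neighborhood $U$ by intersecting two open sets: one that handles the cluster variables outside the chosen cluster $\cluster$, and the identity chart $f_\cluster$ which handles the cluster variables in $\cluster$. Concretely, write $\cluster = (x_1,\dots,x_r)$ with $x_1,\dots,x_m$ the cluster variables equal to $1$ at $p$, and let $v := f_\cluster(p) = (1,\dots,1,v_{m+1},\dots,v_r)$ with each $v_i > 1$ for $i > m$ (strict, since $x_i \notin \cluster$ would contradict... no: $x_{m+1},\dots,x_r$ \emph{are} in $\cluster$ but are not among the cluster variables equal to $1$ at $p$, so $v_i > 1$ strictly for $i > m$). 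The key point is that only \emph{finitely many} cluster variables exist, so only finitely many constraints $\ell_{x,\cluster} \geq 1$ cut out the superunitary region near $p$.

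First I would partition the cluster variables of $\sA$ into three groups relative to the value at $p$: those in $\{x_1,\dots,x_m\}$ (value exactly $1$), those among $\{x_{m+1},\dots,x_r\}$ (in $\cluster$, value $> 1$), and all remaining cluster variables $y$ (value $> 1$, by Proposition~\ref{prop:entries 1 form a subcluster} since the cluster variables equal to $1$ at $p$ form exactly the subcluster $\{x_1,\dots,x_m\}$). For each cluster variable $y$ in the third group, $f_y$ is continuous and $f_y(p) > 1$, so there is an open neighborhood $U_y \subset \sA(\RR_{>0})$ of $p$ on which $f_y > 1$; likewise for each $i \in \{m+1,\dots,r\}$ there is an open neighborhood $U_i$ of $p$ on which $f_{x_i} > 1$. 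Since finite type means finitely many cluster variables, I set $U := \bigcap_{y} U_y \cap \bigcap_{i=m+1}^r U_i$, an open neighborhood of $p$ in $\sA(\RR_{>0})$. On $U$, the only superunitarity constraints that can fail are $f_{x_1} \geq 1, \dots, f_{x_m} \geq 1$ — all other cluster variables are automatically $> 1$.

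Now I would verify the displayed equality. Transport everything through the homeomorphism $f_\cluster : \sA(\RR_{>0}) \xrightarrow{\sim} \RR_{>0}^r$ (Proposition~\ref{prop:homemomorphism from totally positive region to positive orthant}), under which $f_{x_i}$ becomes the $i$-th coordinate function and $\sA(\RR_{\geq 1})$ becomes the locus where every $\ell_{x,\cluster} \geq 1$. For the inclusion $\subseteq$: if $q \in U \cap \sA(\RR_{\geq 1})$ then $f_\cluster(q) \in f_\cluster(U)$ trivially, and $q$ being superunitary forces $f_{x_i}(q) \geq 1$ for $i = 1,\dots,m$ and $f_{x_i}(q) > 0$ for $i > m$, i.e.\ $f_\cluster(q) \in \RR_{\geq 1}^m \times \RR_{>0}^{r-m}$. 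For the reverse inclusion $\supseteq$: given a point $w \in f_\cluster(U) \cap (\RR_{\geq 1}^m \times \RR_{>0}^{r-m})$, let $q := f_\cluster^{-1}(w) \in U$; then $f_{x_i}(q) \geq 1$ for $i \leq m$ by the first $m$ coordinates of $w$, while $f_{x_i}(q) > 1$ for $m < i \leq r$ because $q \in U \subseteq U_i$, and $f_y(q) > 1$ for every other cluster variable $y$ because $q \in U \subseteq U_y$; hence $q$ is superunitary, so $q \in U \cap \sA(\RR_{\geq 1})$ and $w = f_\cluster(q) \in f_\cluster(U \cap \sA(\RR_{\geq 1}))$.

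The only genuine subtlety — and the single place finiteness is used — is in forming $U$ as a finite intersection of open sets; in infinite type this argument breaks, which is consistent with the theorem being stated only in finite type. A minor point to get right is the strictness $v_i > 1$ for $i > m$: this is exactly Proposition~\ref{prop:entries 1 form a subcluster}, which says the cluster variables taking value $1$ at $p$ are precisely $\{x_1,\dots,x_m\}$, so none of $x_{m+1},\dots,x_r$ can equal $1$ there. Everything else is a routine unwinding of the definition of the subspace topology and the homeomorphism $f_\cluster$.
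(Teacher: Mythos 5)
Your proof is correct and is essentially the paper's argument: both define $U$ as a finite intersection of open sets on which every cluster variable outside $\{x_1,\dots,x_m\}$ stays strictly above $1$ (the paper fixes a single $\epsilon$ and uses the threshold $1+\epsilon/2$, while you take one neighborhood per variable, which is an immaterial difference), and then both inclusions follow exactly as you wrote via the chart $f_\cluster$. The only nitpick is that the strictness $f_y(p)>1$ for cluster variables outside the subcluster follows directly from the hypothesis that $\{x_1,\dots,x_m\}$ is precisely the set of value-$1$ cluster variables together with superunitarity of $p$, rather than from Proposition~\ref{prop:entries 1 form a subcluster} (which is only needed to know this set is a subcluster at all).
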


\begin{proof}
Since the set of cluster variables is finite, we can define a real number $\epsilon$ by the formula
\[ 1+ \epsilon = \min f_x(p)\]
where the minimum runs over cluster variables $x$ which are not in $(x_1,x_2,...,x_m)$. Since all such cluster variables have value greater than $1$ at $p$, we see $\epsilon>0$. 
Define
\begin{align} 
\label{eq:lemma:U} U & := 
\bigcap \{ q\in \mathcal{A}(\mathbb{R}_{>0}) \mid f_x(q) >1+\epsilon/2 \} \\
\nonumber
& = \bigcap f_x^{-1} 
\left( \RR_{>1+\frac{\epsilon}{2}} \right) 
\end{align}
where the intersection runs over cluster variables $x$ which are not in $(x_1,x_2,...,x_m)$. 
The set $U$ is open in $\mathcal{A}(\mathbb{R}_{>0})$ (since each $f_x$ is continuous and since the intersection is finite) and contains $p$ because $f_x(p) \geq 1 + \epsilon > 1 + \frac{\epsilon}{2}$ for all cluster variable $x \notin (x_1, x_2, ..., x_m)$.

The containment 
$f_\mathbf{x}(U\cap \mathcal{A}(\mathbb{R}_{\geq 1})) 
\subset 
f_\mathbf{x}(U)\cap (\mathbb{R}_{\geq1}^m \times \mathbb{R}_{>0}^{r-m} )$ 
is clear 
because 
$f_\mathbf{x} (\SSS) \subset \RR_{\geq 1}^m \times \RR_{>0}^{r-m}$.
To show 
$f_\mathbf{x}(U\cap \mathcal{A}(\mathbb{R}_{\geq 1})) 
\supset 
f_\mathbf{x}(U)\cap (\mathbb{R}_{\geq1}^m \times \mathbb{R}_{>0}^{r-m} )$, 
suppose $q \in U$ and $f_{\mathbf{x}}(q) \in 
\RR_{\geq 1}^m \times \RR_{>0}^{r-m}$.
By construction of $U$, we have $f_x(q)  \geq 1$ for all $x$ not in $(x_1, x_2, ... ,x_m)$. 
Since $f_{\mathbf{x}}(q) \in 
\RR_{\geq 1}^m \times \RR_{>0}^{r-m}$, 
we have $f_{x_j}(q) \geq 1$ for each $1 \geq j \geq m$. This shows that $q \in \SSS$.
\end{proof}

\begin{rem}
\label{rem: manifoldwithcorners}
The lemma implies that, for any $p\in \mathcal{A}(\mathbb{R}_{\geq1})$, the map $q\mapsto f_\cluster(q)-f_\cluster(p)$ is a local diffeomorphism from a neighborhood of $p\in \SSS$ to a neighborhood of the origin in $\mathbb{R}_{\geq0}^m\times \mathbb{R}^{r-m}$.
Therefore, $\SSS$ is a \emph{smooth manifold with corners} in the sense of \cite{Joy12} and the subcluster faces are the boundary strata.\footnote{Technically, since we don't know the subcluster faces are connected, this only shows each subcluster face is a union of boundary strata; however, Theorem 
\ref{thmIntro:region is generalized associahedron}
will imply that each face is simply connected.}
\end{rem}

\begin{coro}
\label{coro: manifold}
If $\subcluster$ is a subcluster in a finite type cluster algebra $\mathcal{A}$ of rank $r$, then the subcluster face $\Subface{\subcluster}$ is a manifold of dimension $r-|\subcluster|$.
\end{coro}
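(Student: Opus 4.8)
The plan is to derive the corollary directly from Lemma~\ref{lemma:U}, which is the technical heart of the matter. Let $\subcluster = (x_1, x_2, \dots, x_m)$ be the given subcluster, and extend it to a full cluster $\cluster = (x_1, \dots, x_m, x_{m+1}, \dots, x_r)$; such a cluster exists by the definition of subcluster. I want to show that $\Subface{\subcluster}$ is a manifold of dimension $r - m$, and the natural chart will be the restriction of $f_\cluster$.

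The key steps, in order, are as follows. First, I would fix an arbitrary point $p \in \Subface{\subcluster}$; by the definition of the subcluster face, the cluster variables equal to $1$ at $p$ are exactly $x_1, \dots, x_m$, so Lemma~\ref{lemma:U} applies with this $p$ and this cluster $\cluster$. It produces an open neighborhood $U \subset \mathcal{A}(\mathbb{R}_{>0})$ of $p$ with $f_\cluster(U \cap \mathcal{A}(\mathbb{R}_{\geq 1})) = f_\cluster(U) \cap (\mathbb{R}_{\geq 1}^m \times \mathbb{R}_{>0}^{r-m})$. Second, I would intersect further: $\Subface{\subcluster}$ is the locus in $\mathcal{A}(\mathbb{R}_{\geq 1})$ where $f_{x_j} = 1$ for $j \leq m$ and $f_x > 1$ for every other cluster variable. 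Inside $U$ the latter condition is automatic (that is how $U$ was built — all cluster variables outside $\subcluster$ exceed $1+\epsilon/2 > 1$ there), so $U \cap \Subface{\subcluster} = U \cap \mathcal{A}(\mathbb{R}_{\geq 1}) \cap \{f_{x_1} = \dots = f_{x_m} = 1\}$. Applying the homeomorphism $f_\cluster$ (Proposition~\ref{prop:homemomorphism from totally positive region to positive orthant}), this maps homeomorphically onto $f_\cluster(U) \cap (\{1\}^m \times \mathbb{R}_{>0}^{r-m})$, which is an open subset of the affine subspace $\{1\}^m \times \mathbb{R}^{r-m} \cong \mathbb{R}^{r-m}$. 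Third, I would observe that since $p \in \Subface{\subcluster}$ was arbitrary, every point of $\Subface{\subcluster}$ has a neighborhood (in the subspace topology) homeomorphic to an open subset of $\mathbb{R}^{r-m}$; together with the fact that $\Subface{\subcluster}$ is Hausdorff and second countable (being a subspace of $\mathcal{A}(\mathbb{R}_{>0}) \cong \mathbb{R}_{>0}^r$), this is exactly the statement that $\Subface{\subcluster}$ is an $(r-m)$-dimensional manifold.

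The main obstacle — or rather, the point requiring a little care — is not any of the above but making sure the neighborhood $U$ from Lemma~\ref{lemma:U} is used correctly: one must remember that $U$ is open in $\mathcal{A}(\mathbb{R}_{>0})$, not merely in $\mathcal{A}(\mathbb{R}_{\geq 1})$, and that the subspace topology on $\Subface{\subcluster}$ is inherited from $\mathcal{A}(\mathbb{R}_{\geq 1}) \subset \mathcal{A}(\mathbb{R}_{>0})$, so $U \cap \Subface{\subcluster}$ is genuinely an open neighborhood of $p$ in $\Subface{\subcluster}$. Also worth spelling out is why the condition ``$f_x > 1$ for $x \notin \subcluster$'' is vacuous on $U$: this is immediate from the defining formula for $U$ in Lemma~\ref{lemma:U}, namely $U = \bigcap f_x^{-1}(\mathbb{R}_{>1+\epsilon/2})$ over $x \notin \subcluster$. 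Once these bookkeeping points are in place, the corollary follows with essentially no computation. (One could alternatively phrase this as the statement that $\Subface{\subcluster}$ is the interior, in the manifold-with-corners sense of Remark~\ref{rem: manifoldwithcorners}, of a codimension-$m$ corner stratum, but the direct chart argument above is cleaner for just the dimension count.)
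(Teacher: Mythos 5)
Your argument is correct and follows essentially the same route as the paper: the corollary is a direct consequence of Lemma~\ref{lemma:U}, with the local chart coming from $f_\cluster$ restricted to the neighborhood $U$. The paper phrases this through the manifold-with-corners language of Remark~\ref{rem: manifoldwithcorners} rather than writing out the chart explicitly, but the underlying content is identical.
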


Lemma \ref{lemma:U} tells us that the closure of any subcluster face is a union of subcluster faces, and the closure relation among faces is dual to the containment relation among subclusters.

\begin{prop}
\label{prop: union of subcluster faces}
Let $\sA$ be a finite type cluster algebra, and let $\subcluster$ be a subcluster in $\sA$. Then 
\begin{equation*}\displaystyle
\overline{\Subface{\subcluster}} = \bigsqcup_{\subcluster'\supseteq\subcluster} \Subface{\subcluster'}
\end{equation*}
where the disjoint union runs over subclusters $\subcluster'$ containing $\subcluster$. 
\end{prop}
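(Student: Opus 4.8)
The plan is to prove the two inclusions of $\overline{\Subface{\subcluster}} = \bigsqcup_{\subcluster'\supseteq\subcluster} \Subface{\subcluster'}$ separately, using Lemma~\ref{lemma:U} as the main tool in both directions. First I would fix a cluster $\cluster$ containing $\subcluster$, say $\cluster = (x_1,\ldots,x_m,x_{m+1},\ldots,x_r)$ with $\subcluster = (x_1,\ldots,x_m)$, and work throughout via the homeomorphism $f_\cluster:\mathcal{A}(\mathbb{R}_{>0})\xrightarrow{\sim}\mathbb{R}_{>0}^r$, so that closures and limits in $\SSS$ may be computed in $\mathbb{R}_{>0}^r$.

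For the inclusion $\bigsqcup_{\subcluster'\supseteq\subcluster}\Subface{\subcluster'}\subseteq\overline{\Subface{\subcluster}}$: it suffices to show each $\Subface{\subcluster'}$ with $\subcluster'\supseteq\subcluster$ lies in the closure of $\Subface{\subcluster}$. Take $p\in\Subface{\subcluster'}$; then the cluster variables equal to $1$ at $p$ are exactly those in $\subcluster'\supseteq\subcluster$. Choose a cluster $\cluster$ containing $\subcluster'$ (hence containing $\subcluster$) and apply Lemma~\ref{lemma:U} at $p$: there is a neighborhood $U$ with $f_\cluster(U\cap\SSS) = f_\cluster(U)\cap(\mathbb{R}_{\geq1}^{m'}\times\mathbb{R}_{>0}^{r-m'})$, where $m'=|\subcluster'|$ and the first $m'$ coordinates correspond to $\subcluster'$. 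Now I would perturb $p$: move the coordinates indexed by $\subcluster'\setminus\subcluster$ strictly above $1$ (staying inside $U$ and keeping all other coordinates fixed near their values, which are $>1+\epsilon/2$). The resulting nearby points lie in $U\cap\SSS$ and have exactly $\subcluster$ as their set of unit cluster variables — because the variables in $\subcluster$ still equal $1$, the variables in $\subcluster'\setminus\subcluster$ now exceed $1$, and all variables outside $\subcluster'$ remain $>1$ by the choice of $U$. (Here I use that a cluster variable not in $\cluster$ is a positive Laurent polynomial in $\cluster$ that is continuous on $\mathbb{R}_{>0}^r$, so small perturbations keep it away from $1$ when it starts above $1$.) Hence $p$ is a limit of points of $\Subface{\subcluster}$.

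For the reverse inclusion $\overline{\Subface{\subcluster}}\subseteq\bigsqcup_{\subcluster'\supseteq\subcluster}\Subface{\subcluster'}$: let $q\in\overline{\Subface{\subcluster}}$. Since $\SSS$ is closed in $\mathcal{A}(\mathbb{R}_{>0})$ (each $f_x^{-1}(\mathbb{R}_{\geq1})$ is closed and the intersection is over a finite set in finite type), $q\in\SSS$, so by Proposition~\ref{prop:entries 1 form a subcluster} the set $\subcluster'$ of cluster variables equal to $1$ at $q$ is a subcluster, i.e.\ $q\in\Subface{\subcluster'}$. It remains to show $\subcluster\subseteq\subcluster'$, i.e.\ every $x\in\subcluster$ is equal to $1$ at $q$. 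But $f_x$ is continuous, $f_x\equiv1$ on $\Subface{\subcluster}$ by definition, so $f_x(q)=1$ by taking limits; combined with $q\in\SSS$ (so $f_x(q)\geq1$) this is automatic. Thus $q\in\Subface{\subcluster'}$ with $\subcluster'\supseteq\subcluster$. Finally, the union on the right is disjoint because distinct subclusters give disjoint face conditions (a point has a well-defined set of unit cluster variables), which is immediate from the definition of $\Subface{(-)}$.

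I expect the only real subtlety — the ``main obstacle,'' though a mild one — to be the perturbation argument in the first inclusion: one must verify that a small move in the $\subcluster'\setminus\subcluster$ directions can be made while (a) staying in the neighborhood $U$ supplied by Lemma~\ref{lemma:U}, (b) keeping the variables of $\subcluster'\setminus\subcluster$ strictly greater than $1$, and (c) not accidentally driving some other cluster variable down to $1$. Point (c) is handled by $U$'s defining inequality $f_x>1+\epsilon/2$ for $x\notin\cluster$-part outside $\subcluster'$, but one should be a little careful that the Lemma is applied at $p$ with respect to the subcluster $\subcluster'$ (not $\subcluster$), so that $U$ genuinely controls all variables outside $\subcluster'$. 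Once the bookkeeping of which variables are pinned, perturbed, or bounded away from $1$ is set up correctly, the argument is routine.
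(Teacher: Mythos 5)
Your proposal is correct and takes essentially the same approach as the paper: the paper likewise applies Lemma~\ref{lemma:U} at a point of $\Subface{\subcluster'}$ with respect to a cluster containing $\subcluster'$ and concludes that $f_\cluster(U\cap\Subface{\subcluster'})$ lies in the closure of $f_\cluster(U\cap\Subface{\subcluster})$, which is exactly your perturbation step made implicit, while your reverse inclusion via continuity of the $f_x$ is the paper's neighborhood argument stated contrapositively. No gaps beyond the bookkeeping you already flag (controlling the finitely many cluster variables outside the chosen cluster, which the construction of $U$ or finiteness plus continuity handles).
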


\begin{proof}
Let $\subcluster'$ be any subcluster (not necessarily containing $\subcluster$), and let $p\in \Subface{\subcluster'}$. We show that $p$ is in the closure of $\Subface{\subcluster}$ iff $\subcluster'\supseteq \subcluster$.

If there is $x\in \subcluster \smallsetminus \subcluster'$, 
then $f_x(p)>1$ and, by continuity, $f_x>1$ on an open neighborhood of $p$ in $\mathcal{A}(\mathbb{R}_{>0})$. This open neighborhood is then disjoint from $\Subface{\subcluster}$, and so $p\not\in\overline{\Subface{\subcluster}}$.

If $\subcluster\subseteq \subcluster'$, then we may choose a cluster $(x_1,x_2,...,x_r)$ such that $\subcluster=\{x_1,x_2,...,x_n\}$ and $\subcluster'=\{x_1,x_2,...,x_m\}$ for some $m$ and $n$. By Lemma \ref{lemma:U}, we may choose an open neighborhood $U$ of $p$ in $\mathcal{A}(\mathbb{R}_{>0})$ such that
\[ 
f_{(x_1,x_2,...,x_r)}(U\cap \mathcal{A}(\mathbb{R}_{\geq 1})) 
=
f_{(x_1,x_2,...,x_r)}(U)\cap (\mathbb{R}_{\geq1}^m \times \mathbb{R}_{>0}^{r-m} )
\]
In particular, 
\begin{align*}
f_{(x_1,x_2,...,x_r)}(U\cap \Subface{\subcluster'} ) 
&=
f_{(x_1,x_2,...,x_r)}(U)\cap (\{1\}^m \times \mathbb{R}_{>0}^{r-m} )
\\
f_{(x_1,x_2,...,x_r)}(U\cap \Subface{\subcluster} ) 
&=
f_{(x_1,x_2,...,x_r)}(U)\cap (\{1\}^{n} \times \mathbb{R}_{>1}^{m-n} \times \mathbb{R}_{>0}^{r-m} )
\end{align*}
Since $\{1\}^m \times \mathbb{R}_{>0}^{r-m}$ is in the closure of $\{1\}^{n} \times \mathbb{R}_{>1}^{m-n} \times \mathbb{R}_{>0}^{r-m}$, $p$ is in the closure of $\Subface{\subcluster}$.
\end{proof}

The special case in which $\cluster=\varnothing$ is worth highlighting.

\begin{coro}\label{coro:subcluster faces are boundary of superunitary domain}
If $\mathcal{A}$ is finite type, the interior of the superunitary region $\SSS$ is $\Subface{\varnothing}= \mathcal{A}(\mathbb{R}_{>1})$ and the boundary (i.e.~the complement of the interior) is 
\[ \bigsqcup_{\subcluster\neq \varnothing} \Subface{\subcluster}\]
where the disjoint union runs over non-empty subclusters.
\end{coro}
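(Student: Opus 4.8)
The statement is essentially a direct corollary of Proposition \ref{prop: union of subcluster faces} applied to the empty subcluster, together with the decomposition of $\SSS$ into subcluster faces established by Proposition \ref{prop:entries 1 form a subcluster}. The plan is as follows. First I would recall that every point $p \in \SSS$ lies in exactly one subcluster face: by Proposition \ref{prop:entries 1 form a subcluster}, the set of cluster variables equal to $1$ at $p$ is a subcluster $\subcluster$, and then $p \in \Subface{\subcluster}$ by definition; uniqueness is immediate since the defining condition of $\Subface{\subcluster}$ pins down $\subcluster$ as exactly that set. Hence $\SSS = \bigsqcup_{\subcluster} \Subface{\subcluster}$, the disjoint union over all subclusters.

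Next, I would identify the interior. Every proper inequality $f_x > 1$ is an open condition, and by Corollary \ref{coro: manifold} the face $\Subface{\varnothing}$ has dimension $r$, so it is an open subset of the $r$-manifold $\mathcal{A}(\mathbb{R}_{>0})$; thus $\Subface{\varnothing}$ is contained in the interior of $\SSS$. Conversely, any point $p$ in a face $\Subface{\subcluster}$ with $\subcluster \neq \varnothing$ has some cluster variable $x$ with $f_x(p) = 1$, and by Lemma \ref{lemma:U} a neighborhood of $p$ in $\SSS$ is, in the chart $f_\cluster$, modeled on $\mathbb{R}_{\geq1}^m \times \mathbb{R}_{>0}^{r-m}$ with $m = |\subcluster| \geq 1$; since such a model has empty interior as a subset of $\mathbb{R}^r$ (it lies in the closed half-space $\{x_1 \geq 1\}$ with $p$ on its bounding hyperplane), $p$ is not an interior point of $\SSS$. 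Therefore the interior is exactly $\Subface{\varnothing}$, which equals $\mathcal{A}(\mathbb{R}_{>1})$ by unwinding the definition of the empty subcluster face. Taking complements within $\SSS$ using the disjoint decomposition gives the boundary as $\bigsqcup_{\subcluster \neq \varnothing} \Subface{\subcluster}$.

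There is no real obstacle here; the only point requiring a little care is the identification of which faces are interior points, and that is handled cleanly by the local model from Lemma \ref{lemma:U}. Alternatively, one can deduce the boundary description purely formally: by Proposition \ref{prop: union of subcluster faces} with $\subcluster = \varnothing$, $\overline{\Subface{\varnothing}} = \SSS$, so $\Subface{\varnothing}$ is dense; combined with its openness (from Corollary \ref{coro: manifold}) and the disjointness of the face decomposition, the complement of $\Subface{\varnothing}$ in $\SSS$ is precisely the union of the remaining faces, and the local model shows this complement is closed with empty interior, hence genuinely the topological boundary. I would present the Lemma \ref{lemma:U} argument as the main line since it simultaneously shows $\Subface{\varnothing}$ is open and that the other faces consist of boundary points.
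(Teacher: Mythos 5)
Your argument is correct and follows essentially the route the paper intends: the paper states this corollary without proof as the $\cluster=\varnothing$ special case of Proposition \ref{prop: union of subcluster faces}, resting on the same disjoint face decomposition, the openness of $\Subface{\varnothing}=\mathcal{A}(\mathbb{R}_{>1})$ (finitely many strict inequalities in finite type), and the local model of Lemma \ref{lemma:U} to rule out interior points on the nonempty faces. One phrasing slip: the model $\mathbb{R}_{\geq1}^m\times\mathbb{R}_{>0}^{r-m}$ does \emph{not} have empty interior in $\mathbb{R}^r$; the operative point is exactly your parenthetical, namely that $f_\cluster(p)$ lies on the bounding hyperplane of a closed half-space containing the local image, so $p$ cannot be an interior point.
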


\subsection{Deleting subclusters}

Recall from Section \ref{section: deletion} that each subcluster $\subcluster$ in a finite type cluster algebra $\mathcal{A}$ determines a \textbf{deletion map} $d:\mathcal{A}\rightarrow \mathcal{A}^\dagger$ to a cluster algebra $\mathcal{A}^\dagger$, which sends the cluster variables in $\subcluster$ to 1 and every cluster variable to a sum of cluster monomials.

Any ring homomorphism $\mathcal{A}^\dagger\rightarrow \mathbb{R}$ pulls back along $d$ to a ring homomorphism $\mathcal{A}\rightarrow \mathbb{R}$ which sends each cluster variable in $\subcluster$ to $1$, giving a pullback map $d^*:\mathcal{A}^\dagger(\mathbb{R})\rightarrow \mathcal{A}(\mathbb{R})$.

\begin{lemma}
\label{lemma: deletionhomeomorphism}
Given a subcluster $\subcluster$ in a finite type cluster algebra $\mathcal{A}$, pulling back along the deletion map $d:\mathcal{A}\rightarrow \mathcal{A}^\dagger$ gives an inclusion of superunitary regions
$ d^*: \mathcal{A}^\dagger(\mathbb{R}_{\geq1})\rightarrow \mathcal{A}(\mathbb{R}_{\geq1}) $
which restricts to a homeomorphism \[d^*:\mathcal{A}^\dagger(\mathbb{R}_{\geq1})_{\subcluster'\smallsetminus \subcluster}\xrightarrow{\sim} \mathcal{A}(\mathbb{R}_{\geq1})_{\subcluster'}\]
for each subcluster $\subcluster'$ in $\mathcal{A}$ containing $\subcluster$. 
\end{lemma}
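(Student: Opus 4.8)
The plan is to analyze the map $d^*$ chart-by-chart using the combinatorial description of the deletion map $d:\mathcal{A}\to\mathcal{A}^\dagger$ from Proposition~\ref{prop:deletion map}, together with the orthant identifications from Proposition~\ref{prop:homemomorphism from totally positive region to positive orthant}. First I would fix a cluster $\mathbf{x}=(x_1,\dots,x_r)$ in $\mathcal{A}$ containing $\subcluster=\{x_1,\dots,x_n\}$, so that by Proposition~\ref{prop:deletion map}(2) the images $d(x_{n+1}),\dots,d(x_r)$ form a cluster $\mathbf{x}^\dagger$ in $\mathcal{A}^\dagger$ of rank $r-n$. Composing with the homeomorphisms $f_{\mathbf{x}}$ and $f_{\mathbf{x}^\dagger}$, the pullback map $d^*$ becomes, in coordinates, the inclusion $\mathbb{R}_{>0}^{r-n}\hookrightarrow \mathbb{R}_{>0}^{r}$ sending $(v_{n+1},\dots,v_r)$ to $(1,\dots,1,v_{n+1},\dots,v_r)$: indeed a ring homomorphism $q:\mathcal{A}^\dagger\to\mathbb{R}$ pulls back to $q\circ d:\mathcal{A}\to\mathbb{R}$, which sends each $x_i\in\subcluster$ to $1$ and each $x_j$ ($j>n$) to $q(d(x_j))$. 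This is manifestly a homeomorphism onto the slice $\{1\}^n\times\mathbb{R}_{>0}^{r-n}$ and in particular an injection, so $d^*:\mathcal{A}^\dagger(\mathbb{R})\to\mathcal{A}(\mathbb{R})$ restricts to an injection on totally positive regions.

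Next I would check that $d^*$ carries the superunitary region into the superunitary region, i.e.\ that if $q\in\mathcal{A}^\dagger(\mathbb{R}_{\geq1})$ then $q\circ d$ sends every cluster variable $y$ of $\mathcal{A}$ into $\mathbb{R}_{\geq1}$. If $y\in\subcluster$ this is immediate since $(q\circ d)(y)=1$. Otherwise, by Proposition~\ref{prop:deletion map}(2)--(3), $d(y)$ is a nonempty sum of cluster monomials in $\mathcal{A}^\dagger$ (a single cluster variable when $y$ is compatible with $\subcluster$, and a sum of at least two cluster monomials otherwise); since $q$ sends every cluster variable of $\mathcal{A}^\dagger$ into $\mathbb{R}_{\geq1}$, it sends every cluster monomial into $\mathbb{R}_{\geq1}$, hence sends any nonempty sum of cluster monomials into $\mathbb{R}_{\geq1}$. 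Thus $d^*$ restricts to an inclusion $\mathcal{A}^\dagger(\mathbb{R}_{\geq1})\hookrightarrow\mathcal{A}(\mathbb{R}_{\geq1})$, and since it is already a homeomorphism onto its image for the totally positive regions (being the coordinate slice inclusion, which is a closed embedding), it is a homeomorphism onto its image here as well.

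It then remains to identify, for each subcluster $\subcluster'\supseteq\subcluster$ in $\mathcal{A}$, the image under $d^*$ of the face $\mathcal{A}^\dagger(\mathbb{R}_{\geq1})_{\subcluster'\smallsetminus\subcluster}$ with the face $\mathcal{A}(\mathbb{R}_{\geq1})_{\subcluster'}$; combined with the above this gives the claimed restricted homeomorphisms. The key observation is a bijection, for a point $p=d^*(q)$ in the image, between the cluster variables of $\mathcal{A}$ taking value $1$ at $p$ and the cluster variables of $\mathcal{A}^\dagger$ taking value $1$ at $q$, shifted by $\subcluster$. Concretely: every $x\in\subcluster$ has $f_x(p)=1$; for $y$ compatible with $\subcluster$ but not in $\subcluster$, $d(y)$ is a single cluster variable $y^\dagger$ of $\mathcal{A}^\dagger$ and $f_y(p)=f_{y^\dagger}(q)$, so $f_y(p)=1 \iff f_{y^\dagger}(q)=1$; and for $y$ \emph{not} compatible with $\subcluster$, $d(y)$ is a sum of at least two cluster monomials in $\mathcal{A}^\dagger$, each of which is $\geq1$ at $q$, so $f_y(p)=(q\circ d)(y)\geq 2>1$, meaning such $y$ never contributes to the value-$1$ locus. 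Hence the set of value-$1$ cluster variables at $p$ is exactly $\subcluster$ together with the preimages under $y\mapsto y^\dagger$ of the value-$1$ cluster variables at $q$; this set equals $\subcluster'$ precisely when the value-$1$ locus at $q$ is $\subcluster'\smallsetminus\subcluster$ (using that $d$ induces a bijection on clusters containing $\subcluster$, Proposition~\ref{prop:deletion map}(2)). So $d^*$ sends $\mathcal{A}^\dagger(\mathbb{R}_{\geq1})_{\subcluster'\smallsetminus\subcluster}$ bijectively onto $\mathcal{A}(\mathbb{R}_{\geq1})_{\subcluster'}$, and being a restriction of a homeomorphism it is a homeomorphism.

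The main obstacle, and the step deserving the most care, is the third one: pinning down precisely which cluster variables of $\mathcal{A}$ vanish-to-$1$ at a point in the image of $d^*$. The clean statements of Proposition~\ref{prop:deletion map}(2)--(3) — that $d(y)$ is a single cluster variable exactly when $y$ is compatible with $\subcluster$, and a sum of $\geq2$ cluster monomials otherwise — are exactly what makes this work, since they force every "non-compatible" cluster variable to have value $\geq2$ on the image slice and thereby never interfere with the face structure. One should also be slightly careful that $\subcluster'\supseteq\subcluster$ is genuinely necessary for the face $\mathcal{A}(\mathbb{R}_{\geq1})_{\subcluster'}$ to meet the image at all, which follows since $\subcluster$ is in the value-$1$ locus of every point in the image.
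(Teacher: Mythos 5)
Your argument is correct and is essentially the paper's proof: the substance in both is the trichotomy of Proposition~\ref{prop:deletion map} (variables in $\subcluster$ go to $1$, compatible variables go to cluster variables of $\mathcal{A}^\dagger$, incompatible ones go to sums of at least two cluster monomials and hence take values $\geq 2$ at any superunitary point), which pins down the value-$1$ locus of $d^*(q)$ exactly as you do. The only difference is packaging: where the paper identifies $d^*$ with the bijection onto ring homomorphisms sending $\subcluster$ to $1$ via the quotient presentation $\mathcal{A}/\langle x-1\mid x\in\subcluster\rangle \simeq \mathcal{A}^\dagger$, you realize it in cluster coordinates as the slice embedding $\{1\}^{n}\times\mathbb{R}_{>0}^{r-n}\subset\mathbb{R}_{>0}^{r}$, which if anything makes the topological side (homeomorphism onto the image, and surjectivity of each face map, since every point of $\mathcal{A}(\mathbb{R}_{\geq1})_{\subcluster'}$ is $1$ on $\subcluster$ and hence lies on the slice) slightly more explicit than the paper's own write-up.
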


\begin{proof}
The deletion map factors through the quotient
\[ \mathcal{A} \rightarrow \mathcal{A} / \langle x-1 \mid x\in \subcluster\rangle \xrightarrow {\sim} \mathcal{A}^\dagger \]
Therefore, pulling back along $d$ induces a bijection
\[
d^*:\{ \text{ring homomorphisms }\mathcal{A}^\dagger\rightarrow \mathbb{R} \} 
\xrightarrow{}
\{ \text{ring homomorphisms }\mathcal{A}\rightarrow \mathbb{R} \text{ sending $\subcluster$ to $1$}\}
\]

Let $y$ be a cluster variable in $\mathcal{A}$. By Proposition \ref{prop:deletion map}, $d(y)$ is a sum of cluster monomials in $\mathcal{A}^\dagger$. Therefore, a ring homomorphism $\mathcal{A}^\dagger\rightarrow \mathbb{R}$ which sends the cluster monomials into $\mathbb{R}_{\geq1}$ must pull back to a ring homomorphism $\mathcal{A}\rightarrow \mathbb{R}$ which sends $y$ into $\mathbb{R}_{\geq1}$. Therefore, $d^*$ includes $\mathcal{A}^\dagger(\mathbb{R}_{\geq1})$ into $\mathcal{A}(\mathbb{R}_{\geq1})$.

If $\subcluster'$ is a subcluster in $\mathcal{A}$ containing $\subcluster$, then $\subcluster'\smallsetminus \subcluster$ is a subcluster in $\mathcal{A}^\dagger$, 
and the subcluster face $\mathcal{A}^\dagger( \mathbb{R}_{\geq1}) _{\subcluster'\smallsetminus \subcluster}$ consists of ring homomorphisms $\mathcal{A}^\dagger\rightarrow \mathbb{R}$ which send $x\in \subcluster'\smallsetminus \subcluster$ to $1$ and all other cluster variables into $\mathbb{R}_{>1}$. Pulling back along $d$ gives ring homomorphism $\mathcal{A}\rightarrow \mathbb{R}$ which send $x\in \subcluster'$ to $1$ and all other cluster variables into $\mathbb{R}_{>1}$.
\end{proof}

The case when $\cluster=\cluster'$ gives a homeomorphism
\[\mathcal{A}^\dagger(\mathbb{R}_{\geq1})_{\varnothing} =\mathcal{A}^\dagger(\mathbb{R}_{>1}) \xrightarrow{\sim} \mathcal{A}(\mathbb{R}_{\geq1})_{\subcluster}\]
Combining Lemma \ref{lemma: deletionhomeomorphism} with Proposition \ref{prop: union of subcluster faces} gives the following.

\begin{coro}\label{coro: deletionhomeomorphism}
Given a subcluster $\subcluster$ in a finite type cluster algebra $\mathcal{A}$, pulling back along the deletion map $d:\mathcal{A}\rightarrow \mathcal{A}^\dagger$ induces homeomorphisms
\[ \mathcal{A}^\dagger(\mathbb{R}_{>1}) \xrightarrow{\sim} 
\mathcal{A}(\mathbb{R}_{\geq1})_{\subcluster}
\text{ and }
\mathcal{A}^\dagger(\mathbb{R}_{\geq1}) 
\xrightarrow{\sim} 
\bigsqcup_{\subcluster'\supseteq \subcluster} \mathcal{A}(\mathbb{R}_{\geq1})_{\subcluster'}
=
\overline{\mathcal{A}(\mathbb{R}_{\geq1})_{\subcluster}}
\]
\end{coro}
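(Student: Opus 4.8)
The plan is to read the corollary off from Lemma~\ref{lemma: deletionhomeomorphism} and Proposition~\ref{prop: union of subcluster faces}, handling the two homeomorphisms in turn. For the first, note that $\mathcal{A}^\dagger(\mathbb{R}_{>1})$ is by definition the subcluster face $\mathcal{A}^\dagger(\mathbb{R}_{\geq1})_\varnothing$ of the empty subcluster in $\mathcal{A}^\dagger$: a ring homomorphism sends every cluster variable of $\mathcal{A}^\dagger$ into $\mathbb{R}_{>1}$ precisely when it sends none of them to $1$. Hence the first homeomorphism is exactly the $\subcluster'=\subcluster$ case of Lemma~\ref{lemma: deletionhomeomorphism} (for which $\subcluster'\smallsetminus\subcluster=\varnothing$), which is already recorded in the line preceding the statement. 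Throughout we use that $\mathcal{A}^\dagger$ is again finite type --- a principal submatrix of a finite type exchange matrix is of finite type --- which is in any case required to invoke Proposition~\ref{prop:deletion map} and Lemma~\ref{lemma: deletionhomeomorphism}.

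For the second homeomorphism, Lemma~\ref{lemma: deletionhomeomorphism} already furnishes a topological embedding $d^*:\mathcal{A}^\dagger(\mathbb{R}_{\geq1})\hookrightarrow\mathcal{A}(\mathbb{R}_{\geq1})$ (equivalently, $d$ factors as the quotient $\mathcal{A}\twoheadrightarrow\mathcal{A}/\langle x-1\mid x\in\subcluster\rangle$ followed by an isomorphism, so $d^*$ is a homeomorphism onto the closed subset $\bigcap_{x\in\subcluster}f_x^{-1}(1)$ of $\mathcal{A}(\mathbb{R})$, which restricts to an embedding of superunitary regions), so all that remains is to identify its image. Since $\mathcal{A}^\dagger$ is finite type, Proposition~\ref{prop:entries 1 form a subcluster} gives a disjoint decomposition $\mathcal{A}^\dagger(\mathbb{R}_{\geq1})=\bigsqcup_{\mathbf{z}}\mathcal{A}^\dagger(\mathbb{R}_{\geq1})_{\mathbf{z}}$ over the subclusters $\mathbf{z}$ of $\mathcal{A}^\dagger$; by Proposition~\ref{prop:deletion map}(2), $d$ identifies the cluster variables of $\mathcal{A}^\dagger$ with the cluster variables of $\mathcal{A}$ that are compatible with $\subcluster$ but do not lie in it, and hence identifies the subclusters of $\mathcal{A}^\dagger$ with the subclusters $\subcluster'$ of $\mathcal{A}$ containing $\subcluster$ via $\subcluster'\mapsto\subcluster'\smallsetminus\subcluster$. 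Applying Lemma~\ref{lemma: deletionhomeomorphism} piece by piece, $d^*$ carries $\mathcal{A}^\dagger(\mathbb{R}_{\geq1})_{\subcluster'\smallsetminus\subcluster}$ homeomorphically onto $\mathcal{A}(\mathbb{R}_{\geq1})_{\subcluster'}$, so the image of the embedding $d^*$ is $\bigsqcup_{\subcluster'\supseteq\subcluster}\mathcal{A}(\mathbb{R}_{\geq1})_{\subcluster'}$; and Proposition~\ref{prop: union of subcluster faces} identifies this union with $\overline{\mathcal{A}(\mathbb{R}_{\geq1})_\subcluster}$, giving the claim.

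I expect the only point needing genuine care --- beyond bookkeeping --- to be the bijection between subclusters of $\mathcal{A}^\dagger$ and subclusters of $\mathcal{A}$ containing $\subcluster$, together with the exhaustiveness of the subcluster-face decomposition of $\mathcal{A}^\dagger(\mathbb{R}_{\geq1})$; both ultimately rest on $\mathcal{A}^\dagger$ being finite type (through Propositions~\ref{prop:deletion map}(2) and~\ref{prop:entries 1 form a subcluster}). A secondary subtlety worth stating explicitly is that it does not suffice to observe that $d^*$ restricts to a homeomorphism on each subcluster face --- a continuous bijection that is a homeomorphism on each block of a partition of its domain need not be a homeomorphism onto its image --- so one genuinely uses that $d^*$ is globally an embedding, as supplied by Lemma~\ref{lemma: deletionhomeomorphism}.
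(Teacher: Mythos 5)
Your proof is correct and follows exactly the route the paper intends: the first homeomorphism is the $\subcluster'=\subcluster$ case of Lemma~\ref{lemma: deletionhomeomorphism}, and the second is obtained by assembling the face-by-face homeomorphisms of that lemma (using Propositions~\ref{prop:deletion map}(2) and~\ref{prop:entries 1 form a subcluster} to match the face decomposition of $\mathcal{A}^\dagger(\mathbb{R}_{\geq1})$ with the faces $\Subface{\subcluster'}$, $\subcluster'\supseteq\subcluster$) and then invoking Proposition~\ref{prop: union of subcluster faces} to identify the image with the closure. The two subtleties you flag explicitly --- that one needs $d^*$ to be a global topological embedding rather than merely a piecewise homeomorphism, and that $\mathcal{A}^\dagger$ inherits finite type as a principal submatrix of a finite type exchange matrix --- are genuine and are left implicit in the paper's phrasing.
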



\section{Finite type superunitary regions are generalized associahedra}
\label{section: CW}

In this section, we prove that the superunitary region of a finite type cluster algebra has a face-preserving homeomorphism to the generalized associahedron. The previous section established that the faces of the superunitary region are indexed by subclusters, and the closure relations among these faces is dual to the containment relation among the subclusters. 

To complete the proof, we will need two additional facts:
\begin{itemize}
    \item Each subcluster face is homeomorphic to an open ball.
    \item These homeomorphisms make the superunitary region into a \emph{regular CW complex}.
\end{itemize}
Regular CW complexes are a type of cell complex with the virtue of being determined (up to cellular homeomorphism) by their face poset. Since the subcluster poset is known to be dual to the face poset of a polytope (the \emph{generalized associahedron}), this completes the proof.

\subsection{Recollections on regular CW complexes}
\def\DD{\mathbb{D}}
\def\oDD{\overline{\mathbb{D}}}

Let $\DD_n$ and $\oDD_n$ denote the {open ball of dimension $n$} and the {closed ball of dimension $n$}, respectively.
\begin{defn}
A \textbf{regular CW complex} is a Hausdorff space $X$ with a decomposition into subsets called \textbf{cells}, such that, for each cell $C$, the closure is a finite union of cells and is endowed with a homeomorphism $\oDD_n \xrightarrow{\sim} \overline{C}$ which restricts to a homeomorphism 
$ \DD_n\xrightarrow{\sim} C $. 
\end{defn}

\noindent For a more comprehensive overview, see \cite[Chapter III]{LW69}.

The \textbf{face poset} of a regular CW complex has an element for each cell, and one cell is `less than' another if the former is in the closure of the latter.\footnote{Note that the `face poset' consists of `cells', not `faces'. This ambiguity is standard and hopefully doesn't cause confusion.}
A \textbf{cellular homeomorphism} between regular CW complexes is a homeomorphism $X\rightarrow Y$ which restricts to a homeomorphism on each cell. A cellular homeomorphism induces an equivalence between face posets, and a well-known theorem asserts the converse is true.

\begin{thm}[{\cite[Chapter III, Theorem 1.7]{LW69}}]
\label{thm:Lundell-Weingram}
Every isomorphism between face posets of regular CW complexes can be realized by a cellular homeomorphism.
\end{thm}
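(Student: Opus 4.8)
\section*{Proof proposal}

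The plan is to prove Theorem~\ref{thm:Lundell-Weingram} by induction on skeleta, extending the cellular homeomorphism one dimension at a time by means of the \emph{Alexander trick}. Let $\phi\colon P(X)\to P(Y)$ denote the given isomorphism of face posets. Throughout I would use the standard topological preliminaries about regular CW complexes collected in \cite[Chapter III]{LW69}: the closure $\overline C$ of an $n$-cell $C$ is a finite regular CW subcomplex whose cells are exactly the elements of the principal down-set $\{D\mid D\le C\}$ of the face poset; $C$ is open and dense in $\overline C$; the set $\partial\overline C:=\overline C\smallsetminus C$ is a regular CW subcomplex, all of whose cells have dimension $<n$, and restricting any homeomorphism $\overline{\mathbb D}_n\xrightarrow{\sim}\overline C$ to $S^{n-1}=\partial\overline{\mathbb D}_n$ identifies $\partial\overline C$ with $S^{n-1}$; and the face poset of a regular CW complex is graded by cell dimension, so $\phi$ preserves dimension. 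Consequently $\phi$ restricts to a poset isomorphism from the cells of $\overline C$ onto those of $\overline{\phi(C)}$, and hence from the cells of $\partial\overline C$ onto those of $\partial\overline{\phi(C)}$.

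For the induction, suppose we have produced a cellular homeomorphism $h_{n-1}\colon X^{(n-1)}\to Y^{(n-1)}$ realizing $\phi$ on every cell of dimension $<n$; the base case $n=0$ is the bijection $X^{(0)}\to Y^{(0)}$ determined by $\phi$, which is a homeomorphism of discrete spaces. Fix an $n$-cell $C$ of $X$, put $C':=\phi(C)$, and let $\Phi_C\colon\overline{\mathbb D}_n\xrightarrow{\sim}\overline C$ and $\Phi_{C'}\colon\overline{\mathbb D}_n\xrightarrow{\sim}\overline{C'}$ be the defining homeomorphisms, which restrict to homeomorphisms $S^{n-1}\xrightarrow{\sim}\partial\overline C$ and $S^{n-1}\xrightarrow{\sim}\partial\overline{C'}$. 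Since $\partial\overline C\subseteq X^{(n-1)}$ and $\partial\overline{C'}\subseteq Y^{(n-1)}$ and $\phi$ matches their cells, $h_{n-1}$ restricts to a homeomorphism $\partial\overline C\xrightarrow{\sim}\partial\overline{C'}$, and conjugating by the characteristic maps yields a self-homeomorphism
\[
\psi_C:=\Phi_{C'}^{-1}\circ h_{n-1}\big|_{\partial\overline C}\circ\Phi_C\big|_{S^{n-1}}\colon S^{n-1}\xrightarrow{\sim}S^{n-1}.
\]
By the Alexander trick, $\psi_C$ extends radially to a homeomorphism $\widehat{\psi_C}\colon\overline{\mathbb D}_n\to\overline{\mathbb D}_n$, namely $\widehat{\psi_C}(tv):=t\,\psi_C(v)$ for $v\in S^{n-1}$ and $t\in[0,1]$; then $h_C:=\Phi_{C'}\circ\widehat{\psi_C}\circ\Phi_C^{-1}\colon\overline C\xrightarrow{\sim}\overline{C'}$ carries $C$ onto $C'$ and agrees with $h_{n-1}$ on $\partial\overline C$.

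Now define $h_n\colon X^{(n)}\to Y^{(n)}$ to equal $h_{n-1}$ on $X^{(n-1)}$ and $h_C$ on $\overline C$ for each $n$-cell $C$. This is well defined since the pieces agree on overlaps ($\partial\overline C\subseteq X^{(n-1)}$), and it is a bijection because $\phi$ is a bijection on cells, the cells partition $X^{(n)}$ and $Y^{(n)}$, and each $h_C|_C\colon C\to C'$ is bijective. It is continuous because $X^{(n)}$ carries the weak topology determined by $X^{(n-1)}$ together with the closed $n$-cells, and $h_n$ restricts to a continuous map on each of those; applying the same argument to $h_n^{-1}$ (which restricts to $h_C^{-1}$ on $\overline{C'}$) shows $h_n$ is a homeomorphism, and by construction it realizes $\phi$ on all cells of dimension $\le n$. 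If $X$ is finite dimensional this terminates after finitely many steps; otherwise $X=\bigcup_n X^{(n)}$ and $Y=\bigcup_n Y^{(n)}$ carry the colimit topologies, so $h:=\bigcup_n h_n$ and $\bigcup_n h_n^{-1}$ are continuous, giving the desired cellular homeomorphism $X\to Y$.

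I expect the main obstacle to be not the inductive extension, which is formal once everything is set up, but the topological preliminaries gathered in the first paragraph: that $\partial\overline C$ is genuinely a subcomplex of $X^{(n-1)}$, that it is a topological $(n-1)$-sphere, and that the face poset is graded by dimension. These facts rest on closure-finiteness together with invariance of domain (for instance, no cell of $\overline C$ other than $C$ can have dimension $\ge n$, since its interior would otherwise embed as an open subset of $\overline C\smallsetminus C$, a space of covering dimension $n-1$), and they are exactly the foundational results about regular CW complexes established in \cite[Chapter III]{LW69}; granting them, the Alexander trick and the weak-topology gluing are routine.
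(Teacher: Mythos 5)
Your proposal is correct (granting the foundational facts about regular CW complexes that you explicitly defer to \cite[Chapter III]{LW69}: that $\overline{C}\smallsetminus C$ is a subcomplex carried onto $S^{n-1}$ by the characteristic map, and that the face poset is graded by cell dimension, so the given isomorphism preserves dimension), but it takes a genuinely different route from the paper's argument. The paper's proof sketch goes through the barycentric subdivision: the order complex of the face poset --- the simplicial complex whose simplices are the chains --- is homeomorphic to the regular CW complex, and since a simplicial complex is reconstructible from its poset of faces, isomorphic face posets yield homeomorphic complexes, with cellularity coming from the fact that each closed cell is the union of the simplices of chains with top element below it. You instead construct the homeomorphism directly by induction over skeleta, transporting the already-built boundary homeomorphism through the characteristic maps, extending it over each closed $n$-cell by the Alexander trick, and gluing via the weak topology. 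Your route is more hands-on and self-contained: it produces the cellular homeomorphism realizing the prescribed poset isomorphism explicitly, without invoking the subdivision theorem (and is in fact close in spirit to the argument in \cite{LW69} itself). The paper's route is shorter and more conceptual, but it quietly relies on the nontrivial theorem that the barycentric subdivision of a regular CW complex is homeomorphic to it, whose standard proof is an induction of essentially the same shape as yours; in both cases the real content sits in the same foundational facts about regular cells that you flag in your final paragraph.
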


\begin{proof}[Proof sketch]
A regular CW complex has a barycentric subdivision, a homeomorphic simplicial complex whose simplices are chains in the face poset of the CW complex. Since simplicial complexes can be reconstructed from their poset, so can regular CW complexes.
\end{proof}

\subsection{Generalized associahedra}

A \textbf{polytope} is a compact subset of a finite-dimensional real vector space defined by finitely many linear inequalities. 
Every polytope has the structure of a regular CW complex whose cells are the faces, and the face posets of polytopes were characterized by Bj\"orner \cite{Bjo84}.

One of the first results on the combinatorics of finite type cluster algebras was that the poset of subclusters is dual to the face poset of a polytope.

\begin{thm}[{\cite[Theorem 1.4]{CFZ02}}]
The subcluster poset of a finite type cluster algebra $\mathcal{A}$ is dual to the face poset of a polytope, called the \textbf{generalized associahedron} of $\mathcal{A}$.
\end{thm}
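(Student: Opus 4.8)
The plan is to construct the generalized associahedron directly as a polytope and to read its face lattice off the construction, following \cite{CFZ02}. First I would reduce to the irreducible case. A finite type cluster algebra $\mathcal{A}$ is a tensor product of cluster algebras $\mathcal{A}_1,\dots,\mathcal{A}_k$ indexed by the connected components of its Dynkin diagram, and since cluster variables of distinct factors are distinct elements, a subset of a cluster of $\mathcal{A}$ is a subcluster precisely when it is a disjoint union of subclusters of the $\mathcal{A}_j$; hence the subcluster poset of $\mathcal{A}$ is the product of the subcluster posets of the $\mathcal{A}_j$. Because the face poset of a product of polytopes is the product of their face posets, with dimensions adding, it is enough to realize each irreducible factor, so from now on I assume $\mathcal{A}$ is irreducible of Cartan--Killing type $X_n$.

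Next I would pass to a combinatorial model of the subcluster poset. Choose a bipartition $I=I_+\sqcup I_-$ of the Dynkin diagram, which exists since the diagram is a forest, and fix the associated bipartite initial seed. The Fomin--Zelevinsky classification of finite type cluster algebras (see \cite{FWZ21}) then identifies the cluster variables of $\mathcal{A}$ with the almost positive roots $\Phi_{\geq -1}=\Phi_{>0}\cup(-\Pi)$ of the associated root system $\Phi$, a cluster variable corresponding to its denominator vector and the initial cluster to $-\Pi$; under this bijection a set of cluster variables is a subcluster exactly when the corresponding almost positive roots are pairwise compatible for the compatibility degree, and the clusters are the maximal pairwise compatible sets, all of size $n$. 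Thus the subcluster poset, together with its bottom element $\varnothing$, is isomorphic to the face poset of the \emph{cluster complex} $\Delta(\Phi)$, the abstract simplicial complex on $\Phi_{\geq -1}$ whose faces are the compatible subsets. It remains to show $\Delta(\Phi)$ is \emph{polytopal}: that there is a simple $n$-polytope $P$ whose poset of nonempty faces, reversed, is the face poset of $\Delta(\Phi)$ (equivalently, $\Delta(\Phi)$ is the boundary complex of a simplicial $n$-polytope, of which $P$ is the polar dual), with a size-$k$ subcluster corresponding to the codimension-$k$ face of $P$ cut out by intersecting the facets attached to its elements.

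To build $P$ I would follow the explicit construction of \cite{CFZ02}, using the same bipartition; it produces commuting piecewise-linear involutions $\tau_+,\tau_-$ on $\Phi_{\geq -1}$. Attach to each almost positive root $\gamma$ an explicit affine-linear inequality on $\mathbb{R}^n$, determined by requiring the appropriate twisted $\tau_\pm$-equivariance and specified on the negative simple roots by the coordinate functions, and let $P\subset\mathbb{R}^n$ be the solution set of the resulting family of $|\Phi_{\geq -1}|$ inequalities. The verification splits into: (a) $P$ is bounded and $n$-dimensional; (b) each of the $|\Phi_{\geq -1}|$ inequalities is facet-defining, so $P$ has exactly one facet per cluster variable; and (c) for a subset $S\subseteq\Phi_{\geq -1}$, the facets indexed by $S$ have a common point at which all are tight, cutting out a face of dimension $n-|S|$, if and only if $S$ is a face of $\Delta(\Phi)$. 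Granting (a)--(c), $P$ is a simple $n$-polytope whose nonempty faces of codimension $k$ biject, inclusion-reversingly, with the faces of $\Delta(\Phi)$ of size $k$, i.e.\ with the size-$k$ subclusters; declaring $P$ the generalized associahedron and taking products over the irreducible factors completes the proof.

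The hard part is (c): translating the linear algebra of the inequality system into the purely combinatorial compatibility degree. I would argue by induction on $n$, using the $\tau_\pm$-symmetry to reduce claims about arbitrary compatible sets to claims about sets containing some negative simple root $-\alpha_i$. Deleting the node $i$ yields a generalized associahedron of rank $n-1$ --- this matches the deletion of a subcluster from Section~\ref{section: deletion} --- and the inductive hypothesis controls the link of $-\alpha_i$ in $\Delta(\Phi)$, while the facet of $P$ where the inequality for $-\alpha_i$ is tight is identified with that lower-rank polytope; an analysis of the exchange relations at a negative simple root, together with the bipartite symmetry, patches these pieces together. One point worth recording is that the construction depends on the chosen bipartition, but different choices give combinatorially equivalent polytopes, so the face poset --- which is all that is asserted here --- is well defined.
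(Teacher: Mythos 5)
The paper does not prove this statement at all: it is imported as a black-box citation to Chapoton--Fomin--Zelevinsky, so there is no internal argument to compare against. Your plan is, in effect, a faithful reconstruction of that original route: reduce to the irreducible case via products (correctly noting that the subcluster poset and the face poset both factor), use the finite type classification to identify cluster variables with almost positive roots and subclusters with compatible subsets, i.e.\ with faces of the cluster complex, and then prove polytopality by the explicit CFZ inequality system built from the $\tau_\pm$-involutions, with the rank induction that reduces an arbitrary compatible set to one containing a negative simple root and identifies the corresponding facet with the lower-rank associahedron. That is the correct architecture, and the inclusion-reversing bookkeeping (size-$k$ subclusters versus codimension-$k$ faces, empty subcluster versus the whole polytope) is handled properly. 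The only caveat is that the genuinely hard content --- the precise definition of the support function/inequalities and the verification of your step (c), that tightness patterns match compatibility --- is deferred to the cited construction rather than carried out; as a plan for reproving a result the paper itself only cites, that is a reasonable division of labor, but one should be aware that (c) is where essentially all the work of \cite{CFZ02} lives, together with the prior fact (from the $Y$-systems paper) that the compatible cones form a complete simplicial fan.
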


By Theorem \ref{thm:Lundell-Weingram}, this polytope is uniquely determined by the cluster algebra (up to cellular homeomorphism). These generalize a few families of well-studied polytopes.
\begin{itemize}
    \item The generalized associahedron of type $A_n$ is the $n$-dimensional associahedron (also called a Stasheff polytope).
    \item The generalized associahedron of type $B_n$ or $C_n$ is the $n$-dimensional cyclohedron (or Bott--Taubes polytope).
\end{itemize}
More information about generalized associahedra can be found in \cite{CFZ02} and \cite{FR07}.

\subsection{Proof of Theorem~\ref{thmIntro:region is generalized associahedron}}

Given a finite type cluster algebra, the superunitary region and the generalized associahedron both have a decomposition into faces indexed by subclusters. Since the closure relation among both kinds of faces are dual to the containment relation among subclusters, the face posets of the two spaces are equivalent. Therefore, to show that there is a cellular homeomorphism between them, it suffices to show that the subcluster faces give the superunitary region the structure of a regular CW complex.

We need a notable theorem, proven independently by Mazur and Brown \cite{Maz59,Bro60}.\footnote{An exposition and survey are found in \emph{The generalized Sch\"onflies theorem} by Putnam.}

\begin{thm}[Generalized Sch\"onflies Theorem]
\label{thm:generalized Schoenflies theorem}
Let $S^{n-1}\rightarrow S^n$ be a topological embedding, and let $C$ be a component of the complement of the image. If $C$ is a manifold, then there is a homeomorphism from the closed $n$-ball $\oDD_n\rightarrow \overline{C}$ restricting to homeomorphisms $\DD_n\xrightarrow{\sim} C$ and $\partial \DD_n\xrightarrow{\sim} S^{n-1}$.
\end{thm}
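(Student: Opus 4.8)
The plan is to reduce the statement to the classical bicollared situation and then run the Mazur--Brown collapsing argument.

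\textbf{Reducing to a bicollar.} Let $j\colon S^{n-1}\hookrightarrow S^n$ be the embedding, $\Sigma:=j(S^{n-1})$, and $\overline C$ the closure of the complementary component $C$; the hypothesis that $C$ is a manifold is to be read precisely as: $\overline C$ is a topological manifold with boundary $\partial\overline C=\Sigma$ (some such condition is needed, since e.g.\ the inner domain of the Alexander horned sphere is an open ball but has non-ball closure). By the topological collaring theorem for manifolds with boundary, $\Sigma$ admits a collar in $\overline C$, i.e.\ an embedding $h\colon S^{n-1}\times[0,1)\hookrightarrow\overline C$ with $h(x,0)=j(x)$. Replacing $\Sigma$ by $h(S^{n-1}\times\{\tfrac12\})$ and $\overline C$ by the homeomorphic subspace $\overline C\smallsetminus h(S^{n-1}\times[0,\tfrac12))$ produces a sphere that is now \emph{bicollared} in $S^n$ (one collar comes from $h$, the other from being the boundary of a manifold). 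So we may assume there is a bicollar $g\colon S^{n-1}\times(-1,1)\hookrightarrow S^n$ with $g(\cdot,0)$ parametrising $\Sigma$ and with $\overline C$ the closure of the component lying on the $(0,1)$ side; the goal becomes the usual generalized Schönflies statement, $\overline C\cong\oDD_n$ with $\Sigma\mapsto\partial\oDD_n$.

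\textbf{The cellular collapsing lemma.} Recall that a compact subset $X\subset S^n$ is \emph{cellular} if $X=\bigcap_k D_k$ for a nested family of closed $n$-cells $D_1\supset D_2\supset\cdots$ with $X\subset\operatorname{int}D_k$ for every $k$. The essential external input is the classical fact that collapsing a cellular set recovers the sphere: if $X\subset S^n$ is cellular, then $S^n/X\cong S^n$. I would cite this (essentially Brown's argument \cite{Bro60}) rather than reprove it; its proof realises the quotient map as a pointwise limit of self-homeomorphisms of $S^n$ that successively shrink $D_{k+1}$ toward a point while fixing the complement of $D_k$.

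\textbf{Brown's argument, and the obstacle.} Write $S^n=\overline C\cup_\Sigma\overline{C'}$ for the two closed complementary domains. Using the bicollar, collapsing $\overline{C'}$ has the effect of coning off the boundary of $\overline C$: a collar neighbourhood of $\Sigma$ on the $\overline C$ side becomes a cone on $S^{n-1}$, i.e.\ an open $n$-ball, so $S^n/\overline{C'}$ is homeomorphic to $\overline C$ with $\Sigma$ crushed to a single interior point. The content of Brown's theorem is that $\overline{C'}$ is in fact \emph{cellular} in $S^n$ --- equivalently, that the collar-thickenings $\overline{C'}\cup g(S^{n-1}\times[0,\varepsilon])$ may be chosen to be closed $n$-cells. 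Granting this, the collapsing lemma gives $S^n/\overline{C'}\cong S^n$, hence $\overline C/\Sigma\cong S^n$, hence $\operatorname{int}\overline C\cong S^n\smallsetminus\{\mathrm{pt}\}\cong\mathbb R^n$; in Brown's treatment the same shrinking construction simultaneously produces the homeomorphism $\oDD_n\xrightarrow{\sim}\overline C$ carrying $\partial\oDD_n$ onto $\Sigma$, which is the assertion of the theorem. The main obstacle is exactly this cellularity claim: every naive verification is circular, since the obvious collar-thickened approximations to $\overline{C'}$ are all homeomorphic to $\overline{C'}$ itself, so one cannot deduce ``$\overline{C'}$ is cellular'' except from ``$\overline{C'}$ is a ball''. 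Brown's resolution is an infinite swindle performed inside a single copy of $S^n$, repeatedly using the collar to manufacture room; this is the step I would either import wholesale from \cite{Maz59,Bro60} or devote the bulk of a complete proof to.
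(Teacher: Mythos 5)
The paper does not actually prove this statement: it is imported from Mazur and Brown \cite{Maz59,Bro60} (with a pointer to Putnam's survey), so at the level of the core theorem your route and the paper's coincide --- both ultimately rest on the classical generalized Sch\"onflies theorem, and deferring Brown's cellularity/shrinking argument to the literature is no weaker than what the paper does. What you add, correctly, is the bridge the paper leaves implicit: the classical theorem assumes the sphere is bicollared, whereas the hypothesis here must be read, as you do, as saying that $\overline{C}$ is a compact manifold with boundary (read literally, ``$C$ is a manifold'' is vacuous, since $C$ is open in $S^n$). Your reduction --- collar $\Sigma$ inside $\overline{C}$ by the topological collaring theorem, push inward to $\Sigma'=h(S^{n-1}\times\{\tfrac12\})$, observe that $h$ restricted to $S^{n-1}\times(0,1)$ is an open embedding by invariance of domain so that $\Sigma'$ is bicollared, apply the bicollared Sch\"onflies theorem to the trimmed domain $\overline{C}\smallsetminus h(S^{n-1}\times[0,\tfrac12))$, and absorb the collar --- is exactly the argument needed, and it matches how the theorem is used in the proof of Theorem~\ref{thmIntro:region is generalized associahedron}, where Lemma~\ref{lemma:U} shows the superunitary region is a manifold with corners and hence its boundary is collared from inside.

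Two small corrections, neither of which affects the argument. Your motivating example is off: the inner domain of the Alexander horned sphere and its closure are both balls; the wildness lives in the exterior, which is the example showing the literal hypothesis is insufficient (the exterior is a manifold, but its closure is not a ball), while for a complementary domain that is an open ball with non-ball closure one needs something like a Fox--Artin sphere. Also, the bicollar of the pushed-in sphere $\Sigma'$ comes entirely from the single collar $h$; the phrase ``the other from being the boundary of a manifold'' adds nothing.
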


We can now prove our main theorem for finite type cluster algebras.

\begin{thmIntro}
\label{thmIntro:region is generalized associahedron}
If $\mathcal{A}$ is finite type, then the superunitary region $\mathcal{A}(\mathbb{R}_{\geq1})$ of $\mathcal{A}$ is a regular CW complex which is cellular homeomorphic to the generalized associahedron of $\mathcal{A}$.
\end{thmIntro}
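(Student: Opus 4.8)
The plan is to establish that the superunitary region $\mathcal{A}(\mathbb{R}_{\geq1})$ is a regular CW complex whose cells are the subcluster faces $\Subface{\subcluster}$; once this is done, the theorem follows immediately from Theorem~\ref{thm:Lundell-Weingram}, since the face poset of the superunitary region is (by Proposition~\ref{prop: union of subcluster faces}) the subcluster poset ordered by reverse inclusion, which is exactly the face poset of the generalized associahedron by \cite[Theorem 1.4]{CFZ02}. So the entire content is the verification of the regular CW structure, and I would argue this by induction on the rank $r$ of $\mathcal{A}$.

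The induction goes as follows. For the base case $r=0$ the region is a point. For the inductive step, fix a subcluster $\subcluster$; I need a homeomorphism $\oDD_k \xrightarrow{\sim} \overline{\Subface{\subcluster}}$ restricting to $\DD_k \xrightarrow{\sim} \Subface{\subcluster}$, where $k = r - |\subcluster|$. By Corollary~\ref{coro: deletionhomeomorphism}, pulling back along the deletion map gives a homeomorphism $\mathcal{A}^\dagger(\mathbb{R}_{\geq1}) \xrightarrow{\sim} \overline{\Subface{\subcluster}}$ carrying $\mathcal{A}^\dagger(\mathbb{R}_{>1})$ onto $\Subface{\subcluster}$, where $\mathcal{A}^\dagger$ is the deletion, a finite type cluster algebra of rank $k$. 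So it suffices to prove the top-dimensional statement: for a finite type cluster algebra $\mathcal{B}$ of rank $k$, the pair $(\mathcal{B}(\mathbb{R}_{\geq1}), \mathcal{B}(\mathbb{R}_{>1}))$ is homeomorphic to $(\oDD_k, \DD_k)$. This reduces the whole theorem to showing each finite type superunitary region is a closed ball with the expected interior.

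To prove this top-dimensional claim I would invoke the Generalized Sch\"onflies Theorem~\ref{thm:generalized Schoenflies theorem}. Using a cluster $\cluster$ as a chart, $f_\cluster$ embeds $\mathcal{B}(\mathbb{R}_{\geq1})$ as a closed subset of $\mathbb{R}_{>0}^k$, and by Remark~\ref{rem: manifoldwithcorners} it is a compact smooth manifold with corners of dimension $k$, with interior $\mathcal{B}(\mathbb{R}_{>1})$ and boundary $\bigsqcup_{\subcluster'\neq\varnothing}\Subface{\subcluster'}$ (Corollary~\ref{coro:subcluster faces are boundary of superunitary domain}). The boundary $\partial\mathcal{B}(\mathbb{R}_{\geq1})$ is, by the inductive hypothesis applied to the closures of the codimension-one faces and their gluing, a regular CW complex homeomorphic to the boundary complex of the generalized associahedron, hence homeomorphic to $S^{k-1}$. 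Embedding this $S^{k-1}$ into a sphere $S^k$ (e.g.\ compactify $\mathbb{R}^k \supset \mathbb{R}_{>0}^k$), Sch\"onflies gives that the bounded complementary component — which is $\mathcal{B}(\mathbb{R}_{>1})$, a manifold — has closure homeomorphic to $\oDD_k$ via a homeomorphism restricting to $\DD_k \xrightarrow{\sim} \mathcal{B}(\mathbb{R}_{>1})$ and $S^{k-1}\xrightarrow{\sim}\partial\mathcal{B}(\mathbb{R}_{\geq1})$. Combined with the deletion homeomorphisms, this supplies the required characteristic maps for every cell and verifies the regular CW axioms (closure-finiteness is clear from Proposition~\ref{prop: union of subcluster faces}, and Hausdorffness is inherited from $\mathcal{A}(\mathbb{R})$).

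The main obstacle is the circularity lurking in the Sch\"onflies step: to know the boundary sphere $\partial\mathcal{B}(\mathbb{R}_{\geq1})$ is genuinely homeomorphic to $S^{k-1}$, and to identify $\mathcal{B}(\mathbb{R}_{>1})$ as precisely the bounded complementary component, I must already know that the boundary faces (closures of proper subcluster faces) assemble into a regular CW complex realizing the associahedron boundary — which is exactly the inductive hypothesis, but one must be careful that it is being applied only to cluster algebras of strictly smaller rank (the deletions $\mathcal{A}^\dagger$ for $\subcluster\neq\varnothing$) and that the gluing of these closed cells along their common faces is compatible, so that the union is literally the regular CW complex predicted by the associahedron's boundary poset. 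Making this gluing argument precise — i.e.\ checking that the cellular homeomorphisms on overlapping closed faces agree or can be chosen to agree — and confirming that the embedded boundary is bicollared (so Sch\"onflies applies; a manifold-with-corners boundary is locally flat, so this is fine but should be stated) is where the real care is needed.
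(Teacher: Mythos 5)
Your proposal is essentially the paper's own proof: induction on rank, using the deletion homeomorphisms (Corollary~\ref{coro: deletionhomeomorphism}) to realize each proper closed face as a lower-rank superunitary region, hence a closed ball by induction, then Theorem~\ref{thm:Lundell-Weingram} to identify the boundary with the associahedron boundary sphere, the Generalized Sch\"onflies Theorem~\ref{thm:generalized Schoenflies theorem} for the top cell, and Theorem~\ref{thm:Lundell-Weingram} once more for the final cellular homeomorphism. The ``gluing compatibility'' you flag as the main obstacle is not actually needed: a regular CW structure only asks for a characteristic homeomorphism onto each closed cell separately, with no agreement required on overlaps, so the paper simply records one such map per non-empty subcluster and compares face posets.
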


\begin{proof}
We prove the theorem by induction on the rank of $\mathcal{A}$. In the base case of rank $0$, the superunitary region is a single point and theorem holds.

Assume, for induction, that the theorem holds for all finite type cluster algebras of rank less than $r$, and consider a finite type cluster algebra $\mathcal{A}$ of rank $r$.
For each non-empty subcluster $\subcluster$ in $\mathcal{A}$, the deletion of $\subcluster$ is a cluster algebra $\mathcal{A}^\dagger$ and, by Lemma~\ref{lemma: deletionhomeomorphism} and Corollary~\ref{coro: deletionhomeomorphism}, there is a homeomorphism 
\[ \mathcal{A}^\dagger(\mathbb{R}_{\geq1}) \xrightarrow{\sim} \overline{\Subface{\subcluster}} \]
which restricts to a homeomorphism $\mathcal{A}^\dagger(\mathbb{R}_{\geq1})_{\subcluster'\smallsetminus \subcluster} \xrightarrow{\sim} \Subface{\subcluster'}$ for each subcluster $\subcluster'\supseteq \subcluster$.

Since the subcluster $\subcluster$ was not empty, the deletion $\mathcal{A}^\dagger$ is a finite type cluster algebra of rank less than $r$, and so the inductive hypothesis applies; that is, $\mathcal{A}^\dagger(\mathbb{R}_{\geq1})$ admits a cellular homeomorphism to the generalized associahedron of $\mathcal{A}^\dagger$. Since polytopes are homeomorphic to closed balls, we may choose a homeomorphism
\[ 
\oDD_{r-|\subcluster|} \xrightarrow{\sim} \mathcal{A}^\dagger(\mathbb{R}_{\geq1}) \xrightarrow{\sim} \overline{\Subface{\subcluster}}
\]
which restricts to a homeomorphism
\[
\DD_{r-|\subcluster|} \xrightarrow{\sim} \mathcal{A}^\dagger(\mathbb{R}_{\geq1})_{\varnothing} \xrightarrow{\sim} \Subface{\subcluster}
\]
Running over all non-empty subclusters of $\mathcal{A}$,
these homeomorphisms define a regular CW structure on
\[ \bigsqcup_{\subcluster\neq \varnothing} \Subface{\subcluster}\]
which is the boundary of the superunitary region by Corollary~\ref{coro:subcluster faces are boundary of superunitary domain}. 
The face poset of this regular CW complex is the dual poset of non-empty subclusters in $\mathcal{A}$, which is in turn the face poset of the boundary of the generalized associahedron of $\mathcal{A}$. 
Therefore, the boundary of the superunitary region is homeomorpic to the boundary of the generalized associahedron of $\mathcal{A}$; 
in particular, it is homeomorphic to a sphere of dimension $r-1$. 

Choose a cluster $\cluster$ and consider the topological embedding $f_\cluster:\mathcal{A}(\mathbb{R}_{\geq1})\rightarrow \mathbb{R}^r$. This can be extended to a topological embedding $\mathcal{A}(\mathbb{R}_{\geq1})\rightarrow S^r$, which then restricts to a topological embedding of the boundary $\partial\mathcal{A}(\mathbb{R}_{\geq1})\rightarrow S^r$.
One of the components of the complement is the interior $\mathcal{A}(\mathbb{R}_{\geq1})_\varnothing$ of the superunitary region, which is a manifold by Corollary \ref{coro: manifold}. 
Thus, the Generalized Sch\"onflies Theorem (Theorem~\ref{thm:generalized Schoenflies theorem}) implies there is a homeomorphism $\oDD_r \xrightarrow{\sim} \mathcal{A}(\mathbb{R}_{\geq1})$ which restricts to a homeomorphism $\DD_r \xrightarrow{\sim} \mathcal{A}(\mathbb{R}_{\geq1})_\varnothing$.

This map completes the regular CW complex structure on the superunitary region $\mathcal{A}(\mathbb{R}_{\geq1})$.  
Since the face poset of this regular CW complex coincides with the face poset of the generalized associahedron of $\mathcal{A}$, there is a cellular homeomorphism between the two spaces
by Theorem~\ref{thm:Lundell-Weingram}; completing the induction.
\end{proof}

The theorem has the following immediate consequences which we will exploit later.

\begin{coro}\label{coro: finite}
If $\mathcal{A}$ is finite type, then the superunitary region $\mathcal{A}(\mathbb{R}_{\geq1})$ is compact and the set of frieze points $\mathcal{A}(\mathbb{Z}_{\geq1})$ is finite.
\end{coro}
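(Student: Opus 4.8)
The plan is to read off both assertions directly from Theorem~\ref{thmIntro:region is generalized associahedron} together with the elementary point-set topology already assembled in the excerpt. For compactness, I would argue as follows: Theorem~\ref{thmIntro:region is generalized associahedron} provides a (cellular) homeomorphism from $\mathcal{A}(\mathbb{R}_{\geq1})$ to the generalized associahedron of $\mathcal{A}$. The generalized associahedron is a polytope, and polytopes are by definition compact subsets of a finite-dimensional real vector space. Since compactness is a topological property preserved by homeomorphism, $\mathcal{A}(\mathbb{R}_{\geq1})$ is compact.

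For finiteness of the frieze points, I would combine this with Proposition~\ref{prop: closeddiscrete}, which states that $\mathcal{A}(\mathbb{Z}_{\geq1})$ is a closed and discrete subset of $\mathcal{A}(\mathbb{R}_{\geq1})$. A closed subset of a compact space is compact, and a discrete compact space is finite; hence $\mathcal{A}(\mathbb{Z}_{\geq1})$ is finite. (One could equally phrase this as: the discreteness gives an open cover of $\mathcal{A}(\mathbb{Z}_{\geq1})$ by singletons, which by compactness admits a finite subcover.)

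There is essentially no obstacle here — the corollary is a formal consequence of Theorem~\ref{thmIntro:region is generalized associahedron} and Proposition~\ref{prop: closeddiscrete}, and the only ``input'' beyond those is the standard fact that a closed discrete subspace of a compact space is finite. The one thing worth being slightly careful about is that the homeomorphism of Theorem~\ref{thmIntro:region is generalized associahedron} genuinely lands on the whole polytope (not just its boundary or an open cell), so that compactness transfers; but this is exactly what the theorem asserts, so no extra work is needed.
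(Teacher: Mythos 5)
Your argument is exactly the paper's: compactness of $\mathcal{A}(\mathbb{R}_{\geq1})$ follows from the cellular homeomorphism to the (compact) generalized associahedron, and finiteness of $\mathcal{A}(\mathbb{Z}_{\geq1})$ follows because, by Proposition~\ref{prop: closeddiscrete}, it is a closed discrete subset of that compact space. Correct and identical in substance, just spelled out in slightly more detail.
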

\begin{proof}
Polytopes are compact spaces. Since the set of frieze points is a closed and discrete subset of a compact space (Proposition \ref{prop: closeddiscrete}), it is finite.
\end{proof}

\section{Finiteness of positive integral friezes of Dynkin type}
\label{section: friezes}

We now highlight an application of our main result. 
Given a Dynkin diagram $\Gamma$, a \emph{positive integral frieze of type $\Gamma$} is an assignment of positive integers to the vertices of a \emph{repetition quiver} of type $\Gamma$ such that certain \emph{mesh relations} hold.
The positive integral friezes of type $\Gamma$ are in bijection with the frieze points in the cluster algebra of type $\Gamma$, and so the finiteness of frieze points (Corollary \ref{coro: finite}) immediately implies the finiteness of positive integral friezes.  
This was conjectured in \cite{FP16} and has been proven in all but two cases (see~\cite{FP16} and~\cite[Appendix~B]{BFGST21}). For more details on friezes and their connections to cluster algebras, see the survey papers~\cite{Kel10} and \cite{Mor15}.

\subsection{Friezes from quivers}
\label{sec:friezes from quivers}

We will review the notion of friezes from quivers,  defined in~\cite{CC06,ARS10}. 
These were introduced in connection with the theory of cluster algebras, and we will explain the connection in Section~\ref{sec:friezes and cluster algebras}.

Let $Q=(Q_0,Q_1)$ be a quiver with set of vertices $Q_0$ and set of arrows $Q_1$. 
Following~\cite{Rie80}, the \textbf{repetition quiver} $\mathbb{Z}Q$ of $Q$ is constructed as follows.
\begin{itemize}
    \item The vertices of $\mathbb{Z}Q$ are pairs $(m,i)\in \mathbb{Z}\times Q_0$; note this is an infinite set.
    \item For each arrow $i\rightarrow j$ in $Q_1$, there is a family of arrows (denoted by solid arrows)
    \[ (m,i)\rightarrow (m,j) \]
    for each $m\in \mathbb{Z}$, and a family of arrows (denoted by dashed arrows)
    \[ (m,j) \rightarrow (m+1,i) \]
    for each $m\in \mathbb{Z}$.
\end{itemize}
The repetition quiver of $Q$ can be drawn as $\mathbb{Z}$-many copies of $Q$, each translated horizontally from each other, with adjacent copies of $Q$ connected by the dashed arrows; see Figure \ref{fig:D5 repetition quiver}.
Each repetition quiver admits a \textbf{translation map} 
$\tau:\mathbb{Z}Q\rightarrow \mathbb{Z}Q$ sending $(m,i)$ to $(m-1,i)$. 
Note that $Q$ embeds into $\mathbb{Z}Q$ as $\{0\}\times Q$, which we call the \textbf{initial slice} of $\mathbb{Z}Q$.

\begin{figure}[h!t]
\begin{tikzpicture}[x=0.028cm,y=-0.025cm]
    \begin{scope}[every node/.style={inner sep=4pt,font=\footnotesize}]
        \node (1_0) at (20,289) {$1$};
        \node (2_0) at (20,339) {$2$};
        \node (3_0) at (20,389) {$3$};
        \node (4_0) at (-5,439) {$4$};
        \node (5_0) at (45,489) {$5$};
    \end{scope}
    \begin{scope}[every node/.style={fill=white,font=\scriptsize},every path/.style={-{Latex[length=1.5mm,width=1mm]}}]
        \path (1_0) edge (2_0);
        \path (3_0) edge (2_0);
        \path (3_0) edge (4_0);
        \path (3_0) edge (5_0);
    \end{scope}
\end{tikzpicture}
\hspace{1cm}
\begin{tikzpicture}[x=0.028cm,y=-0.025cm]
\begin{scope}[every node/.style={inner sep=4pt,font=\footnotesize}]
\node (1_-1) at (-80,289) {$\dots$};
\node (2_-1) at (-30,339) {$(-1,2)$};
\node (3_-1) at (-80,389) {$\dots$};
\node (4_-1) at (-30,439) {$(-1, 4)$};
\node (5_-1) at (-30,489) {$(-1, 5)$};
\node (1_0) at (20,289) {$(0,1)$};
\node (2_0) at (70,339) {$(0,2)$};
\node (3_0) at (20,389) {$(0,3)$};
\node (4_0) at (70,439) {$(0, 4)$};
\node (5_0) at (70,489) {$(0, 5)$};
\node (1_1) at (120,289) {$(1,1)$};
\node (3_1) at (120,389) {$(1,3)$};
\node (2_1) at (170,339) {$(1,2)$};
\node (4_1) at (170,439) {$(1,4)$};
\node (5_1) at (170,489) {$(1,5)$};

\node (3_2) at (220,389) {$(2,3)$};
\node (1_2) at (220,289) {$(2,1)$};
\node (2_2) at (270,339) {$(2,2)$};
\node (4_2) at (270,439) {$(2,4)$};
\node (5_2) at (270,489) {$(2,5)$};

\node (1_3) at (320,289) {$\dots$};
\node (3_3) at (320,389) {$\dots$};
\end{scope}
\begin{scope}[every node/.style={fill=white,font=\scriptsize},every path/.style={-{Latex[length=1.5mm,width=1mm]}}]
\path (1_-1) edge (2_-1);
\path (2_-1) edge[densely dashed] (1_0);
\path (3_-1) edge (2_-1);
\path (3_-1) edge (4_-1);
\path (3_-1) edge (5_-1);
\path (2_-1) edge[densely dashed] (3_0);
\path (4_-1) edge[densely dashed] (3_0);
\path (5_-1) edge[densely dashed] (3_0);
\path (1_0) edge (2_0);
\path (2_0) edge[densely dashed] (1_1);
\path (3_0) edge (2_0);
\path (3_0) edge (4_0);
\path (3_0) edge (5_0);
\path (2_0) edge[densely dashed] (3_1);
\path (4_0) edge[densely dashed] (3_1);
\path (5_0) edge[densely dashed] (3_1);
\path (1_1) edge (2_1);
\path (3_1) edge (2_1);
\path (3_1) edge (4_1);
\path (3_1) edge (5_1);
\path (2_1) edge[densely dashed] (3_2);
\path (4_1) edge[densely dashed] (3_2);
\path (5_1) edge[densely dashed] (3_2);
\path (2_1) edge[densely dashed] (1_2);
\path (3_2) edge (2_2);
\path (1_2) edge (2_2);
\path (3_2) edge (4_2);
\path (3_2) edge (5_2);
\path (2_2) edge[densely dashed] (3_3);
\path (4_2) edge[densely dashed] (3_3);
\path (5_2) edge[densely dashed] (3_3);
\path (2_2) edge[densely dashed] (1_3);
\end{scope}
 \end{tikzpicture}
\caption{A quiver of type $D_5$ (left) and its repetition quiver (right)}
\label{fig:D5 repetition quiver}
\end{figure}
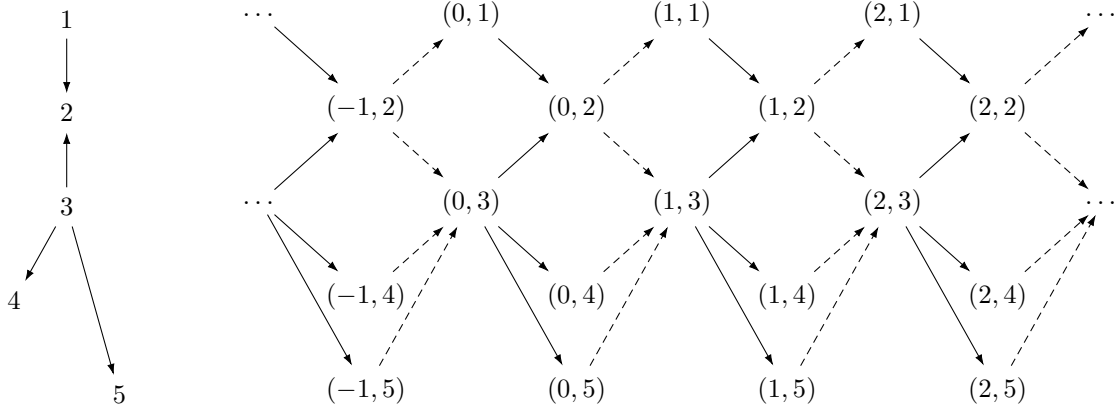

It will be useful to define friezes with values in a very general algebraic object. To that end, a \textbf{commutative semiring} is a set with two binary operations:
\begin{itemize}
    \item Addition, which must commute and associate, but need not have an identity.
    \item Multiplication, which must commute, associate, distribute over addition, and possess an identity, but need not have inverses.
\end{itemize}

Every commutative ring is a commutative semiring, as are $\mathbb{R}_{>0}$, $\mathbb{R}_{\geq1}$, and $\mathbb{Z}_{\geq1}$. The semiring $\mathbb{R}_{>0}$ is an example of a \textbf{semifield}: a commutative semiring in which every element has a multiplicative inverse.

\begin{warn}
We do not assume semirings or semifields have an additive identity (or ``zero'')! 
Moreso, since elements in a semifield must have multiplicative inverses, a non-trivial semifield will never have an additive identity. 
As a result, $\mathbb{R}$ and other fields are not semifields.
\end{warn}

\begin{defn}
Given a commutative semiring $S$ and an acyclic quiver $Q$, 
an \textbf{$S$-valued frieze of type $Q$}
is a function
\[ a:(\mathbb{Z} Q)_0\rightarrow S \]
from the vertices of the repetition quiver $\mathbb{Z}Q$ to $S$ such that the assigned values satisfy the following \textbf{mesh relation} for each vertex $v\in (\mathbb{Z}Q)_0$:
\begin{equation}
a(v) a(\tau v) = 1 + \prod_{\substack{w\rightarrow v \\ \text{in }(\mathbb{Z}Q)_1}} a(w) 
\end{equation}
\end{defn}

This mesh relation can be stated in terms of the original quiver $Q$ as a function
$a:\mathbb{Z}\times Q_0\rightarrow S $
such that, for each $(m,v)\in \mathbb{Z} \times Q_0$, 
\begin{equation}
a(m,v)a(m-1,v) = 1 + \prod_{w \to v \text{ in } Q_1} a(m,w) \prod_{v \to w \text{ in } Q_1} a(m-1,w) 
\end{equation}

Note that if $S$ has multiplicative inverses, then the mesh relation can be reformulated as
\[ a(\tau v) = a(v)^{-1}\left(1 + \prod_{\substack{w\rightarrow v \\ \text{in }(\mathbb{Z}Q)_1}} a(w) \right) \]
in which case the frieze can be constructed from a few initial values via a \emph{knitting algorithm}. 
This can be made precise as follows.

\begin{prop}\label{prop: knitting}
If $S$ is a semifield, then an $S$-valued frieze is freely determined by its values on the initial slice $\{0\}\times Q_0\subset (\mathbb{Z}Q)_0$.
\end{prop}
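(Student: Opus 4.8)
The plan is to show that the value of a frieze on any vertex $(m,v)$ of the repetition quiver is uniquely determined by — and can be freely prescribed on — the initial slice $\{0\}\times Q_0$. Since $Q$ is acyclic, I would first fix a linear ordering $v_1,v_2,\dots,v_n$ of $Q_0$ compatible with the arrows (i.e.\ every arrow $v_i\to v_j$ has $i<j$); such an ordering exists precisely because $Q$ has no oriented cycles. The mesh relation for a vertex $(m,v)$,
\[
a(m,v)\,a(m-1,v) = 1 + \prod_{w\to v \text{ in }Q_1} a(m,w)\,\prod_{v\to w\text{ in }Q_1} a(m-1,w),
\]
expresses $a(m-1,v)$ in terms of $a(m,v)$ together with values $a(m,w)$ for arrows $w\to v$ and $a(m-1,w)$ for arrows $v\to w$. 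Because $S$ is a semifield, every element has a multiplicative inverse and the right-hand side is a well-defined element of $S$ (recall $1+(\text{anything})$ makes sense since addition need not have an identity but is always defined), so this relation can be solved:
\[
a(m-1,v) = a(m,v)^{-1}\Bigl(1 + \prod_{w\to v} a(m,w)\,\prod_{v\to w} a(m-1,w)\Bigr),
\]
and symmetrically for $a(m,v)$ in terms of the lower slice.

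The key step is then a double induction — on $m$ (to move from slice $0$ to any slice) and on the position in the chosen ordering (to fill in a single slice). Suppose by induction that $a$ has been determined on all slices $\{0\}\times Q_0,\dots,\{m-1\}\times Q_0$ (for the $m\ge 1$ direction; the $m\le -1$ direction is symmetric, reading the mesh relation the other way). To determine slice $m$, I induct on $i$: assuming $a(m,v_j)$ is known for all $j<i$, the mesh relation centered at $(m+1,v_i)$ — or, more carefully, the relation I should use is the one that lets me solve for $a(m,v_i)$ using only slice $m-1$ and the already-computed entries $a(m,v_j)$, $j<i$. Concretely, the relation for $(m,v_i)$ reads $a(m,v_i)a(m-1,v_i) = 1 + \prod_{w\to v_i}a(m,w)\prod_{v_i\to w}a(m-1,w)$; every arrow $w\to v_i$ has $w=v_j$ with $j<i$ by the compatibility of the ordering, so the right-hand side involves only $a(m-1,\cdot)$ (known) and $a(m,v_j)$ with $j<i$ (known by the inner induction), and $a(m-1,v_i)$ is invertible, so $a(m,v_i)$ is forced. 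This establishes uniqueness; for existence, one defines $a$ slice by slice using exactly these formulas and then checks that the remaining mesh relations (those not used as definitions) hold — but for a semifield every mesh relation is equivalent to its "solved" form, so once $a$ is defined to satisfy the solving relations, all mesh relations hold automatically.

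The main obstacle I anticipate is bookkeeping rather than conceptual: one must be careful that the ordering of $Q_0$ used to fill in a slice is compatible with the direction of the arrows in the way the mesh relation needs (arrows into $v$ come from earlier vertices so their slice-$m$ values are already available), and one must verify that no mesh relation is ever used "twice" with conflicting consequences — i.e.\ that the solving relations are mutually consistent. This consistency is automatic because each mesh relation at $(m,v)$ is used exactly once, to solve for one new unknown, and the full system of mesh relations is in bijection with the set of vertices being determined. A secondary subtlety is that the problem is genuinely symmetric in $m\mapsto -m$ only after swapping the roles of $a(m,v)$ and $a(m-1,v)$ in the mesh relation and reindexing the arrows; I would spell this out once and invoke it for the negative slices. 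No deep input is needed beyond the semifield axioms and acyclicity of $Q$.
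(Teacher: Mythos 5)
Your proof is correct and takes essentially the same route the paper intends: the paper simply rewrites the mesh relation in its solved form $a(\tau v)=a(v)^{-1}\bigl(1+\prod_{w\to v}a(w)\bigr)$ and asserts the proposition via the knitting algorithm, which is exactly your slice-by-slice argument. Your added bookkeeping (a topological order on the acyclic quiver $Q$, the double induction, and the observation that each mesh relation is used exactly once as a definition and is equivalent to its solved form, giving both uniqueness and existence) is a faithful and complete write-up of that argument.
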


We are most interested in $\mathbb{Z}_{\geq1}$-valued friezes, also called \textbf{positive integral friezes}; an example is given in Figure \ref{fig:D5 frieze}.
Constructing and enumerating positive integral friezes has been an interesting problem since~\cite{ConCox73}, who constructed a bijection between positive integral friezes of type $A_n$ and triangulations of an $(n+3)$-gon. 
More enumerative results and history are given in Section \ref{section: enumeration}.

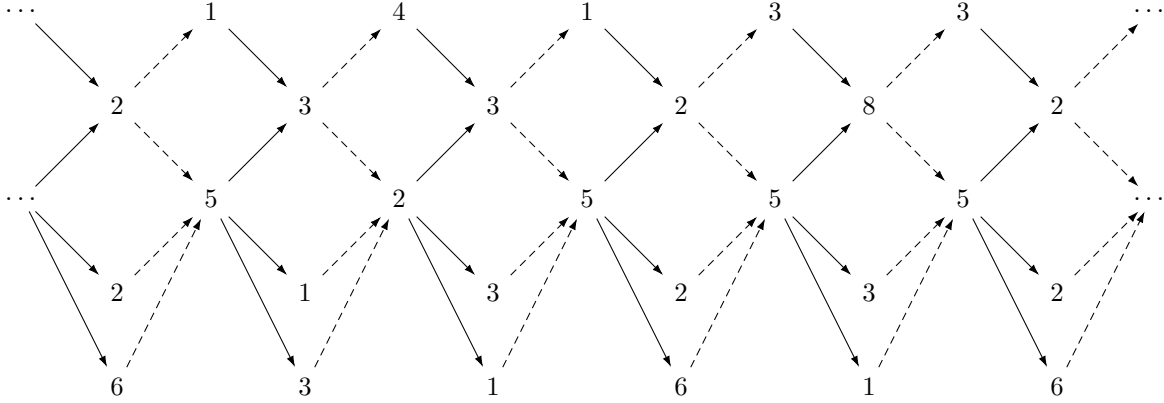
\begin{figure}[h!t]

\begin{tikzpicture}[x=0.025cm,y=-0.025cm]
\begin{scope}[every node/.style={inner sep=4pt,font=\footnotesize}]
\node (1_-1) at (-80,289) {$\dots$};
\node (2_-1) at (-30,339) {$2$};
\node (3_-1) at (-80,389) {$\dots$};
4\node (4_-1) at (-30,439) {$2$};
\node (5_-1) at (-30,489) {$6$};
\node (1_0) at (20,289) {$1$};
\node (2_0) at (70,339) {$3$};
\node (3_0) at (20,389) {$5$};
\node (4_0) at (70,439) {$1$};
\node (5_0) at (70,489) {$3$};
\node (1_1) at (120,289) {$4$};
\node (2_1) at (170,339) {$3$};
\node (3_1) at (120,389) {$2$};
\node (4_1) at (170,439) {$3$};
\node (5_1) at (170,489) {$1$};
\node (1_2) at (220,289) {$1$};
\node (2_2) at (270,339) {$2$};
\node (3_2) at (220,389) {$5$};
\node (4_2) at (270,439) {$2$};
\node (5_2) at (270,489) {$6$};
\node (1_3) at (320,289) {$3$};
\node (2_3) at (370,339) {$8$};
\node (3_3) at (320,389) {$5$};
\node (4_3) at (370,439) {$3$};
\node (5_3) at (370,489) {$1$};
\node (1_4) at (420,289) {$3$};
\node (2_4) at (470,339) {$2$};
\node (3_4) at (420,389) {$5$};
\node (4_4) at (470,439) {$2$};
\node (5_4) at (470,489) {$6$};

\node (1_5) at (520,289) {$\dots$};
\node (3_5) at (520,389) {$\dots$};
\end{scope}
\begin{scope}[every node/.style={fill=white,font=\scriptsize},every path/.style={-{Latex[length=1.5mm,width=1mm]}}]
\path (1_-1) edge (2_-1);
\path (2_-1) edge[densely dashed] (1_0);
\path (3_-1) edge (2_-1);
\path (3_-1) edge (4_-1);
\path (3_-1) edge (5_-1);
\path (2_-1) edge[densely dashed] (3_0);
\path (4_-1) edge[densely dashed] (3_0);
\path (5_-1) edge[densely dashed] (3_0);
\path (1_0) edge (2_0);
\path (2_0) edge[densely dashed] (1_1);
\path (3_0) edge (2_0);
\path (3_0) edge (4_0);
\path (3_0) edge (5_0);
\path (2_0) edge[densely dashed] (3_1);
\path (4_0) edge[densely dashed] (3_1);
\path (5_0) edge[densely dashed] (3_1);
\path (1_1) edge (2_1);
\path (3_1) edge (2_1);
\path (3_1) edge (4_1);
\path (3_1) edge (5_1);
\path (2_1) edge[densely dashed] (3_2);
\path (4_1) edge[densely dashed] (3_2);
\path (5_1) edge[densely dashed] (3_2);
\path (2_1) edge[densely dashed] (1_2);
\path (3_2) edge (2_2);
\path (1_2) edge (2_2);
\path (3_2) edge (4_2);
\path (3_2) edge (5_2);
\path (2_2) edge[densely dashed] (3_3);
\path (4_2) edge[densely dashed] (3_3);
\path (5_2) edge[densely dashed] (3_3);
\path (2_2) edge[densely dashed] (1_3);
\path (3_3) edge (2_3);
\path (1_3) edge (2_3);
\path (3_3) edge (4_3);
\path (3_3) edge (5_3);
\path (2_3) edge[densely dashed] (3_4);
\path (4_3) edge[densely dashed] (3_4);
\path (5_3) edge[densely dashed] (3_4);
\path (2_3) edge[densely dashed] (1_4);
\path (3_4) edge (2_4);
\path (1_4) edge (2_4);
\path (3_4) edge (4_4);
\path (3_4) edge (5_4);
\path (2_4) edge[densely dashed] (3_5);
\path (4_4) edge[densely dashed] (3_5);
\path (5_4) edge[densely dashed] (3_5);
\path (2_4) edge[densely dashed] (1_5);
\end{scope}
 \end{tikzpicture}
\caption{A positive integral frieze of type $D_5$}
\label{fig:D5 frieze}
\end{figure}

\begin{rem}
If $Q$ and $Q'$ are two orientations of the same tree $\Gamma$, then the repetition quivers $\mathbb{Z}Q$ and $\mathbb{Z}Q'$ are isomorphic and friezes of type $Q$ and $Q'$ are equivalent, so we may unambiguously refer to a \textbf{frieze of type $\Gamma$}. Notably, we use ADE notation to denote friezes on the repetition quiver of any orientation of the corresponding simply-laced Dynkin diagram.
\end{rem}

\begin{rem}
Working with commutative semirings combines many cases into a single framework; most of which we don't consider in this note. For example, $\mathbb{Z}$ and $\mathbb{R}$ may be 
made into
\emph{tropical semifields} in which `addition' is the maximum of two numbers and `multiplication' is the sum of two numbers. Friezes valued in a tropical semifield are called \emph{additive friezes}, 
and they are related to dimension vectors of indecomposable quiver representations \cite{Gab80}.
\end{rem}

\subsection{Friezes from valued quivers}

The constructions in Section~\ref{sec:friezes from quivers}
may be extended to \emph{valued quivers}, which allows the theory to encompass non-simply-laced Dynkin diagrams.

A \textbf{valued quiver} is a quiver $Q$ together with a pair of positive integral \text{values} $(b,c)$ for each arrow in $Q$. 
A quiver may be regarded as a valued quiver in which every arrow has values $(1,1)$, and when depicting a valued quiver, we suppress values of the form $(1,1)$.

\begin{figure}[htb]
\begin{tikzpicture}[x=0.025cm,y=-0.025cm]
\begin{scope}[every node/.style={inner sep=4pt,font=\footnotesize}]
    \node (1_0) at (20,289) {$1$};
    \node (2_0) at (70,339) {$2$};
    \node (3_0) at (20,389) {$3$};
    \node (5_0) at (70,439) {$4$};
\end{scope}
\begin{scope}[every node/.style={font=\scriptsize},every path/.style={-{Latex[length=1.5mm,width=1mm]}}]
\path (1_0) edge (2_0);
\path (3_0) edge (2_0);
\path (3_0) edge node[pos=0.5,rotate=-45,above]{\tiny (1,2)} (5_0);
\end{scope}
\end{tikzpicture}
\hspace{1cm}\begin{tikzpicture}[x=0.025cm,y=-0.025cm]
\begin{scope}[every node/.style={inner sep=4pt,font=\footnotesize}]
    \node (1_-1) at (-80,289) {$\dots$};
    \node (2_-1) at (-30,339) {$(-1,2)$};
    \node (3_-1) at (-80,389) {$\dots$};
    \node (5_-1) at (-30,439) {$(-1,4)$};

    \node (1_0) at (20,289) {$(0,1)$};
    \node (2_0) at (70,339) {$(0,2)$};
    \node (3_0) at (20,389) {$(0,3)$};
    \node (5_0) at (70,439) {$(0, 4)$};
    
    \node (1_1) at (120,289) {$(1,1)$};
    \node (2_1) at (170,339) {$(1,2)$};
    \node (3_1) at (120,389) {$(1,3)$};
    \node (5_1) at (170,439) {$(1,4)$};
    
    \node (1_2) at (220,289) {$(2,1)$};
    \node (2_2) at (270,339) {$(2,2)$};
    \node (3_2) at (220,389) {$(2,3)$};
    \node (5_2) at (270,439) {$(2,4)$};
    
    \node (1_3) at (320,289) {$\dots$};
    \node (3_3) at (320,389) {$\dots$};
\end{scope}
\begin{scope}[every node/.style={font=\scriptsize},every path/.style={-{Latex[length=1.5mm,width=1mm]}}]
    \draw (1_-1) to (2_-1);
    \draw (3_-1) to (2_-1);
    \draw (3_-1) to node[pos=0.5,rotate=-45,above]{\tiny (1,2)} (5_-1);
    \draw[densely dashed] (2_-1) to (1_0);
    \draw[densely dashed] (2_-1) to (3_0);
    \draw[densely dashed] (5_-1) to node[pos=0.5,rotate=45,below]{\tiny (2,1)} (3_0);
    
    \path (1_0) edge (2_0);
    \path (2_0) edge[densely dashed] (1_1);
    \path (3_0) edge (2_0);
    \draw (3_0) -- (5_0) node[pos=0.5,rotate=-45,above]{\tiny (1,2)};
    \path (2_0) edge[densely dashed] (3_1);
    \draw[densely dashed] (5_0) -- (3_1) node[pos=0.5,rotate=45,below]{\tiny (2,1)};
    \path (1_1) edge (2_1);
    \path (3_1) edge (2_1);
    \draw (3_1) -- (5_1) node[pos=0.5,rotate=-45,above]{\tiny (1,2)};
    \path (2_1) edge[densely dashed] (3_2);
    \draw[densely dashed] (5_1) -- (3_2) node[pos=0.5,rotate=45,below]{\tiny (2,1)};
    \path (2_1) edge[densely dashed] (1_2);
    \path (3_2) edge (2_2);
    \path (1_2) edge (2_2);
    \draw (3_2) -- (5_2) node[pos=0.5,rotate=-45,above]{\tiny (1,2)};

    \draw[densely dashed] (2_2) to (1_3);
    \draw[densely dashed] (2_2) to (3_3);
    \draw[densely dashed] (5_2) to node[pos=0.5,rotate=45,below]{\tiny (2,1)} (3_3);
\end{scope}
 \end{tikzpicture}
\caption{A valued quiver of type $B_4$ (left) and its repetition quiver (right)}
\label{fig:B4 repetition}
\end{figure}
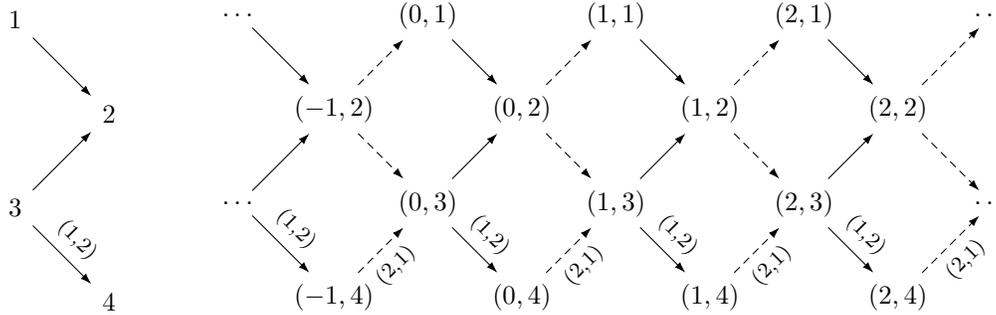

The repetition quiver $\mathbb{Z}Q$ of a valued quiver $Q$ is a valued quiver constructed as follows. 
\begin{itemize}
    \item As before, the vertices of $\mathbb{Z}Q$ are pairs $(m,i)\in \mathbb{Z}\times Q_0$.
    \item For each arrow $i\rightarrow j$ in $Q$ with values $(b,c)$, there is a family of arrows 
    \[ (m,i)\rightarrow (m,j) \]
    with values $(b,c)$ 
    for each $m\in \mathbb{Z}$ (denoted by solid arrows), and a family of arrows 
    \[ (m,j) \rightarrow (m+1,i) \]
    with values $(c,b)$
    for each $m\in \mathbb{Z}$ (denoted by dashed arrows).
\end{itemize}

\begin{ex}
A valued quiver of type $B_4$ and its repetition quiver are given in Figure~\ref{fig:B4 repetition}.
\end{ex}

\begin{defn}
Given a commutative semiring $S$ and a valued acyclic quiver $Q$,   
an \textbf{$S$-valued frieze of type $Q$} 
is a function
\[ a:\mathbb{Z}\times Q_0\rightarrow S \]
from the vertices of the repetition quiver $\mathbb{Z}Q$ to the semiring $S$, such that the assigned values satisfy the \textbf{mesh relation} for each $(m,i)\in \mathbb{Z}\times Q_0$:
\begin{equation}
a(v) a(\tau v) = 1 + \prod_{\substack{w\rightarrow v \text{ in }(\mathbb{Z}Q)_1 \\ \text{with values $(b,c)$}}} a(w)^{b}
\end{equation}
\end{defn}

As before, this can be stated in terms of the original quiver $Q$ as a function
$a:\mathbb{Z}\times Q_0\rightarrow S $
such that, for each $(m,i)\in \mathbb{Z} \times Q_0$, 
\begin{equation}
a(m,v) a(m-1, v) = 1 + 
\prod_{\substack{w\rightarrow v \text{ in }Q_1 \\ \text{with values $(b,c)$}}} a(m,w)^{b}
\prod_{\substack{v\rightarrow w \text{ in }Q_1 \\ \text{with values $(b,c)$}}} a(m-1,w)^{c}
\end{equation}

An example of such a frieze is given in Figure~\ref{fig:B4 frieze}.

\begin{figure}[htb]
\begin{tikzpicture}[x=0.025cm,y=-0.025cm]
\begin{scope}[every node/.style={inner sep=4pt,font=\footnotesize}]
    \node (1_-1) at (-80,289) {$\dots$};
    \node (2_-1) at (-30,339) {$17$};
    \node (3_-1) at (-80,389) {$\dots$};
    \node (5_-1) at (-30,439) {$3$};

    \node (1_0) at (20,289) {$3$};
    \node (2_0) at (70,339) {$2$};
    \node (3_0) at (20,389) {$11$};
    \node (5_0) at (70,439) {$4$};
    
    \node (1_1) at (120,289) {$1$};
    \node (2_1) at (170,339) {$2$};
    \node (3_1) at (120,389) {$3$};
    \node (5_1) at (170,439) {$1$};
    
    \node (1_2) at (220,289) {$3$};
    \node (2_2) at (270,339) {$2$};
    \node (3_2) at (220,389) {$1$};
    \node (5_2) at (270,439) {$2$};
    
    \node (1_3) at (320,289) {$1$};
    \node (2_3) at (370,339) {$5$};
    \node (3_3) at (320,389) {$9$};
    \node (5_3) at (370,439) {$5$};
       
    \node (1_4) at (420,289) {$6$};
    \node (2_4) at (470,339) {$17$};
    \node (3_4) at (420,389) {$14$};
    \node (5_4) at (470,439) {$3$};
    
    \node (1_5) at (520,289) {$\dots$};
    \node (3_5) at (520,389) {$\dots$};
\end{scope}
\begin{scope}[every node/.style={font=\scriptsize},every path/.style={-{Latex[length=1.5mm,width=1mm]}}]
    \draw (1_-1) to (2_-1);
    \draw (3_-1) to (2_-1);
    \draw (3_-1) to node[pos=0.5,rotate=-45,above]{\tiny (1,2)} (5_-1);
    \draw[densely dashed] (2_-1) to (1_0);
    \draw[densely dashed] (2_-1) to (3_0);
    \draw[densely dashed] (5_-1) to node[pos=0.5,rotate=45,below]{\tiny (2,1)} (3_0);
    
    \path (1_0) edge (2_0);
    \path (2_0) edge[densely dashed] (1_1);
    \path (3_0) edge (2_0);
    \draw (3_0) -- (5_0) node[pos=0.5,rotate=-45,above]{\tiny (1,2)};
    \path (2_0) edge[densely dashed] (3_1);
    \draw[densely dashed] (5_0) -- (3_1) node[pos=0.5,rotate=45,below]{\tiny (2,1)};
    \path (1_1) edge (2_1);
    \path (3_1) edge (2_1);
    \draw (3_1) -- (5_1) node[pos=0.5,rotate=-45,above]{\tiny (1,2)};
    \path (2_1) edge[densely dashed] (3_2);
    \draw[densely dashed] (5_1) -- (3_2) node[pos=0.5,rotate=45,below]{\tiny (2,1)};
    \path (2_1) edge[densely dashed] (1_2);
    \path (3_2) edge (2_2);
    \path (1_2) edge (2_2);
    \draw (3_2) -- (5_2) node[pos=0.5,rotate=-45,above]{\tiny (1,2)};
    \path (2_2) edge[densely dashed] (3_3);
    \draw[densely dashed] (5_2) -- (3_3) node[pos=0.5,rotate=45,below]{\tiny (2,1)};
    \path (2_2) edge[densely dashed] (1_3);
    \path (3_3) edge (2_3);
    \path (1_3) edge (2_3);
    \draw (3_3) -- (5_3) node[pos=0.5,rotate=-45,above]{\tiny (1,2)};
    
    \path (2_3) edge[densely dashed] (3_4);
    \draw[densely dashed] (5_3) -- (3_4) node[pos=0.5,rotate=45,below]{\tiny (2,1)};
    \path (2_3) edge[densely dashed] (1_4);
    \path (3_4) edge (2_4);
    \path (1_4) edge (2_4);
    \draw (3_4) -- (5_4) node[pos=0.5,rotate=-45,above]{\tiny (1,2)};

    \draw[densely dashed] (2_4) to (1_5);
    \draw[densely dashed] (2_4) to (3_5);
    \draw[densely dashed] (5_4) to node[pos=0.5,rotate=45,below]{\tiny (2,1)} (3_5);
\end{scope}
 \end{tikzpicture}
 \caption{A positive integral frieze of type $B_4$}
 \label{fig:B4 frieze}

\end{figure}
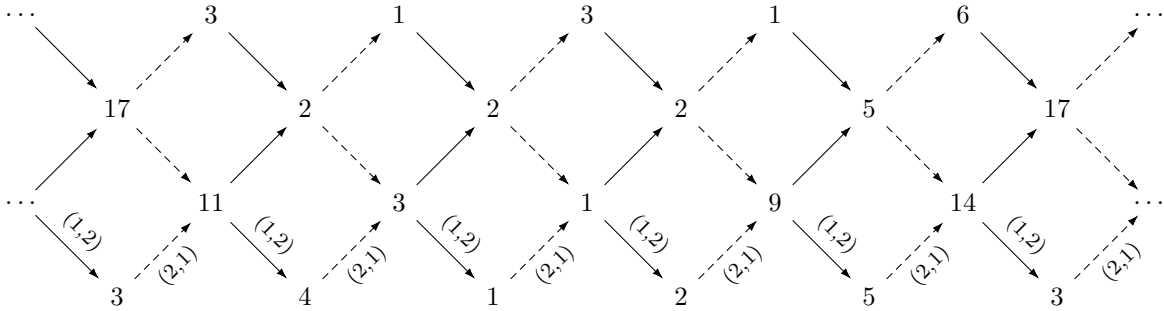

Friezes of valued quivers have many of the same properties of friezes of quivers; e.g.~the analog of Proposition~\ref{prop: knitting} holds. 

\begin{rem}
An orientation of a Dynkin diagram with Cartan matrix $C$ gives a valued quiver in which the arrow $i\rightarrow j$ has value $(|C_{ji}|,|C_{ij}|)$. As before, friezes only depend on the underlying valued graph of $Q$; therefore, we may associate friezes to Dynkin diagrams.
\end{rem}

\subsection{Friezes and cluster algebras}
\label{sec:friezes and cluster algebras}

Recall from Section \ref{section: recollections} that a valued quiver is \textbf{skew-symmetrizable} if it comes from a skew-symmetrizable matrix, and such a valued quiver determines a cluster algebra $\mathcal{A}$. Note that a quiver (regarded as a valued quiver with all values $(1,1)$) is automatically skew-symmetrizable.

\begin{thm}
If $Q$ is a (skew-symmetrizable valued) acyclic quiver with cluster algebra $\mathcal{A}$, 
then there exists a unique $\mathcal{A}$-valued frieze of type $Q$ 
sending the initial slice $\{0\}\times Q$ of $\mathbb{Z}Q$ to the initial cluster $\cluster$. This frieze sends all vertices of $\mathbb{Z}Q$ to cluster variables, and if $Q$ is Dynkin or has two vertices, then every cluster variable in $\mathcal{A}$ appears on some vertex of $\mathbb{Z}Q$.
\end{thm}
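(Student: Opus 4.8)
The plan is to realise the frieze explicitly as the record of the \emph{iterated Coxeter mutation} of the initial seed, and then to obtain uniqueness by a knitting argument. Since $Q$ is acyclic, order its vertices $Q_0=\{1,\dots,n\}$ so that every arrow $i\to j$ has $i<j$, and let $\nu:=\mu_n\circ\cdots\circ\mu_2\circ\mu_1$ be the Coxeter mutation of the seed $\mathbf{t}_0:=(\cluster,Q)$. First I would recall the standard facts that in this composition each $\mu_i$ is performed at a source of the transitional (valued) quiver and that $\nu$ restores the quiver, $\nu(Q)=Q$; consequently $\nu^m(\mathbf{t}_0)=(\cluster^{(m)},Q)$ is defined for every $m\in\ZZ$ (one runs backwards using $\nu^{-1}=\mu_1\circ\cdots\circ\mu_n$), with $\cluster^{(0)}=\cluster$ and $\cluster^{(m)}=(x^{(m)}_1,\dots,x^{(m)}_n)$. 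I then set $a(m,i):=x^{(m)}_i$; by construction every value is a cluster variable of $\mathcal{A}$ and the initial slice $\{0\}\times Q_0$ is sent to $\cluster$.

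The heart of the existence argument is that the incidence data of $\ZZ Q$ is arranged precisely to encode a source-first mutation sequence. In the round $\cluster^{(m-1)}\rightsquigarrow\cluster^{(m)}$, by the time $\mu_i$ is applied the vertices $1,\dots,i-1$ already carry $x^{(m)}_\bullet$ while $i+1,\dots,n$ still carry $x^{(m-1)}_\bullet$; vertex $i$ is a source of the transitional quiver, its out-neighbours there being the $Q$-out-neighbours $j$ of $i$ (still holding $x^{(m-1)}_j$) together with the $Q$-in-neighbours $k$ of $i$ (now holding $x^{(m)}_k$); and the resulting exchange relation, writing $(b_{pq},c_{pq})$ for the value of an arrow $p\to q$ in $Q$, is
\[ x^{(m-1)}_i\,x^{(m)}_i \;=\; 1+\prod_{k\to i}\bigl(x^{(m)}_k\bigr)^{b_{ki}}\prod_{i\to j}\bigl(x^{(m-1)}_j\bigr)^{c_{ij}}. \]
This is exactly the mesh relation of $\ZZ Q$ at $(m,i)$: its solid arrows come from $(m,k)$ with $k\to i$ and carry value $(b_{ki},c_{ki})$, while its dashed arrows come from $(m-1,j)$ with $i\to j$ and carry value $(c_{ij},b_{ij})$. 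I would isolate the identification of the transitional quivers as a short lemma on source-first mutation sequences of acyclic quivers; it is routine but notation-heavy.

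For uniqueness, let $a'$ be any $\mathcal{A}$-valued frieze of type $Q$ with $a'(0,i)=x_i$ for all $i$, and show $a'=a$ by induction on $|m|$, the base case being the hypothesis. In the step $m\rightsquigarrow m+1$ with $m\ge 0$ I would induct on $i$ in increasing order: the mesh relation at $(m+1,i)$ writes $a'(m+1,i)\,a'(m,i)$ in terms of the values $a'(m+1,k)$ with $k\to i$ (so $k<i$, already determined) and $a'(m,j)$ with $i\to j$ (in slice $m$, already determined), so its right-hand side agrees with that of the mesh relation for $a$; cancelling $a'(m,i)=a(m,i)$, which is a cluster variable and hence a nonzero element of the domain $\mathcal{A}$, gives $a'(m+1,i)=a(m+1,i)$. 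The step $m\rightsquigarrow m-1$ with $m\le 0$ is the same, except that slice $m-1$ must be traversed in \emph{decreasing} order of $i$ so that the values $a'(m-1,j)$ with $i\to j$ (so $j>i$) are in hand when one solves the mesh relation at $(m,i)$ for $a'(m-1,i)$; acyclicity of $Q$ is exactly what makes both traversals legitimate.

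It remains to establish the last clause. When $Q$ has two vertices, written $1\xrightarrow{(b,c)}2$, substituting into the mesh relations shows that the bi-infinite reading $\dots,a(m,1),a(m,2),a(m+1,1),\dots$ satisfies the recurrence $y_{i-1}y_{i+1}=y_i^{\,b}+1$ for $i$ odd and $y_{i-1}y_{i+1}=y_i^{\,c}+1$ for $i$ even of Example~\ref{ex: rank2}, so the frieze values run over all of $\{x_i\}_{i\in\ZZ}$. The Dynkin case is the real obstacle, since one must show that iterating the Coxeter mutation on $\mathbf{t}_0$ already exhausts the (finitely many) cluster variables of $\mathcal{A}$. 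I would deduce this from the cluster-category model of \cite{CC06}: the indecomposable objects of the cluster category $\mathcal{C}_Q$ are in bijection with the cluster variables of $\mathcal{A}$, its Auslander--Reiten quiver is the quotient of $\ZZ Q$ by the automorphism induced by $\tau^{-1}[1]$ (a glide by the Coxeter number), so the covering $\ZZ Q\to\mathrm{AR}(\mathcal{C}_Q)$ is surjective onto indecomposables, and under the Caldero--Chapoton map (which carries Auslander--Reiten triangles to the mesh/exchange relations) this covering is intertwined with $a$; hence every cluster variable occurs as some $a(m,i)$. Alternatively, one may simply invoke \cite{CC06,ARS10}, where this surjectivity is proved directly (the non-simply-laced cases by folding or species).
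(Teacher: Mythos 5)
Your proposal is correct and follows essentially the same route as the paper's proof sketch: your iterated Coxeter mutation is precisely the source-first knitting algorithm used there, and your cluster-category/Caldero--Chapoton argument for exhaustiveness in Dynkin type matches the paper's appeal to \cite{CC06,ARS10}. You make explicit the mesh-relation verification and, more usefully, the uniqueness argument by cancellation in the integral domain $\mathcal{A}$ (which cannot simply invoke Proposition~\ref{prop: knitting} since $\mathcal{A}$ is not a semifield), both of which the paper's sketch leaves implicit.
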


\noindent We refer to 
this $\mathcal{A}$-valued frieze 
as the \textbf{general frieze of type $Q$}.

\begin{proof}[Proof sketch]
All parts of this result have appeared in earlier works, but we sketch the proof.

The general frieze may be constructed by applying the \emph{knitting algorithm} (see \cite{Kel10} for details and examples) starting with the initial values of $(0,v)=x_v$. Since every acyclic quiver has a sink and a source, each iteration of knitting/mutating produces a new cluster variable, which is placed on a vertex in the repetition quiver. Iterating this in both directions yields the general frieze. 

If $Q$ is Dynkin, the fact that every cluster variable appears in the general frieze follows from 
\cite[Theorem 1.9]{CA2}, 
which parametrizes non-initial cluster variables by the positive roots in the associated root system. In the simply-laced cased, a deeper explanation is given by the cluster character formula \cite{CC06}, which embeds 
the Auslander--Reiten quiver of $Q$ into the general frieze.

If $Q$ has two vertices, the cluster algebra was described in Example \ref{ex: rank2} and the general frieze is given in Figure \ref{fig: bcuni}. Comparing the two, all cluster variables appear.
\end{proof}

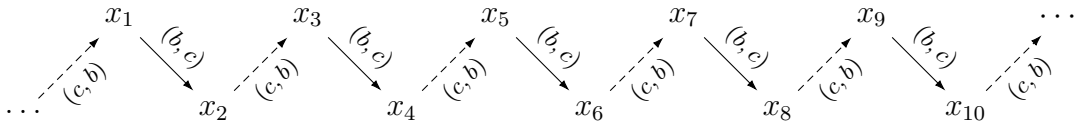
\begin{figure}[h!t]
\begin{tikzpicture}[x=0.025cm,y=-0.025cm]
\begin{scope}[every node/.style={inner sep=4pt}]
    \node (2_-1) at (-30,339) {$\dots$};
    \node (1_0) at (20,289) {$x_1$};
    \node (2_0) at (70,339) {$x_2$};
    \node (1_1) at (120,289) {$x_3$};
    \node (2_1) at (170,339) {$x_4$};
    \node (1_2) at (220,289) {$x_5$};
    \node (2_2) at (270,339) {$x_6$};
    \node (1_3) at (320,289) {$x_7$};
    \node (2_3) at (370,339) {$x_8$};
    \node (1_4) at (420,289) {$x_9$};
    \node (2_4) at (470,339) {$x_{10}$};
    \node (1_5) at (520,289) {$\cdots$};
\end{scope}
\begin{scope}[every node/.style={font=\scriptsize},every path/.style={-{Latex[length=1.5mm,width=1mm]}}]
    \draw[densely dashed] (2_-1) to node[below,rotate=45] {$(c,b)$} (1_0);
    \draw (1_0) to node[above,rotate=-45] {$(b,c)$} (2_0);
    \draw[densely dashed] (2_0) to node[below,rotate=45] {$(c,b)$} (1_1);
    \draw (1_1) to node[above,rotate=-45] {$(b,c)$} (2_1);
    \draw[densely dashed] (2_1) to node[below,rotate=45] {$(c,b)$} (1_2);
    \draw (1_2) to node[above,rotate=-45] {$(b,c)$} (2_2);
    \draw[densely dashed] (2_2) to node[below,rotate=45] {$(c,b)$} (1_3);
    \draw (1_3) to node[above,rotate=-45] {$(b,c)$} (2_3);
    \draw[densely dashed] (2_3) to node[below,rotate=45] {$(c,b)$} (1_4);
    \draw (1_4) to node[above,rotate=-45] {$(b,c)$} (2_4);
    \draw[densely dashed] (2_4) to node[below,rotate=45] {$(c,b)$} (1_5);
\end{scope}
 \end{tikzpicture}
\caption{The form of a general frieze with two vertices}
\label{fig: bcuni}
\end{figure}

\begin{ex}
Figure \ref{fig: bcuni} shows the general frieze for an arbitrary quiver with two vertices, with cluster variables indexed by $\mathbb{Z}$ as in Example \ref{ex: rank2}. This general frieze becomes more interesting when written as Laurent polynomials in the initial cluster $\{x_1,x_2\}$; three general friezes of Dynkin type are given in Figure \ref{fig: ABGuni}.
\end{ex}

\begin{figure}[h!t]
\begin{tikzpicture}[x=0.025cm,y=-0.025cm]
\begin{scope}[every node/.style={inner sep=4pt,font=\footnotesize}]
    \node (2_-1) at (-30,339) {$\dots$};
    \node (1_0) at (20,289) {$x_1$};
    \node (2_0) at (70,339) {$x_2$};
    \node (1_1) at (120,289) {$\frac{1+x_2}{x_1}$};
    \node (2_1) at (170,339) {$\frac{x_1+1+x_2}{x_1x_2}$};
    \node (1_2) at (220,289) {$\frac{x_1+1}{x_2}$};
    \node (2_2) at (270,339) {$x_1$};
    \node (1_3) at (320,289) {$x_2$};
    \node (2_3) at (370,339) {$\frac{1+x_2}{x_1}$};
    \node (1_4) at (420,289) {$\frac{x_1+1+x_2}{x_1x_2}$};
    \node (2_4) at (470,339) {$\frac{x_1+1}{x_2}$};
    \node (1_5) at (520,289) {$\dots$};
\end{scope}
\begin{scope}[every node/.style={fill=white,font=\scriptsize},every path/.style={-{Latex[length=1.5mm,width=1mm]}}]
    \draw[densely dashed] (2_-1) to (1_0);
    \draw (1_0) to (2_0);
    \draw[densely dashed] (2_0) to (1_1);
    \draw (1_1) to (2_1);
    \draw[densely dashed] (2_1) to (1_2);
    \draw (1_2) to (2_2);
    \draw[densely dashed] (2_2) to (1_3);
    \draw (1_3) to (2_3);
    \draw[densely dashed] (2_3) to (1_4);
    \draw (1_4) to (2_4);
    \draw[densely dashed] (2_4) to (1_5);
\end{scope}
 \end{tikzpicture}

 \begin{tikzpicture}[x=0.025cm,y=-0.025cm]
\begin{scope}[every node/.style={inner sep=4pt,font=\footnotesize}]
    \node (2_-1) at (-30,389) {$\dots$};
    \node (1_0) at (20,289) {$x_1$};
    \node (2_0) at (70,389) {$x_2$};
    \node (1_1) at (120,289) {$\frac{x_2^2+1}{x_1}$};
    \node (2_1) at (170,389) {$\frac{x_1+x_2^2+1}{x_1x_2}$};
    \node (1_2) at (220,289) {$\frac{x_1^2+x_2^2+2x_1+1}{x_1x_2^2}$};
    \node (2_2) at (270,389) {$\frac{x_1+1}{x_2}$};
    \node (1_3) at (320,289) {$x_1$};
    \node (2_3) at (370,389) {$x_2$};
    \node (1_4) at (420,289) {$\frac{x_2^2+1}{x_1}$};
    \node (2_4) at (470,389) {$\frac{x_1+x_2^2+1}{x_1x_2}$};
    \node (1_5) at (520,289) {$\dots$};
\end{scope}
\begin{scope}[every node/.style={fill=white,font=\scriptsize},every path/.style={-{Latex[length=1.5mm,width=1mm]}}]
    \draw[densely dashed] (2_-1) to node[below,rotate=63] {$(1,2)$} (1_0);
    \draw (1_0) to node[above,rotate=-63] {$(1,2)$} (2_0);
    \draw[densely dashed] (2_0) to node[below,rotate=63] {$(2,1)$} (1_1);
    \draw (1_1) to node[above,rotate=-63] {$(1,2)$} (2_1);
    \draw[densely dashed] (2_1) to node[below,rotate=63] {$(2,1)$} (1_2);
    \draw (1_2) to node[above,rotate=-63] {$(1,2)$} (2_2);
    \draw[densely dashed] (2_2) to node[below,rotate=63] {$(2,1)$} (1_3);
    \draw (1_3) to node[above,rotate=-63] {$(1,2)$} (2_3);
    \draw[densely dashed] (2_3) to node[below,rotate=63] {$(2,1)$} (1_4);
    \draw (1_4) to node[above,rotate=-63] {$(1,2)$} (2_4);
    \draw[densely dashed] (2_4) to node[below,rotate=63] {$(2,1)$} (1_5);
\end{scope}
\end{tikzpicture}

 \begin{tikzpicture}[x=0.025cm,y=-0.025cm]
\begin{scope}[every node/.style={inner sep=4pt,font=\footnotesize}]
    \node (2_-1) at (-30,389) {$\dots$};
    \node (1_0) at (20,289) {$x_1$};
    \node (2_0) at (70,389) {$x_2$};
    \node (1_1) at (120,289) {$\frac{x_2^3+1}{x_1}$};
    \node (2_1) at (170,389) {$\frac{x_1+x_2^3+1}{x_1x_2}$};
    \node (1_2) at (220,289) {};
    \node (2_2) at (270,389) {$\frac{x_2^3+x_1^2+2x_1+1}{x_1x_2^2}$};
    \node (1_3) at (320,289) {$\frac{x_1^3+x_2^3+3x_1^2 + 3x_1+1}{x_1x_2^3}$};
    \node (2_3) at (370,389) {$\frac{x_1+1}{x_2}$};
    \node (1_4) at (420,289) {$x_1$};
    \node (2_4) at (470,389) {$x_2$};
    \node (1_5) at (520,289) {$\dots$};
    
    \node[inner sep=0] (label) at (220,259) {$\underbrace{\scriptstyle \frac{x_2^6+3 x_1 x_2^3+2 x_2^3+x_1^3+3 x_1^2+3 x_1+1}{x_1^2x_2^3}}$};
    \draw[thick] (1_2.center) to (label.270);
\end{scope}
\begin{scope}[every node/.style={font=\scriptsize},every path/.style={-{Latex[length=1.5mm,width=1mm]}}]
    \draw[densely dashed] (2_-1) to node[below,rotate=61] {$(3,1)$} (1_0);
    \draw (1_0) to node[above,rotate=-63] {$(1,3)$} (2_0);
    \draw[densely dashed] (2_0) to node[below,rotate=61] {$(3,1)$} (1_1);
    \draw (1_1) to node[above,rotate=-63] {$(1,3)$} (2_1);
    \draw[densely dashed] (2_1) to node[below,rotate=61] {$(3,1)$} (1_2);
    \draw (1_2) to node[above,rotate=-63] {$(1,3)$} (2_2);
    \draw[densely dashed] (2_2) to node[below,rotate=61] {$(3,1)$} (1_3);
    \draw (1_3) to node[above,rotate=-63] {$(1,3)$} (2_3);
    \draw[densely dashed] (2_3) to node[below,rotate=61] {$(3,1)$} (1_4);
    \draw (1_4) to node[above,rotate=-63] {$(1,3)$} (2_4);
    \draw[densely dashed] (2_4) to node[below,rotate=61] {$(3,1)$} (1_5);
\end{scope}
\end{tikzpicture}
\caption{The general friezes of types $A_2$, $B_2/C_2$, and $G_2$ (as Laurent polynomials)}
\label{fig: ABGuni}
\end{figure}
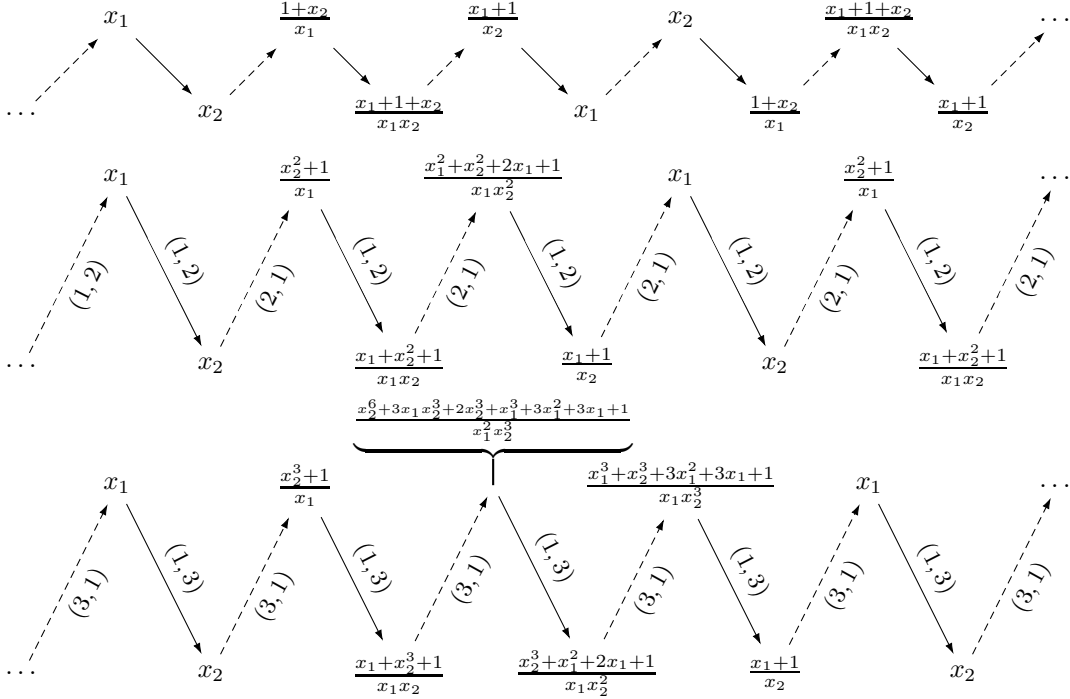

\subsection{Specializing the general frieze}

We can use the general frieze of type $Q$ to construct friezes valued in real semirings, as follows. 
Given a ring homomorphism $p:\mathcal{A}\rightarrow \mathbb{R}$, we may apply $p$ to the vertices of the general frieze. Since a ring homomorphism preserves the mesh relations, the result is an $\mathbb{R}$-valued frieze; let us call this the \textbf{specialization of the general frieze at $p$}.
If $p\in \mathcal{A}(S)$ for a subsemiring $S\subset \mathbb{R}$ (that is, $p$ sends cluster variables into $S$), then this specialization is an $S$-valued frieze of type $Q$.

\begin{ex}
There are nine frieze points for the $G_2$ cluster algebra (Example \ref{ex: G2friezepoints}). Consider the three frieze points which, as ring homomorphisms, send $(x_1,x_2)$ to $(1,1)$, $(1,2)$, and $(3,2)$, 
respectively. 
Applying each of these ring homomorphisms to the general frieze of type $G_2$ (Figure \ref{fig: ABGuni}) gives a positive integral frieze of type $G_2$ (Figure \ref{fig: G2frieze}).
\end{ex}

\begin{figure}[h!!t]
\begin{tikzpicture}[x=0.025cm,y=-0.025cm]
\begin{scope}[every node/.style={inner sep=4pt,font=\footnotesize}]
    \node (2_-1) at (-30,389) {$\dots$};
    \node (1_0) at (20,289) {$1$};
    \node (2_0) at (70,389) {$1$};
    \node (1_1) at (120,289) {$2$};
    \node (2_1) at (170,389) {$3$};
    \node (1_2) at (220,289) {$14$};
    \node (2_2) at (270,389) {$5$};
    \node (1_3) at (320,289) {$9$};
    \node (2_3) at (370,389) {$2$};
    \node (1_4) at (420,289) {$1$};
    \node (2_4) at (470,389) {$1$};
    \node (1_5) at (520,289) {$\dots$};
\end{scope}
\begin{scope}[every node/.style={font=\scriptsize},every path/.style={-{Latex[length=1.5mm,width=1mm]}}]
    \draw[densely dashed] (2_-1) to node[below,rotate=61] {$(3,1)$} (1_0);
    \draw (1_0) to node[above,rotate=-63] {$(1,3)$} (2_0);
    \draw[densely dashed] (2_0) to node[below,rotate=61] {$(3,1)$} (1_1);
    \draw (1_1) to node[above,rotate=-63] {$(1,3)$} (2_1);
    \draw[densely dashed] (2_1) to node[below,rotate=61] {$(3,1)$} (1_2);
    \draw (1_2) to node[above,rotate=-63] {$(1,3)$} (2_2);
    \draw[densely dashed] (2_2) to node[below,rotate=61] {$(3,1)$} (1_3);
    \draw (1_3) to node[above,rotate=-63] {$(1,3)$} (2_3);
    \draw[densely dashed] (2_3) to node[below,rotate=61] {$(3,1)$} (1_4);
    \draw (1_4) to node[above,rotate=-63] {$(1,3)$} (2_4);
    \draw[densely dashed] (2_4) to node[below,rotate=61] {$(3,1)$} (1_5);
\end{scope}
\end{tikzpicture}

 \begin{tikzpicture}[x=0.025cm,y=-0.025cm]
\begin{scope}[every node/.style={inner sep=4pt,font=\footnotesize}]
    \node (2_-1) at (-30,389) {$\dots$};
    \node (1_0) at (20,289) {$1$};
    \node (2_0) at (70,389) {$2$};
    \node (1_1) at (120,289) {$9$};
    \node (2_1) at (170,389) {$5$};
    \node (1_2) at (220,289) {$14$};
    \node (2_2) at (270,389) {$3$};
    \node (1_3) at (320,289) {$2$};
    \node (2_3) at (370,389) {$1$};
    \node (1_4) at (420,289) {$1$};
    \node (2_4) at (470,389) {$2$};
    \node (1_5) at (520,289) {$\dots$};
\end{scope}
\begin{scope}[every node/.style={font=\scriptsize},every path/.style={-{Latex[length=1.5mm,width=1mm]}}]
    \draw[densely dashed] (2_-1) to node[below,rotate=61] {$(3,1)$} (1_0);
    \draw (1_0) to node[above,rotate=-63] {$(1,3)$} (2_0);
    \draw[densely dashed] (2_0) to node[below,rotate=61] {$(3,1)$} (1_1);
    \draw (1_1) to node[above,rotate=-63] {$(1,3)$} (2_1);
    \draw[densely dashed] (2_1) to node[below,rotate=61] {$(3,1)$} (1_2);
    \draw (1_2) to node[above,rotate=-63] {$(1,3)$} (2_2);
    \draw[densely dashed] (2_2) to node[below,rotate=61] {$(3,1)$} (1_3);
    \draw (1_3) to node[above,rotate=-63] {$(1,3)$} (2_3);
    \draw[densely dashed] (2_3) to node[below,rotate=61] {$(3,1)$} (1_4);
    \draw (1_4) to node[above,rotate=-63] {$(1,3)$} (2_4);
    \draw[densely dashed] (2_4) to node[below,rotate=61] {$(3,1)$} (1_5);
\end{scope}
\end{tikzpicture}

 \begin{tikzpicture}[x=0.025cm,y=-0.025cm]
\begin{scope}[every node/.style={inner sep=4pt,font=\footnotesize}]
    \node (2_-1) at (-30,389) {$\dots$};
    \node (1_0) at (20,289) {$3$};
    \node (2_0) at (70,389) {$2$};
    \node (1_1) at (120,289) {$3$};
    \node (2_1) at (170,389) {$2$};
    \node (1_2) at (220,289) {$3$};
    \node (2_2) at (270,389) {$2$};
    \node (1_3) at (320,289) {$3$};
    \node (2_3) at (370,389) {$2$};
    \node (1_4) at (420,289) {$3$};
    \node (2_4) at (470,389) {$2$};
    \node (1_5) at (520,289) {$\dots$};
\end{scope}
\begin{scope}[every node/.style={font=\scriptsize},every path/.style={-{Latex[length=1.5mm,width=1mm]}}]
    \draw[densely dashed] (2_-1) to node[below,rotate=61] {$(3,1)$} (1_0);
    \draw (1_0) to node[above,rotate=-63] {$(1,3)$} (2_0);
    \draw[densely dashed] (2_0) to node[below,rotate=61] {$(3,1)$} (1_1);
    \draw (1_1) to node[above,rotate=-63] {$(1,3)$} (2_1);
    \draw[densely dashed] (2_1) to node[below,rotate=61] {$(3,1)$} (1_2);
    \draw (1_2) to node[above,rotate=-63] {$(1,3)$} (2_2);
    \draw[densely dashed] (2_2) to node[below,rotate=61] {$(3,1)$} (1_3);
    \draw (1_3) to node[above,rotate=-63] {$(1,3)$} (2_3);
    \draw[densely dashed] (2_3) to node[below,rotate=61] {$(3,1)$} (1_4);
    \draw (1_4) to node[above,rotate=-63] {$(1,3)$} (2_4);
    \draw[densely dashed] (2_4) to node[below,rotate=61] {$(3,1)$} (1_5);
\end{scope}
\end{tikzpicture}
\caption{Three positive integral friezes of type $G_2$, corresponding to the frieze points in which $(x_1,x_2)$ equals $(1,1),(1,2)$, and $(3,2)$}
\label{fig: G2frieze}
\end{figure}

\begin{thm}\label{thm: specialization}
If $Q$ is a (skew-symmetrizable valued) acyclic quiver and $S\subseteq \mathbb{R}_{>0}$ is a subsemiring, then specializing the general frieze of type $Q$ defines an injection
\[ \sigma: \mathcal{A}(S) \hookrightarrow \{ \text{$S$-valued friezes of type $Q$} \} \]
Furthermore, this map is a bijection if $S=\mathbb{R}_{>0}$, if $Q$ is Dynkin, or if $Q$ has two vertices.
\end{thm}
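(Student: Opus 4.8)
The plan is to deduce everything from two facts already in hand: the homeomorphism $f_\mathbf{x}\colon\mathcal{A}(\mathbb{R}_{>0})\xrightarrow{\sim}\mathbb{R}_{>0}^r$ of Proposition~\ref{prop:homemomorphism from totally positive region to positive orthant}, and the fact that a frieze valued in a semifield is freely determined by its values on the initial slice (the valued-quiver analog of Proposition~\ref{prop: knitting}, flagged in the paragraph following it). \textbf{Well-definedness.} Since the general frieze $g$ of type $Q$ is an $\mathcal{A}$-valued frieze, its mesh relations are equalities in $\mathcal{A}$; applying the ring homomorphism $p$ preserves these relations, so $\sigma(p):=p\circ g$ is an $\mathbb{R}$-valued frieze of type $Q$, and since every vertex of $\mathbb{Z}Q$ carries a cluster variable under $g$ and $p\in\mathcal{A}(S)$ sends cluster variables into $S$, the frieze $\sigma(p)$ is $S$-valued. \textbf{Injectivity.} By construction the restriction of $\sigma(p)$ to the initial slice $\{0\}\times Q_0$ is the tuple $(p(x_1),\dots,p(x_r))=f_\mathbf{x}(p)$; since $S\subseteq\mathbb{R}_{>0}$ we have $p\in\mathcal{A}(\mathbb{R}_{>0})$, and $f_\mathbf{x}$ is injective, so $\sigma(p)=\sigma(p')$ forces $p=p'$.

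\textbf{Surjectivity when $S=\mathbb{R}_{>0}$.} Given an $\mathbb{R}_{>0}$-valued frieze $a$ of type $Q$, I would set $v:=(a(0,1),\dots,a(0,r))\in\mathbb{R}_{>0}^r$ and $p:=f_\mathbf{x}^{-1}(v)\in\mathcal{A}(\mathbb{R}_{>0})$, so that $p(x_i)=a(0,i)$. Then $\sigma(p)$ and $a$ are both $\mathbb{R}_{>0}$-valued friezes of type $Q$ agreeing on the initial slice, and since $\mathbb{R}_{>0}$ is a semifield, the valued-quiver form of Proposition~\ref{prop: knitting} gives $\sigma(p)=a$.

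\textbf{Surjectivity when $Q$ is Dynkin or has two vertices.} For a general subsemiring $S\subseteq\mathbb{R}_{>0}$ and an $S$-valued frieze $a$, I would first apply the previous paragraph (viewing $a$ as $\mathbb{R}_{>0}$-valued) to obtain $p\in\mathcal{A}(\mathbb{R}_{>0})$ with $\sigma(p)=a$, and then promote $p$ to a point of $\mathcal{A}(S)$: by the general frieze theorem, when $Q$ is Dynkin or has two vertices every cluster variable $y$ of $\mathcal{A}$ equals $g(v)$ for some vertex $v$ of $\mathbb{Z}Q$, whence $p(y)=a(v)\in S$. Thus $p\in\mathcal{A}(S)$ and $\sigma$ is onto.

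The only real obstacle is this last step: bijectivity for general $S$ hinges entirely on every cluster variable occurring somewhere in the general frieze, which is exactly the hypothesis that fails for an arbitrary acyclic quiver of infinite type — there the general frieze realizes only some cluster variables, and a frieze valued in a small $S$ could correspond to a totally positive point on which some unrealized cluster variable takes a value outside $S$, so $\sigma$ need only be an injection. I would also be careful to invoke the semifield-determines-the-frieze statement in its valued-quiver form rather than the quiver form of Proposition~\ref{prop: knitting}.
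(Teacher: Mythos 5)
Your proposal is correct and follows essentially the same route as the paper: the paper packages the identical ingredients (the homeomorphism $f_\mathbf{x}$, the fact that a semifield-valued frieze is freely determined by its initial slice, and the appearance of every cluster variable in the general frieze when $Q$ is Dynkin or rank $2$) into a commutative diagram, whereas you spell the same argument out directly. Your closing cautions — checking well-definedness, using the valued-quiver form of Proposition~\ref{prop: knitting}, and noting that bijectivity for small $S$ fails exactly when some cluster variable is missing from the general frieze — all match the paper's treatment.
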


\begin{proof}
First, we claim that the map $\sigma$ fits into a commutative diagram.
\[
\begin{tikzpicture}[yscale=0.8]
    \node (1) at (0,0) {$\mathcal{A}(S)$};
    \node (2) at (7,0) {$\{\text{$S$-valued friezes of type $Q$} \}$};
    \node (3) at (0,-2) {$\mathcal{A}(\mathbb{R}_{>0})$};
    \node (4) at (2.5,-2) {$\mathbb{R}_{>0}^r$};
    \node (5) at (7,-2) {$\{\text{$\mathbb{R}_{>0}$-valued friezes of type $Q$} \}$};
    \draw[->] (1) to node[above] {$\sigma$} (2);
    \draw[->] (3) to node[above] {$f_\mathbf{x}$} (4);
    \draw[->] (5) to node[above] {$g$} (4);
    \draw[right hook->] (1) to node[left] {$\iota$} (3);
    \draw[right hook->] (2) to node[left] {$\iota'$} (5);
\end{tikzpicture}
\]
where $f_\cluster$ sends a point in $\mathcal{A}(\mathbb{R}_{>0})$ to its values on the initial cluster $\cluster$, $g$ sends a frieze to its values on the initial slice $\{0\}\times Q$, and the inclusions $\iota,\iota'$ are induced by $S\subseteq \mathbb{R}_{>0}$. The maps $f_\cluster$ and $g$ are bijections by Propositions \ref{prop:homemomorphism from totally positive region to positive orthant} and \ref{prop: knitting}.

To check commutativity, the composition $g\circ \iota' \circ \sigma$ sends a point in $\mathcal{A}(S)$ to its values on the initial slice of the general frieze. Since this initial slice is the initial cluster $\cluster$, the composition $g\circ \iota' \circ \sigma$ sends a point in $\mathcal{A}(S)$ to its values on the initial cluster. This is the definition of $f_\cluster\circ \iota$, and so the diagram commutes. 
Furthermore, $\sigma$ is a bijection if $S=\mathbb{R}_{>0}$.

Let $F$ be an $S$-valued frieze of type $Q$. 
By the commutativity of the diagram, $F$ is the specialization of the general frieze at a unique point $p:=f_\cluster^{-1}\circ g \circ \iota'(F) \in \mathcal{A}(\mathbb{R}_{>0})$, which may or may not be in $\mathcal{A}(S)$. Since the preimage $\sigma^{-1}(F)$ has at most one element, $\sigma$ is injective. 
In the case that $Q$ is Dynkin or has two vertices, every cluster variable occurs somewhere in the general frieze. Since $p$ sends the general frieze to $F$ and $F$ has values in $S$, it follows that $p$ sends every cluster variable into $S$; that is, $p\in \mathcal{A}(S)$. Therefore, $\sigma$ is surjective.
\end{proof}

\begin{rem}
\label{rem: integralspecialization}
The map $\sigma$ is a bijection whenever $S=\mathbb{Z}_{\geq1}$, but showing this requires a presentation of $\mathcal{A}$ which hasn't explicitly appeared in the literature. 
In \cite{BNI20}, the authors show that the cluster algebra of an acyclic quiver $Q$ may be generated by the cluster variables which appear in the general frieze on the initial slice and its translation. We claim that
\begin{enumerate}
    \item their argument extends \emph{mutatis mutandis} to the skew-symmetrizable case, and
    \item the relations among this generating set are generated by the mesh relations.
\end{enumerate}

Assuming such a presentation, the proof is as follows.
Given a $\mathbb{Z}_{\geq 1}$-valued frieze of type $Q$, its values on this generating set satisfy the mesh relations and extend to a ring homomorphism $p:\mathcal{A}\rightarrow \mathbb{Z}$. 
The specialization $\sigma(p)$ of the general frieze is then $\mathbb{Z}$-valued and agrees with original frieze on the initial slice; by Proposition \ref{prop: knitting}, they coincide.
\end{rem}

\begin{rem}
However, $\sigma$ may not be a bijection for other semirings $S$. For example,
\[ \sigma: \mathcal{A}(\mathbb{R}_{\geq1}) \hookrightarrow \{\text{$ \mathbb{R}_{\geq1}$-valued friezes of type $Q$}\}\]
need not be a 
surjection. 
A counterexample is given by the following acyclic quiver. 
\[ 
\begin{tikzpicture}
    \node (1) at (210:1) {$1$};
    \node (2) at (90:1) {$2$};
    \node (3) at (-30:1) {$3$};
    \draw[-angle 90] (1) to (2);
    \draw[-angle 90] (2) to (3);
    \draw[-angle 90] (1) to (3);
\end{tikzpicture}
\]
The totally positive point $p$ of the associated cluster algebra with $(x_1,x_2,x_3) = (1,3,1)$ sends every cluster variable in the general frieze into $\mathbb{R}_{\geq1}$, and so $\sigma(p)$ is an $\mathbb{R}_{\geq1}$-valued frieze. This cluster algebra has two cluster variables not in the general frieze, one of which is the mutation $x_2'$ of $x_2$ in the initial seed. Since $p(x_2')=2/3<1$, the point  $p\not\in \mathcal{A}(\mathbb{R}_{\geq1})$.
\end{rem}

\begin{rem}
If $S\subseteq\mathbb{R}$ but $S\not\subseteq\mathbb{R}_{>0}$, $\sigma$ need not be surjective even in Dynkin type. For example, for any aperiodic sequence $...,a_0,a_1,a_2,...$ of real numbers, the $\mathbb{R}$-valued frieze in Figure \ref{fig: wildfrieze} is aperiodic and cannot be a specialization of the general frieze of type $A_3$.
\end{rem}

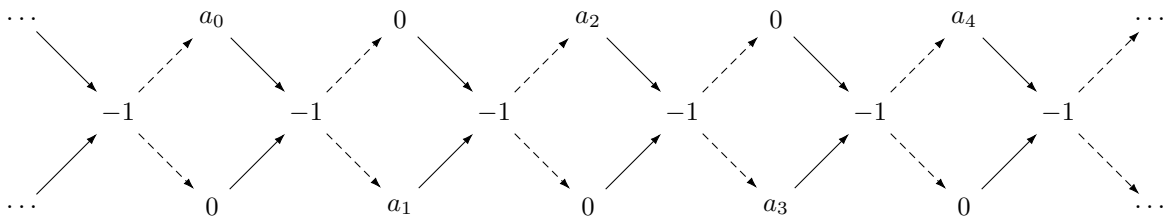
\begin{figure}[h!t]
\begin{tikzpicture}[x=0.025cm,y=-0.025cm]
\begin{scope}[every node/.style={inner sep=4pt,font=\footnotesize}]
    \node (1_-1) at (-80,289) {$\dots$};
    \node (2_-1) at (-30,339) {$-1$};
    \node (3_-1) at (-80,389) {$\dots$};

    \node (1_0) at (20,289) {$a_0$};
    \node (2_0) at (70,339) {$-1$};
    \node (3_0) at (20,389) {$0$};
    
    \node (1_1) at (120,289) {$0$};
    \node (2_1) at (170,339) {$-1$};
    \node (3_1) at (120,389) {$a_1$};
   
    \node (1_2) at (220,289) {$a_2$};
    \node (2_2) at (270,339) {$-1$};
    \node (3_2) at (220,389) {$0$};
    
    \node (1_3) at (320,289) {$0$};
    \node (2_3) at (370,339) {$-1$};
    \node (3_3) at (320,389) {$a_3$};
       
    \node (1_4) at (420,289) {$a_4$};
    \node (2_4) at (470,339) {$-1$};
    \node (3_4) at (420,389) {$0$};
   
    \node (1_5) at (520,289) {$\dots$};
    \node (3_5) at (520,389) {$\dots$};
\end{scope}
\begin{scope}[every node/.style={fill=white,font=\scriptsize},every path/.style={-{Latex[length=1.5mm,width=1mm]}}]
    \draw (1_-1) to (2_-1);
    \draw (3_-1) to (2_-1);
    \draw[densely dashed] (2_-1) to (1_0);
    \draw[densely dashed] (2_-1) to (3_0);
   
    \path (1_0) edge (2_0);
    \path (2_0) edge[densely dashed] (1_1);
    \path (3_0) edge (2_0);
    \path (2_0) edge[densely dashed] (3_1);
    \path (1_1) edge (2_1);
    \path (3_1) edge (2_1);
    \path (2_1) edge[densely dashed] (3_2);
    \path (2_1) edge[densely dashed] (1_2);
    \path (3_2) edge (2_2);
    \path (1_2) edge (2_2);
    \path (2_2) edge[densely dashed] (3_3);
    \path (2_2) edge[densely dashed] (1_3);
    \path (3_3) edge (2_3);
    \path (1_3) edge (2_3);
    
    \path (2_3) edge[densely dashed] (3_4);
    \path (2_3) edge[densely dashed] (1_4);
    \path (3_4) edge (2_4);
    \path (1_4) edge (2_4);

    \draw[densely dashed] (2_4) to (1_5);
    \draw[densely dashed] (2_4) to (3_5);
\end{scope}
 \end{tikzpicture}
 \caption{A family of wild non-positive friezes of type $A_3$}
 \label{fig: wildfrieze}
\end{figure}

\subsection{Positive integral friezes}
\label{section: enumeration}

For $S=\mathbb{Z}_{\geq1}$, specializing the general frieze
is a bijection\footnote{Theorem~\ref{thm: specialization} covers bijectivity for $Q$ Dynkin and rank $2$, and Remark \ref{rem: integralspecialization} addresses the general case.}
\begin{equation}
\sigma:\mathcal{A}(\mathbb{Z}_{\geq1}) :=\{\text{frieze points of $\mathcal{A}$}\} \rightarrow \{\text{positive integral friezes of type $Q$}\} 
\end{equation}
This is the justification for calling elements of the former set `\emph{frieze points}'.

\begin{rem}
This is somewhat inconsistent with our own terminology, since the frieze points only correspond to positive integral (i.e.~$\mathbb{Z}_{\geq1}$-valued) friezes, while other real points of $\mathcal{A}$ determine other kinds of real-valued friezes of type $Q$. Our weak justification is that this is inherited from the literature, which tends to regard positive integral friezes as the `true friezes', and the rest as inferior approximations.
\end{rem}

\begin{ex}
There are nine frieze points in the $G_2$ cluster algebra (Example \ref{ex: G2friezepoints}), which correspond to nine positive integral friezes of type $G_2$. Three are given in Figure \ref{fig: G2frieze} and the rest are obtained by translations of the first two.
\end{ex}

Recall that the map sending a cluster $\mathbf{x}$ to the unitary point $\mathbf{1}_\cluster$ is an inclusion, and so we have a composable pair of inclusions
\begin{equation}
\{\text{clusters in $\mathcal{A}$}\} \hookrightarrow \mathcal{A}(\mathbb{Z}_{\geq1})\hookrightarrow \{\text{positive integral friezes of type $Q$}\}
\end{equation}
Friezes in the image of this composition will be called \textbf{unitary friezes}; see \cite{FP16,GS20,CFGT22} for further consideration.
When $Q$ is not Dynkin, the classification of finite type cluster algebras implies there are infinitely many positive integral friezes of type $Q$. 

The converse to this observation is the main result of this section.

\begin{thmIntro}
\label{thm: finitefriezes}
There are finitely many positive integral friezes of each Dynkin type.
\end{thmIntro}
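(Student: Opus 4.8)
The plan is to obtain Theorem~\ref{thm: finitefriezes} as a one-line corollary of two results already established in the excerpt: the compactness package of Corollary~\ref{coro: finite}, and the specialization bijection of Theorem~\ref{thm: specialization}. Fix a Dynkin diagram $\Gamma$ and choose any orientation $Q$ of $\Gamma$ (as a valued quiver, reading the values off the Cartan matrix in the non-simply-laced case). Since friezes depend only on the underlying valued graph, the positive integral friezes of type $\Gamma$ are exactly the positive integral friezes of type $Q$. Let $\mathcal{A}$ be the cluster algebra determined by $Q$; by the finite type classification, $\mathcal{A}$ is of finite type, so Corollary~\ref{coro: finite} tells us that the set of frieze points $\mathcal{A}(\mathbb{Z}_{\geq1})$ is finite.

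Next I would invoke Theorem~\ref{thm: specialization} with $S=\mathbb{Z}_{\geq1}$, which is a subsemiring of $\mathbb{R}_{>0}$. Because $Q$ is Dynkin, the bijectivity clause of that theorem applies, so specialization of the general frieze is a bijection
\[ \sigma:\mathcal{A}(\mathbb{Z}_{\geq1}) \xrightarrow{\ \sim\ } \{\,\mathbb{Z}_{\geq1}\text{-valued friezes of type }Q\,\}. \]
Chaining this with the finiteness of $\mathcal{A}(\mathbb{Z}_{\geq1})$ gives that the set of positive integral friezes of type $Q$, hence of type $\Gamma$, is finite. One small point deserving care is that we should quote the \emph{Dynkin} branch of Theorem~\ref{thm: specialization}'s bijectivity statement — the surjectivity of $\sigma$ over $\mathbb{Z}_{\geq1}$ is immediate here precisely because every cluster variable of a finite type cluster algebra occurs somewhere in the general frieze, so a $\mathbb{Z}_{\geq1}$-valued frieze forces the underlying real point of $\mathcal{A}(\mathbb{R}_{>0})$ to lie in $\mathcal{A}(\mathbb{Z}_{\geq1})$; we do not need the more delicate presentation-based argument sketched in Remark~\ref{rem: integralspecialization}, which is only relevant for non-Dynkin $Q$.

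There is essentially no obstacle remaining in this particular proof: all the real content has been pushed into Corollary~\ref{coro: finite}, which rests on Theorem~\ref{thmIntro:region is generalized associahedron} (and through it on the Generalized Sch\"onflies Theorem and the induction establishing the regular CW structure), and into the already-proven bijection $\sigma$. I would therefore keep the write-up short, and close by emphasizing the two features worth highlighting: that this yields a \emph{type-uniform} finiteness proof, in contrast to the earlier case-by-case verifications that left $E_7$ and $E_8$ open, and that it comes with a crude quantitative bound — the number of positive integral friezes of type $\Gamma$ is at most the number of lattice points of $\mathbb{Z}_{\geq1}^{r}$ lying in the compact region $f_{\mathbf{x}}(\mathcal{A}(\mathbb{R}_{\geq1}))\subset\mathbb{R}_{>0}^{r}$ for any choice of cluster $\mathbf{x}$.
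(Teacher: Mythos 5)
Your proposal is correct and follows the paper's own proof essentially verbatim: the paper likewise deduces Theorem~\ref{thm: finitefriezes} by combining the Dynkin-case bijection of Theorem~\ref{thm: specialization} between $\mathcal{A}(\mathbb{Z}_{\geq1})$ and positive integral friezes with the finiteness of frieze points from Corollary~\ref{coro: finite}. Your added remarks (why Remark~\ref{rem: integralspecialization} is unnecessary in the Dynkin case, and the lattice-point bound) are consistent but not part of the paper's argument.
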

\begin{proof}
For a valued quiver of Dynkin type $Q$, positive integral friezes of type $Q$ are in bijection with frieze points 
by Theorem \ref{thm: specialization}, 
which are 
finite by Corollary \ref{coro: finite}.
\end{proof}

Earlier works have shown the finiteness of positive integral friezes for many specific Dynkin types, usually by direct enumeration. 
Table \ref{table: friezes} collects results known to the authors, and the citations are as follows.

\begin{table}
\[\begin{array}{|c|c|c|}
\hline
& & \\[-.4cm]
\text{Dynkin Type} & \text{\# of Positive Integral Friezes} & 
\begin{array}{c}
\text{\# of Unitary Friezes}  \\
=\text{(\# of Clusters)}
\end{array}
\\[.1cm]
\hline
& & \\[-.3cm]
A_n & \displaystyle \frac{1}{n+2} \binom{2n+2}{n+1} & \displaystyle \frac{1}{n+2} \binom{2n+2}{n+1}  
\\[.4cm]
\hline
& & \\[-.3cm]
B_n & \displaystyle \sum_{m=1}^{\sqrt{n+1}} \binom{2n-m^2+1}{n} & \displaystyle \binom{2n}{n} 
\\[.5cm]
\hline
& & \\[-.3cm]
C_n & \displaystyle \binom{2n}{n} & \displaystyle \binom{2n}{n} 
\\[.4cm]
\hline
& & \\[-.3cm]
D_n & \displaystyle \sum_{m=1}^n d(m) \binom{2n-m-1}{n-m} & \displaystyle \frac{3n-2}{n} \binom{2n-2}{n-1} 
\\[.5cm]
\hline
& & \\[-.4cm]
E_6 & 868 & 833 
\\[.1cm]
\hline
& & \\[-.4cm]
E_7 & \text{Open (conjectured: $4400$)} & 4160 
\\[.1cm]
\hline
& & \\[-.4cm]
E_8 & \text{Open (conjectured: $26952$)} & 25080
\\[.1cm]
\hline
& & \\[-.4cm]
F_4 & 112 & 105 
\\[.1cm]
\hline
& & \\[-.4cm]
G_2 & 9 & 8 
\\[.1cm]
\hline
\end{array}\]
\[ \footnotesize d(m) := \text{ the number of divisors of $m$} \]
\caption{Counts of positive integral friezes}
\label{table: friezes}
\end{table}

\begin{itemize}
    \item Positive integral friezes of type $A_n$ were first enumerated in \cite{ConCox73}, who constructed a bijection with triangulations of polygons.
    \item The number of positive integral friezes of type $D_4$ was conjectured to be $51$ during early 2000s in~\cite{Pro20}, and later proven in~\cite{MOT12}.
    \item Positive integral friezes of types $B_n,C_n,D_n,$ and $G_2$ were enumerated in \cite{FP16}, who also conjectured counts in the remaining Dynkin types based on a computer search.
    \item In types $E_6$ and $F_4$, these counts were verified by Cuntz and Plamondon in~\cite[Appendix B]{BFGST21}.
    \item In each case, the number of unitary friezes is equal to the number of clusters in the corresponding cluster algebra, which were enumerated in \cite{FZ03}. 
\end{itemize}
Unfortunately, our general argument for finiteness is purely topological, and doesn't provide a direct method to enumerate positive integral friezes. 

However, we observe 
that any frieze point which does not lie in the interior of the superunitary region can be `extended' from a frieze point in a cluster algebra of lower rank.

\begin{lemma}\label{lemma: lifting}
Given a subcluster $\cluster$ in finite type $\mathcal{A}$ with deletion map $d:\mathcal{A}\rightarrow \mathcal{A}^\dagger$, the induced homeomorphism $d^*: \mathcal{A}^\dagger(\mathbb{R}_{>1}) \xrightarrow{\sim}\mathcal{A}(\mathbb{R}_{\geq1}) _\cluster$ (Corollary \ref{coro: deletionhomeomorphism}) restricts to bijections
\[
\mathcal{A}^\dagger(\mathbb{Z}_{\geq2}) \xrightarrow{\sim}\mathcal{A}(\mathbb{R}_{\geq1}) _\cluster \cap \mathcal{A}(\mathbb{Z}_{\geq1})
\text{ and }
\mathcal{A}^\dagger(\mathbb{Z}_{\geq1}) \xrightarrow{\sim}\overline{\mathcal{A}(\mathbb{R}_{\geq1}) _\cluster} \cap \mathcal{A}(\mathbb{Z}_{\geq1})
\]
\end{lemma}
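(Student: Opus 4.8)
The plan is to build on Corollary~\ref{coro: deletionhomeomorphism}, which already supplies the homeomorphisms $d^*:\mathcal{A}^\dagger(\mathbb{R}_{>1}) \xrightarrow{\sim}\mathcal{A}(\mathbb{R}_{\geq1})_\cluster$ and $d^*:\mathcal{A}^\dagger(\mathbb{R}_{\geq1}) \xrightarrow{\sim}\overline{\mathcal{A}(\mathbb{R}_{\geq1})_\cluster}$. Since these are already bijections, it suffices in each case to verify two containments: that $d^*$ carries the integral subset of the source into the integral subset of the target, and conversely that the $d^*$-preimage of any point of the target's integral subset lies in the source's integral subset. The computational engine is the identity $f_y(d^*p)=p(d(y))$ together with Proposition~\ref{prop:deletion map}, which tells us that for a cluster variable $y$ of $\mathcal{A}$ the element $d(y)\in\mathcal{A}^\dagger$ is: equal to $1$ if $y\in\cluster$; a single cluster variable of $\mathcal{A}^\dagger$ if $y$ is compatible with $\cluster$ but not in $\cluster$; and a sum of at least two cluster monomials of $\mathcal{A}^\dagger$ (with positive integer coefficients) if $y$ is incompatible with $\cluster$. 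Moreover, by part~(2) of that proposition, every cluster variable of $\mathcal{A}^\dagger$ arises as $d(y)$ for some cluster variable $y$ of $\mathcal{A}$ compatible with $\cluster$ and not in $\cluster$.

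For the forward containment, take $p$ in $\mathcal{A}^\dagger(\mathbb{Z}_{\geq2})$ (respectively $\mathcal{A}^\dagger(\mathbb{Z}_{\geq1})$) and a cluster variable $y$ of $\mathcal{A}$. In the first of the three cases above $f_y(d^*p)=1$; in the second it equals $p$ evaluated at a cluster variable of $\mathcal{A}^\dagger$, hence lies in $\mathbb{Z}_{\geq2}$ (respectively $\mathbb{Z}_{\geq1}$); in the third it equals $\sum_\Gamma\lambda_\Gamma p(x_\Gamma)$, a sum of at least two positive integers counted with multiplicity (each cluster monomial $x_\Gamma$ has positive integer value at $p$), hence an integer that is $\geq2$. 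In every case $f_y(d^*p)\in\mathbb{Z}_{\geq1}$, so $d^*p\in\mathcal{A}(\mathbb{Z}_{\geq1})$; combining this with $d^*p\in\mathcal{A}(\mathbb{R}_{\geq1})_\cluster$ (respectively $\overline{\mathcal{A}(\mathbb{R}_{\geq1})_\cluster}$) from Corollary~\ref{coro: deletionhomeomorphism} lands $d^*p$ in the target.

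For the reverse containment, take $q$ in $\mathcal{A}(\mathbb{R}_{\geq1})_\cluster\cap\mathcal{A}(\mathbb{Z}_{\geq1})$ (respectively $\overline{\mathcal{A}(\mathbb{R}_{\geq1})_\cluster}\cap\mathcal{A}(\mathbb{Z}_{\geq1})$) and let $p$ be its unique $d^*$-preimage in $\mathcal{A}^\dagger(\mathbb{R}_{>1})$ (respectively $\mathcal{A}^\dagger(\mathbb{R}_{\geq1})$). For any cluster variable $z$ of $\mathcal{A}^\dagger$, write $z=d(y)$ with $y$ a cluster variable of $\mathcal{A}$ compatible with $\cluster$ and not in $\cluster$; then $p(z)=f_y(d^*p)=f_y(q)\in\mathbb{Z}_{\geq1}$. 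In the first situation, since $y\notin\cluster$ and $q\in\mathcal{A}(\mathbb{R}_{\geq1})_\cluster$, we have $f_y(q)>1$, hence $p(z)\in\mathbb{Z}_{\geq2}$; in the second situation we conclude only $p(z)\in\mathbb{Z}_{\geq1}$. Either way $p$ lies in the claimed source, which finishes the bijection.

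I do not expect a serious obstacle: once Corollary~\ref{coro: deletionhomeomorphism} and Proposition~\ref{prop:deletion map} are available, the argument is bookkeeping through the three cases. The one point needing a little care is the incompatible case, where one must use that $d(y)$ is genuinely a positive-integer combination of cluster monomials (not merely a positive Laurent polynomial), so that evaluating at a point whose cluster-variable values are integers $\geq1$ yields an integer $\geq1$. The reason $\mathbb{Z}_{\geq2}$ rather than $\mathbb{Z}_{\geq1}$ appears in the first bijection is precisely the strict inequality $f_y>1$ that points of the open face $\mathcal{A}(\mathbb{R}_{\geq1})_\cluster$ enforce on cluster variables outside $\cluster$, upgraded to $\geq2$ at integral points.
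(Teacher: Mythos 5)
Your proof is correct and follows essentially the same route as the paper: both reduce the lemma to the trichotomy of Proposition~\ref{prop:deletion map} for $d(y)$, check the two containments by evaluating $p(d(y))$ in each case, and use the surjectivity in part~(2) of that proposition to handle the preimage direction. Your version spells out the $\mathbb{Z}_{\geq1}$ case explicitly, where the paper remarks it is "almost identical" to the $\mathbb{Z}_{\geq2}$ case, but the argument and the key observations (cluster monomials have integer values $\geq1$, and a sum of at least two of them has value $\geq 2$) are the same.
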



\begin{proof}
Proposition \ref{prop:deletion map} gives the following possibilities for each cluster variable $y\in \mathcal{A}$.
\begin{itemize}
    \item If $y\in \cluster$, then $d(y)=1$.
    \item If $y$ is compatible with $\cluster$ but not in $\cluster$, then $d(y)$ is a cluster variable in $\mathcal{A}^\dagger$. Every cluster variable in $\mathcal{A}^\dagger$ is of this form.
    \item Otherwise, $d(y)$ is a sum of at least two cluster monomials.
\end{itemize}

If $p\in \mathcal{A}^\dagger(\mathbb{Z}_{\geq2})$, then $p$ sends every cluster monomial other than $1$ into $\mathbb{Z}_{\geq2}$; therefore, $p(d(y))\geq1$ with equality iff $y\in \cluster$. That is,
\[ d^*(\mathcal{A}^\dagger(\mathbb{Z}_{\geq2})) \subseteq \mathcal{A}(\mathbb{R}_{\geq1}) _\cluster \cap \mathcal{A}(\mathbb{Z}_{\geq1})\]

If $q\in \mathcal{A}(\mathbb{R}_{\geq1}) _\cluster \cap \mathcal{A}(\mathbb{Z}_{\geq1})$, then by the invertibility of $d^*: \mathcal{A}^\dagger(\mathbb{R}_{>1}) \xrightarrow{\sim}\mathcal{A}(\mathbb{R}_{\geq1}) _\cluster$ there is some $p\in \mathcal{A}^\dagger(\mathbb{R}_{\geq1})$ such that $d^*(p)=p\circ d=q$. If $z$ is a cluster variable in $\mathcal{A}^\dagger$, then $z=d(y)$ for some cluster variable $y$ in $\mathcal{A}$ which is compatible with $\cluster$ but not in $\cluster$. Then $p(z) = p(d(y))=q(y)$, which is in $\mathbb{Z}_{\geq2}$ by the assumption on $q$. Therefore, $p\in \mathcal{A}^\dagger(\mathbb{Z}_{\geq2})$, so \[d^*:\mathcal{A}^\dagger(\mathbb{Z}_{\geq2}) \xrightarrow{\sim}\mathcal{A}(\mathbb{R}_{\geq1}) _\cluster \cap \mathcal{A}(\mathbb{Z}_{\geq1})\]
The second bijection follows an almost identical proof.
\end{proof}

Translated into positive integral friezes, the lemma provides bijections
\begin{align*}
\{\text{$\mathbb{Z}_{\geq1}$-valued friezes of type $\Gamma'$}\}
&\xrightarrow{\sim}
\{\text{$\mathbb{Z}_{\geq1}$-valued friezes of type $\Gamma$ which are $1$ at $\cluster$}\}
\\
\{\text{$\mathbb{Z}_{\geq2}$-valued friezes of type $\Gamma'$}\}
&\xrightarrow{\sim}
\{\text{$\mathbb{Z}_{\geq1}$-valued friezes of type $\Gamma$ which are $1$ only at $\cluster$}\}
\end{align*}
where $\Gamma'$ is the Dynkin diagram of the deletion of $\cluster$. The image of a positive integral frieze under either of these maps will be called a \textbf{unitary extension} of that frieze, because it is `extending' the smaller frieze by adding $1$s.


The second bijection above implies that each positive integral frieze is a unitary extension of a unique $\mathbb{Z}_{\geq2}$-valued frieze. 
As a consequence,
\begin{equation}\label{eq: sumoveratleast2}
\text{\# of $\mathbb{Z}_{\geq1}$-valued friezes of type $\Gamma$} 
= \sum_{\text{subclusters $\cluster$}}\text{(\# of $\mathbb{Z}_{\geq2}$-valued friezes of type $\Gamma'$)} 
\end{equation}
where $\Gamma'$ denotes the Dynkin diagram of the deletion of $\cluster$. 

\begin{warn}
The sum \eqref{eq: sumoveratleast2} is \emph{not} over subdiagrams $\Gamma'$ of $\Gamma$, as there are generally many subclusters $\cluster$ whose deletion has the same Dynkin diagram $\Gamma'$. The number of subclusters of each type (equivalently, the number of faces of each type) is computed in Appendix \ref{section: countingfaces}. 
\end{warn}

When $\cluster$ is a cluster, the deletion of $\cluster$ is the empty Dynkin diagram, and there is a unique $\mathbb{Z}_{\geq2}$-valued frieze (the empty frieze). A unitary extension of the empty frieze is a unitary frieze, and so the summands of \eqref{eq: sumoveratleast2} corresponding to clusters count the unitary friezes.

\begin{rem}\label{rem: emptydiagram}
The `empty frieze' is more natural in the language of frieze points.
When $\cluster$ is a cluster, the deletion $\mathcal{A}^\dagger$ of $\cluster$ is just the ground ring $\mathbb{Z}$, which we regard as a cluster algebra with no cluster variables. As a consequence, the unique ring homomorphism $\mathbb{Z}\rightarrow \mathbb{R}$ is vacuously in $\mathcal{A}^\dagger(S)$ for every $S\subset \mathbb{R}$. In particular, $\mathcal{A}^\dagger(\mathbb{Z}_{\geq2})$ has a single element which maps to the unitary point $\mathbf{1}_\cluster$ in $\mathcal{A}(\mathbb{Z}_{\geq1})$ along the homomorphism in Lemma \ref{lemma: lifting}.
\end{rem}

Using \eqref{eq: sumoveratleast2} to reverse engineer counts of $\mathbb{Z}_{\geq2}$-valued friezes yields a surprisingly short list.

\begin{thmIntro}
\label{thm: elementaryfriezes}
Assuming there are $4400$ and $26952$ positive integral friezes of types $E_7$ and $E_8$,
every positive integral frieze of Dynkin type is a unitary extension of a unique (possibly empty) union of the following friezes: 
\begin{enumerate}
    \item The family of $D_n$ friezes in Figure \ref{fig: Dabfrieze} determined by a proper factorization $n=ab$.
    \item The four translations of the $E_8$ frieze in Figure \ref{fig: E8frieze}.
    \item The family of $B_n$ friezes in Figure \ref{fig: Bn2frieze}, for all $\sqrt{n+1}\in \mathbb{Z}_{\geq2}$.
    \item The $G_2$ frieze at the bottom of Figure \ref{fig: G2frieze}.
\end{enumerate}
\end{thmIntro}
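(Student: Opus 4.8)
The plan is to reduce Theorem~\ref{thm: elementaryfriezes} to a finite bookkeeping computation built on the decomposition \eqref{eq: sumoveratleast2}. The key conceptual point is already in hand: every positive integral frieze is a unitary extension of a \emph{unique} $\mathbb{Z}_{\geq2}$-valued frieze of some smaller Dynkin type $\Gamma'$ (the deletion of the subcluster of $1$s). So to prove the theorem it suffices to identify, for every connected Dynkin diagram $\Gamma$, exactly how many $\mathbb{Z}_{\geq2}$-valued friezes of type $\Gamma$ there are, and to exhibit the explicit friezes when that number is nonzero. A disconnected diagram $\Gamma = \Gamma_1 \sqcup \cdots \sqcup \Gamma_k$ contributes $\mathbb{Z}_{\geq2}$-valued friezes that are exactly tuples of $\mathbb{Z}_{\geq2}$-valued friezes on the connected components (the repetition quiver and mesh relations split as a product), which is why the theorem phrases the answer as ``a unitary extension of a (possibly empty) union'' of the four listed families — a union of connected-type friezes without $1$s.

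First I would set up the arithmetic. For each connected Dynkin type $\Gamma$, let $N_1(\Gamma)$ and $N_2(\Gamma)$ denote the number of $\mathbb{Z}_{\geq1}$-valued and $\mathbb{Z}_{\geq2}$-valued friezes of type $\Gamma$, respectively. Equation \eqref{eq: sumoveratleast2} gives
\[
N_1(\Gamma) = \sum_{\text{subclusters } \cluster \subseteq \mathcal{A}(\Gamma)} N_2\big(\Gamma'(\cluster)\big),
\]
where $\Gamma'(\cluster)$ is the Dynkin type of the deletion and, for a disconnected $\Gamma'$, $N_2(\Gamma')$ is the product of the $N_2$ of its components (with $N_2(\varnothing)=1$). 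Since the face-counting data — the number of subclusters of each deletion-type — is provided in Appendix~\ref{section: countingfaces}, and since $N_1(\Gamma)$ is known (or, for $E_7,E_8$, assumed) from Table~\ref{table: friezes}, one can solve recursively for $N_2(\Gamma)$ by induction on rank: rearrange the displayed identity to isolate the single term with $\Gamma'(\cluster)=\Gamma$ (the empty subcluster $\cluster=\varnothing$, which contributes $N_2(\Gamma)$ itself), so
\[
N_2(\Gamma) = N_1(\Gamma) - \sum_{\cluster \neq \varnothing} N_2\big(\Gamma'(\cluster)\big),
\]
and every term on the right involves a strictly smaller rank and is already computed. Carrying this out for all connected Dynkin types $A_n, B_n, C_n, D_n, E_{6,7,8}, F_4, G_2$ — a finite computation for the exceptional types and an induction on $n$ for the infinite families — yields $N_2(\Gamma)=0$ for every connected type except: $D_n$ when $n$ has a proper factorization (with $N_2$ equal to the number of such factorizations), $E_8$ (with $N_2=4$), $B_n$ when $n+1$ is a perfect square $\geq 4$ (with $N_2=1$), and $G_2$ (with $N_2=1$).

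Second, I would verify that the friezes listed in items (1)--(4) really are $\mathbb{Z}_{\geq2}$-valued and really are distinct, and that they account for all of $N_2$. Concretely: each frieze in Figures~\ref{fig: Dabfrieze}, \ref{fig: E8frieze}, \ref{fig: Bn2frieze}, and the bottom of Figure~\ref{fig: G2frieze} should be checked to satisfy the appropriate mesh relations (a routine but finite check, or a citation to \cite{FP16} where the $D_n$, $B_n$, $G_2$ families were already exhibited) and to have all entries $\geq 2$ — equivalently, the corresponding frieze point has all cluster-variable values $\geq 2$, i.e.\ lies in $\mathcal{A}(\mathbb{Z}_{\geq2}) = \mathcal{A}(\mathbb{R}_{>1})\cap \mathcal{A}(\mathbb{Z}_{\geq1})$, the frieze points in the \emph{interior} of the superunitary region. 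For $D_n$ one checks the $ab$-frieze and the $ba$-frieze (the horizontal translate) are equal iff $a=b$, giving the claimed count matching the number of proper factorizations; for $E_8$ the four translates of Figure~\ref{fig: E8frieze} are pairwise distinct (inspect the entries) and one confirms there are no others with $N_2(E_8)=4$; for $B_n$ and $G_2$ a single frieze each. Matching the counts from the first step with these explicit lists completes the proof.

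The main obstacle is the first step: actually running the recursion through the exceptional types, especially $E_7$ and $E_8$, where $N_1$ is only conjectural and the face-count data from Appendix~\ref{section: countingfaces} must be assembled carefully — one needs, for each $E_7$ and $E_8$ subcluster, the Dynkin type of its deletion and the multiplicity with which that type occurs. There is also a subtlety in the disconnected case: when $\Gamma'(\cluster)$ is disconnected, $N_2(\Gamma'(\cluster))$ is a product, so for instance a subcluster of $E_8$ whose deletion is $D_4 \sqcup A_1$ contributes $N_2(D_4)\cdot N_2(A_1) = 0 \cdot 0$, while one whose deletion is $D_6$ contributes $N_2(D_6)=1$ (since $6=2\cdot3$); keeping these products straight across the full lattice of subclusters of $E_7$ and $E_8$ is where the bookkeeping is delicate but, being finite, presents no difficulty of principle. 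Once the tables are in hand, the rest is immediate from \eqref{eq: sumoveratleast2} and the uniqueness half of the $\mathbb{Z}_{\geq2}$-decomposition.
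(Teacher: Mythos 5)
Your proposal is essentially the paper's own argument: you use the same decomposition \eqref{eq: sumoveratleast2} (equivalently \eqref{eq: multiplicitye}), the same recursion isolating $e(\Gamma)=N_2(\Gamma)$ from the known or conjectured $N_1(\Gamma)$ and the multiplicity data of Appendix~\ref{section: countingfaces}, and the same verification that the explicit friezes in Figures~\ref{fig: Dabfrieze}, \ref{fig: E8frieze}, \ref{fig: Bn2frieze}, and \ref{fig: G2frieze} account for every nonzero $e(\Gamma)$. The only cosmetic difference is that for the infinite $ABCD$ and $G_2$ families the paper takes the $\mathbb{Z}_{\geq 2}$-valued frieze counts directly from \cite{FP16} rather than re-deriving them by induction on rank, an alternative you also mention.
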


\noindent This theorem is proven is Appendix \ref{section: countingfriezes}.

\begin{rem}
Proving the number of $\mathbb{Z}_{\geq2}$-valued friezes of types $E_7$ and $E_8$ are $0$ and $4$ would prove the number of $\mathbb{Z}_{\geq1}$-valued friezes of types $E_7$ and $E_8$ are $4400$ and $26952$.
\end{rem}

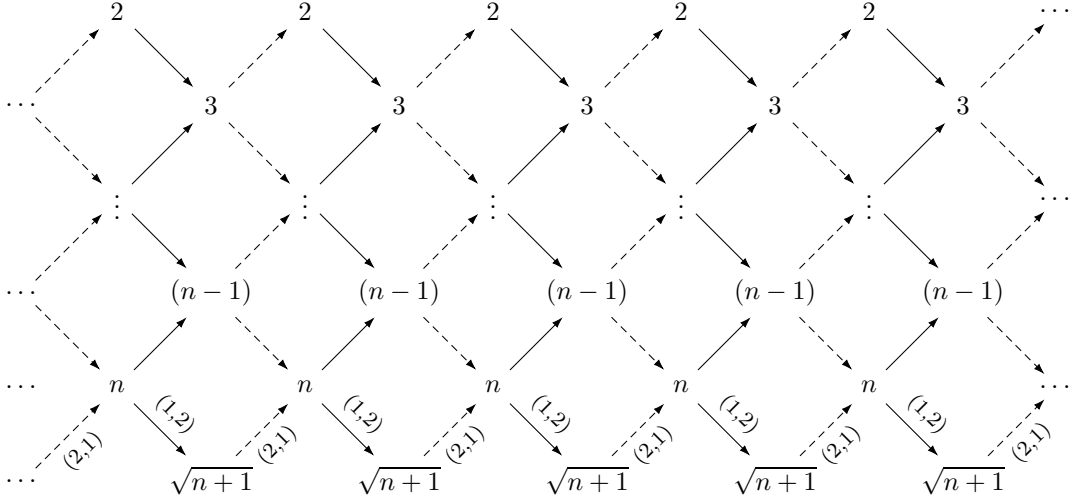
\begin{figure}[h!t]

\begin{tikzpicture}[x=0.025cm,y=-0.025cm]
\begin{scope}[every node/.style={inner sep=4pt,font=\footnotesize}]
    \node (2_a) at (-30,339) {$\dots$};
    \node (4_a) at (-30,439) {$\dots$};
    \node (6_a) at (-30,489) {$\dots$};
    \node (7_a) at (-30,539) {$\dots$};

    \node (1_0) at (20,289) {$2$};
    \node (2_0) at (70,339) {$3$};
    \node (3_0) at (20,389) {$\vdots$};
    \node (4_0) at (70,439) {$(n-1)$};
    \node (5_0) at (20,489) {$n$};
    \node (7_0) at (70,539) {$\sqrt{n+1}$};
    \node (1_1) at (120,289) {$2$};
    \node (2_1) at (170,339) {$3$};
    \node (3_1) at (120,389) {$\vdots$};
    \node (4_1) at (170,439) {$(n-1)$};
    \node (5_1) at (120,489) {$n$};
    \node (7_1) at (170,539) {$\sqrt{n+1}$};
    \node (1_2) at (220,289) {$2$};
    \node (2_2) at (270,339) {$3$};
    \node (3_2) at (220,389) {$\vdots$};
    \node (4_2) at (270,439) {$(n-1)$};
    \node (5_2) at (220,489) {$n$};
    \node (7_2) at (270,539) {$\sqrt{n+1}$};
    \node (1_3) at (320,289) {$2$};
    \node (2_3) at (370,339) {$3$};
    \node (3_3) at (320,389) {$\vdots$};
    \node (4_3) at (370,439) {$(n-1)$};
    \node (5_3) at (320,489) {$n$};
    \node (7_3) at (370,539) {$\sqrt{n+1}$};
    \node (1_4) at (420,289) {$2$};
    \node (2_4) at (470,339) {$3$};
    \node (3_4) at (420,389) {$\vdots$};
    \node (4_4) at (470,439) {$(n-1)$};
    \node (5_4) at (420,489) {$n$};
    \node (7_4) at (470,539) {$\sqrt{n+1}$};
    
    \node (1_5) at (520,289) {$\dots$};
    \node (3_5) at (520,389) {$\dots$};
    \node (5_5) at (520,489) {$\dots$};
\end{scope}
\begin{scope}[every node/.style={font=\scriptsize},every path/.style={-{Latex[length=1.5mm,width=1mm]}}]
    \path (2_a) edge[densely dashed] (1_0);
    \path (2_a) edge[densely dashed] (3_0);
    \path (4_a) edge[densely dashed] (3_0);
    \path (4_a) edge[densely dashed] (5_0);
    \path (7_a) edge[densely dashed] node[pos=0.5,rotate=45,below]{\tiny (2,1)} (5_0);
    \path (1_0) edge (2_0);
    \path (3_0) edge (2_0);
    \path (3_0) edge (4_0);
    \path (5_0) edge (4_0);
    \path (5_0) edge node[pos=0.5,rotate=-45,above]{\tiny (1,2)} (7_0);
    \path (2_0) edge[densely dashed] (1_1);
    \path (2_0) edge[densely dashed] (3_1);
    \path (4_0) edge[densely dashed] (3_1);
    \path (4_0) edge[densely dashed] (5_1);
    \path (7_0) edge[densely dashed] node[pos=0.5,rotate=45,below]{\tiny (2,1)} (5_1);
    \path (1_1) edge (2_1);
    \path (3_1) edge (2_1);
    \path (3_1) edge (4_1);
    \path (5_1) edge (4_1);
    \path (5_1) edge node[pos=0.5,rotate=-45,above]{\tiny (1,2)} (7_1);
    \path (2_1) edge[densely dashed] (1_2);
    \path (2_1) edge[densely dashed] (3_2);
    \path (4_1) edge[densely dashed] (3_2);
    \path (4_1) edge[densely dashed] (5_2);
    \path (7_1) edge[densely dashed] node[pos=0.5,rotate=45,below]{\tiny (2,1)} (5_2);
    \path (1_2) edge (2_2);
    \path (3_2) edge (2_2);
    \path (3_2) edge (4_2);
    \path (5_2) edge (4_2);
    \path (5_2) edge node[pos=0.5,rotate=-45,above]{\tiny (1,2)} (7_2);
    \path (2_2) edge[densely dashed] (1_3);
    \path (2_2) edge[densely dashed] (3_3);
    \path (4_2) edge[densely dashed] (3_3);
    \path (4_2) edge[densely dashed] (5_3);
    \path (7_2) edge[densely dashed] node[pos=0.5,rotate=45,below]{\tiny (2,1)} (5_3);
    \path (1_3) edge (2_3);
    \path (3_3) edge (2_3);
    \path (3_3) edge (4_3);
    \path (5_3) edge (4_3);
    \path (5_3) edge node[pos=0.5,rotate=-45,above]{\tiny (1,2)} (7_3);
    \path (2_3) edge[densely dashed] (1_4);
    \path (2_3) edge[densely dashed] (3_4);
    \path (4_3) edge[densely dashed] (3_4);
    \path (4_3) edge[densely dashed] (5_4);
    \path (7_3) edge[densely dashed] node[pos=0.5,rotate=45,below]{\tiny (2,1)} (5_4);
    \path (1_4) edge (2_4);
    \path (3_4) edge (2_4);
    \path (3_4) edge (4_4);
    \path (5_4) edge (4_4);
    \path (5_4) edge node[pos=0.5,rotate=-45,above]{\tiny (1,2)} (7_4);
    \path (2_4) edge[densely dashed] (1_5);
    \path (2_4) edge[densely dashed] (3_5);
    \path (4_4) edge[densely dashed] (3_5);
    \path (4_4) edge[densely dashed] (5_5);
    \path (7_4) edge[densely dashed] node[pos=0.5,rotate=45,below]{\tiny (2,1)} (5_5);
\end{scope}
 \end{tikzpicture}
\caption{The unique $\mathbb{Z}_{\geq2}$-valued frieze of type $B_{n}$ for $\sqrt{n+1}\in \mathbb{Z}_{\geq2}$}
\label{fig: Bn2frieze}
\end{figure}

\section{Superunitary regions in infinite type}
\label{section: infinitetype}

While $\mathcal{A}(S)$ (Definition \ref{defn: AS}) makes sense for infinite type cluster algebras, our two key tools (Proposition~\ref{prop:deletion map} and Lemma~\ref{lemma:U}) both assume finite type. This assumption is necessary so that the cluster monomials form a basis (Theorem \ref{thm:cluster monomials form a positive basis in finite type}). This suggests that, in infinite type, we should extend the preceding definition from the set of cluster variables to a sufficiently nice basis containing the cluster monomials. The resulting set, denoted $\mathcal{A}_\mathfrak{B}(S)$ (Definition~\ref{defn: ASgeneral}), depends on a choice of basis $\mathfrak{B}$, but we expect it to be better behaved in general. In many cases, this extra choice of basis is irrelevant and $\mathcal{A}_\mathfrak{B}(S)$ coincides with $\mathcal{A}(S)$.

\subsection{Good bases for cluster algebras}
\label{section: goodbases}

An essential feature of cluster algebras is the existence of certain bases satisfying positivity conditions. 
For the sake of discussion, let's establish a definition.

\begin{idefn}
A \textbf{good basis} for a cluster algebra $\mathcal{A}$ is a $\mathbb{Z}$-basis $\mathfrak{B}$ which contains the cluster monomials and has non-negative integer structure constants; that is, the product of two basis elements is a non-negative integer linear combination of basis elements.
\end{idefn}

Good bases are fundamental to cluster algebras and their origins; cluster algebras were introduced to axiomatize patterns found in the dual canonical basis of Lusztig and Kashiwara.
Accordingly, there has been extensive work on constructing good bases for cluster algebras.
\begin{itemize}
    \item In finite type, the \textbf{cluster monomials} form the unique good basis (Theorem \ref{thm:cluster monomials form a positive basis in finite type}).
    \item Rank 2 cluster algebras have a good basis of \textbf{greedy elements} \cite{LLZ14Greedy}, which are defined by a \emph{greedy recurrence relation} among their Laurent coefficients.
    \item The atomic positive elements sometimes form a good basis, called the \textbf{atomic basis}.
    \item Cluster algebras of marked surfaces with enough marked points admit several topologically defined bases \cite{MSW13}.  Among these, the \textbf{bracelet basis} is good, the \textbf{bangles basis} is almost never good, and the \textbf{bands basis} might be good \cite{Thu14}.
    \item Cluster algebras from Lie theory (such as double Bruhat cells, Grassmannians, and flag varieties) admit several representation theoretic bases, such as the \textbf{dual canonical basis}, the \textbf{dual semicanonical basis}, the \textbf{web basis}, and the \textbf{Mirkovi\'c---Vilonen basis}. Among these, the web basis is known to not be good \cite[Proposition 9.11]{FP16tensor} and the Mirkovi\'c--Vilonen basis was conjectured to be good by \cite{Kam22,Qin21}.
    \item Every cluster algebra admits a good basis of \textbf{theta functions} \cite{GHKK18},\footnote{This is a slight lie; in general, the theta functions are a good basis for an algebra that contains the cluster algebra and is contained in the upper cluster algebra. However, these two algebras coincide for most cluster algebras of interest, including virtually all previous bullet points.} whose coefficients count \emph{broken lines} in a \emph{scattering diagram}. 
\end{itemize}

\noindent In summary: good bases always exist,\footnote{At least, if one is willing to replace a cluster algebra with an algebra contained in the upper cluster algebra.} but a cluster algebra may admit several good bases and the relative virtues of each basis are an active area of study and debate.

\subsection{Semiring-valued points}

Given a good basis $\mathfrak{B}$ for a cluster algebra $\mathcal{A}$, let 
\[ \mathcal{A}_\mathfrak{B}^+ := \mathrm{span}_\mathbb{N}(\mathfrak{B}) 
:= 
\left\{
\sum_{b\in \mathfrak{B}} c_bb \in \mathcal{A} \text{ such that  $c_b\in \mathbb{Z}_{\geq0}$ 
and $0<\sum_{b\in \mathfrak{B}} c_b <\infty$}
\right\}
\]
denote the set of non-zero finite sums of the basis elements. The positivity of the structure constants implies $\mathcal{A}_\mathfrak{B}^+$ is closed under multiplication, and is therefore a commutative semiring.

A choice of good basis for $\mathcal{A}$ facilitates the following definition.

\begin{defn}\label{defn: ASgeneral}
Given a cluster algebra $\mathcal{A}$ with a good basis $\mathfrak{B}$ and a commutative semiring $S$, define the set
\[ \mathcal{A}_\mathfrak{B}(S) := \{ \text{semiring homomorphisms $p:\mathcal{A}_\mathfrak{B}^+\rightarrow S$}\} \]
\end{defn}

Each element $a\in \mathcal{A}_\mathfrak{B}^+$ defines an $S$-valued function $f_a$ on $\mathcal{A}_\mathfrak{B}(S)$, via the rule that $f_a(p):= p(a)$. 
If $S$ is a topological semiring (that is, a semiring with a topology for which addition and multiplication are continuous), then we endow $\mathcal{A}_\mathfrak{B}(S)$ with the coarsest topology for which each $f_a$ is continuous for all $a\in \mathcal{A}_\mathfrak{B}^+$ (though it suffices to consider $a\in\mathfrak{B}$).

When $S$ is a real semiring, this may be related to Definition \ref{defn: AS} as follows. 

\begin{prop}
Let $\mathcal{A}$ be a cluster algebra with a good basis $\mathfrak{B}$, and let $S\subset \mathbb{R}$ be a subsemiring. Then
\[ \mathcal{A}_\mathfrak{B}(S) = \{ \text{ring homomorphisms $p:\mathcal{A}\rightarrow \mathbb{R}$ such that $p(\mathfrak{B})\subset S$}\} \subseteq \mathcal{A}(S)\]
\end{prop}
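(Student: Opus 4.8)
The proof is essentially an unwinding of definitions, with the only real content being the comparison between semiring homomorphisms out of $\mathcal{A}_\mathfrak{B}^+$ and ring homomorphisms out of $\mathcal{A}$.

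\begin{proof}
Let $\mathfrak{B}$ be a good basis for $\mathcal{A}$ and $S\subset\mathbb{R}$ a subsemiring. We prove the two sets are equal by exhibiting mutually inverse assignments.

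First, suppose $p:\mathcal{A}\rightarrow\mathbb{R}$ is a ring homomorphism with $p(\mathfrak{B})\subset S$. Since $\mathcal{A}_\mathfrak{B}^+\subset\mathcal{A}$ consists of $\mathbb{N}$-linear combinations of elements of $\mathfrak{B}$ (with at least one nonzero coefficient), and $S$ is closed under addition and multiplication, the restriction of $p$ to $\mathcal{A}_\mathfrak{B}^+$ lands in $S$ and is a semiring homomorphism $\mathcal{A}_\mathfrak{B}^+\rightarrow S$ (it preserves products and sums because $p$ does, and it preserves the multiplicative identity because $1\in\mathfrak{B}$ is a cluster monomial, hence is sent to $1\in S$). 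Thus restriction gives a map from the right-hand set into $\mathcal{A}_\mathfrak{B}(S)$. Moreover, since $\mathfrak{B}$ contains the cluster monomials, in particular the cluster variables, such a $p$ sends every cluster variable into $S$, so $p\in\mathcal{A}(S)$; this establishes the claimed inclusion $\mathcal{A}_\mathfrak{B}(S)\subseteq\mathcal{A}(S)$ once we know the two descriptions agree.

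Conversely, suppose $q:\mathcal{A}_\mathfrak{B}^+\rightarrow S$ is a semiring homomorphism. We must extend $q$ uniquely to a ring homomorphism $\mathcal{A}\rightarrow\mathbb{R}$. Since $\mathfrak{B}$ is a $\mathbb{Z}$-basis for $\mathcal{A}$, every element of $\mathcal{A}$ is uniquely a $\mathbb{Z}$-linear combination $\sum_b c_b b$ (finite support), and there is exactly one $\mathbb{Z}$-linear extension of $q|_\mathfrak{B}$ to $\mathcal{A}$, namely $\tilde q(\sum_b c_b b):=\sum_b c_b\, q(b)$, viewed in $\mathbb{R}$. This $\tilde q$ is additive by construction; it is multiplicative because for $a_1=\sum_b c_b b$ and $a_2=\sum_{b'} c'_{b'} b'$ with all $c_b,c'_{b'}\geq 0$ and supports nonempty (i.e.\ $a_1,a_2\in\mathcal{A}_\mathfrak{B}^+$) the positivity of the structure constants expresses $a_1a_2$ as an $\mathbb{N}$-linear combination of basis elements, so both $\tilde q(a_1a_2)=q(a_1a_2)$ and $\tilde q(a_1)\tilde q(a_2)=q(a_1)q(a_2)$ agree by the hypothesis that $q$ is a semiring homomorphism; multiplicativity on all of $\mathcal{A}$ then follows by $\mathbb{Z}$-bilinearity, writing an arbitrary element as a $\mathbb{Z}$-linear combination of basis elements (each of which lies in $\mathcal{A}_\mathfrak{B}^+$). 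Finally $\tilde q(1)=q(1)=1$ since $1\in\mathfrak{B}$. Hence $\tilde q:\mathcal{A}\rightarrow\mathbb{R}$ is a ring homomorphism with $\tilde q(\mathfrak{B})=q(\mathfrak{B})\subset S$, and $\tilde q|_{\mathcal{A}_\mathfrak{B}^+}=q$. Uniqueness of the extension is clear since $\mathfrak{B}\subset\mathcal{A}_\mathfrak{B}^+$ spans $\mathcal{A}$ over $\mathbb{Z}$.

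The two assignments $p\mapsto p|_{\mathcal{A}_\mathfrak{B}^+}$ and $q\mapsto\tilde q$ are mutually inverse, so the set of ring homomorphisms $\mathcal{A}\rightarrow\mathbb{R}$ with $p(\mathfrak{B})\subset S$ is canonically identified with $\mathcal{A}_\mathfrak{B}(S)$. As noted above, every such $p$ sends each cluster variable into $S$, giving the inclusion $\mathcal{A}_\mathfrak{B}(S)\subseteq\mathcal{A}(S)$.
\end{proof}

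Remark on the plan: the only subtle point is verifying multiplicativity of the extension $\tilde q$ on all of $\mathcal{A}$ rather than just on $\mathcal{A}_\mathfrak{B}^+$; this is where the positivity of structure constants is essential, since it guarantees that products of basis elements already live in $\mathcal{A}_\mathfrak{B}^+$ (no cancellation), so the semiring homomorphism property of $q$ transfers, and then one bootstraps to general $\mathbb{Z}$-combinations by bilinearity. I do not expect any genuine obstacle; the result is a bookkeeping lemma reconciling the two definitions.
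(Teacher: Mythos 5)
Your proof is correct and follows essentially the same route as the paper: both arguments identify a ring homomorphism $\mathcal{A}\to\mathbb{R}$ with $p(\mathfrak{B})\subset S$ and a semiring homomorphism $\mathcal{A}_\mathfrak{B}^+\to S$ via their common restriction to $\mathfrak{B}$, using the nonnegativity of the structure constants to see that the $\mathbb{Z}$-linear extension is multiplicative, and then noting that $\mathfrak{B}$ contains the cluster variables to get the inclusion into $\mathcal{A}(S)$. You simply spell out the extension-of-$q$ step more explicitly than the paper does, which is a presentational rather than substantive difference.
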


\begin{proof}
A semiring homomorphism $\mathcal{A}_\mathfrak{B}^+\rightarrow \mathbb{R}$ and a ring homomorphism $\mathcal{A}\rightarrow \mathbb{R}$ are both determined by their restriction to the basis $\mathfrak{B}$, which will be a function $p:\mathfrak{B}\rightarrow \mathbb{R}$ satisfying
\begin{align}\label{eq: multiplicativepropertyofbasis}
p(a) p(b) = \sum_{c\in \mathfrak{B}} \lambda^c_{a,b} p(c) 
\end{align}
for every $a,b\in \mathfrak{B}$ with $ ab = \sum_{c\in \mathfrak{B}} \lambda^c_{a,b} c$. Furthermore, any function $\mathfrak{B}\rightarrow S$ satisfying Equation \eqref{eq: multiplicativepropertyofbasis} will extend to a semiring homomorphism $p:\mathcal{A}_\mathfrak{B}^+\rightarrow S$ and to a ring homomorphism $p:\mathcal{A}\rightarrow \mathbb{R}$ such that $p(\mathfrak{B})\subset S$. Therefore, every semiring homomorphism $p:\mathcal{A}_\mathfrak{B}^+\rightarrow S$ is the restriction of a unique ring homomorphism $p:\mathcal{A}\rightarrow \mathbb{R}$ such that $p(\mathfrak{B})\subset S$. Since $\mathfrak{B}$ contains the cluster variables, the latter set is a subset of $\mathcal{A}(S)$.
\end{proof}

It is therefore natural to ask when $\mathcal{A}_\mathfrak{B}(S)=\mathcal{A}(S)$; that is: for which $\mathcal{A}, \mathfrak{B}$, and $S$ do Definitions \ref{defn: AS} and \ref{defn: ASgeneral} agree?
Two of the simpler cases follow; more interesting cases are considered by Theorem \ref{thm: Bgreedy} and Conjecture \ref{conj: Bir}.

\begin{prop}
If $\mathcal{A}$ is finite type and $S$ is a semiring in $\mathbb{R}$, then $\mathcal{A}_\mathfrak{B}(S) = \mathcal{A}(S)$.
\end{prop}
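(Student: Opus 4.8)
The plan is to observe that, in finite type, there is essentially no choice to be made: by Theorem~\ref{thm:cluster monomials form a positive basis in finite type}, the cluster monomials are the unique good basis $\mathfrak{B}$ for $\mathcal{A}$. So $\mathfrak{B}$ consists precisely of the monomials in the cluster variables coming from a single cluster, and in particular every cluster variable lies in $\mathfrak{B}$ and every element of $\mathfrak{B}$ is a product of cluster variables. I would then invoke the Proposition immediately preceding the statement, which identifies $\mathcal{A}_\mathfrak{B}(S)$ with the set of ring homomorphisms $p:\mathcal{A}\rightarrow\mathbb{R}$ with $p(\mathfrak{B})\subseteq S$, a subset of $\mathcal{A}(S)$. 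It remains to prove the reverse inclusion $\mathcal{A}(S)\subseteq\mathcal{A}_\mathfrak{B}(S)$.

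The key step is the following elementary closure observation: if $p\in\mathcal{A}(S)$, i.e.\ $p$ sends every cluster variable into $S$, then $p$ sends every cluster monomial into $S$, because $S$ is closed under multiplication (it is a subsemiring of $\mathbb{R}$) and each cluster monomial is a finite product of cluster variables. Hence $p(\mathfrak{B})\subseteq S$, so by the preceding Proposition $p\in\mathcal{A}_\mathfrak{B}(S)$. Combining the two inclusions gives $\mathcal{A}_\mathfrak{B}(S)=\mathcal{A}(S)$. I would also remark that this equality is, as the preceding ``\texttt{warn}'' environment already noted, the reason Definition~\ref{defn: AS} suffices in the coefficient-free finite type setting.

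I do not expect any real obstacle here; the only thing to be careful about is the edge case where a cluster monomial is the empty product (the unit $1$), which is sent to $1\in S$ since semiring homomorphisms preserve the multiplicative identity, so this causes no trouble. One should also note that the statement implicitly fixes $\mathfrak{B}$ to be \emph{the} good basis of cluster monomials; since Theorem~\ref{thm:cluster monomials form a positive basis in finite type} (together with Remark~\ref{rem: atomic}) guarantees uniqueness of the good basis in finite type, there is no ambiguity, and the argument is complete.

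\begin{proof}
By Theorem~\ref{thm:cluster monomials form a positive basis in finite type}, the cluster monomials form a good basis $\mathfrak{B}$ for $\mathcal{A}$, and in finite type this is the unique good basis. Each element of $\mathfrak{B}$ is a finite product of cluster variables in a common cluster, and every cluster variable lies in $\mathfrak{B}$.

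By the preceding Proposition, $\mathcal{A}_\mathfrak{B}(S)$ is the set of ring homomorphisms $p:\mathcal{A}\rightarrow\mathbb{R}$ with $p(\mathfrak{B})\subseteq S$; since $\mathfrak{B}$ contains the cluster variables, this is a subset of $\mathcal{A}(S)$. Conversely, let $p\in\mathcal{A}(S)$, so $p$ sends every cluster variable into $S$. Since $S$ is a subsemiring of $\mathbb{R}$, it is closed under multiplication and contains $1$; as every element of $\mathfrak{B}$ is a finite product of cluster variables (the empty product being $1$), $p$ sends every element of $\mathfrak{B}$ into $S$. Therefore $p(\mathfrak{B})\subseteq S$, and again by the preceding Proposition $p\in\mathcal{A}_\mathfrak{B}(S)$. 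Combining the two inclusions, $\mathcal{A}_\mathfrak{B}(S)=\mathcal{A}(S)$.
\end{proof}
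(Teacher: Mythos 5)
Your proof is correct and uses the same argument as the paper: identify $\mathfrak{B}$ as the cluster monomials via Theorem~\ref{thm:cluster monomials form a positive basis in finite type}, then use closure of $S$ under multiplication to show $p(\mathfrak{B})\subseteq S$ for any $p\in\mathcal{A}(S)$. The only cosmetic difference is that you spell out the inclusion $\mathcal{A}_\mathfrak{B}(S)\subseteq\mathcal{A}(S)$ and the edge case of the unit, which the paper leaves implicit from the preceding Proposition.
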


\begin{proof}
By Theorem \ref{thm:cluster monomials form a positive basis in finite type}, $\mathfrak{B}$ must be the set of cluster monomials.
For any $p\in \mathcal{A}(S)$, $f_x(p)=p(x)\in S$ for every cluster variable $x$. 
Given a cluster monomial $x_1^{e_1}x_2^{e_2}\cdots x_r^{e_r}$, 
\[ 
p(x_1^{e_1}x_2^{e_2}\cdots x_r^{e_r}) = p(x_1)^{e_1}p(x_2)^{e_2}\cdots p(x_r)^{e_r} \]
Since $S$ is closed under multiplication, this is in $S$. Thus, $p(\mathfrak{B})\subset S$ and so $p\in \mathcal{A}_\mathfrak{B}(S)$.
\end{proof}

\begin{prop}
If $S\subset \mathbb{R}$ is a semifield, then $\mathcal{A}_\mathfrak{B}(S) = \mathcal{A}(S)\simeq S^{\rank(\mathcal{A})}$. 
\end{prop}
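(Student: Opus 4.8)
The plan is to prove the statement in two halves: first, that $\mathcal{A}_\mathfrak{B}(S) = \mathcal{A}(S)$ when $S$ is a subsemifield of $\mathbb{R}$, and second, that this common set is homeomorphic to $S^r$ where $r = \rank(\mathcal{A})$. Throughout, fix a cluster $\cluster = (x_1, \ldots, x_r)$.

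For the first half, the inclusion $\mathcal{A}_\mathfrak{B}(S) \subseteq \mathcal{A}(S)$ is immediate from the preceding proposition. For the reverse inclusion, let $p \in \mathcal{A}(S)$, so $p$ is a ring homomorphism $\mathcal{A} \to \mathbb{R}$ with $p(x) \in S$ for every cluster variable $x$; I must show $p(b) \in S$ for every $b \in \mathfrak{B}$. Since $b$ lies in the cluster algebra, the Laurent phenomenon writes $b = \ell_{b,\cluster}(x_1, \ldots, x_r)$ as a Laurent polynomial in $\cluster$ with \emph{integer} (not necessarily positive) coefficients. The key point is that $S$ being a subsemifield of $\mathbb{R}$ means $S$ is closed under addition, multiplication, and multiplicative inverses, and moreover $S \subseteq \mathbb{R}_{>0}$ (since a nontrivial semifield in $\mathbb{R}$ cannot contain $0$ or negative elements, as noted in the Warning). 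Applying $p$ gives $p(b) = \ell_{b,\cluster}(p(x_1), \ldots, p(x_r))$. This is not obviously in $S$ because $\ell_{b,\cluster}$ may have negative coefficients; however, $b$ is also a positive element in the sense that it is a non-negative combination of basis elements $\mathfrak{B}$ — wait, $b$ itself is a single basis element. The cleaner route: since $\mathfrak{B}$ is a good basis containing the cluster monomials, and the positive Laurent phenomenon (Theorem~\ref{thm:positivity}) applies, I instead argue as follows. Multiply $b$ by a suitable cluster monomial $m$ in $\cluster$ with large exponents so that $mb$ is a genuine polynomial (not Laurent) in $\cluster$ with non-negative integer coefficients — this is possible because $b$ has a Laurent expansion, so some monomial clears all denominators, and the positivity of the structure constants makes $mb$ a non-negative combination of basis elements, hence (being polynomial in $\cluster$) a non-negative \emph{integer} polynomial in $x_1, \ldots, x_r$. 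Then $p(mb) = (\text{non-negative integer polynomial in } p(x_1), \ldots, p(x_r)) \in S$ since $S$ is closed under addition and multiplication. Also $p(m) \in S$ similarly, and since $S$ is a semifield, $p(m)^{-1} \in S$, so $p(b) = p(mb)\,p(m)^{-1} \in S$. Thus $p \in \mathcal{A}_\mathfrak{B}(S)$.

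For the second half, I claim $f_\cluster : \mathcal{A}(S) \to S^r$, $p \mapsto (p(x_1), \ldots, p(x_r))$, is a bijection, in fact a homeomorphism. It is the restriction of the homeomorphism $f_\cluster : \mathcal{A}(\mathbb{R}_{>0}) \xrightarrow{\sim} \mathbb{R}_{>0}^r$ of Proposition~\ref{prop:homemomorphism from totally positive region to positive orthant} (noting $\mathcal{A}(S) \subseteq \mathcal{A}(\mathbb{R}_{>0})$ since $S \subseteq \mathbb{R}_{>0}$), so it is injective and continuous, and its image is contained in $S^r$. For surjectivity onto $S^r$: given $v = (v_1, \ldots, v_r) \in S^r$, the proof of Proposition~\ref{prop:homemomorphism from totally positive region to positive orthant} produces a ring homomorphism $p : \mathcal{A} \to \mathbb{R}$ with $p(x_i) = v_i$; by the first half of this proof (applied with this $p \in \mathcal{A}(\mathbb{R}_{>0})$, but now observing $p(x_i) \in S$ for the initial cluster suffices to rerun the monomial-clearing argument for \emph{every} cluster variable, since every cluster variable is a positive Laurent polynomial in $\cluster$), $p$ sends every cluster variable into $S$, so $p \in \mathcal{A}(S)$ and $f_\cluster(p) = v$. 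Finally, $f_\cluster^{-1} : S^r \to \mathcal{A}(S)$ is continuous as the restriction of the continuous $\mathbb{R}_{>0}^r \to \mathcal{A}(\mathbb{R}_{>0})$, and $\mathcal{A}(S)$ carries the subspace topology. Hence $f_\cluster$ is a homeomorphism $\mathcal{A}(S) \xrightarrow{\sim} S^r$, completing the proof.

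The main obstacle is the first half — specifically, upgrading from ``$p$ sends cluster variables into $S$'' to ``$p$ sends the full good basis $\mathfrak{B}$ into $S$''. The subtlety is that basis elements need not be cluster monomials in infinite type, and their Laurent expansions can have negative coefficients in a given cluster, so one cannot directly evaluate. The resolution — clearing denominators by a cluster monomial to land in $\mathrm{span}_\mathbb{N}(\mathfrak{B}) \cap \mathbb{Z}[\cluster]$ and then using that $S$ is a semifield to divide back — is the crux; it is exactly here that the semifield hypothesis (rather than merely semiring) is essential, and also where $S \subseteq \mathbb{R}_{>0}$ gets used. Everything else is bookkeeping with subspace topologies and reuse of Proposition~\ref{prop:homemomorphism from totally positive region to positive orthant}.
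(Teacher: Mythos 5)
Your proposal is correct and is in essence the argument the paper sketches: the paper's proof is the one-line remark that, as in Proposition~\ref{prop:homemomorphism from totally positive region to positive orthant}, a point of $\mathcal{A}_\mathfrak{B}(S)$ is freely determined by its values on a fixed cluster, and the substantive detail you supply -- that for each $b\in\mathfrak{B}$ some cluster monomial $m$ makes $mb$ a polynomial in $\cluster$ with non-negative integer coefficients, so $p(b)=p(mb)p(m)^{-1}\in S$ -- is exactly the clearing-denominators/structure-constant trick the paper records in the remark after Theorem~\ref{thm:cluster monomials form a positive basis in finite type} (the polynomial coefficients of $mb$ equal structure constants because monomials in $\cluster$ are themselves basis elements and expansions in the $\mathbb{Z}$-basis $\mathfrak{B}$ are unique; it is worth saying that uniqueness step out loud, since your parenthetical ``being polynomial in $\cluster$'' is carrying it). One caveat: your assertion that a semifield $S\subset\mathbb{R}$ is automatically contained in $\mathbb{R}_{>0}$ is not what the Warning says (it only rules out an additive identity), and it is false in general -- e.g.\ the closure of $\{-\pi\}$ under addition, multiplication and reciprocals is a subsemifield of $\mathbb{R}$ containing a negative number. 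You only use this to view $\mathcal{A}(S)$ inside $\mathcal{A}(\mathbb{R}_{>0})$ and borrow injectivity and continuity from Proposition~\ref{prop:homemomorphism from totally positive region to positive orthant}; for the cases the paper actually uses (such as $S=\mathbb{R}_{>0}$) this is harmless, and in general it can be bypassed, since every value $p(x_i)$ lies in a semifield and is therefore nonzero, so the Laurent phenomenon already shows $p$ is determined by its cluster values and the positive Laurent expansions keep all evaluations in $S$.
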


\begin{proof}[Proof sketch]
By an identical argument to Proposition \ref{prop:homemomorphism from totally positive region to positive orthant}, a point in $\mathcal{A}_\mathfrak{B}(S)$ is freely determined by its values on a fixed cluster $\cluster$.
\end{proof}

\subsection{$\mathfrak{B}$-superunitary regions}

Let us return to our favorite semirings $\mathbb{R}_{>0},\mathbb{R}_{\geq1},\mathbb{Z}_{\geq1}\subset \mathbb{R}$. The last three propositions imply that
\[ 
\mathcal{A}_\mathfrak{B}(\mathbb{R}_{> 0}) 
= 
\mathcal{A}(\mathbb{R}_{> 0}) \simeq \mathbb{R}_{>0}^{\rank(\mathcal{A})}
\hspace{1cm}
\mathcal{A}_\mathfrak{B}(\mathbb{R}_{\geq 1}) 
\subseteq 
\mathcal{A}(\mathbb{R}_{\geq1}) 
\hspace{1cm}
\mathcal{A}_\mathfrak{B}(\mathbb{Z}_{\geq 1}) 
\subseteq 
\mathcal{A}(\mathbb{Z}_{\geq1}) 
\]
with equalities in finite type. 

The \textbf{$\mathfrak{B}$-superunitary region} of a cluster algebra $\mathcal{A}$ with good basis $\mathfrak{B}$ is the space $\mathcal{A}_\mathfrak{B}(\mathbb{R}_{\geq 1})$, which has the following equivalent characterizations.
\begin{itemize}
    \item The set of semiring homomorphisms $p:\mathcal{A}_\mathfrak{B}^+\rightarrow \mathbb{R}_{\geq1}$.
    \item The set of ring homomorphisms $p:\mathcal{A}\rightarrow \mathbb{R}$ which send each element in $\mathfrak{B}$ into $\mathbb{R}_{\geq1}$.
    \item The subspace of $\mathcal{A}(\mathbb{R})$ on which $f_b$ is greater than or equal to $1$, for each  $b\in \mathfrak{B}$.
\end{itemize}
At a glance, one might expect this to be a smaller space than the superunitary region $\mathcal{A}(\mathbb{R}_{\geq1})$, since it is carved out by a larger set of inequalities; specifically,  $f_b\geq1$ for all $b\in \mathfrak{B}$ rather than just the cluster variables. However, these additional conditions are often vacuous.

\begin{ex}\label{ex: (2,2)}
Consider the rank 2 cluster algebra $\mathcal{A}$ with $(b,c)=(2,2)$ (c.f.~Example \ref{ex: rank2}). 
The clusters are adjacent pairs of cluster variables $\{x_i,x_{i+1}\}$, and so the cluster monomials are of the form $x_i^ax_{i+1}^b$ for $a,b\in \mathbb{N}$. 
%

To extend the cluster monomials to a good basis, define a family of elements $\{t_i\}_{i\geq0}$ by the rule that
$ x_j t_i := x_{j+i}+x_{j-i} $
for all $j\in \mathbb{Z}$; note these elements satisfy the {Chebyshev recurrence} $ t_1 t_i = t_{i+1}+t_{|i-1|}$.
The set
\[ \mathfrak{B} := \{t_i\}_{i\geq 1} \cup \bigcup_{i\in \mathbb{Z}} \{x_i^\mathbb{N}x_{i+1}^\mathbb{N}\} \]
forms a good basis for the cluster algebra $\mathcal{A}$.\footnote{Note that $t_0=2$ is \emph{not} a basis element; however, this value of $t_0$ is force by the recurrence.} This basis is an instance of the greedy basis, the bracelet basis, and the theta basis.

The superunitary region of $\mathcal{A}$ (or rather, its embedding in $\mathbb{R}^2$) is depicted in Figure \ref{fig: (2,2)}. By definition, this region is carved out by the infinite system of inequalities $f_{x_i}\geq1$ as $x_i$ runs over all cluster variables. Note that the region is unbounded.

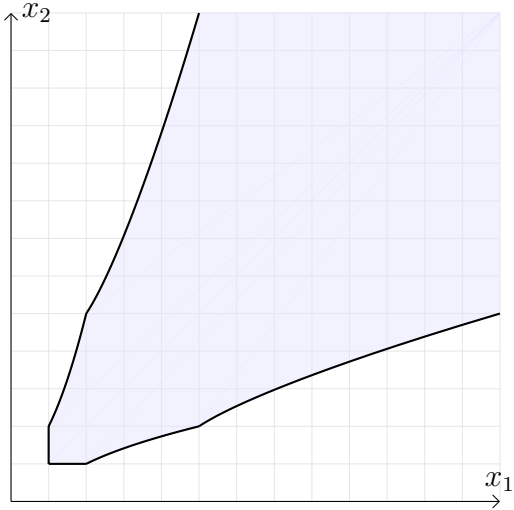
\begin{figure}[h!t]
\begin{tikzpicture}[xshift=.75in,scale=.5,baseline=(current bounding box.center)]
    \draw[step=1,black!10,very thin] (0,0) grid (13,13);
		\draw[-angle 90] (0,0) to (13,0) node[above] {$x_1$};
		\draw[-angle 90] (0,0) to (0,13) node[right] {$x_2$};
		
 		\draw[blue!10,fill=blue!10,opacity=\opa,variable=\t,domain=1:2] (13,13) plot 
 		({\t+4*\t^(-1)+8*\t^(-3)},{1+4*\t^(-2)}) to (13,13);
 		\draw[blue!10,fill=blue!10,opacity=\opa,variable=\t,domain=1:2] (13,13) plot 
 		({1+4*\t^(-2)},{2*\t^(-1)}) to (13,13);
 		\draw[blue!10,fill=blue!10,opacity=\opa,variable=\t,domain=1:2] (13,13) plot 
 		({2*\t^(-1)},1) to (13,13);
		\draw[blue!10,fill=blue!10,opacity=\opa,variable=\t,domain=1:2] (13,13) plot 
		(1,\t) to (13,13);
 		\draw[blue!10,fill=blue!10,opacity=\opa,variable=\t,domain=1:2] (13,13) plot 
 		(\t,\t^2+1) to (13,13);
 		\draw[blue!10,fill=blue!10,opacity=\opa,variable=\t,domain=1:2] (13,13) plot 
 		({\t^2+1},{\t^3+2*\t+2*\t^(-1)}) to (13,13);
		\draw[variable=\t,domain=1:2,thick] 
		plot ({\t+4*\t^(-1)+8*\t^(-3)},{1+4*\t^(-2)}) 
		plot ({1+4*\t^(-2)},{2*\t^(-1)}) 
		plot ({2*\t^(-1)},1) plot (1,\t) plot (\t,\t^2+1) 
		plot ({\t^2+1},{\t^3+2*\t+2*\t^(-1)});
  \end{tikzpicture}

\caption{The superunitary region for $(b,c)=(2,2)$}
\label{fig: (2,2)}
\end{figure}

The definition of the $\mathfrak{B}$-superunitary region imposes two types of additional conditions: 
\begin{itemize}
    \item The cluster monomials are greater than or equal to $1$ (not just the cluster variables).
    \item The non-cluster basis elements $t_1,t_2,t_3,...$ are greater than or equal to $1$.
\end{itemize}

The first family of conditions is redundant; if $f_{x_i}(p)\geq1$ and $f_{x_{i+1}}(p)\geq1$, then $ f_{x_i^ax_{i+1}^b}(p)  = f_{x_i }(p) ^af_{x_{i+1}}(p) ^b\geq1$.
The second family of conditions is vacuous because, at any totally positive point $p\in \mathcal{A}(\mathbb{R}_{>0})$,  
\[ 2 < f_{t_1}(p) < f_{t_2}(p) < f_{t_3}(p) < \cdots < f_{t_i}(p) < \cdots \]
This can be shown by induction on $i$ using $ t_1 t_i = t_{i+1}+t_{|i-1|}$. As a consequence, the $\mathfrak{B}$-superunitary region $\mathcal{A}_\mathfrak{B}(\mathbb{R}_{\geq1})$ coincides with the superunitary region $\mathcal{A}(\mathbb{R}_{\geq1})$.
\end{ex}

In general, the `extra' elements in a greedy basis contribute vacuous inequalities to $\mathcal{A}_\mathfrak{B}(\mathbb{R}_{\geq 1}) $.

\begin{thm}\label{thm: Bgreedy}
If $\mathcal{A}$ is rank 2 and $\mathfrak{B}$ is the greedy basis, then
$\mathcal{A}_\mathfrak{B}(\mathbb{R}_{\geq 1}) 
= 
\mathcal{A}(\mathbb{R}_{\geq1}) $.
\end{thm}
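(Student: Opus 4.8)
The plan is to reduce everything to showing that, at any totally positive point, the non-cluster-monomial elements of the greedy basis automatically take values $>1$, so that the inequalities they contribute to $\mathcal{A}_\mathfrak{B}(\mathbb{R}_{\geq1})$ are vacuous. Since $\mathcal{A}_\mathfrak{B}(\mathbb{R}_{\geq1})\subseteq \mathcal{A}(\mathbb{R}_{\geq1})$ always holds (by the proposition identifying $\mathcal{A}_\mathfrak{B}(S)$ with ring homomorphisms sending $\mathfrak{B}$ into $S$), the content is the reverse inclusion: a point $p\in\mathcal{A}(\mathbb{R}_{\geq1})$ sends every cluster variable into $\mathbb{R}_{\geq1}$, hence every cluster monomial into $\mathbb{R}_{\geq1}$ (as in the finite-type proposition), and it remains to check $f_b(p)\geq1$ for every greedy basis element $b$ that is not a cluster monomial.

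First I would recall the structure of the greedy basis in rank $2$ from \cite{LLZ14Greedy}: fix the cluster $\cluster=(x_1,x_2)$ with exchange matrix governed by $(b,c)$, and recall that the greedy basis elements are indexed by $(a_1,a_2)\in\mathbb{Z}^2$, with $x[a_1,a_2]$ a cluster monomial exactly when $a_1\leq0$ or $a_2\leq0$ (the ``imaginary'' region $a_1,a_2>0$ gives the genuinely new elements). The key structural fact I would invoke is the greedy recurrence defining the Laurent coefficients of $x[a_1,a_2]$: every greedy element is a positive Laurent polynomial in $\cluster$ whose constant coefficient (the coefficient of $x_1^{-a_1}x_2^{-a_2}$ appropriately normalized) is $1$, together with strictly positive contributions from other monomials when $(a_1,a_2)$ is in the imaginary cone. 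Concretely, for $(a_1,a_2)$ with $a_1,a_2\geq1$ one can write $x[a_1,a_2]=x_1^{-a_1}x_2^{-a_2}\bigl(\text{sum of monomials with nonnegative exponents}\bigr)$ where the polynomial part has value strictly larger than its value at $x_1=x_2=1$ whenever $x_1,x_2\geq1$ — but more simply, one shows $f_{x[a_1,a_2]}(p)>$ the product of lower-degree cluster monomials, by induction on $a_1+a_2$ using the divided-difference/greedy recursion.

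The cleanest route, which I would try first, is to mimic Example \ref{ex: (2,2)}: show that the greedy elements in the imaginary cone dominate, in value at any totally positive point, the cluster monomials of the same $g$-vector, and in particular exceed $1$. For the extremal imaginary elements $x[1,1],x[2,2],\dots$ along the diagonal (the analogues of the Chebyshev/bracelet elements $t_i$) one has a Chebyshev-type three-term recurrence, and an induction argument shows their values at any $p\in\mathcal{A}(\mathbb{R}_{>0})$ form an increasing sequence bounded below by a value $\geq1$ (indeed $>1$). For general imaginary $(a_1,a_2)$, the greedy element is bounded below by a cluster monomial times such a diagonal element, hence is $\geq1$ as well. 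Combining: $p\in\mathcal{A}(\mathbb{R}_{\geq1})$ forces $f_b(p)\geq1$ for all $b\in\mathfrak{B}$, so $p\in\mathcal{A}_\mathfrak{B}(\mathbb{R}_{\geq1})$, giving $\mathcal{A}(\mathbb{R}_{\geq1})\subseteq\mathcal{A}_\mathfrak{B}(\mathbb{R}_{\geq1})$ and hence equality.

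The main obstacle I anticipate is making the ``greedy elements dominate cluster monomials at totally positive points'' claim precise and uniform over all $(b,c)$, not just $(2,2)$. The greedy recurrence has a genuinely combinatorial flavor (maximal Dyck paths, the $\max$ in the recursion for the coefficients), so establishing the needed pointwise inequality $f_{x[a_1,a_2]}(p)\geq 1$ for all totally positive $p$ will require either (i) a careful induction on $a_1+a_2$ showing each coefficient-bearing monomial appears with coefficient $\geq$ its value in a suitable comparison polynomial, or (ii) an argument that the constant term $1$ of $x[a_1,a_2]$ written in the cluster $(x_1',x_2)$ adjacent to it forces the value to exceed $1$ whenever $x_1',x_2\geq1$, using that mutation sends $\mathcal{A}(\mathbb{R}_{\geq1})$-points to points with $x_i'$ possibly $<1$ — which is exactly where one must be careful. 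I would expect route (i), driven by the explicit greedy recursion of \cite{LLZ14Greedy}, to be the one that works, with the Chebyshev/bracelet diagonal handled separately as the base mechanism as in Example \ref{ex: (2,2)}.
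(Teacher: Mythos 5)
Your high-level strategy is correct and matches the paper's: show that the extra inequalities $f_b\geq 1$ coming from non-cluster-monomial greedy elements $b$ are in fact vacuous on all of $\mathcal{A}(\mathbb{R}_{>0})$. But the technical route you propose is not the one the paper uses, and I do not think it closes.

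The paper's proof is much shorter and rests on a general observation about positive Laurent polynomials (Lemma~\ref{lemma: support}): if $\ell$ is a Laurent polynomial in $d$ variables with positive integer coefficients, then $\ell\geq 1$ on all of $\mathbb{R}_{>0}^d$ if and only if the Newton polytope of $\ell$ contains the origin. (One direction: a convex combination $\sum a_i m_i = 0$ of exponent vectors in the support gives, via the multinomial expansion of $\ell^{\sum a_i}$, a constant term $\geq 1$, so $\ell^{\sum a_i}-1$ is again a positive Laurent polynomial, hence nonnegative on the positive orthant.) The proof of the theorem then consists of reading off, from the explicit description of the Laurent support of imaginary-root greedy elements in \cite[Prop.~4.1, Case~6]{LLZ14Greedy}, that their Newton polytopes contain the origin. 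This is a purely support-based argument; no control over the actual coefficients or the $\max$ in the greedy recursion is needed, which is exactly the obstacle you correctly flag as the hard part of your plan.

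Your proposed route has two concrete problems. First, the Chebyshev/bracelet three-term recurrence $t_1 t_i = t_{i+1}+t_{|i-1|}$ used in Example~\ref{ex: (2,2)} is specific to $(b,c)=(2,2)$, where the imaginary cone is a single ray; for $bc>4$ the imaginary cone is two-dimensional, there is no single ``diagonal,'' and the greedy elements there do not satisfy such a clean recurrence. Second, the step ``for general imaginary $(a_1,a_2)$, the greedy element is bounded below by a cluster monomial times a diagonal element'' does not yield $\geq 1$ even if true, because cluster monomials can take arbitrarily small positive values on $\mathcal{A}(\mathbb{R}_{>0})$ (they involve negative exponents). What actually saves you is precisely the Newton-polytope-contains-the-origin condition, which you do not invoke. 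Replacing the coefficient-level induction with the support-level Lemma~\ref{lemma: support} would repair the proof and give the uniform argument you were hoping for.
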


The theorem follows from the following fact about  positive Laurent polynomials. 

\begin{lemma}\label{lemma: support}
Let $\ell=\sum_{m\in \mathbb{Z}^d} c_m\mathbf{x}^m$ be a Laurent polynomial with positive integer coefficients. Then $\ell(p)\geq 1$ for all $p\in \mathbb{R}_{>0}^d$ iff the Newton polytope of $\ell$ contains the origin.
\end{lemma}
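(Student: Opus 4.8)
The plan is to analyze the infimum of $\ell$ over the positive orthant by passing to logarithmic coordinates, where $\ell$ becomes a sum of exponentials and the relevant geometry becomes convex. First I would substitute $x_i = e^{y_i}$, so that each Laurent monomial $\mathbf{x}^m$ becomes $e^{\langle m, y\rangle}$ and $\ell$ becomes $g(y) := \sum_{m} c_m e^{\langle m, y\rangle}$, a positive combination of exponentials indexed by the support $\operatorname{supp}(\ell) := \{m \mid c_m > 0\}$. The condition ``$\ell(p) \geq 1$ for all $p \in \mathbb{R}_{>0}^d$'' becomes ``$g(y) \geq 1$ for all $y \in \mathbb{R}^d$'', i.e. $\inf_{y} g(y) \geq 1$. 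Since each $c_m \geq 1$, we have $g(y) \geq e^{\langle m, y\rangle}$ for every $m \in \operatorname{supp}(\ell)$, so if the Newton polytope $N(\ell) = \operatorname{conv}(\operatorname{supp}(\ell))$ contains the origin, I want to conclude $g(y) \geq 1$.

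For the ``if'' direction: if $0 \in N(\ell)$, write $0 = \sum_{m \in \operatorname{supp}(\ell)} \lambda_m m$ as a convex combination with $\lambda_m \geq 0$, $\sum \lambda_m = 1$. Then for any $y$, the weighted AM--GM inequality (or convexity of $t \mapsto e^t$) gives
\[
g(y) = \sum_m c_m e^{\langle m, y\rangle} \geq \sum_{m \in \operatorname{supp}(\ell)} \lambda_m e^{\langle m, y\rangle} \geq \prod_{m} \left(e^{\langle m,y\rangle}\right)^{\lambda_m} = e^{\langle \sum_m \lambda_m m,\, y\rangle} = e^{0} = 1,
\]
using $c_m \geq 1 \geq \lambda_m$ and dropping any terms with $\lambda_m = 0$. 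This handles one direction cleanly.

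For the ``only if'' direction (the contrapositive): suppose $0 \notin N(\ell)$. By the separating hyperplane theorem for the compact convex set $N(\ell)$ and the point $0$, there is a vector $v \in \mathbb{R}^d$ and $\delta > 0$ with $\langle m, v\rangle \leq -\delta$ for all $m \in \operatorname{supp}(\ell)$. Then along the ray $y = tv$ for $t \to +\infty$, each term $c_m e^{\langle m, tv\rangle} = c_m e^{t\langle m,v\rangle} \leq c_m e^{-t\delta} \to 0$, so $g(tv) \to 0$ as $t \to \infty$; in particular $g(tv) < 1$ for large $t$, giving a point $p = (e^{tv_1}, \dots, e^{tv_d}) \in \mathbb{R}_{>0}^d$ with $\ell(p) < 1$. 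This contradicts the hypothesis.

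\textbf{Main obstacle.} The argument is essentially routine once the logarithmic change of variables is made; the only subtlety I anticipate is bookkeeping around the finiteness of $\operatorname{supp}(\ell)$ (needed so that $N(\ell)$ is a genuine polytope and the separation argument and the finite AM--GM both apply) and making sure the degenerate cases are covered---e.g. when $\ell$ is a single monomial (then $N(\ell)$ is a point, and $0 \in N(\ell)$ iff that monomial is $c_m$ with $m = 0$, i.e. a constant $\geq 1$, which matches), or when some $c_m$ are strictly greater than $1$ (harmless, since we only ever used $c_m \geq 1$ or $c_m \geq \lambda_m$). I would state the separation step carefully since $N(\ell)$ being a polytope (hence closed) is what makes strict separation from an exterior point available without further hypotheses.
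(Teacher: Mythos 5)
Your proposal is correct, and the ``only if'' direction is essentially the paper's own argument: both of you pick a linear functional that is strictly negative on the support (the paper takes an integral $v$ with $v(m)\leq -1$, you take a real separating functional with margin $\delta$) and push a point to infinity along the corresponding one-parameter scaling so that every term, hence $\ell$, tends to $0$. The ``if'' direction is where you genuinely diverge. The paper stays inside the calculus of positive Laurent polynomials: since the support lies in $\mathbb{Z}^d$, the rational convex combination expressing the origin can be cleared to a relation $a_1m_1+\cdots+a_nm_n=0$ with $a_i\in\mathbb{N}$, and then the multinomial expansion of $\ell^{a_1+\cdots+a_n}$ contains a constant term $\geq 1$; hence $\ell^{N}-1$ still has nonnegative coefficients, is $\geq 0$ on the positive orthant, and so $\ell\geq 1$. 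You instead pass to logarithmic coordinates and apply weighted AM--GM (convexity of the exponential) directly to the convex combination $0=\sum\lambda_m m$, using $c_m\geq 1\geq\lambda_m$. Your route is shorter and more flexible --- it needs only $c_m\geq 1$ as real numbers, with no integrality, and avoids manipulating high powers of $\ell$ --- while the paper's route is purely algebraic and never leaves the setting of positive-integer Laurent polynomials, which is the ambient framework of the section. Both arguments are complete; your attention to finiteness of the support (so the Newton polytope is compact and strict separation applies) is exactly the right care to take.
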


\begin{proof}
If the Newton polytope of $\ell$ contains the origin, then there are $m_1,m_2,..,m_n$ in the support of $\ell$ and $a_1,a_2,...,a_n\in \mathbb{N}$ such that 
$ a_1m_1+ a_2m_2 + \cdots + a_nm_n = 0 $.
Then the multinomial formula for $\ell^{a_1+a_2+\cdots +a_n}$ contains a constant term of the form
\[ c_{m_1}^{a_1} c_{m_2}^{a_2} \cdots c_{m_n}^{a_n} \binom{a_1+a_2+\cdots + a_m}{a_1 \; a_2 \; \cdots \; a_n} \]
Since the $c_1,c_2,...,c_n$ are positive integers, this term is at least $1$.
Therefore, $\ell^{a_1+a_2+\cdots +a_n}-1$ is also a Laurent polynomial with positive integer coefficients, and so it must have non-negative values at each $p\in \mathbb{R}_{>0}^d$. This implies $ \ell(p)^{a_1+a_2+\cdots +a_n} \geq 1 $, and so $\ell(p) \geq1$.

If the Newton polytope of $\ell$ does not contain the origin, then there is a linear map $v:\mathbb{Z}^d\rightarrow \mathbb{Z}$ such that $v(m)\leq -1$ for all $m$ in the support. Given $p=(p_1,p_2,...,p_d)\in \mathbb{R}_{>0}^d$,  define
\[ p_t = (e^{tv(e_1)}p_1,e^{tv(e_2)}p_2,...,e^{tv(e_d)}p_d)\]
for each $t\in \mathbb{R}_{\geq0}$, where $e_1,e_2,...,e_n$ is the standard basis for $\mathbb{Z}^n$. Then 
\[ \ell(p_t) = \sum c_mp_t^m = \sum c_m p^m e^{tv(m)} \leq \sum c_m p^me^{-t} = e^{-t} \ell(p) \]
where the sums are over the support of $\ell$. Therefore, $\ell(p_t)<1$ for $t\gg0$.
\end{proof}

\begin{proof}[Proof of Theorem \ref{thm: Bgreedy}]
The greedy basis elements which are not cluster monomials correspond to positive imaginary roots of the Cartan matrix \cite[Remark~1.9]{LLZ14Greedy}. The Laurent support of these greedy basis elements is described in \cite[Proposition~4.1, Case 6]{LLZ14Greedy}, where it may be observed that their convex hull (the Newton polytope) contains the origin. 
By Lemma \ref{lemma: support}, they are $\geq1$ everywhere on the totally positive region. 
\end{proof}

Despite the theorem, it is not always true that $\mathcal{A}_\mathfrak{B}(\mathbb{R}_{\geq1})=\mathcal{A}(\mathbb{R}_{\geq1})$.

\begin{ex}
Consider the \emph{Markov cluster algebra} determined by the following matrix.
\[
\begin{bmatrix}
0 & -2 & 2 \\
2 & 0 & -2 \\
-2 & 2 & 0
\end{bmatrix}
\]
This cluster algebra is an important source of counterexamples; e.g.~it is not equal to its own upper cluster algebra $\mathcal{U}$, it is infinitely generated, it has no maximal green sequences, etc. 

Since $\mathcal{A}$ is the cluster algebra of a once-punctured torus, we may consider the \emph{bracelet basis} $\mathfrak{B}\subset \mathcal{U}$ (including tagged arcs, as defined in \cite[Appendix A]{MSW13}). While $\mathfrak{B}$ spans an algebra, this algebra properly contains the cluster algebra and is properly contained in the upper cluster algebra. Nevertheless, we extend the preceding definitions to this algebra.

Every element in $\mathfrak{B}$ is a product of three types of elements: the plain arcs, the notched arcs, and the bracelets. 
The plain arcs are the cluster variables, and their inequalities carved out the superunitary region $\mathcal{A}(\mathbb{R}_{\geq1})$. By an application of Lemma \ref{lemma: support}, the inequalities from the bracelets are vacuous. However, each notched arc provides a non-trivial inequality, ensuring that $\mathcal{A}_\mathfrak{B}(\mathbb{R}_{\geq1})\subsetneq \mathcal{A}(\mathbb{R}_{\geq1})$.
\end{ex}

\begin{rem}\label{rem: nofrozens}
Working with a choice of good basis also provides a uniform approach to working with the many kinds of coefficients a cluster algebra may possess. 

For example, if $\mathcal{A}$ is a cluster algebra with frozen variables and their inverses, and $\mathfrak{B}$ is a good basis for $\mathcal{A}$ which is invariant under multiplication by each frozen variable (which holds in the examples of good bases mentioned above), then a $\mathfrak{B}$-superunitary point $p\in \mathcal{A}_\mathfrak{B}(\mathbb{R}_{\geq1})$ must send each frozen variable  $y$ and its inverse $y^{-1}$ into $\mathbb{R}_{\geq1}$. This forces $p(y)=1$, and so $p$ factors through the map $\mathcal{A}\rightarrow\mathcal{A}^\dagger$ which deletes the frozen variables. Therefore, the $\mathfrak{B}$-superunitary regions of $\mathcal{A}$ and $\mathcal{A}^\dagger$ coincide.
\end{rem}

\subsection{Conjectures}

We describe how our finite type results might extend to the general case. The first is a conjectural characterization of the elements in $\mathfrak{B}$ which give non-trivial inequalities in the definition of the superunitary region. 
An element $b$ in a basis $\mathfrak{B}$ is called...
\begin{itemize}
    \item \textbf{real} if $b^2\in \mathfrak{B}$, and
    \item \textbf{indecomposable} if, whenever $b=cd$ for $c,d\in \mathfrak{B}$, then one of $c,d$ must be invertible.
\end{itemize}
The cluster variables are indecomposable real elements in any good basis, and they are the only indecomposable real elements in many good bases (e.g.~greedy bases). 


Let $\mathfrak{B}_{ir}\subset\mathfrak{B}$ denote the subset of indecomposable real elements. The following conjecture asserts that these elements suffice to define the $\mathfrak{B}$-superunitary region.

\begin{conj}\label{conj: Bir}
$\mathcal{A}_\mathfrak{B}(\mathbb{R}_{\geq1})$ is the subset of $\mathcal{A}(\mathbb{R}_{>0})$ on which $f_b\geq1$ for each $b\in \mathfrak{B}_{ir}$.
\end{conj}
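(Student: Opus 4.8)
The plan is to prove the two inclusions of Conjecture~\ref{conj: Bir} separately. The inclusion $\mathcal{A}_\mathfrak{B}(\mathbb{R}_{\geq1})\subseteq\{p\in\mathcal{A}(\mathbb{R}_{>0}) : f_b(p)\geq1\text{ for all }b\in\mathfrak{B}_{ir}\}$ is immediate since $\mathfrak{B}_{ir}\subseteq\mathfrak{B}$, so the content is the reverse inclusion. Fix a point $p\in\mathcal{A}(\mathbb{R}_{>0})$ with $f_b(p)\geq1$ for every indecomposable real $b$; the goal is to deduce $f_b(p)\geq1$ for \emph{every} $b\in\mathfrak{B}$. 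The natural device is a well-founded induction along the divisibility preorder on $\mathfrak{B}$, writing $c\preceq b$ when $b=cd$ for some $d\in\mathfrak{B}$. One first checks that this preorder is well-founded: for instance, when $\mathfrak{B}$ is the theta basis each element carries a $g$-vector, a proper factorization strictly enlarges the Newton polytope in every cluster, and the ambient $g$-vector cones are strongly convex, so there are no infinite descending chains; a uniform argument of this flavor should work for an arbitrary good basis, though the cleanest formulation is a minor technical point I would address first.

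Granting well-foundedness, the induction splits into three cases. If $b$ is decomposable, say $b=cd$ with $c,d\in\mathfrak{B}$ neither invertible, then $c,d\prec b$, so $f_c(p),f_d(p)\geq1$ by induction and hence $f_b(p)=f_c(p)f_d(p)\geq1$; this case subsumes all cluster monomials, since a cluster monomial factors into cluster variables and a cluster variable is readily seen to be indecomposable and real. If $b$ is indecomposable and real then $b\in\mathfrak{B}_{ir}$ and $f_b(p)\geq1$ by hypothesis. The only remaining case — and the crux of the conjecture — is that $b$ is indecomposable but \emph{not} real. Here I would prove something stronger than needed, namely that $f_b\geq1$ on \emph{all} of $\mathcal{A}(\mathbb{R}_{>0})$, so that the inequality contributes nothing and in particular holds at $p$. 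By Lemma~\ref{lemma: support}, this is equivalent to the combinatorial statement that for every cluster $\mathbf{x}$ the Newton polytope of $\ell_{b,\mathbf{x}}$ contains the origin.

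The main obstacle is exactly this last statement, which is why Conjecture~\ref{conj: Bir} remains open. The guiding heuristic is that realness is a rigidity condition: $b^2\in\mathfrak{B}$ says that $\mathrm{Newt}(\ell_{b,\mathbf{x}})$ is, up to scaling by $2$, again the Newton polytope of a basis element — a self-similarity one expects only at the extremal positions of the relevant $g$-fan, precisely the positions where the polytope fails to surround the origin — whereas a non-real indecomposable element ought to have a Newton polytope broad enough to contain $0$. Turning this into a proof requires genuine structural control of the non-cluster-monomial elements of the chosen good basis, e.g.\ of their Laurent supports or their broken-line expansions in the scattering diagram of \cite{GHKK18}. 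Such control is available in the cases already settled — the greedy basis in rank $2$, where these elements index positive imaginary roots and the claim is Theorem~\ref{thm: Bgreedy} (via \cite{LLZ14Greedy}), and bracelet bases of surfaces in the instances that have been computed — but not in general. One piece of slack left unused above is that we never actually need $f_b\geq1$ on the whole totally positive region, only on the subregion cut out by $\mathfrak{B}_{ir}$; a proof in the hard cases may well have to exploit this.
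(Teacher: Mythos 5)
The statement you are addressing is stated in the paper as a conjecture: the paper gives no proof of it, only supporting evidence (Theorem~\ref{thm: Bgreedy} for greedy bases in rank $2$, and the bracelet-basis discussion of the Markov cluster algebra). So there is no paper proof to compare against, and your proposal, by its own admission, does not close the conjecture either; it is a reduction, not a proof.

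The parts you do carry out are the easy ones. The forward inclusion is immediate from $\mathfrak{B}_{ir}\subseteq\mathfrak{B}$, and the factorization induction disposes of decomposable elements and of cluster monomials (granting that cluster variables lie in $\mathfrak{B}_{ir}$, which the paper asserts). But two genuine gaps remain. First, the well-foundedness of your divisibility preorder on an arbitrary good basis is asserted, not proven; your sketch for the theta basis leans on properties of $g$-vectors and Newton polytopes that are themselves nontrivial, and an ``informal definition'' of good basis gives you nothing to induct on in general. Second, and decisively, the case of indecomposable non-real elements is exactly the open content of Conjecture~\ref{conj: Bir}. Your plan is to prove the stronger claim that every such $b$ satisfies $f_b\geq1$ on all of $\mathcal{A}(\mathbb{R}_{>0})$, equivalently (via Lemma~\ref{lemma: support}) that the Newton polytope of $\ell_{b,\mathbf{x}}$ contains the origin for every cluster $\mathbf{x}$. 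This is consistent with every case treated in the paper (the imaginary-root greedy elements behind Theorem~\ref{thm: Bgreedy}, the Chebyshev elements $t_i$ in the $(2,2)$ example, the bracelets in the Markov example), but no argument is offered, and the heuristic about ``self-similarity of Newton polytopes under squaring'' is not a proof sketch that could be completed with current tools; as you note, the stronger statement might even fail for some good bases while the conjecture survives, since the conjecture only needs $f_b\geq1$ on the region cut out by $\mathfrak{B}_{ir}$. In short: correct trivial inclusion, reasonable reduction, but the conjecture's actual difficulty is untouched.
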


\noindent The conjecture would imply that $ \mathcal{A}_\mathfrak{B}(\mathbb{R}_{\geq1}) = \mathcal{A}(\mathbb{R}_{\geq1}) $ whenever $\mathfrak{B}_{ir}$ is the set of cluster variables, generalizing Theorem \ref{thm: Bgreedy}.


A subset of $\mathfrak{B}_{ir}$ is \textbf{strongly compatible} if any monomial in the elements of that subset is in $\mathfrak{B}$. Each subcluster is strongly compatible, and we have the following generalization of Proposition \ref{prop:entries 1 form a subcluster}, with essentially the same proof. 

\begin{prop}
If $p\in \mathcal{A}_\mathfrak{B}(\mathbb{R}_{\geq1})$, then the set $\{b\in \mathfrak{B}_{ir} \mid f_b(p)=1\}$ is strongly compatible.
\end{prop}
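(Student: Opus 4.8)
The statement is the exact analogue of Proposition \ref{prop:entries 1 form a subcluster}, so I would run the same argument, replacing "cluster monomial" with "element of $\mathfrak{B}$" and "subcluster" with "strongly compatible subset of $\mathfrak{B}_{ir}$." Let $p\in \mathcal{A}_\mathfrak{B}(\mathbb{R}_{\geq1})$ and let $\{u_1,u_2,\dots,u_m\}$ be the set of elements of $\mathfrak{B}_{ir}$ which are sent to $1$ by $p$. The goal is to show every monomial $u_1^{a_1}u_2^{a_2}\cdots u_m^{a_m}$ lies in $\mathfrak{B}$; by definition of strong compatibility this is exactly what must be proved.

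First I would reduce to showing that each such monomial is a \emph{single} basis element (rather than a proper $\mathbb{N}$-combination of several). Since $\mathfrak{B}$ is a good basis, the product $u_1^{a_1}\cdots u_m^{a_m}$ expands as $\sum_{b\in\mathfrak{B}}\lambda_b\, b$ with $\lambda_b\in\mathbb{Z}_{\geq0}$ and only finitely many nonzero. Evaluate both sides at $p$: the left-hand side is $p(u_1)^{a_1}\cdots p(u_m)^{a_m}=1$, while on the right-hand side every $p(b)\geq1$ because $p\in\mathcal{A}_\mathfrak{B}(\mathbb{R}_{\geq1})$ sends each basis element into $\mathbb{R}_{\geq1}$. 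Hence $\sum_b \lambda_b\, p(b)=1$ with all $\lambda_b\geq0$ integers and all $p(b)\geq1$, which forces exactly one $\lambda_b$ to equal $1$ (and its $b$ to satisfy $p(b)=1$) and all others to vanish. Therefore $u_1^{a_1}\cdots u_m^{a_m}=b\in\mathfrak{B}$, as required. Running this over all exponent vectors $(a_1,\dots,a_m)\in\mathbb{N}^m$ shows $\{u_1,\dots,u_m\}$ is strongly compatible.

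One subtlety to address, which is why the excerpt says "essentially the same proof" rather than "the same proof": in Proposition \ref{prop:entries 1 form a subcluster} finiteness of the set of cluster variables was used to guarantee the expansion is a finite sum, but here that is automatic from the definition of a good basis (products are \emph{finite} $\mathbb{N}$-linear combinations of basis elements), so no finite type hypothesis is needed. The only genuinely new input is purely bookkeeping — one must know that the empty monomial $1$ is not an issue (it is $p(1)=1$ trivially) and that the case $m=0$ gives the empty strongly compatible set. I do not anticipate a real obstacle here; the argument is a direct transcription, and the positivity of the structure constants together with $p(\mathfrak{B})\subseteq\mathbb{R}_{\geq1}$ does all the work exactly as in the finite type case. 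If anything, the one point worth a sentence of care is that strong compatibility is stated in terms of \emph{all} monomials in the chosen subset, so the argument must be applied to an arbitrary exponent vector rather than just to the squarefree product $u_1u_2\cdots u_m$.
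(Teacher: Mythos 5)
Your argument is correct and is exactly the adaptation the paper intends when it says the proposition follows "with essentially the same proof" as Proposition \ref{prop:entries 1 form a subcluster}: expand an arbitrary monomial in the $u_i$ in the good basis, evaluate at $p$, and use $p(b)\geq 1$ for all $b\in\mathfrak{B}$ together with nonnegativity of the structure constants to force the expansion to be a single basis element. Your observation that one must treat every exponent vector (not just the squarefree product) is precisely the right point of care for the strong-compatibility formulation, and the finiteness issues are handled correctly.
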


We conjecture that most of our results on finite type superunitary regions extend to $\mathfrak{B}$-superunitary regions, subject to the following generalizations: 
\begin{align*} 
\text{cluster variable} &\mapsto \text{indecomposable real basis element} 
\\
\text{subcluster} &\mapsto \text{strongly compatible set of indecomposable real basis elements} 
\end{align*}
Note that, for many good bases, these generalizations are equalities.

The conjectured generalization of our results may be summarized as follows.

\begin{conj}
Let $\mathcal{A}$ be a cluster algebra with good basis $\mathfrak{B}$. 
Then the $\mathfrak{B}$-superunitary region $\mathcal{A}_\mathfrak{B}(\mathbb{R}_{\geq1})$ is a manifold with boundary, and the connected components of the boundary stratification are
\[ \mathcal{A}_\mathfrak{B}(\mathbb{R}_{\geq1})_\subcluster
:= \{ p \in \mathcal{A}_\mathfrak{B}(\mathbb{R}_{\geq1}) \mid \text{for all $b\in \mathfrak{B}_{ir}$, $f_b(p)=1$ if and only if $b\in \subcluster$}\} 
\]
where $\subcluster$ runs over all strongly compatible subsets of $\mathfrak{B}_{ir}$. Each $\mathcal{A}_\mathfrak{B}(\mathbb{R}_{\geq1})_\subcluster$ is homeomorphic to an open ball of dimension $\rank(\mathcal{A})-|\subcluster|$, and 
\[ \overline{\mathcal{A}_\mathfrak{B}(\mathbb{R}_{\geq1})_\subcluster}
= \bigsqcup_{\subcluster'\supseteq \subcluster} \mathcal{A}_\mathfrak{B}(\mathbb{R}_{\geq1})_{\subcluster'}
\]
\end{conj}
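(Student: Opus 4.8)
The final statement to prove is the last displayed \textbf{Conjecture}, which asserts that the $\mathfrak{B}$-superunitary region is a manifold with boundary whose boundary stratification is indexed by strongly compatible subsets of $\mathfrak{B}_{ir}$, with each stratum an open ball of the expected dimension and the expected closure relations. Since this is a conjecture rather than a theorem, I cannot fully resolve it, but I can outline the approach that would mirror the finite-type proof in Sections 3--4, identifying precisely which ingredients are missing.

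The plan is to replay the finite-type argument with the dictionary ``cluster variable $\mapsto$ indecomposable real basis element'' and ``subcluster $\mapsto$ strongly compatible subset of $\mathfrak{B}_{ir}$.'' First I would establish the analog of Proposition~\ref{prop:entries 1 form a subcluster}: if $p\in\mathcal{A}_\mathfrak{B}(\mathbb{R}_{\geq1})$ then $\{b\in\mathfrak{B}_{ir}\mid f_b(p)=1\}$ is strongly compatible --- this is exactly the Proposition already stated in the excerpt, proved via positivity of structure constants. Next I would prove the analog of the local structure Lemma~\ref{lemma:U}: near a point $p$ whose ``equal-to-$1$'' set is $\cluster$, the region looks like $\mathbb{R}_{\geq1}^{|\cluster|}\times\mathbb{R}_{>0}^{r-|\cluster|}$ in suitable coordinates. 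This requires two nontrivial inputs that hold in finite type but are conjectural in general: (i) Conjecture~\ref{conj: Bir}, so that only the $f_b$ for $b\in\mathfrak{B}_{ir}$ cut out the region (allowing the ``finitely many inequalities near $p$'' argument to go through locally --- one must still check local finiteness of the relevant $\mathfrak{B}_{ir}$), and (ii) an analog of the deletion map $d:\mathcal{A}\to\mathcal{A}^\dagger$ sending $\cluster$ to $1$ and every $b\in\mathfrak{B}_{ir}$ to a sum of basis elements whose Newton polytope contains the origin, so that Lemma~\ref{lemma: support}-type reasoning shows the remaining inequalities become vacuous or reduce to the smaller region. Given these, Corollary~\ref{coro: manifold}, Proposition~\ref{prop: union of subcluster faces}, and Corollary~\ref{coro:subcluster faces are boundary of superunitary domain} transfer verbatim, yielding the manifold-with-corners structure and the claimed closure relations $\overline{\mathcal{A}_\mathfrak{B}(\mathbb{R}_{\geq1})_\subcluster}=\bigsqcup_{\subcluster'\supseteq\subcluster}\mathcal{A}_\mathfrak{B}(\mathbb{R}_{\geq1})_{\subcluster'}$.

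The claim that each stratum $\mathcal{A}_\mathfrak{B}(\mathbb{R}_{\geq1})_\subcluster$ is homeomorphic to an open ball of dimension $\rank(\mathcal{A})-|\subcluster|$ is the most delicate point and is where the finite-type proof genuinely breaks down: there, one argues by induction on rank using that the deletion $\mathcal{A}^\dagger$ is again finite type and hence its superunitary region is a polytope (via the Generalized Schönflies Theorem applied to the sphere that is its boundary). In infinite type there is no compactness and no associahedron, so the induction has no base to stand on and the boundary of each stratum need not be a sphere. I would instead try to show directly that $\mathcal{A}_\mathfrak{B}(\mathbb{R}_{>1})$ (the top stratum, $\subcluster=\varnothing$) is contractible --- plausibly it is star-shaped or convex after a logarithmic change of coordinates, since the defining inequalities $\ell_{b,\cluster}\geq1$ are inequalities on positive Laurent polynomials --- and then deduce ball-ness of lower strata from the deletion homeomorphisms. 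Proving convexity/contractibility of $\{\ell_b\geq1 \text{ for all } b\in\mathfrak{B}_{ir}\}$ in log-coordinates would be the main obstacle, since positive Laurent polynomials are log-convex but $\{\ell\geq1\}$ is a superlevel set, not a sublevel set, so convexity is not automatic and one would need to exploit the specific structure of $\mathfrak{B}_{ir}$.

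In summary, the proof strategy is: (1) the strong-compatibility proposition (already done); (2) a local-model lemma generalizing Lemma~\ref{lemma:U}, contingent on Conjecture~\ref{conj: Bir} and a good-basis deletion map; (3) transfer of Proposition~\ref{prop: union of subcluster faces} and its corollaries to get the stratification and closure relations; (4) a new argument --- not an inductive reduction to polytopes --- showing each stratum is an open ball, most likely via contractibility of the top stratum in logarithmic coordinates combined with the deletion homeomorphisms. Steps (2) and (4) are the genuine obstacles; step (4) in particular cannot simply copy the Schönflies argument and requires a fundamentally different, ``direct'' topological input suited to the non-compact setting.
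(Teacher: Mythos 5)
The statement you were asked to prove is presented in the paper as a conjecture: the paper offers no proof of it, so there is no argument of the authors' to compare yours against, and your own write-up correctly concedes that it does not constitute a proof. As a roadmap it is faithful to what the paper itself indicates is missing: the transfer of the finite-type machinery hinges on Conjecture~\ref{conj: Bir}, on a good-basis analogue of the deletion map of Proposition~\ref{prop:deletion map}, and on replacing the finite-type induction through polytopes and the generalized Sch\"onflies theorem, and the paper's remark immediately after the conjecture (the closure of a face need not be a closed ball, as in Example~\ref{ex: (2,2)}) matches your diagnosis that compactness is exactly what breaks.

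Two further points in your outline would need repair even as a program. First, in your step (2), the finite-type Lemma~\ref{lemma:U} gets its uniform $\epsilon$ from the finiteness of the set of cluster variables; in infinite type you must additionally argue that near a given point only finitely many $b\in\mathfrak{B}_{ir}$ take values close to $1$ (a local finiteness or properness statement for the family $\{f_b\}$ that is not implied by Conjecture~\ref{conj: Bir} and is itself nontrivial). Second, in your step (4), contractibility of the top stratum would not suffice: a contractible open $n$-manifold need not be homeomorphic to an open ball (Whitehead's example), so you would need star-shapedness in log-coordinates or some other genuinely stronger input, and the plan to propagate ball-ness to lower strata via deletion homeomorphisms presupposes the very deletion machinery whose existence for general good bases is open. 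So the genuine gaps are where you locate them, plus these two; the proposal is an honest and reasonable research outline, but, unavoidably for an open conjecture, it is not a proof.
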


\begin{rem}
One result that does not generalize is the regular CW structure, because the closure of a face $\mathcal{A}_\mathfrak{B}(\mathbb{R}_{\geq1})_\subcluster$ is generally not homeomorphic to a closed ball. In Example \ref{ex: (2,2)}, $\mathcal{A}_\mathfrak{B}(\mathbb{R}_{\geq1})_\varnothing$ is the interior of the superunitary region and is homeomorphic to an open ball, but its closure 
is not compact and therefore cannot be homeomorphic to a closed ball.
\end{rem}

\section{Relation to Teichm\"uller spaces}

To close, we briefly sketch the interpretation of superunitary regions in the case of cluster algebras of marked surfaces. While we include an incomplete review of the relevant background, interested readers should consult \cite{FST08,GSV05, FG06} for the details. 

Let $\Sigma$ be a \textbf{unpunctured marked surface}; that is, an oriented surface with boundary together with a finite set $M$ of \textbf{marked points} in $\partial \Sigma$. We furthermore assume that $\Sigma$ admits a triangulation consisting of arcs between marked points. This assumption ensures that $\Sigma$  determines a cluster algebra $\mathcal{A}$ in which the cluster variables are parametrized by arcs between marked points, and the clusters are parametrized by triangulations of $\Sigma$.

Penner's Theorem \cite{Pen87} identifies the totally positive region of $\mathcal{A}$ with the \textbf{decorated Teichm\"uller space} $\widehat{\mathcal{T}}(\Sigma)$ of $\Sigma$. This is the moduli space of finite volume hyperbolic structures on $\Sigma \smallsetminus M$ which are geodesic on $\partial \Sigma$ (up to conformal equivalence), together with a choice of horocycle around each marked point. If $x\in \mathcal{A}$ is a cluster variable, then the function $f_x$ on $\mathcal{A}(\mathbb{R}_{>0})$ may be identified with the \textbf{lambda length} of the corresponding arc in $\Sigma$; that is, the quantity $ e^{\ell/2}$ where $\ell$ is the geodesic distance between decorating horocycles.

Under this identification $\mathcal{A}(\mathbb{R}_{>0})\simeq \widehat{\mathcal{T}}(\Sigma)$, the superunitary region $\mathcal{A}(\mathbb{R}_{\geq1})$ corresponds to hyperbolic metrics in which each lambda length is at least 1; equivalently, in which the geodesic distance between decorating horocycles is non-negative. This is equivalent to requiring that the decorating horocycles have disjoint interior. 

There is an ambiguity to address, since $\mathcal{A}$ has frozen variables represented by geodesics along the boundary (which we otherwise avoided considering). If one is using a definition of superunitary region that forces frozen variables to be 1 (see Remark \ref{rem: nofrozens}), then these geodesics must have lambda length $1$; that is, the decorating horocycles must be tangent.

We summarize these observations as follows.

\begin{prop}
Let $\Sigma$ be an unpunctured marked surface, 
and let $\mathcal{A}$ be the cluster algebra of $\Sigma$. Under $\mathcal{A}(\mathbb{R}_{>0})\simeq \widehat{\mathcal{T}}(\Sigma)$, the superunitary region $\mathcal{A}(\mathbb{R}_{\geq1})$ of $\mathcal{A}$ parametrizes the decorated hyperbolic metrics in which the decorating horocycles have disjoint interiors. If the frozen variables must be $1$, then adjacent horocycles must be tangent.
\end{prop}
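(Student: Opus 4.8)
The plan is to unwind the chain of identifications from Penner's theorem and track what the superunitary condition becomes on the geometric side. First I would invoke Penner's theorem \cite{Pen87}, which for an unpunctured marked surface $\Sigma$ gives a bijection $\mathcal{A}(\mathbb{R}_{>0})\simeq\widehat{\mathcal{T}}(\Sigma)$ under which, for each arc $\gamma$ between marked points, the cluster variable $x_\gamma$ has the property that $f_{x_\gamma}$ equals the lambda length $e^{\ell_\gamma/2}$, where $\ell_\gamma$ is the signed geodesic distance between the two decorating horocycles along $\gamma$ (signed so that overlapping horocycles give negative $\ell_\gamma$). Since every cluster variable of $\mathcal{A}$ is of the form $x_\gamma$ for some arc $\gamma$ (this is the parametrization of cluster variables by arcs, which is part of the surface/cluster-algebra dictionary we are allowed to assume), the superunitary region $\mathcal{A}(\mathbb{R}_{\geq1})$ is exactly the locus where $e^{\ell_\gamma/2}\geq 1$ for all arcs $\gamma$, i.e.\ where $\ell_\gamma\geq 0$ for all arcs $\gamma$.

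Next I would translate ``$\ell_\gamma\geq 0$ for all arcs $\gamma$'' into a statement about the horocycles themselves. The key elementary observation is: two decorating horocycles $h_p$, $h_q$ at distinct marked points $p,q$ have disjoint interiors if and only if, for \emph{every} arc $\gamma$ from $p$ to $q$, the geodesic distance $\ell_\gamma$ between them along $\gamma$ is $\geq 0$ — and in fact it suffices to check a single arc, since if two horocycles overlap then the shortest geodesic between them (realized by some arc in a suitable triangulation) has negative length, while if they are disjoint every geodesic between them has nonnegative length. Running this over all pairs of marked points, $\ell_\gamma\geq 0$ for all arcs $\gamma$ is equivalent to: the collection of decorating horocycles has pairwise disjoint interiors. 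This establishes the first assertion of the proposition.

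For the second assertion, I would recall the setup with frozen variables (Remark \ref{rem: nofrozens}): the boundary segments of $\Sigma$ correspond to frozen variables of $\mathcal{A}$, and a boundary segment between consecutive marked points $p,q$ also has a lambda length $e^{\ell/2}$ where now $\ell$ is the distance between $h_p$ and $h_q$ measured along the boundary geodesic. Under the convention that forces frozen variables to take the value $1$, these boundary lambda lengths must equal $1$, hence $\ell=0$, which says precisely that the horocycles $h_p$ and $h_q$ at consecutive (boundary-adjacent) marked points are tangent. Combining this with the previous paragraph gives the full statement.

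The main obstacle I expect is the precise sign conventions and the ``disjoint interiors $\iff$ nonnegative $\lambda$-length'' equivalence: one must be careful that Penner's lambda length is $e^{\ell/2}$ with $\ell$ \emph{signed} (negative when horocycles overlap), and that checking nonnegativity on all arcs is equivalent to genuine disjointness of horocyclic disks. This is standard in the decorated Teichmüller literature but deserves a careful sentence or two; everything else is bookkeeping with the surface–cluster dictionary and Remark \ref{rem: nofrozens}.
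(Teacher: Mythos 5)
Your proposal is correct and takes essentially the same route as the paper: the paper states this proposition as a summary of exactly the discussion you reproduce — Penner's identification $\mathcal{A}(\mathbb{R}_{>0})\simeq\widehat{\mathcal{T}}(\Sigma)$ with cluster variables as lambda lengths $e^{\ell/2}$, superunitarity translating to non-negative signed horocyclic distances (hence disjoint horocycle interiors), and the frozen-variable-equals-$1$ condition forcing tangency along boundary segments. The only difference is that you spell out the sign-convention and ``disjointness $\iff$ non-negative $\ell$'' equivalence slightly more carefully than the paper, which simply asserts it.
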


\begin{figure}[h!t]
    \begin{tikzpicture}[scale=3]
        \draw (0,0) circle (1);
        \node[dot] (m1) at \MarkedPoint{2}{0} {};
        \node[dot] (m2) at \MarkedPoint{2}{1} {};
        \node[dot] (m3) at \MarkedPoint{1}{2} {};
        \node[dot] (m4) at \MarkedPoint{(-1)}{1} {};
        \node[dot] (m5) at \MarkedPoint{(-2)}{1} {};
        \node[right] at (m1) {$1$};
        \node[above right] at (m2) {$2$};
        \node[above left] at (m3) {$3$};
        \node[below] at (m4) {$4$};
        \node[below right] at (m5) {$5$};
        \clip (0,0) circle (1);
        
        \draw[fill=black!10, even odd rule] 
        (0,0) circle (1)
        \Geodesic{2}{0}{2}{1}
        \Geodesic{2}{1}{1}{2}
        \Geodesic{1}{2}{(-1)}{1}
        \Geodesic{(-1)}{1}{(-2)}{1}
        \Geodesic{(-2)}{1}{2}{0};
        
        \draw[blue] \Horocycle{2}{0};
        \draw[blue] \Horocycle{2}{1};
        \draw[blue] \Horocycle{1}{2};
        \draw[blue] \Horocycle{(-1)}{1};
        \draw[blue] \Horocycle{(-2)}{1};
    \end{tikzpicture}
    \hspace{1cm}
    \begin{tikzpicture}[scale=3,even odd rule]
        \draw (0,0) circle (1);
        \node[dot] (m1) at \MarkedPoint{2}{0} {};
        \node[dot] (m2) at \MarkedPoint{2}{1} {};
        \node[dot] (m3) at \MarkedPoint{1}{2} {};
        \node[dot] (m4) at \MarkedPoint{(-1)}{1} {};
        \node[dot] (m5) at \MarkedPoint{(-2)}{1} {};
        \node[right] at (m1) {$1$};
        \node[above right] at (m2) {$2$};
        \node[above left] at (m3) {$3$};
        \node[below] at (m4) {$4$};
        \node[below right] at (m5) {$5$};
        
        \clip (0,0) circle (1);
        
        \draw[fill=black!10, even odd rule] 
        (0,0) circle (1)
        \Geodesic{2}{0}{2}{1}
        \Geodesic{2}{1}{1}{2}
        \Geodesic{1}{2}{(-1)}{1}
        \Geodesic{(-1)}{1}{(-2)}{1}
        \Geodesic{(-2)}{1}{2}{0};
        
        \draw[blue] \Horocycle{2}{0};
        \draw[blue] \Horocycle{2}{1};
        \draw[blue] \Horocycle{1}{2};
        \draw[blue] \Horocycle{(-1)}{1};
        \draw[blue] \Horocycle{(-2)}{1};
        
        \draw[dashed] \Geodesic{2}{1}{(-1)}{1};
        \path[clip] (0,0) circle (1) \Horocycle{2}{1} \Horocycle{(-1)}{1};
        \draw[ultra thick,red] \Geodesic{2}{1}{(-1)}{1};
    \end{tikzpicture}
    \caption{A decorated hyperbolic pentagon (left, decorations in \textcolor{blue}{blue}) and the geodesic segment between decorating horocycles 2 and 4 (right, in \textcolor{red}{red})}
    \label{fig: hyperbolicpentagon}
\end{figure}
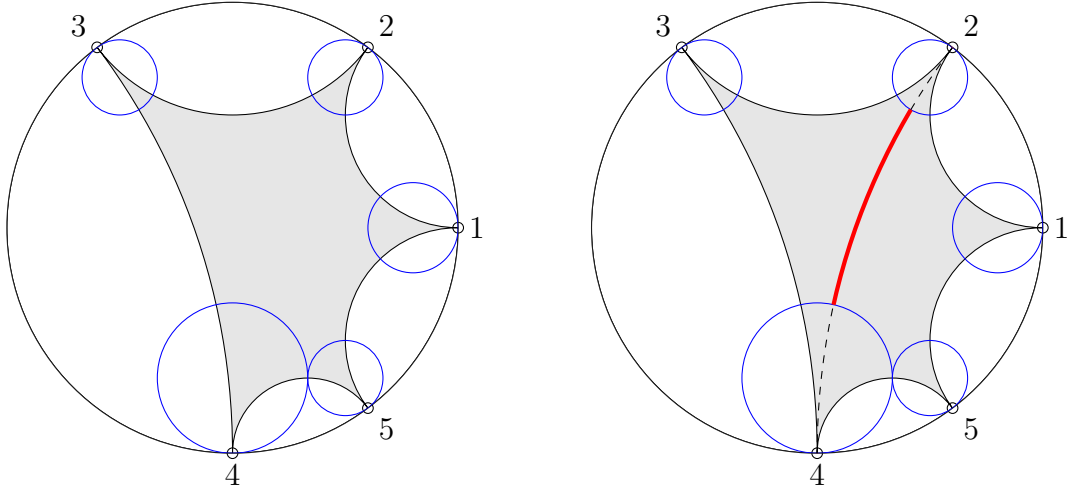

\begin{ex}
Let $\Sigma$ be the disc with $5$ marked points around the boundary, indexed counterclockwise by $1,2,3,4,5$. By the Riemann Mapping Theorem, a hyperbolic metric on $\Sigma\smallsetminus M$ is conformally equivalent to an an ideal pentagon in the hyperbolic disc, and this is unique up to M\"obius transformation. Note that such a pentagon is freely determined by its vertices (indexed by $1,2,3,4,5$). 
A decorating horocycle at a marked point $m$ becomes a circle in the hyperbolic disc tangent to the boundary at $m$ (see Figure \ref{fig: hyperbolicpentagon}). Since these five circles determine the ideal pentagon, $\widehat{\mathcal{T}}(\Sigma)$ is equivalent to the moduli space of five circles inscribed in the hyperbolic disc (up to M\"obius transformation).

There is a unique hyperbolic geodesic connecting any pair of distinct marked points, given by the circular chord perpendicular to the boundary at those points. If $\ell$ is the length of the segment of this geodesic between the decorating horocycles (with a negative sign if the horocycles overlap), then the lambda length of the geodesic is $e^{\ell/2}$. As a function on  $\mathcal{A}(\mathbb{R}_{>0})\simeq \widehat{\mathcal{T}}(\Sigma)$, the lambda length equals $f_x$ for some cluster variable $x$.

Since $e^{\ell/2}\geq1$ iff $\ell\geq0$, the superunitary region corresponds to the moduli space of five circles inscribed in the hyperbolic disc with disjoint interior (up to M\"obius transformation). 
Furthermore, two circles are tangent if and only if the lambda length of the geodesic connecting them is $1$. 
Therefore, such a collection of five circles is in the subcluster face indexed by the collection of arcs dual to the tangencies between circles.

If this collection of arcs is a triangulation, then the subcluster face consists of a single unitary point. Therefore, there is a unique-up-to-M\"obius-transformation collection of five circles inscribed in the disc whose tangencies are dual to the triangulation (Figure \ref{fig: circlepacking}); this is a special case of the Koebe--Andreev--Thurston circle packing theorem (see \cite{Ste03}).
\end{ex}

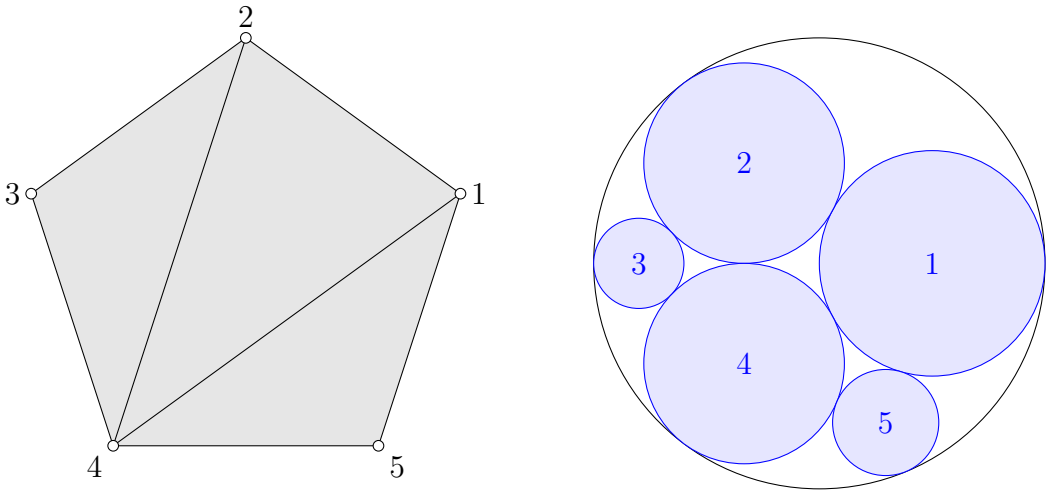
\begin{figure}[h!t]
    \begin{tikzpicture}[scale=3,baseline={(0,0)}]
        \draw[fill=black!10] (18+0*72:1) to (18+1*72:1) to (18+2*72:1) to (18+3*72:1) to (18+4*72:1) to (18+5*72:1);
        \draw (18+3*72:1) to (18+0*72:1);
        \draw (18+3*72:1) to (18+1*72:1);
        
        \node[dot,fill=white] (1) at (18+0*72:1) {};
        \node[dot,fill=white] (2) at (18+1*72:1) {};
        \node[dot,fill=white] (3) at (18+2*72:1) {};
        \node[dot,fill=white] (4) at (18+3*72:1) {};
        \node[dot,fill=white] (5) at ({18+(4*72)}:1) {};
        \node[right] at (1) {$1$};
        \node[above] at (2) {$2$};
        \node[left] at (3) {$3$};
        \node[below left] at (4) {$4$};
        \node[below right] at (5) {$5$};
    \end{tikzpicture}
    \hspace{1cm}
    \begin{tikzpicture}[scale=3,even odd rule,baseline={(0,0)}]
        \draw (0,0) circle (1);
        \draw[blue,fill=blue!10] \Horocycle{1}{0};
        \draw[blue,fill=blue!10] \Horocycle{.5}{1};
        \draw[blue,fill=blue!10] \Horocycle{0}{2};
        \draw[blue,fill=blue!10] \Horocycle{(-.5)}{1};
        \draw[blue,fill=blue!10] \Horocycle{(-1.5)}{1};
        
        \node[blue] at \HorocycleCenter{1}{0} {$1$};
        \node[blue] at \HorocycleCenter{.5}{1} {$2$};
        \node[blue] at \HorocycleCenter{0}{2} {$3$};
        \node[blue] at \HorocycleCenter{(-.5)}{1} {$4$};
        \node[blue] at \HorocycleCenter{(-1.5)}{1} {$5$};
    \end{tikzpicture}
    \caption{A triangulation (left) and the corresponding circle packing (right)}
    \label{fig: circlepacking}
\end{figure}

\addtocontents{toc}{\SkipTocEntry}
\appendix

\section{Counting faces and friezes}
\label{section: counting}

This section contains the necessary computations for Theorem \ref{thm: elementaryfriezes}. The key tool is a formula for the number of faces of each Dynkin type in a generalized associahedron. 

\addtocontents{toc}{\SkipTocEntry}
\subsection{Sorting faces by Dynkin type}

Given a generalized associahedron of type $\Gamma$, let us say a face has \textbf{type} $\Gamma'$ if the deletion of the associated subcluster is a cluster algebra with Dynkin diagram $\Gamma'$. Note that the Dynkin diagram of a face may be disconnected even if $\Gamma$ is connected, so we must take care to consider disconnected diagrams.
Define the \textbf{multiplicity} of $\Gamma'$ in $\Gamma$ to be
\[ \mu(\Gamma',\Gamma) := \text{(\# of faces in the generalized associahedron of $\Gamma$ of type $\Gamma'$}) \]

\begin{ex}
A $1$-dimensional face corresponds to the complement of one element in a cluster. The deletion of such a subcluster has Dynkin diagram $A_1$. 
Therefore, 
\[ \mu(A_1,\Gamma) = \text{(\# of edges in the generalized associahedron)} \qedhere \]
\end{ex}

\begin{ex}
A $0$-dimensional face corresponds to a cluster, whose deletion is a cluster algebra with empty Dynkin diagram $\varnothing$. Therefore,
\[ \mu(\varnothing,\Gamma) = \text{(\# of clusters in cluster algebra of $\Gamma$)}
= \text{(\# of unitary friezes of type $\Gamma$)}
\]
These counts are listed in Table \ref{table: friezes}. 
\end{ex}

Multiplicity is useful in counting positive integral friezes, because the number of frieze points in each subcluster face is equal to the number of $\mathbb{Z}_{\geq2}$-valued frieze points in the deletion. Because this only depends on the Dynkin type of that face, we can simplify Equation \eqref{eq: sumoveratleast2} by combining faces of the same type.
\[
\text{(\# of $\mathbb{Z}_{\geq1}$-valued friezes of type $\Gamma$)}
= \sum_{\Gamma'} \mu(\Gamma',\Gamma)\text{(\# of $\mathbb{Z}_{\geq2}$-valued friezes of type $\Gamma'$)}
\]
In the next section, we derive a formula for $\mu(\Gamma', \Gamma)$ which will allow us to relate the counts of the two types of friezes.

\addtocontents{toc}{\SkipTocEntry}
\subsection{Counting facets}

A general formula for multiplicity will follow from a formula for the number of facets of each type. 
If $h(\Gamma)$ is the Coxeter number of a connected Dynkin diagram $\Gamma$, define the \textbf{order} of $\Gamma$ to be $\ord(\Gamma) := h(\Gamma)/2+1$ (see Table \ref{table: orders}). Note that this is not a standard term, and it is an integer unless $\Gamma=A_{2k}$.

\begin{table}[h!t]
\[\begin{array}{|c|c|c|c|c|c|c|c|c|}
\hline
\Gamma & A_n & B_n\text{ or }C_n & D_n & E_6 & E_7 & E_8 & F_4 & G_2 \\
\hline
h(\Gamma) & n+1 & 2n & 2n-2 & 12 & 18 & 30 & 12 & 6 \\
\hline
\ord(\Gamma) & \frac{n+3}{2} & n+1 & n & 7 & 10 & 16 & 7 & 4 \\
\hline
\end{array}\]
\caption{Coxeter numbers and orders of Dynkin diagrams}
\label{table: orders}
\end{table}

\begin{prop}
Let $\Gamma$ be a connected Dynkin diagram.
The number of facets of $\Gamma$ of type $\Gamma'$ is equal to $\ord(\Gamma)$ times the number of vertices in $\Gamma$ whose complement is $\Gamma'$.
\end{prop}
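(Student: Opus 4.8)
The statement concerns facets, i.e.\ codimension-$1$ faces, which correspond to subclusters $\subcluster$ with $|\subcluster| = 1$; say $\subcluster = \{u\}$ for a single cluster variable $u$. The deletion of such a subcluster has a Dynkin diagram $\Gamma'$ obtained by deleting one vertex from some seed of the cluster algebra of $\Gamma$. So the quantity to compute is the number of cluster variables $u$ such that the deletion of $\{u\}$ has Dynkin type $\Gamma'$, and we want to show this equals $\ord(\Gamma)$ times the number of vertices $i$ of $\Gamma$ (the initial quiver) whose complementary subdiagram $\Gamma \smallsetminus \{i\}$ equals $\Gamma'$. The natural approach is to use the parametrization of cluster variables in finite type by \emph{almost positive roots} (due to \cite{CA2}): the non-initial cluster variables correspond bijectively to positive roots of the root system of $\Gamma$, and the initial cluster variables correspond to the negative simple roots, giving $|\Phi_{\geq -1}| = |\Phi_{>0}| + n$ cluster variables in total.

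First I would set up the correspondence between a cluster variable $u$ and its ``$\mathbf{d}$-vector'' or denominator vector in the initial cluster, which identifies $u$ with an almost positive root $\alpha_u \in \Phi_{\geq -1}$. The key observation is that the Dynkin type of the deletion of $\{u\}$ can be read off from $\alpha_u$: it should be the type of the ``link'' or the sub-root-system orthogonal-to/compatible-with $\alpha_u$, which for a negative simple root $-\alpha_i$ is exactly the subdiagram $\Gamma \smallsetminus \{i\}$. I would then invoke the cyclic symmetry of the cluster structure: the set of almost positive roots carries an action related to the Coxeter element / the rotation $\tau = \tau_+ \tau_-$ of Fomin--Zelevinsky, and more relevantly the generalized associahedron has a dihedral symmetry whose rotational part has order $\ord(\Gamma) = h/2 + 1$ (this is the period appearing in the rank-$2$ case of Example~\ref{ex: rank2}, where the number of cluster variables is $h/2 + 2$... — one must be careful, but the relevant ``rotation'' orbit structure on facets is what is needed). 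The claim is that the orbits of this rotation acting on the set of facets of type $\Gamma'$ each have size $\ord(\Gamma)$, and each orbit contains exactly one facet coming from deleting an initial vertex $i$ with $\Gamma \smallsetminus \{i\} = \Gamma'$.

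Concretely, the steps would be: (1) recall the root parametrization of cluster variables and the compatibility degree, and identify the deletion type of $\{u\}$ with the Dynkin type of the root subsystem ``compatible with'' $\alpha_u$ (equivalently, the recursive structure of the generalized associahedron: the facet dual to $u$ is itself a generalized associahedron of type equal to the deletion); (2) recall that the number of cluster variables of type $\Gamma$ is $\tfrac{1}{2}(\text{number of roots}) + n$, and more precisely that each cluster variable's ``exchangeability class'' has a controlled size; (3) establish the rotation action and show every facet of type $\Gamma'$ lies in an orbit of size $\ord(\Gamma)$ that meets the set of initial-vertex-deletions in exactly one point; (4) conclude by counting. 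Alternatively — and this may be cleaner — I would bypass roots and argue directly via the known count of facets: the total number of facets of a type-$\Gamma$ generalized associahedron equals the number of cluster variables, which is $\sum_i (\text{appropriate count}) = \ord(\Gamma) \cdot n$ when $\Gamma$ is... no; rather I'd sum the claimed formula over all $\Gamma'$ and check it against the known total facet count $|\Phi_{\geq -1}|$, using $\sum_i \ord(\Gamma) = n \cdot \ord(\Gamma)$ and comparing with $|\Phi_{>0}| + n$; this forces $\ord(\Gamma) - 1 = |\Phi_{>0}|/n = h/2$, which is the definition of $\ord$, giving a consistency check but not the full per-type statement.

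\textbf{Main obstacle.} The hard part will be step (3): proving that the cluster variables whose deletion has a \emph{given} type $\Gamma'$ form a union of rotation-orbits each of size exactly $\ord(\Gamma)$, and that each such orbit contains precisely one ``initial'' representative (a deletion of a vertex of the fixed initial quiver $\Gamma$). This requires a clean description of how the Dynkin type of the deletion transforms under the rotation of the associahedron, together with the fact (for connected $\Gamma$) that the rotation acts freely on facets — equivalently that no cluster variable is fixed by a nontrivial power of the rotation, which can fail for small rank and must be checked (the $A_{2k}$ case, where $\ord$ is a half-integer, is presumably excluded or handled separately, consistent with the parenthetical remark after Table~\ref{table: orders}). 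I would handle the uniform cases via the root-theoretic rotation (the map $R = -w_0 \circ$ (something), or the ``$\tau$''-orbit structure from \cite{CFZ02,FR07}) and dispatch the exceptional/small-rank subtleties by direct inspection using the explicit face counts already tabulated. I expect the bookkeeping for non-simply-laced types ($B/C/F/G$), where one works with valued quivers and folding, to require the most care.
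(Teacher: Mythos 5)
There is a genuine gap, and it sits exactly where you flagged it: step (3) is asserted, not proved, and in the form you state it is not correct. You claim that the facets of a given type $\Gamma'$ decompose into rotation-orbits of size exactly $\ord(\Gamma)$, each containing exactly one facet obtained by deleting an initial vertex. Already in type $A_3$ this fails for the naive rotation: $\ord(A_3)=3$, two vertices have complement $A_2$, and the six type-$A_2$ facets (short diagonals of the hexagon, in the polygon model) form a single orbit of size $6$ under the rotation, not two orbits of size $3$; and for $A_{2k}$ the "orbit size" $\ord(A_{2k})=\frac{2k+3}{2}$ is not even an integer, so no group action can produce it. Moreover, the proposition does not exclude type $A_{2k}$ — the half-integer order is compensated because the vertices $v$ and $v^{op}$ with the same complement come in pairs — so "presumably excluded or handled separately" is not an available escape: the type-$A$ bookkeeping (all non-middle vertices, not just even rank) is an essential part of the statement and must be carried out. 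Your fallback consistency check (summing over $\Gamma'$ and comparing with the total number of cluster variables) is, as you note yourself, not a proof of the per-type count. Finally, your step (1) — that the deletion type of $\{u\}$ can be read off the almost positive root as the type of its "link" — also needs an argument; the clean statement one actually needs is that every cluster variable $u$ sits in some seed whose quiver is an orientation of $\Gamma$ with $u$ placed at a specific vertex $v$, whence the deletion of $\{u\}$ has type $\Gamma\smallsetminus\{v\}$.

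For comparison, the paper's proof avoids the root/rotation machinery entirely and uses the general frieze: every cluster variable appears in some row $v$ of the repetition quiver, and a variable in row $v$ belongs to a seed which is an orientation of $\Gamma$ with that variable at $v$, so its deletion has type $\Gamma\smallsetminus\{v\}$. The count then comes from periodicity of the rows: if $\Gamma$ is not of type $A$, or $v$ is the middle vertex of $A_{2n+1}$, the row is $\ord(\Gamma)$-periodic and disjoint from the other rows, contributing $\ord(\Gamma)$ facets of type $\Gamma\smallsetminus\{v\}$; if $\Gamma$ is of type $A$ and $v$ is not the middle vertex, the row is $2\ord(\Gamma)$-periodic but coincides with the row of $v^{op}$, so the pair $(v,v^{op})$ contributes $2\ord(\Gamma)$ facets. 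This is precisely the orbit/period analysis your plan defers to "direct inspection", so to repair your argument you would either have to prove the analogous periodicity statement for the $\tau$-action on almost positive roots (with the type-$A$ doubling and the identification of which powers fix which roots), or adopt the frieze-row argument directly.
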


\noindent Equivalently, when $\Gamma$ is connected and $\Gamma'$ has one fewer vertex, $ \mu(\Gamma',\Gamma)$ is equal to $\ord(\Gamma)$ times the number of vertices in $\Gamma$ whose complement is $\Gamma'$.

\begin{proof}
Let $v$ be a vertex in $\Gamma$, and consider the cluster variables in Row $v$ of the general frieze of type $\Gamma$. Given such a cluster variable $x$, there is a seed whose quiver is an orientation of $\Gamma$ with $x$ on vertex $v$. Therefore, the deletion of $\{x\}$ is a cluster algebra of type $\Gamma\smallsetminus \{v\}$.

If $\Gamma$ is not type $A$  or if $v$ is the middle vertex of $\Gamma=A_{2n+1}$, then the cluster variables in the $v$th row of the general frieze are $\ord(\Gamma)$-periodic and do not appear in any other rows. 
Therefore, the vertex $v$ accounts for $\ord(\Gamma)$-many facets of type $\Gamma\smallsetminus\{v\}$.

If $\Gamma$ is type $A$ and $v$ is not a middle vertex, then the cluster variables in the $v$th row are $2\ord(\Gamma)$-periodic. However, the same sequence of cluster variables appears in Row $v^{op}$, where $v^{op}$ is the image of $v$ under the reflection of $\Gamma$. Therefore, the pair of vertices $(v,v^{op})$ accounts for $2\ord(\Gamma)$-many facets of type $\Gamma\smallsetminus\{v\}$.
\end{proof}

\begin{ex}\label{ex: multiplicityA12}
The Dynkin diagram $A_2$ has order $5/2$ and two vertices with complement $A_1$. Therefore,
\[ \mu(A_1,A_2) = (5/2) \cdot 2 = 5  \]
So, the $A_2$ generalized associahedron has $5$ facets of type $A_1$.
\end{ex}

\begin{ex}
The Dynkin diagram $A_3$ has order $3$, two vertices with complement $A_2$, and one vertex with complement $A_1\times A_1$. Therefore,
\[ \mu(A_2,A_3) = 3\cdot 2 = 6
\text{ and }
\mu(A_1\times A_1,A_3) = 3
\]
So, the $A_3$ generalized associahedron has $6$ facets of type $A_2$ and $3$ facets of type $A_1\times A_1$.
\end{ex}

The proposition can be extended to disconnected Dynkin diagrams with more careful notation. Given a vertex $v$ in a Dynkin diagram $\Gamma$, let
\[ \ord(v,\Gamma) := \text{order of the connected component of $\Gamma$ containing $v$} \]
Then each vertex $v$ of $\Gamma$ contributes $\ord(v,\Gamma)$-many facets of type $\Gamma\smallsetminus \{v\}$. Equivalently, whenever $\rank(\Gamma)-\rank(\Gamma')=1$, 
\[ \mu(\Gamma',\Gamma) = \sum \ord(v,\Gamma) 
\]
where the sum runs over $v\in \Gamma$ such that $\Gamma\smallsetminus \{v\}=\Gamma'$.

\addtocontents{toc}{\SkipTocEntry}
\subsection{Counting faces}
\label{section: countingfaces}

Every subcluster $\cluster$ contains $|\cluster|$-many cluster variables, so each face of the generalized associahedron of codimension $k$ is contained in $k$-many facets. 
Therefore, every face of codimension $k$ in a generalized associahedron is a face in $k$-many facets. This translates into the following recursive formula for face multiplicies.

\begin{thm}
\label{thm: A6}
Let $\Gamma',\Gamma$ be Dynkin diagrams with $k:=\mathrm{rank}(\Gamma)-\mathrm{rank}(\Gamma')>0$, then 
\begin{equation}
\label{eq: facetsum}
\mu(\Gamma',\Gamma)
= \frac{1}{k} \sum_{v\in \Gamma} \ord(v,\Gamma) \mu(\Gamma',\Gamma\smallsetminus \{v\}) 
\end{equation}
\end{thm}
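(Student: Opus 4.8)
The plan is to prove \eqref{eq: facetsum} by a double-counting argument on the incidences between faces of type $\Gamma'$ and facets of the generalized associahedron of $\Gamma$. Throughout, I will use the dictionary from Section \ref{section: CW}: faces of the generalized associahedron of type $\Gamma$ correspond to subclusters $\cluster$ of the cluster algebra $\mathcal{A}$ of type $\Gamma$, the type of a face being the Dynkin diagram of the deletion of the corresponding subcluster. A face of type $\Gamma'$ corresponds to a subcluster $\cluster$ with $|\cluster| = \operatorname{rank}(\Gamma) - \operatorname{rank}(\Gamma') = k$, and since $\operatorname{Del}(\cluster)$ does not depend on the chosen cluster containing $\cluster$, this type is well-defined.

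The first step is to set up the bipartite incidence count. Let $P$ be the set of pairs $(\cluster, x)$ where $\cluster$ is a subcluster of type $\Gamma'$ (so $|\cluster| = k$) and $x \in \cluster$ is one of its $k$ cluster variables. I will count $|P|$ in two ways. Counting by $\cluster$ first: each subcluster of type $\Gamma'$ contributes exactly $k$ pairs, so $|P| = k \cdot \mu(\Gamma', \Gamma)$. Counting by $x$ first: for a cluster variable $x$, deleting $\{x\}$ gives a cluster algebra $\mathcal{A}^\dagger_x$ whose Dynkin diagram is $\Gamma \smallsetminus \{v\}$ for the appropriate vertex $v$ (this is the content of the facet proposition preceding Theorem \ref{thm: A6}), and by Proposition \ref{prop:deletion map}(2) the subclusters of $\mathcal{A}$ containing $x$ are in bijection with the subclusters of $\mathcal{A}^\dagger_x$. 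Under this bijection, a subcluster $\cluster \ni x$ of type $\Gamma'$ corresponds to a subcluster $\cluster \smallsetminus \{x\}$ of $\mathcal{A}^\dagger_x$; I must check that the latter also has type $\Gamma'$, i.e., that $\operatorname{Del}_{\mathcal{A}^\dagger_x}(\cluster \smallsetminus \{x\}) \cong \operatorname{Del}_{\mathcal{A}}(\cluster)$. This follows because both are obtained by deleting the vertex set $\cluster$ from a common seed whose quiver contains $\Gamma \smallsetminus (\cluster \smallsetminus \{x\})$, so the two iterated deletions agree; this is the key compatibility I need to verify carefully.

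The second step is to assemble the count. For a fixed $x$ lying in "Row $v$" of the general frieze of type $\Gamma$, the number of subclusters of $\mathcal{A}^\dagger_x$ of type $\Gamma'$ is $\mu(\Gamma', \Gamma \smallsetminus \{v\})$ — but this depends only on $v$, not on the particular cluster variable $x$ in that row. The facet proposition tells us the number of cluster variables $x$ whose deletion has type $\Gamma \smallsetminus \{v\}$ is $\operatorname{ord}(v, \Gamma)$ times the number of vertices $w$ with $\Gamma \smallsetminus \{w\} \cong \Gamma \smallsetminus \{v\}$; summing over vertices $v$ of $\Gamma$, the count of pairs becomes $\sum_{v \in \Gamma} \operatorname{ord}(v, \Gamma)\, \mu(\Gamma', \Gamma \smallsetminus \{v\})$. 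Equating the two counts of $|P|$ and dividing by $k$ yields \eqref{eq: facetsum}.

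The main obstacle I anticipate is bookkeeping rather than conceptual: I must be scrupulous about the distinction between vertices of the Dynkin diagram and cluster variables (the $A_{2n+1}$-middle-vertex subtlety and the "same row appears twice" phenomenon in type $A$ are already absorbed into the definition of $\operatorname{ord}(v,\Gamma)$ via the facet proposition, so I should simply cite that proposition rather than re-derive it), and about the claim that deletion is "associative" — that deleting $\cluster$ in one step equals deleting $\{x\}$ and then $\cluster \smallsetminus \{x\}$ — which I will justify by choosing a single seed $(\yy, B)$ with $\cluster \subseteq \yy$ and observing that all the deletions in question are computed by erasing the corresponding rows and columns of $B$ (equivalently, the corresponding vertices of the valued quiver), an operation which is manifestly independent of the order of erasure. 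With that lemma in hand, the double count goes through cleanly.
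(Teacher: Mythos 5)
Your double-counting of incidences between faces of type $\Gamma'$ and facets is exactly the argument the paper compresses into the paragraph preceding the theorem (each codimension-$k$ face lies in $k$ facets, and the facet proposition supplies the per-facet count). The one extra step you correctly flag and resolve --- that deletion is order-independent, so a subcluster $\cluster\ni x$ of type $\Gamma'$ in $\mathcal{A}$ descends under Proposition~\ref{prop:deletion map}(2) to a subcluster of the same type $\Gamma'$ in $\mathcal{A}^\dagger_x$ --- is handled soundly by your single-seed, commuting-row/column-erasure argument.
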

Note that, if $\Gamma$ is connected, the order is independent of $v$ and this formula becomes
\[ \mu(\Gamma',\Gamma)
= \frac{\ord(\Gamma)}{\mathrm{rank}(\Gamma)-\mathrm{rank}(\Gamma')} \sum_{v\in \Gamma}  \mu(\Gamma',\Gamma\smallsetminus \{v\}) 
\]

\begin{ex}
We may count $A_1$ faces in the $A_3$ generalized associahedron with
\begin{align*}
\mu(A_1,A_3) 
&= \frac{\ord(A_3)}{\mathrm{rank}(A_3)-\mathrm{rank}(A_1)}
\left(\mu(A_1,A_2) + \mu(A_1,A_1\times A_1) + \mu(A_1,A_2)  \right) \\
&= \frac{3}{2}
\left(2\mu(A_1,A_2) + \mu(A_1,A_1\times A_1)  \right)
\end{align*}
Example \ref{ex: multiplicityA12} showed $\mu(A_1,A_2)=5$. To compute $\mu(A_1,A_1\times A_1)$, observe that both vertices in $A_1\times A_1$ have complement $A_1$ and order $2$, so 
$\mu(A_1,A_1\times A_1) = 2 + 2 = 4$. 
Then
\[\mu(A_1,A_3) = \frac{3}{2}(2\cdot 5 +4 ) =21 \]
That is, the $A_3$ generalized associahedron has $21$ faces of type $A_1$.
\end{ex}

In general, we can reduce to the case of connected $\Gamma$ with the following. Let
\[ N(\Gamma) := \text{(\# of clusters in the cluster algebra of $\Gamma$)} \] 
Since this coincides with the number of unitary friezes, the values of $N$ are in Table \ref{table: friezes}.

\begin{prop}
If $\Gamma',\Gamma_1,$ and $\Gamma_2$ are connected Dynkin diagrams, then
\[ \mu(\Gamma',\Gamma_1\times \Gamma_2) = N(\Gamma_2)\mu(\Gamma',\Gamma_1) + N(\Gamma_1)\mu(\Gamma',\Gamma_2) \]
\end{prop}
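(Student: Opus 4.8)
The plan is to exploit the bijective correspondence between subclusters of a product cluster algebra and pairs of subclusters of its factors. Recall that the cluster algebra of $\Gamma_1\times\Gamma_2$ is the tensor product $\mathcal{A}(\Gamma_1)\otimes_{\mathbb{Z}}\mathcal{A}(\Gamma_2)$: its seeds are pairs of seeds, its clusters are $\cluster_1\sqcup\cluster_2$ where $\cluster_i$ is a cluster in $\mathcal{A}(\Gamma_i)$, and more generally its subclusters are exactly the disjoint unions $\subcluster_1\sqcup\subcluster_2$ of a subcluster $\subcluster_1$ in $\mathcal{A}(\Gamma_1)$ and a subcluster $\subcluster_2$ in $\mathcal{A}(\Gamma_2)$. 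Under the deletion operation from Section~\ref{section: deletion}, deleting $\subcluster_1\sqcup\subcluster_2$ from $\mathcal{A}(\Gamma_1\times\Gamma_2)$ yields the product of the deletions, so the Dynkin diagram of the deleted subcluster is $\Gamma_1'\times\Gamma_2'$ where $\Gamma_i'$ is the type of the deletion of $\subcluster_i$ in $\mathcal{A}(\Gamma_i)$. Hence a face of $\Gamma_1\times\Gamma_2$ has type $\Gamma'$ precisely when its two coordinate pieces have types $\Gamma_1'$ and $\Gamma_2'$ with $\Gamma_1'\times\Gamma_2'=\Gamma'$.

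First I would record the counting consequence of this bijection:
\[
\mu(\Gamma',\Gamma_1\times\Gamma_2)=\sum_{\Gamma_1'\times\Gamma_2'=\Gamma'}\mu(\Gamma_1',\Gamma_1)\,\mu(\Gamma_2',\Gamma_2),
\]
where the sum ranges over ways of writing $\Gamma'$ as an (unordered, but here the two factors come with fixed roles) product $\Gamma_1'\times\Gamma_2'$. Now I would use the hypothesis that $\Gamma'$, $\Gamma_1$, and $\Gamma_2$ are all \emph{connected}. A connected Dynkin diagram cannot be written as a nontrivial product of two Dynkin diagrams, so in any factorization $\Gamma_1'\times\Gamma_2'=\Gamma'$ one of $\Gamma_1',\Gamma_2'$ must be the empty diagram $\varnothing$ and the other must be $\Gamma'$ itself. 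This splits the sum into exactly two terms: $\Gamma_1'=\Gamma'$, $\Gamma_2'=\varnothing$, and $\Gamma_1'=\varnothing$, $\Gamma_2'=\Gamma'$.

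Finally I would evaluate these two terms. The term with $\Gamma_1'=\varnothing$ contributes $\mu(\varnothing,\Gamma_1)\,\mu(\Gamma',\Gamma_2)$, and $\mu(\varnothing,\Gamma_1)$ is the number of clusters (equivalently, $0$-dimensional faces) of $\mathcal{A}(\Gamma_1)$, which is $N(\Gamma_1)$ by the Example following the definition of multiplicity. Symmetrically the other term contributes $N(\Gamma_2)\,\mu(\Gamma',\Gamma_1)$. Adding the two terms gives
\[
\mu(\Gamma',\Gamma_1\times\Gamma_2)=N(\Gamma_2)\,\mu(\Gamma',\Gamma_1)+N(\Gamma_1)\,\mu(\Gamma',\Gamma_2),
\]
as claimed. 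The main obstacle — really the only nontrivial point — is justifying the product decomposition of subclusters and of the deletion operation for a product cluster algebra; this is standard (the exchange graph and the subcluster poset of a product are the products of those of the factors, and the generalized associahedron of $\Gamma_1\times\Gamma_2$ is the product polytope), but it should be stated carefully, perhaps citing \cite{CFZ02} or the behavior of deletion under disjoint unions of quivers, since everything downstream rests on it.
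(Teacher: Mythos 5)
Your argument is correct. The paper states this proposition without proof, so there is nothing to compare against line by line; your reconstruction is the natural argument one would supply. The key points all check out: subclusters of the type $\Gamma_1\times\Gamma_2$ cluster algebra are exactly disjoint unions $\subcluster_1\sqcup\subcluster_2$ of subclusters of the factors, deletion respects this decomposition (the exchange matrix is block-diagonal, so deleting $\subcluster_1\sqcup\subcluster_2$ gives the product of the two deletions), and hence
\[
\mu(\Gamma',\Gamma_1\times\Gamma_2)=\sum_{\Gamma_1'\times\Gamma_2'\cong\Gamma'}\mu(\Gamma_1',\Gamma_1)\,\mu(\Gamma_2',\Gamma_2),
\]
the sum taken over ordered pairs of (possibly empty) Dynkin diagrams. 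Connectedness of $\Gamma'$ forces exactly the two ordered pairs $(\Gamma',\varnothing)$ and $(\varnothing,\Gamma')$, which are distinct since $\Gamma'$ is nonempty, and $\mu(\varnothing,\Gamma_i)=N(\Gamma_i)$ by the paper's example identifying $0$-codimension-complement faces, i.e.\ clusters. Your closing caveat is the right one: the only ingredient needing explicit justification is the product decomposition of seeds, subclusters, and deletions for a disjoint union of (valued) quivers, which is standard and could be cited or verified directly from the mutation rule.
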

\noindent In particular, if $\mu(\Gamma',\Gamma_2)=0$, then 
$ \mu(\Gamma',\Gamma_1\times \Gamma_2) = N(\Gamma_2)\mu(\Gamma',\Gamma_1) $.

A notable special case of Theorem~\ref{thm: A6} is when $\Gamma'$ is the empty diagram, when the multiplicity becomes $\mu(\varnothing,\Gamma) = \text{\# of clusters in $\Gamma$} =: N(\Gamma) $.
\begin{prop}
Let $\Gamma$ be a Dynkin diagram. Then 
\[ N(\Gamma)
= \frac{1}{\mathrm{rank}(\Gamma)} \sum_{v\in \Gamma} \ord(v,\Gamma) N(\Gamma\smallsetminus \{v\}) 
\]
\end{prop}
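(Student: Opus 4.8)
This is precisely the special case $\Gamma' = \varnothing$ of Theorem~\ref{thm: A6}, and the plan is to obtain it as such, with one extra observation to handle the rank-$1$ base case that Theorem~\ref{thm: A6} excludes (it requires $k = \mathrm{rank}(\Gamma)-\mathrm{rank}(\Gamma') > 0$, which is fine here since $\varnothing$ has rank $0$, but one should double-check the formula degenerates correctly when $\mathrm{rank}(\Gamma)=1$). First I would recall that $N(\Gamma) = \mu(\varnothing,\Gamma)$, since a $0$-dimensional face of the generalized associahedron corresponds to a cluster, whose deletion is the coefficient-free cluster algebra on no variables, i.e.\ has empty Dynkin diagram (this is spelled out in the excerpt's example following the definition of multiplicity, and in Remark~\ref{rem: emptydiagram}). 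So the statement to prove is literally Equation~\eqref{eq: facetsum} with $\Gamma' = \varnothing$ and $k = \mathrm{rank}(\Gamma)$.

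The substantive content is therefore just to invoke Theorem~\ref{thm: A6}. I would write: apply Theorem~\ref{thm: A6} with $\Gamma'=\varnothing$; then $k = \mathrm{rank}(\Gamma) - \mathrm{rank}(\varnothing) = \mathrm{rank}(\Gamma)$, which is positive as long as $\Gamma$ is nonempty, and \eqref{eq: facetsum} reads
\[
N(\Gamma) = \mu(\varnothing,\Gamma) = \frac{1}{\mathrm{rank}(\Gamma)}\sum_{v\in\Gamma}\ord(v,\Gamma)\,\mu(\varnothing,\Gamma\smallsetminus\{v\}) = \frac{1}{\mathrm{rank}(\Gamma)}\sum_{v\in\Gamma}\ord(v,\Gamma)\,N(\Gamma\smallsetminus\{v\}),
\]
using $\mu(\varnothing,-) = N(-)$ again on the right-hand side. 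For the empty diagram $\Gamma = \varnothing$ the sum is empty and the convention $N(\varnothing) = 1$ must be invoked separately, but the stated formula implicitly assumes $\mathrm{rank}(\Gamma)\geq 1$ (otherwise we divide by zero), so I would either add the hypothesis $\Gamma\neq\varnothing$ or simply note the base case is trivial by the convention that there is a unique (empty) cluster.

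\textbf{Main obstacle.} There is essentially no obstacle at the level of this proposition — it is a one-line corollary of Theorem~\ref{thm: A6} once the identification $N = \mu(\varnothing,-)$ is made explicit. The only thing to be careful about is bookkeeping for disconnected $\Gamma$: the factor $\ord(v,\Gamma)$ genuinely depends on which connected component contains $v$, not on all of $\Gamma$, so I would not prematurely pull an overall order out of the sum. If one wanted a self-contained argument (not relying on Theorem~\ref{thm: A6}), the counting idea is that each cluster $\cluster$ of $\mathcal{A}$ has $\mathrm{rank}(\Gamma)$ elements, hence lies in $\mathrm{rank}(\Gamma)$ facets, so summing over facets the number of vertices they contain counts each cluster $\mathrm{rank}(\Gamma)$ times; by the facet-counting proposition each vertex $v\in\Gamma$ contributes $\ord(v,\Gamma)$ facets of type $\Gamma\smallsetminus\{v\}$, and a facet of type $\Gamma\smallsetminus\{v\}$ contains $N(\Gamma\smallsetminus\{v\})$ vertices. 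Equating the two counts gives the formula. But since Theorem~\ref{thm: A6} is already available in the excerpt, I would just cite it.
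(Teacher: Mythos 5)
Your proposal is correct and matches the paper exactly: the paper presents this proposition as the special case $\Gamma'=\varnothing$ of Theorem~\ref{thm: A6}, using the identification $\mu(\varnothing,\Gamma)=N(\Gamma)$, which is precisely your argument. Your extra remarks on the rank-$1$ and empty-diagram edge cases and the component-dependence of $\ord(v,\Gamma)$ are sensible bookkeeping but add nothing beyond the paper's route.
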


\begin{ex}
The Dynkin diagram $B_3$ has order $4$, and so
\[ N(B_3) = \frac{4}{3} \left( N(B_2) + N(A_1\times A_1) + N(A_2)\right) 
= \frac{4}{3} (6+4+5) = 20 \]
which coincides with the standard formula $N(B_3)=\binom{6}{3} =20$ in Table \ref{table: friezes}.
\end{ex}

\addtocontents{toc}{\SkipTocEntry}
\subsection{Preliminary computations}

Using the formulas from the previous section, we compute several multiplicities used to count friezes in the next section.
In the computations that follow, we omit the summands of \eqref{eq: facetsum} for which $\Gamma\smallsetminus \{v\}$
does not contain $\Gamma'$ as a subdiagram, as the multiplicity in those cases is $0$. 

\allowdisplaybreaks

\begin{align*}
\mu(B_3,F_4)
&= \frac{\ord(F_4)}{1}\mu(B_3,B_3)
= \frac{7}{1}(1) = 7 \\
\mu(D_4,D_5) 
&= \frac{\ord(D_5)}{1} 
\left(
\mu(D_4,D_4)
\right) 
= \frac{5}{1}(1) 
=5\\
\mu(D_4,D_6) 
&=\frac{\ord(D_6)}{2} 
\left(
\mu(D_4,D_5)
+\mu(D_4,D_4\times A_1)
\right) 
= \frac{6}{2} (5+
2\cdot 1)  
= 21 \\
\mu(D_4,E_6) 
&=\frac{\ord(E_6)}{2} 
\left(
\mu(D_4,D_5)
+\mu(D_4,D_5)
\right) 
= \frac{7}{2} (5+5) = 35 \\
\mu(D_4,D_7) 
&=\frac{\ord(D_7)}{3} 
\left(
\mu(D_4,D_6)
+\mu(D_4,D_5\times A_1)
+\mu(D_4,D_4\times A_2)
\right) \\
&= \frac{7}{3}(21 + 
2\cdot 5 
+ 
5 \cdot 1) 
= 84 \\
\mu(D_4,E_7) 
&=\frac{\ord(E_7)}{3} 
\left(
\mu(D_4,D_6)
+\mu(D_4,E_6)
+\mu(D_4,D_5\times A_1)
\right) \\
&= \frac{10}{3} (21+35+2\cdot 5) = 220 \\
\mu(D_4,E_8) 
&=\frac{\ord(E_8)}{4} 
\left(
\mu(D_4,D_7)
+\mu(D_4,E_7)
+\mu(D_4,E_6\times A_1)
+\mu(D_4,D_5\times A_2)
\right) \\
&= \frac{16}{4}
(84 + 220 + 
2\cdot 35 
+ 5 \cdot 5) =1596 \\
\mu(D_6,E_7)
&= \frac{\ord(E_7)}{1} \left(
\mu(D_6,D_6)
\right) 
= \frac{10}{1}(1) = 10
\\
\mu(D_6,E_8) 
&= \frac{\ord(E_8)}{2} \left(
\mu(D_6,E_7) + \mu(D_6,D_7)
\right)
= \frac{16}{2}(10 + 7) = 136 
\end{align*}

\addtocontents{toc}{\SkipTocEntry}
\subsection{Counting $\mathbb{Z}_{\geq2}$-valued friezes}
\label{section: countingfriezes}

We can now use known and conjectured counts of positive integral friezes to reverse engineer counts of $\mathbb{Z}_{\geq2}$-valued friezes, and show that they are all accounted for by Theorem \ref{thm: elementaryfriezes}.
Let 
\[ e(\Gamma) := \text{(\# of $\mathbb{Z}_{\geq2}$-valued friezes of type $\Gamma$)} \]
Then Equation \eqref{eq: sumoveratleast2} can be rewritten as
\begin{equation}
\label{eq: multiplicitye}
\text{(\# of positive integral friezes of type $\Gamma$)}
= \sum_{\Gamma'}e(\Gamma')\mu(\Gamma',\Gamma)
\end{equation}
where the sum runs over Dynkin diagrams contained in $\Gamma$.

\begin{warn}
Don't forget the case when $\Gamma'$ is the \emph{empty} Dynkin diagram! Since $e(\varnothing)=1$ and $\mu(\varnothing,\Gamma)=N(\Gamma)$, this case contributes the $N(\Gamma)$-many unitary friezes to \eqref{eq: multiplicitye}.
\end{warn}

\begin{proof}[Proof of Theorem \ref{thm: elementaryfriezes}]
Every positive integral frieze of type $A$ is unitary by \cite{ConCox73}, and so none are $\mathbb{Z}_{\geq2}$. While it is stated in the language of marked surfaces, \cite[Lemma~3.5]{FP16} can be translated into an explicit construction of $\mathbb{Z}_{\geq2}$-valued friezes of type $D$, yielding the $d(n)-2$ friezes in Figure \ref{fig: Dabfrieze}. This construction and the results of \cite[Section~4]{FP16} then imply that the only $\mathbb{Z}_{\geq2}$-valued friezes of types $BCG$ are those in Figures \ref{fig: Bn2frieze} and \ref{fig: G2frieze}. 

Counting the examples from the previous paragraph, if $\Gamma$ is type $ABCDG$, then
\[
e(\Gamma)
:= \left\{
\begin{array}{cc}
d(n)-2 & \text{if $\Gamma=D_n$} \\
1 & \text{if $\Gamma=B_{n^2-1}$ or $G_2$} \\
0 & \text{otherwise}
\end{array}
\right\}
\]
where $d(n)$ denotes the number of divisors of $n$.
Therefore, we can discard any summands from Equation \eqref{eq: multiplicitye} in which $\Gamma'$ are not unions of $D_n$, $B_{n^2+1}$, and the unknown types $EF$.

The number of positive integral friezes of type $F_4$ is known to be $112$. 
The only subdiagrams of $F_4$ which could admit $\mathbb{Z}_{\geq2}$-valued friezes are $\varnothing$, $B_3$, and $F_4$. Then
\begin{align*}
112 
&= e(\varnothing)\mu(\varnothing,F_4) + e(B_3) \mu(B_3,F_4) + e(F_4)\mu(F_4,F_4) \\
&= 1\cdot 105 + 1\cdot 7 + e(F_4)\cdot 1 = 112 + e(F_4)
\end{align*}
Therefore, $e(F_4)=0$; that is, there are no $\mathbb{Z}_{\geq2}$-valued friezes of type $F_4$.

The number of positive integral friezes of type $E_6$ is known to be $868$. 
The only subdiagrams of $E_6$ which could admit $\mathbb{Z}_{\geq2}$-valued friezes are $\varnothing$, $D_4$, and $E_6$. Then
\begin{align*}
868
&= e(\varnothing)\mu(\varnothing,E_6) + e(D_4) \mu(D_4,E_6) + e(E_6)\mu(E_6,E_6) \\
&= 1\cdot 833 + 1\cdot 35 + e(E_6)\cdot 1 = 868 + e(E_6)
\end{align*}
Therefore, $e(E_6)=0$; that is, there are no $\mathbb{Z}_{\geq2}$-valued friezes of type $E_6$.

The number of positive integral friezes of type $E_7$ is conjectured to be $4400$.
The only subdiagrams of $E_7$ which could admit $\mathbb{Z}_{\geq2}$-valued friezes are $\varnothing$, $D_4$, $D_6$, and $E_7$. Assuming the conjectured count is correct,
\begin{align*}
4400
&= e(\varnothing)\mu(\varnothing,E_7) + e(D_4) \mu(D_4,E_7) + e(D_6) \mu(D_6,E_7) + e(E_7)\mu(E_7,E_7) \\
&= 1\cdot 4160 + 1\cdot 220 + 2\cdot 10 + e(E_7)\cdot 1 = 4400 + e(E_7)
\end{align*}
Therefore, $e(E_7)=0$; that is, there are no $\mathbb{Z}_{\geq2}$-valued friezes of type $E_7$.

The number of positive integral friezes of type $E_8$ is conjectured to be $26952$.
The only subdiagrams of $E_8$ which could admit $\mathbb{Z}_{\geq2}$-valued friezes are $\varnothing$, $D_4$, $D_6$, and $E_8$. Assuming the conjectured counts for $E_7$ and $E_8$ are correct,
\begin{align*}
26952
&= e(\varnothing)\mu(\varnothing,E_8) + e(D_4) \mu(D_4,E_8) + e(D_6) \mu(D_6,E_8) + e(E_8)\mu(E_8,E_8) \\
&= 1\cdot 25080 + 1\cdot 1596 + 2\cdot 136 + e(E_8)\cdot 1 = 26948 + e(E_8)
\end{align*}
Therefore, $e(E_8)=4$; that is, there are four $\mathbb{Z}_{\geq2}$-valued friezes of type $E_8$. The four translations of Figure \ref{fig: E8frieze} are $\mathbb{Z}_{\geq2}$-valued friezes of type $E_8$; therefore, they are the only ones.
\end{proof}

\begin{rem}
Besides its existence and apparent uniqueness-up-to-translation, we know of no significant meaning behind the $\mathbb{Z}_{\geq2}$-valued frieze appearing in Figure \ref{fig: E8frieze}. 
\end{rem}

\subsection*{Acknowledgements}
This project has been simmering for more than a decade and has been productively discussed with a great number of people. An earlier version of this project was supported by MSRI's 2012 thematic semester on cluster algebras, including productive conversations with Fr\'ed\'eric Chapoton, Joel Geiger, Karl Marlburg, and Jacob Matherne. The more recent incarnation and its connection with friezes benefited from the input and feedback of Anna Felikson, 
Ana Garcia Elsener, 
Atabey Kaygun, 
Lang Mou, 
Pierre-Guy Plamondon,
Ralf Schiffler, and
Pavel Tumarkin.

The authors would like to thank the Isaac Newton Institute for Mathematical Sciences for support and hospitality during the programme Cluster Algebras and Representation Theory when work on this paper was undertaken. This work was supported by: EPSRC Grant Number EP/R014604/1.


\newpage

\renewcommand\contentsname{Table of Contents}
\tableofcontents

\vspace{-1cm}

\listoffigures

\end{document}